\definecolor{linkcolor}{rgb}{0.65,0,0}
\definecolor{citecolor}{rgb}{0,0.65,0}
\definecolor{urlcolor}{rgb}{0,0,0.65}
\theoremstyle{plain} 
\newtheorem{theorem}{Theorem}
\newtheorem{proposition}[theorem]{Proposition}
\newtheorem{lemma}[theorem]{Lemma}
\newtheorem{corollary}[theorem]{Corollary}
\newtheorem{observation}[theorem]{Observation}
\theoremstyle{definition}
\newtheorem{definition}[theorem]{Definition}
\theoremstyle{remark}
\newtheorem{remark}[theorem]{Remark}
\numberwithin{theorem}{section}
\newcommand{\tightoverset}[2]{%
  \mathop{#2}\limits^{\vbox to -.5ex{\kern-1.05ex\hbox{$#1$}\vss}}}
\newcommand{\bigslant}[2]{{\raisebox{.2em}{$#1$}\left/\raisebox{-.2em}{$#2$}\right.}}
\newcommand{\C}{\mathbb{C}}
\newcommand{\R}{\mathbb{R}}
\newcommand{\QP}{\mathbb{QP}}
\newcommand{\CP}{\mathbb{CP}}
\newcommand{\RP}{\mathbb{RP}}
\newcommand{\PSL}{\mathrm{PSL}}
\newcommand{\SO}{\mathrm{SO}}
\newcommand{\GL}{\mathrm{GL}}
\newcommand{\SP}{\mathrm{SP}}
\newcommand{\Coefs}{\mathrm{Coefs}}
\newcommand{\Roots}{\mathrm{Roots}}
\newcommand{\RootMap}{\mathcal{R}}
\newcommand{\acosh}{\operatorname{acosh}}
\newcommand{\UT}{\mathrm{UT}}
\newcommand{\CCubic}{\Delta^-}
\renewcommand{\a}{\alpha}
\renewcommand{\b}{\beta}
\newcommand{\f}{\phi}
\renewcommand{\r}{\rho}
\newcommand{\z}{\zeta}
\newcommand{\CC}{\mathbb{C}}
\newcommand{\FF}{\mathbb{F}}
\newcommand{\HH}{\mathbb{H}}
\newcommand{\NN}{\mathbb{N}}
\newcommand{\PP}{\mathbb{P}}
\newcommand{\QQ}{\mathbb{Q}}
\newcommand{\RR}{\mathbb{R}}
\renewcommand{\SS}{\mathbb{S}}
\newcommand{\ZZ}{\mathbb{Z}}
\renewcommand{\GL}{\operatorname{GL}}
\newcommand\SL{\operatorname{SL}}
\definecolor{myorange}{rgb}{0.9, 0.55, 0.3}
\definecolor{mygreen}{rgb}{0.35, 0.71, 0.0}
\newcommand{\TITLE}{Algebraic Number Starscapes}
\newcommand{\TITLERUNNING}{Starscapes}
\newcommand{\DATE}{\today}
\title[\TITLERUNNING]{\vspace*{-1.3cm} \TITLE}
\author{Edmund Harriss, Katherine E. Stange, Steve Trettel}
\date{\DATE}
\address{%
Department of Mathematics, University of Colorado,
Campus Box 395, Boulder, Colorado 80309-0395}
\email{}
\keywords{}
\subjclass[2010]{Primary: 11R04, 11R11, 11R16, 11J04, 11J68, 11J87, 53C30 Secondary: 11G50, 11H99, 54E99, 57M99}
\thanks{
	This material is based upon work supported by the National Science Foundation under Grant No. DMS-1439786 and the Alfred P. Sloan Foundation award G-2019-11406 while the authors were in residence at the Institute for Computational and Experimental Research in Mathematics in Providence, RI, during the Illustrating Mathematics program.   Katherine E. Stange is supported by NSF CAREER CNS-1652238.
}
\begin{document}


\begin{abstract}
	We study the geometry of algebraic numbers in the complex plane, and their Diophantine approximation, aided by extensive computer visualization.  Motivated by the resulting images, which we have called \emph{algebraic starscapes}, we describe the geometry of the map from the coefficient space of polynomials to the root space, focussing on the quadratic and cubic cases.  The geometry describes and explains the notable features of the illustrations, and motivates a geometric-minded recasting of fundamental results in the Diophantine approximation of the complex plane.  Meanwhile, the images provide a case-study in the symbiosis of illustration and research, and an entry-point to geometry and number theory for a wider audience.  In particular, the paper is written to provide an accessible introduction to the study of homogeneous geometry and Diophantine approximation.
	
	We investigate the homogeneous geometry of root and coefficient spaces under the natural $\PSL(2;\CC)$ action.  Hyperbolic geometry and the discriminant play an important role in low degree. In particular, we rediscover the quadratic and cubic root formulas as isometries of $\HH^2$ and its unit tangent bundle, respectively.  Utilizing this geometry, we determine when the map sending certain families of polynomials to their complex roots (our starscape images) are embeddings.

	We reconsider the fundamental questions of the Diophantine approximation of complex numbers by algebraic numbers of bounded degree, from the geometric perspective developed.  In the quadratic case (approximation by quadratic irrationals), we consider approximation in terms of \emph{hyperbolic} distance between roots in the complex plane and the \emph{discriminant} as a measure of arithmetic height on a polynomial.  In particular, we determine the supremum on the exponent $k$ for which an algebraic target $\alpha$ has infinitely many approximations $\beta$ whose hyperbolic distance from $\alpha$ does not exceed $\acosh(1+1/|\Delta_\beta|^k)$.  It turns out to fall into two cases, depending on whether $\alpha$ lies on the image of a plane of rational slope in coefficient space (a \emph{rational geodesic}).  The result comes as an application of Schmidt's subspace theorem.  Our results recover the quadratic case of results of Bugeaud and Evertse, and give some geometric explanation for the dichotomy they discovered \cite{BugeaudEvertse}.  Our statements go a little further in distinguishing approximability in terms of whether the target or approximations lie on rational geodesics.

	The paper comes with accompanying software, and finishes with a wide variety of open problems.
\end{abstract}

\maketitle
\newpage

\tableofcontents

\section{Introduction}

We begin (and indeed this research began) with the images in Figure \ref{fig:Initial}.  Take a minute to look at them before continuing. 

\begin{figure}[h!tbp]
\centering
\begin{subfigure}[b]{0.42\textwidth} 
    \centering
	\includegraphics[width=\textwidth]{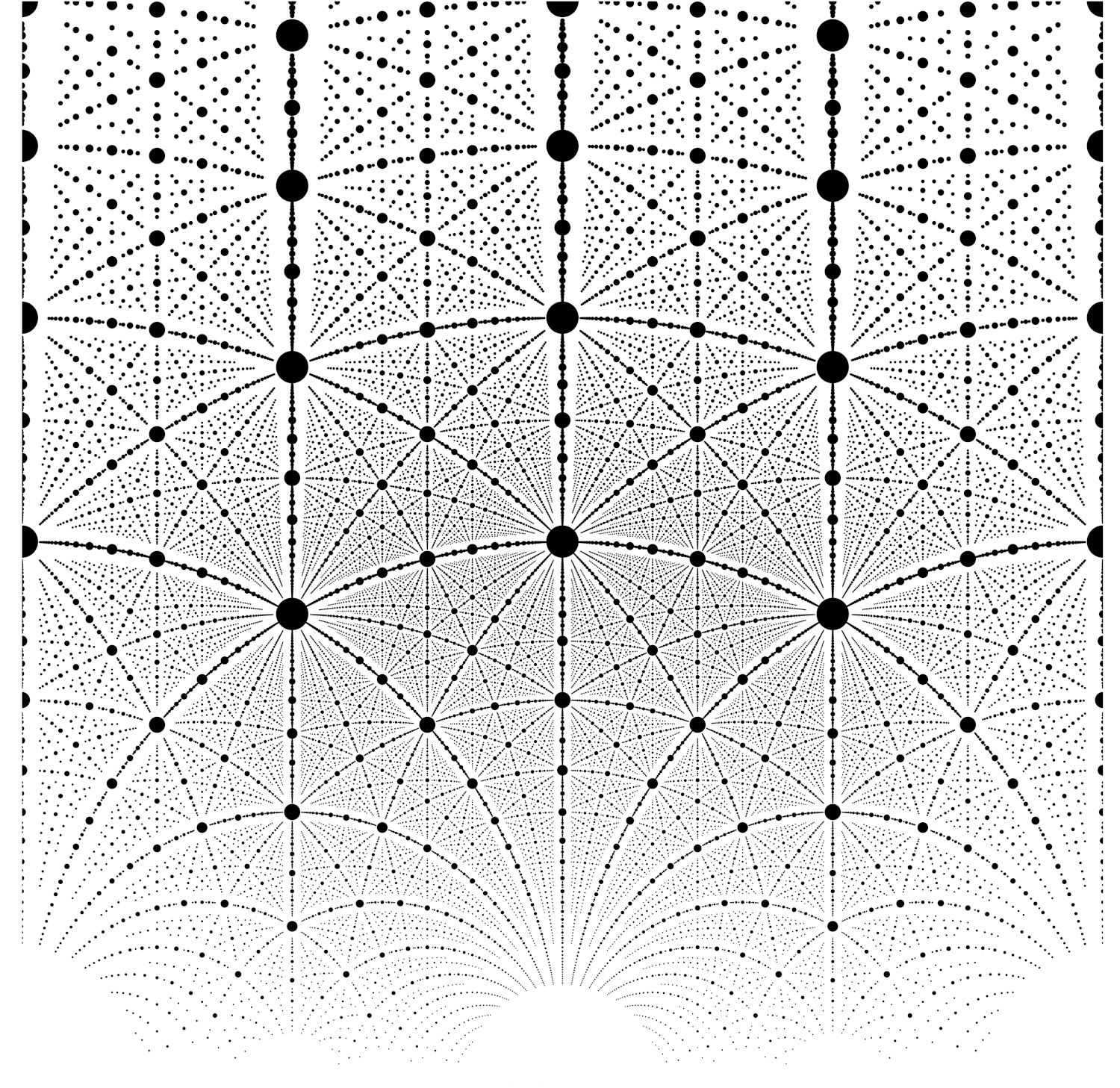}
	\caption{}
    \label{fig:Initial_quadratics}
\end{subfigure}
\begin{subfigure}[b]{0.42\textwidth} 
    \centering
	\includegraphics[width=\textwidth]{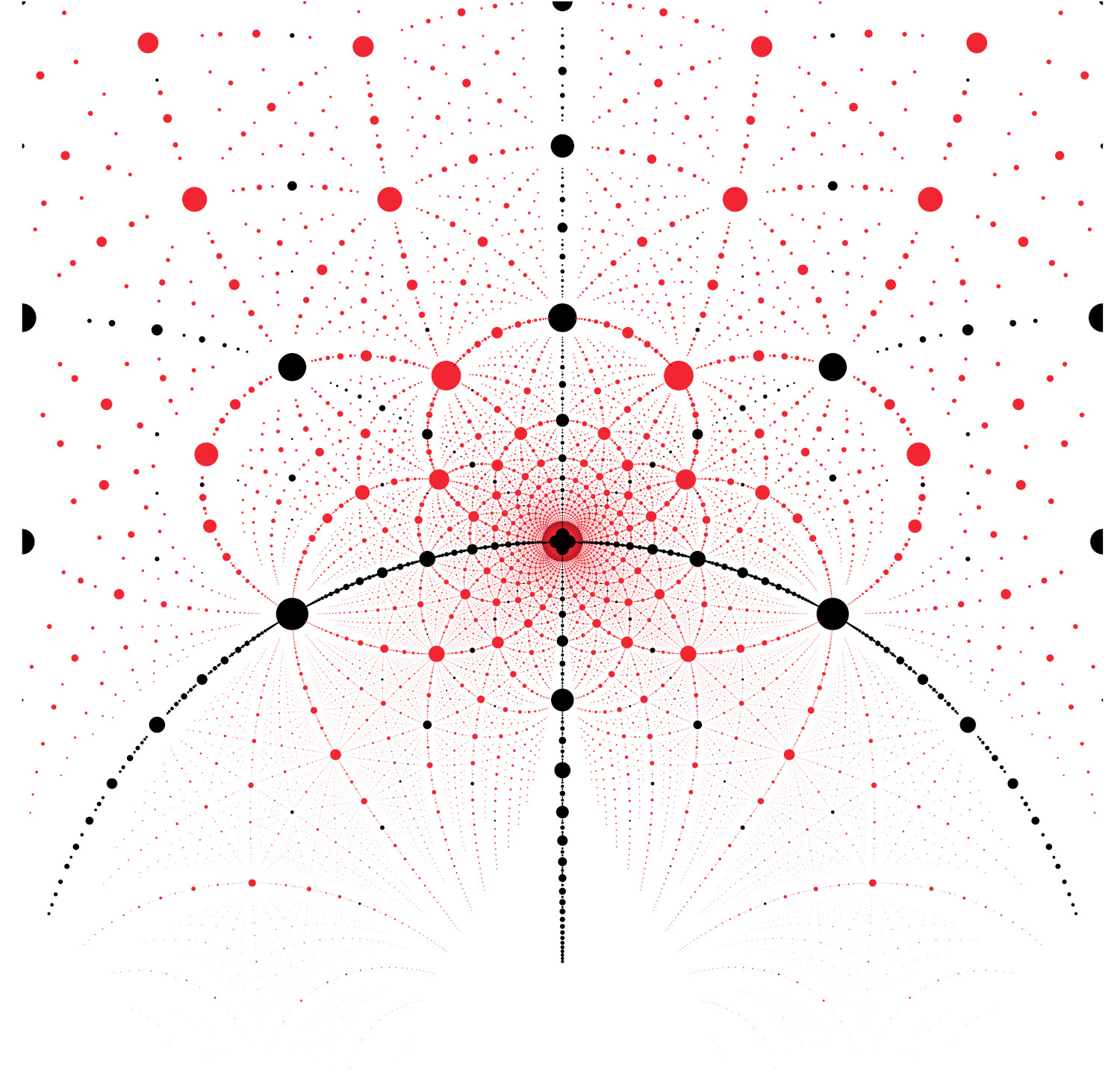}
	\caption{}
    \label{fig:Initial_cubic_family}
\end{subfigure}
\begin{subfigure}[b]{0.42\textwidth} 
    \centering
	\includegraphics[width=\textwidth]{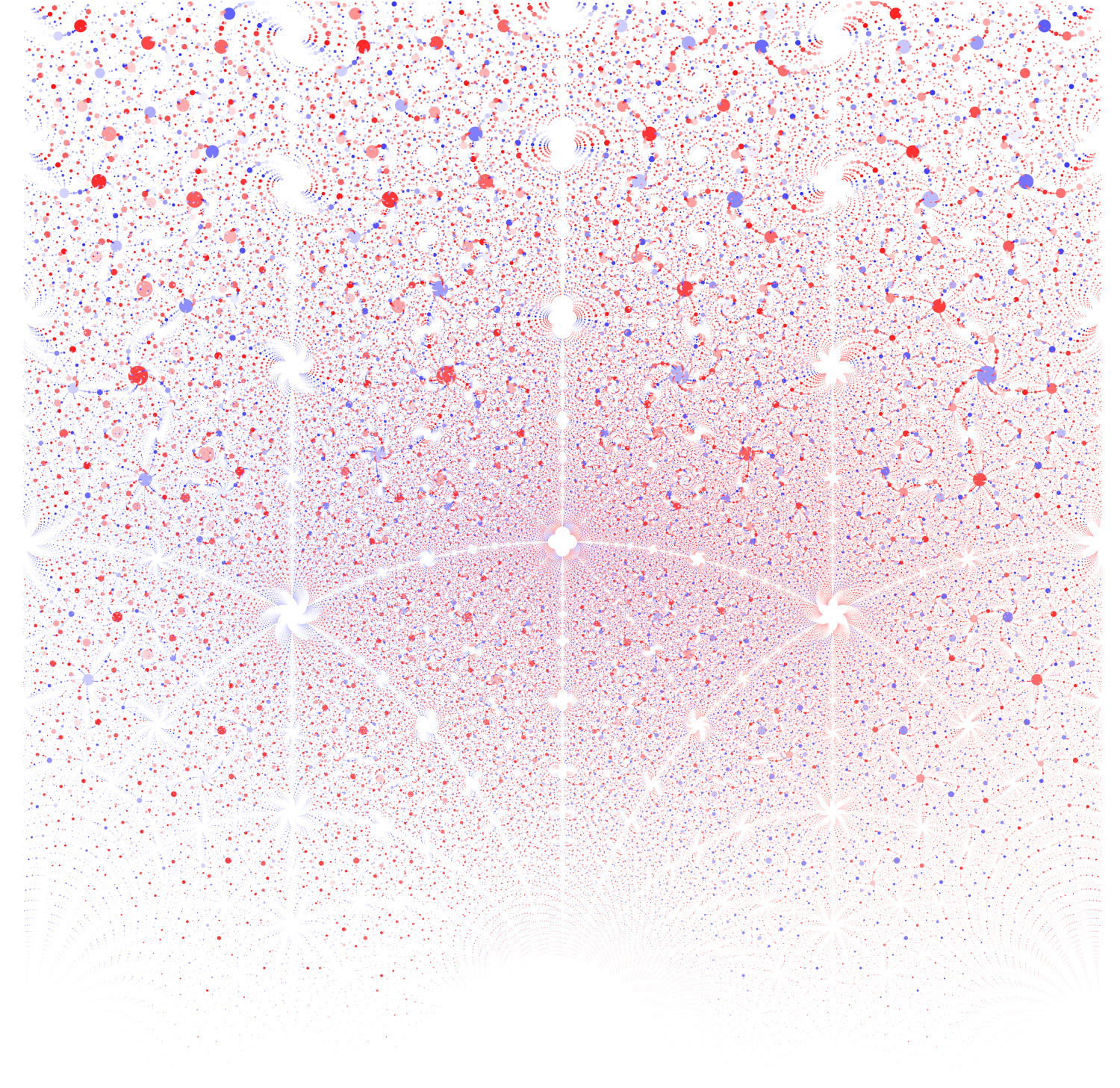}
	\caption{}
    \label{fig:Initial_cubics}
\end{subfigure}
\begin{subfigure}[b]{0.42\textwidth} 
    \centering
	\includegraphics[width=\textwidth]{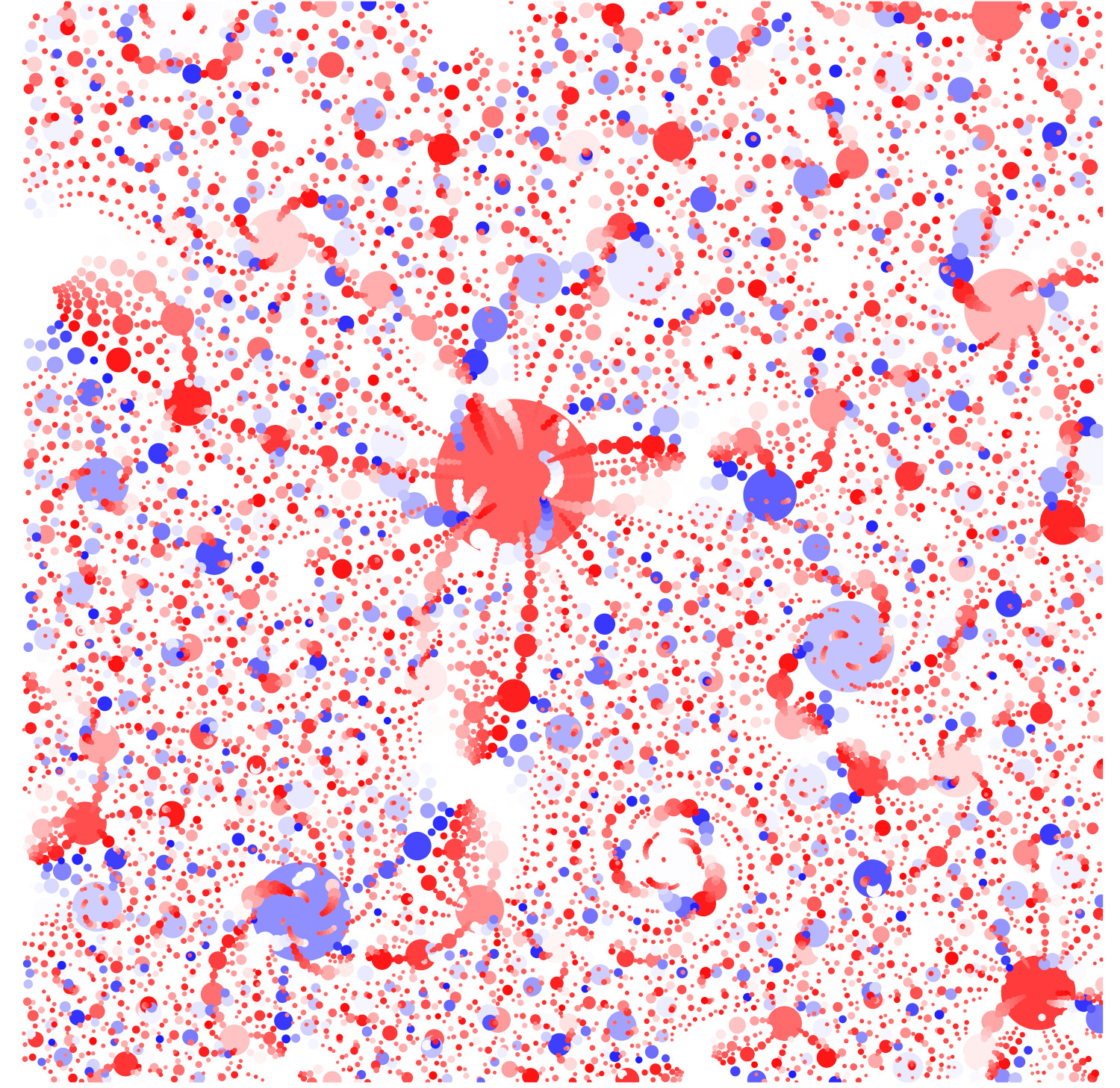}
	\caption{}
    \label{fig:Initial_Cubics_detail}
\end{subfigure}
\caption{Complex algebraic numbers sized by the inverse of the discriminant of their minimal polynomial in the hyperbolic metric. All quadratics (\ref{fig:Initial_quadratics}). All roots (quadratic in black, cubic in red) of the polynomials $a x^3 + c x^2 + b x + c = 0$ (\ref{fig:Initial_cubic_family}). All cubics, coloured based on the value of the real root (\ref{fig:Initial_cubics}) and the detail of the cubics around a root of $x^3 + x + 1 = 0$. The first three images, as many other figures here, are plotted from $-1$ to $1$ in the real axis and $0$ to $2$ in the imaginary axis.}
\label{fig:Initial}
\end{figure}

On the top left (\ref{fig:Initial_quadratics}) you see complex quadratic algebraic numbers plotted and sized by discriminant in the hyperbolic metric\footnote{For the polynomial $a x^2+bx+c = 0$ the dot plotted will be at $\frac{-b+\sqrt{4ac-b^2} i}{2a}$ on the complex plane. Note this only considers polynomials with negative discriminant and thus complex roots. A similar image can be created for polynomials with only real roots, see Figure~\ref{fig:RealQuadratics}. The radius of the dot is proportional to $\frac{1}{\sqrt{4ac-b^2}}$ (one over the root discriminant) times the height above the real axis $\frac{\sqrt{4ac-b^2}}{2a}$ to adjust the radius to the hyperbolic metric. This gives dots with radius proportional to $\frac{1}{2a}$. If you are plotting the points yourself it can be useful to adjust the scale of the dots (keeping the same proportions) as more are added.}. On the top right (\ref{fig:Initial_cubic_family}) are the complex roots of polynomials of the form $a x^3 + c x^2 + b x + c = 0$, and on the bottom all complex cubics coloured by their real conjugate (\ref{fig:Initial_cubics}), with zoomed in detail around the complex root of $x^3+x+1$ (\ref{fig:Initial_Cubics_detail}). In each case the dot size is inversely proportional to the root discriminant (the discriminant to the root of the degree), and the points are plotted in the hyperbolic metric (meaning, the radius is hyperbolic in the upper half-plane model). This paper grew out of our excitement at the beauty and detail present in these images and the search for mathematics that could be both seen and conjectured from their structure.

\subsection{A note on our expository approach}  We believe that the images presented here provide a motivated path into several topics in geometry and number theory. Mathematical beauty can be incredibly hard to communicate to people not familiar with the details of the subject, yet these images have already appeared in an art exhibit in Iceland~\cite{Harriss:AS} and been used for engagement in a workshop with the Math Club for Battle Creek Area Math and Science Center in Michigan.  We have therefore attempted to build a paper containing two distinct paths for two distinct audiences.  For those with less background, we provide a leisurely introduction to the subject keeping the description as accessible as possible, introducing even well-known terminology as we build toward more sophisticated mathematics.  We hope the paper might provide insight and interest to a motivated high school student and a mathematician in these research areas alike, and inspire REU projects.  For the researcher who wishes to access the results directly, we have endeavoured to make a bypass that avoids the more leisurely parts of the paper, and provide an alternative, condensed introductory section that allows the reader to pass directly to the results.  

\subsection{How to access this paper for a wider audience}

For a motivated student of mathematics, this paper should be read in chronological order, skipping Section~\ref{sec:researcher}.  In Section \ref{sec:gallery}, we follow the visual investigation with a gallery of some of the images produced, and discussion of some of the structural aspects visible in the images. We develop those observations in the next two sections studying the geometry of the roots (in Section~\ref{sec:Geometry}) and their number theory (in Sections~\ref{sec:DiophantineApproximation} and \ref{sec:Dio-quad}).  These (geometry and number theory) are somewhat parallel, in that a reader may wish to read both to their natural stopping point (dictated by the reader's mathematical background).  Each one starts from a lower level of background and ramps up.

\subsection{How researchers should read this paper}

For a working mathematician with experience in the relevant background, Section~\ref{sec:researcher} should be read first, followed by Section~\ref{sec:Geometry}, and then Section~\ref{sec:Dio-quad}, dipping into Section~\ref{sec:DiophantineApproximation} for background as needed.

\subsection{Previous illustration}

The notion of plotting algebraic numbers is not new, and there are many beautiful mathematical visualisations made from them, many in the world of blogs and online mathematical discussions.  We curate those we are aware follows at the end of this introduction.  Our images were studied initially from the perspective of their aesthetic, by asking, ``What makes images that look interesting?''\ (without trying too hard to define \emph{interesting}).  However, they rapidly became both a tool to illuminate existing mathematics, including hyperbolic and projective geometry, representations of $\PSL(2;\CC)$, and Diophantine approximation, and also a source of new mathematics.  

The richest previous investigation of imagery grows out of the study of the roots of Littlewood polynomials~\cite{Littlewood:SPRCA, Odlyzko:ZPWC, Shmerkin:ZPSCLSS}, which are polynomials with coefficients $\pm 1$, and more generally polynomials with coefficients from a finite set. This produces the oldest images we have found, particularly in the work of Peter and Jonathan Borwein~\cite{Bailey:EMA, Borwein:CEANT, Borwein:PWCFF, Borwein:VSNT, Pickover:M}. These collections of polynomials have the nice feature that all polynomials up to a certain degree can be considered. Other investigations of the geometry and images related to such polynomials (that we will not describe in detail) include the curiously named Thurston's master teapot~\cite{bray2019shape} and the eigenvalues of Bohemian Matrices~\cite{Bohemian, CHAN202072}.

In particular, a lot of interest was generated by the work of Dan Christiansen~\cite{Christiansen}, shared and described by John Baez, including an incredibly intricate image created by Sam Derbyshire~\cite{Baez1, Baez2, SciAm}.

Many other people, some directly inspired by this, have also created, or discussed, the images, including Paul Nylander~\cite{Nylander}, Greg Egan~\cite{Egan}, Andrej Bauer~\cite{Bauer}, Vincent Pantaloni~\cite{Pantaloni}, Daniel Wiegreffe~\cite{Wiegreffe}, 
Bernat Espigul\'{e}~\cite{Espigule}, Jwalin Bhatt~\cite{Bhatt}, Jonathan Lidbeck~\cite{Lidbeck}, and Jordan Ellenberg~\cite{Ellenberg}.

Pictures not limiting coefficients so strongly, closer to those we present here, are rarer. The most notable are the images of Stephen Brooks used on Wikipedia~\cite{Brooks}, that were further developed and optimised by David Moore~\cite{Moore}. These images also inspired a  Wolfram Demonstration from Enrique Zeleny~\cite{Zeleny}. Other interesting images we have found come from David Marciel~\cite{Marciel} and Deviant Art user Fauxtographique~\cite{Fauxtographique}. There have also been discussions on Stackexchange~\cite{Groff,iadvd}, with the latter linking to further amazing images on the Flickr account of Stackexchange user ``DumpsterDoofus''~\cite{MS}.  The ease of creating these images means that people quickly start making their own:  for example, a twitter thread started by the first-named author \cite{Harriss} quickly prompted variations by Dan Anderson, Michael Pershan and Peter Farrell \cite{Anderson, Anderson2, Pershan}.  

This list is extensive, but probably not exhaustive. We are interested in other versions of such patterns, especially earlier ones (before 2010, and even more before 2000) so please send us any you know of. 

\subsection{Software and image generation}
We encourage the reader to explore along with us, throughout the paper, using the accompanying software, available as a Sage Mathematics Software \cite{sagemath} notebook, at \url{algebraicstarscapes.com}.

In general the images here are produced by a rather simple three step process. We first generate a list of polynomials whose coefficients lie in a region about the origin (for example, a box or ball).  These polynomials are then solved to give the collection of roots, with additional data (such as polynomial discriminant) attached to the roots, data which is eventually to be used for sizing.  Finally, that list of data is converted into a collection of points and plotted.  Most of the images drawn involve over $50{,}000$ dots, but some get to over $250{,}000$.

\subsection{Acknowledgements}  
The authors are grateful to the Institute for Computational and Experimental Research in Mathematics in Providence, RI, and to the semester organizers, for the opportunity to be in residence for Fall 2019 at the Illustrating Mathematics program, where this work was initiated in the grand tradition of just being in the right place at the right time.  The authors would also like to thank their respective home institutions for their help in making semester residency possible.  Thanks are also due to the many participants in that program for helpful discussions, including Arthur Baragar and Joseph H. Silverman.  Special thanks go to Pierre Arnoux and David Dumas for especially inspiring and detailed conversations as these ideas developed.

\section{A technical introduction}
\label{sec:researcher}

\subsection{Algebraic starscapes}

The images central to our story we have called \emph{algebraic starscapes}.  Formally, these images consist of dots centred at all algebraic roots of a family of polynomials, with radius a function of the coefficients of the relevant polynomial.  More specifically, these families are chosen by fixing a bound on the polynomial degree and allowing the coefficients to range through all integer points in some affine subspace of the full vector space of coefficients.  The sizing is typically chosen from various measures of arithmetic complexity, such as Weil height or polynomial discriminant, so that big dots correspond to low complexity.  A \emph{linear starscape} is formed when the family of polynomials is two-dimensional; these appear as beaded necklaces (see Figure \ref{fig:Lines}); \emph{planar starscapes} are formed from three-dimensional families (see Figure \ref{fig:Starscapes}).  Planar starscapes contain infinitely many linear starscapes.  The reader is invited to examine the examples in Figures \ref{fig:Starscapes_a} through \ref{fig:Starscapes_f}.  These pictures can all be considered generalisations of Figure \ref{fig:RationalStarscape}.  The `repulsion' of large dots from one another illustrates an analog to Dirichlet's approximation theorem (Theorem \ref{thm:Dirichlet_intro}), stating that rational numbers cannot well approximate other rational numbers (see also Figure \ref{fig:Dirichlet}).

As an aside, although we restrict our attention to complex roots, there's no reason one cannot seek analogous visualizations for real roots.  For example, Figure \ref{fig:RealQuadratics} shows the real pairs that are roots of quadratics.  Or, for cubics with one real root, one could parametrize polynomials by a complex root in the upper half plane together with a real root on its ideal boundary:  this gives us a starscape picture living in a solid torus as in Figure \ref{fig:cubicsin3d}.  See Section \ref{sec:futurework} for further avenues.

The purpose of this paper is to study algebraic numbers in the complex plane, including their Diophantine approximation properties, as a function of the homogeneous geometry of the coefficients-to-roots map.  That is, we aim to describe exactly the map from coefficient space (the space of coefficient vectors of polynomials of fixed degree), and its affine subspaces, to the root space (collections of points in the complex plane).  Starscapes represent the images of affine subspaces (strictly speaking, the starscape is formed by plotting all the roots occurring in the root space image).  The philosophy is that, with a sufficient understanding of the geometry, we can state and prove concrete Diophantine results in low degree.  

\subsection{Geometry}

The arithmetic complexity of an algebraic number, although measured in a variety of ways (see Figures \ref{fig:DotSizesCoeff} and \ref{fig:DotSizesRoots}), is typically correlated to the size of the integer coefficients of its minimal polynomial.  The big dots in the image, then, are the image of the short integer vectors in the affine subspace.  If the geometric map from coefficient space to root space is sufficiently convoluted, these dots may end up close together in the complex plane.  If we can control the behaviour of the geometric map, we can control this effect and prove Diophantine results.

There are several key geometric points:

\begin{enumerate}
	\item The geometric description of the coefficients-to-roots map is studied in detail in degrees two and three.  The spaces of coefficients and roots decompose as a union of homogeneous spaces for $\PSL(2;\RR)$, and in these low degrees we may describe the cases of interest (real polynomials with complex roots) completely in terms of the geometry of the hyperbolic plane and its unit tangent bundle. See Sections \ref{ssec:QuadraticGeometry} and \ref{ssec:CubicGeometry}.
	
	\item In degree two, we demonstrate that the coefficients-to-roots map (more colloquially known as the quadratic formula) realizes an \emph{isometry between two models of the hyperbolic plane}. See Theorem \ref{thm:hypIso}, Corrolary \ref{cor:QuadFormula} and the associated Figure \ref{fig:hypIso} for a precise formulation.
While surely classically known, none of the authors had previously encountered this surprising aspect of the quadratic formula, and so we have provided details accessible to students of hyperbolic geometry. 

\item In degree three, the $\PSL(2,\RR)$ action naturally identifies the space of real cubics possessing a complex root with the unit tangent bundle to the hyperbolic plane.
This identification allows a recasting of many familiar algebraic results in geometric terms.
As a particular example we see the fact that every real cubic has a real root provides a preferred trivialization $\UT\HH^2$ (Proposition \ref{prop:UTH2}), and the cubic formula can be interpreted as a means of explicitly identifying cubics in coefficient-space with their coordinates with respect to this trivialization (Theorem \ref{thm:CubicFactor}).

	\item Concerning the roots map in degree three, the projection onto complex roots naturally identifies with the bundle map $\UT\HH^2\to\HH^2$. We study the interaction of the lattice of integer points with this map:  for example, can rational affine subspaces be contained in the fibres?  We discuss this, and its implications for starscapes, in Section \ref{sec:applications-cubic}.
\end{enumerate}


\subsection{Diophantine Approximation}

Diophantine approximation can be described as the quantitative study of the trade-off that is required to approximate a real number from a set of approximations (for example, rational numbers), namely between the precision of the approximation and the complexity of the approximant.  Most of the classical story lives on the real line, so that, given $\alpha \in \RR$, we ask for $p/q \in \QQ$ (if approximating with rationals), so that
\[
	\left| \alpha - \frac{p}{q} \right| < \frac{1}{q^k}
\]
for various positive $k$.  Dirichlet's Theorem (Theorem \ref{thm:Dirichlet_intro}) asserts that for $\alpha \notin \QQ$, and $k=2$, there are infinitely many such approximations, while for algebraic $\alpha$, and $k > 2$, Roth's Theorem (Theorem \ref{thm:roth}) asserts that there are only finitely many.  Thus, the exponent $k=2$ is a critical exponent for approximation of algebraic numbers by rationals.  We might choose approximations from other sets, such as algebraic numbers of bounded degree.  Koksma defines $k_d(\alpha)$ to be the supremum of all $k$ such that there are infinitely many algebraic $\beta$ of degree $\le d$ satisfying
\[
	\left| \alpha - \beta \right| < \frac{1}{H(f_\beta)^k}.
\]
Here, $H$ refers to the na\"ive height, and $f_\beta$ to the minimal polynomial of $\beta$.

These questions have naturally been extended to the rest of the complex plane, typically by reference to the euclidean distance in the complex plane.  Thus one may define $k_d(\alpha)$ for $\alpha \in \CC$ also.  Sprind\u{z}uk showed that the real and complex cases are essentially different:  $k_d(\alpha) = d+1$ for almost all $\alpha \in \RR$, but $k_d(\alpha) = (d+1)/2$ for almost all $\alpha \in \CC$ (see Theorem \ref{thm:sprindzuk}).  
Bugeaud and Evertse where able to determine $k_d(\alpha)$ for most algebraic $\alpha \in \CC \backslash \mathbb{R}$ (Theorem \ref{thm:bugeaud-evertse}).  In particular, in the quadratic case they discovered that $k_2(\alpha)=2$ or $3/2$, depending upon whether the quantities $1$, $\alpha \overline{\alpha}$ and $\alpha + \overline{\alpha}$ are linearly dependent or not, respectively.

However, motivated by the geometry of the coefficients-to-roots map which describes the visual features of the algebraic starscapes, we propose that it make sense to ask about Diophantine approximation in the hyperbolic metric and with a sensitivity to the action of $\PSL(2;\ZZ)$ implied by the homogeneous geometry.  Thus we consider instead the critical exponent $k$ when asking for approximations in the following sense:
\[
	d_{hyp}(\alpha,\beta) \le \operatorname{arcosh} \left( 1 + \frac{1}{|\Delta_\beta|^k} \right).
\]
Here, $\Delta_\beta$ is the discriminant of the minimal polynomial of $\beta$, and $d_{hyp}$ represents the hyperbolic distance.  This inequality is preserved under $\PSL(2;\ZZ)$ transformations, and we demonstrate, using Schmidt's Subspace Theorem, that the critical exponent is either $2$ or $3/2$, the difference controlled as in Bugeaud and Evertse's result.  The dependence of $1$, $\alpha \overline{\alpha}$ and $\alpha + \overline{\alpha}$ can be reinterpreted as the condition that $\alpha$ lie on a \emph{rational geodesic} (see Section~\ref{sec:applquad}), i.e. the image of a plane of rational slope in the coefficient space.  These are exactly the linear starscapes of Figure~\ref{fig:Initial_quadratics}, which are more densely packed with quadratic irrationals.

This essentially recovers (up to some nuances discussed in Section \ref{ssec:QuadraticDirichletonRationalGeodesics}) the result of Bugeaud and Evertse for degree $2$.  But it begs the question if higher degree cases (some of which are not completely settled) can be similarly described in terms of the geometry of the coefficients-to-roots map.

\subsection{Reading Guide}

In Section~\ref{sec:Geometry}, we describe in detail the map from coefficient space to root space.  A research audience may wish to skip Section \ref{sec:ProjGeo}, briefly looking at Section~\ref{sssec:RootsMap} for the roots map in the linear case, and begin with Section \ref{sec:QuadHypGeo}, which concerns quadratic polynomials. Section~\ref{sec:applquad} concerns the consequences of the quadratic geometric story for integer points, which is relevant to the Diophantine approximation we will do later.
Section~\ref{ssec:CubicGeometry} tells a similar story for cubics using the geometry of the unit tangent bundle to the hyperbolic plane, and interprets the cubic formula from this perspective \ref{thm:CubicFactor}).

In Section~\ref{sec:DiophantineApproximation}, we give the necessary background to Diophantine approximation.  The expert will not find anything new in Section \ref{ssec:ClassicDiophantineApproximation}--\ref{ssec:NaiveHeightComplexity}, but we place the Diophantine results in context in Section \ref{ssec:fixdeg}, and argue for the importance of $\PSL(2;\ZZ)$ in Section \ref{ssec:DophantineSl2R}.  Sections \ref{ssec:WeilHeight} and \ref{ssec:WeilHeightRepulsion} can again be skipped by the expert.  Section \ref{ssec:DiscriminantComplexity}--\ref{ssec:SizingStarscapes} discuss the appropriate choice of measure of arithmetic complexity.

The number theorist interested in the quadratic results will therefore wish to read Sections \ref{sec:QuadHypGeo}--\ref{sec:applquad}, Sections \ref{ssec:fixdeg}--\ref{ssec:DophantineSl2R} and then focus on Section \ref{sec:Dio-quad} (possibly returning to Sections \ref{ssec:CubicGeometry} for the cubic geometric story, although our Diophantine results are restricted to quadratics).  In Section \ref{sec:Dio-quad}, we revisit the description of the critical exponent for Diophantine approximation of complex numbers by algebraic numbers of degree $2$.  We use Schmidt's subspace theorem, and we find that the exponent depends upon whether the complex numbers lies on a rational hyperbolic geodesic (defined in Section~\ref{sec:applquad}).  The main theorems are Theorems~\ref{thm:dirichlet-geo} and \ref{thm:dirichlet-quad} (analogous to Dirichlet's Theorem, asserting infinitely many good quadratic irrational approximations) and Theorem \ref{thm:quad-roth} (analogous to Roths' Theorem, asserting that there are only finitely many better quadratic irrational approximations).

\section{Gallery}
\label{sec:gallery}

\subsection{In a galaxy far, far away}
\label{sec:Starscapes}

\begin{figure}[h!tbp]
\centering
\begin{subfigure}[b]{0.42\textwidth} 
    \centering
	\includegraphics[width=\textwidth]{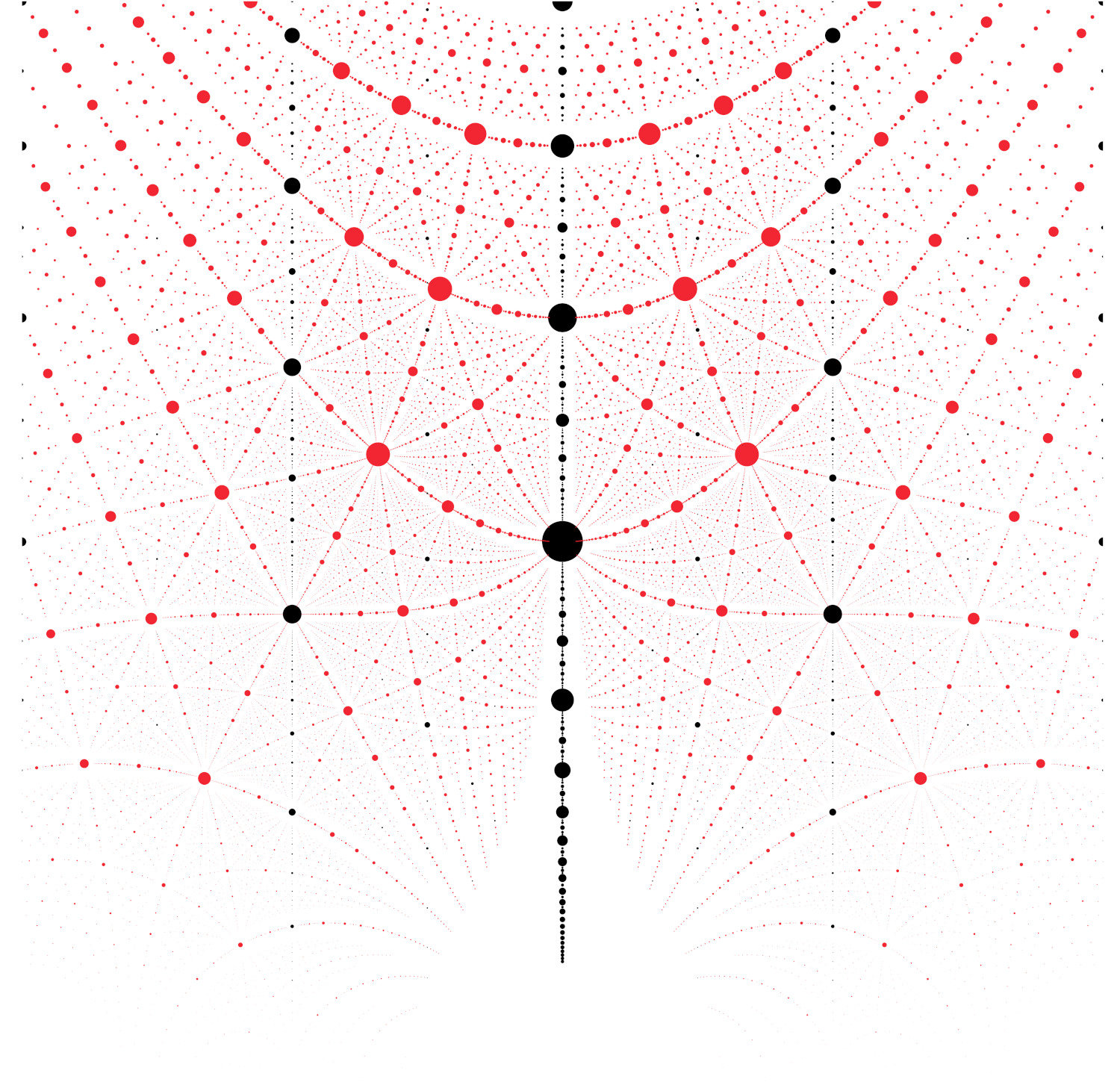}
	\caption{$a x^3 + b x + c = 0$}
    \label{fig:Starscapes_a}
\end{subfigure}
\begin{subfigure}[b]{0.42\textwidth} 
    \centering
	\includegraphics[width=\textwidth]{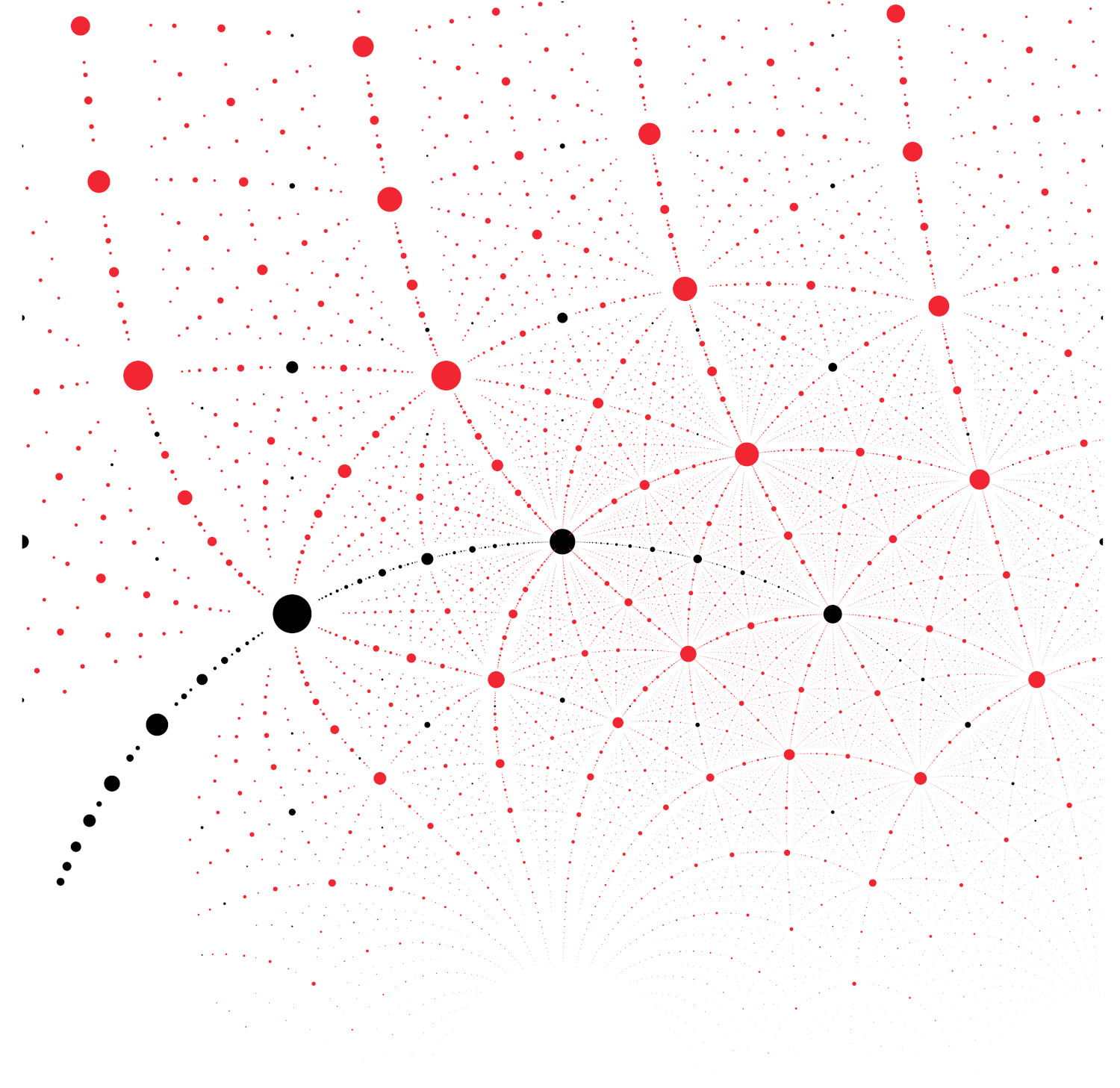}
	\caption{$a x^3 + b x^2 + c x + a = 0$}
    \label{fig:Starscapes_b}
\end{subfigure}
\begin{subfigure}[b]{0.42\textwidth} 
    \centering
	\includegraphics[width=\textwidth]{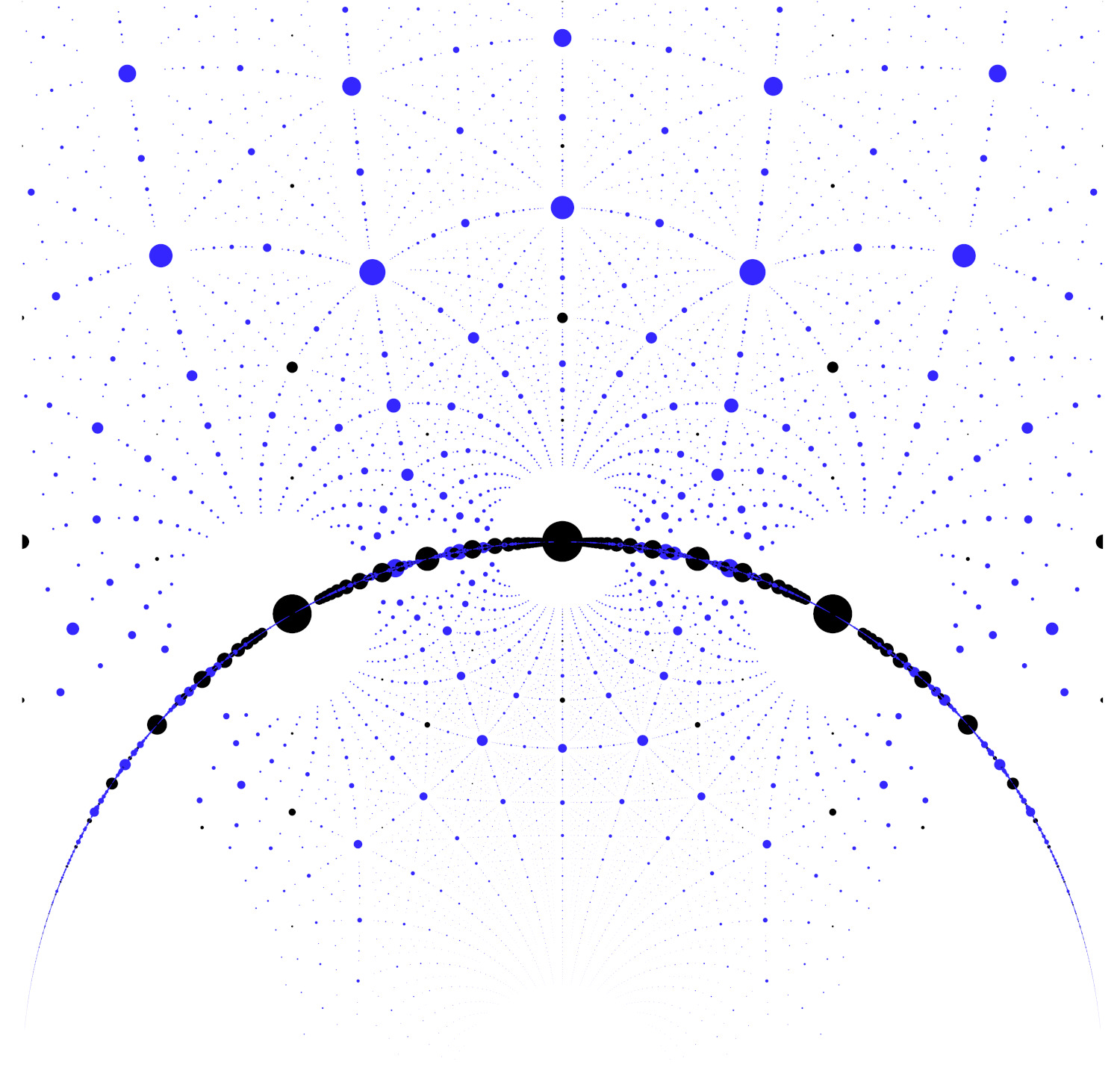}
	\caption{$a x^4 + b x^3 + c x^2 + b x + a = 0$}
    \label{fig:Starscapes_c}
\end{subfigure}
\begin{subfigure}[b]{0.42\textwidth} 
    \centering
	\includegraphics[width=\textwidth]{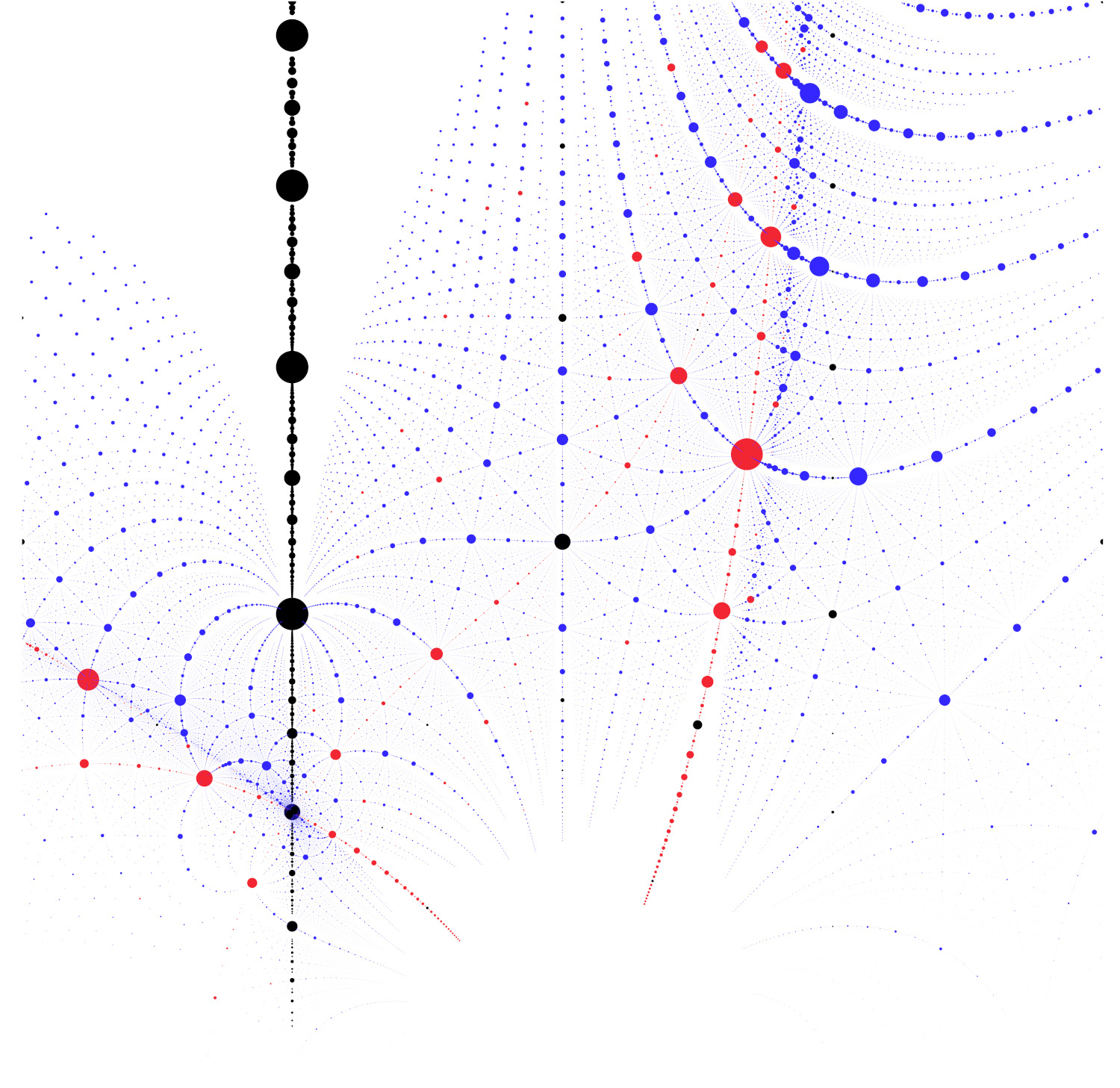}
	\caption{$a x^4 + b x^2 + b x + c = 0$}
    \label{fig:Starscapes_d}
\end{subfigure}
\begin{subfigure}[b]{0.42\textwidth} 
    \centering
	\includegraphics[width=\textwidth]{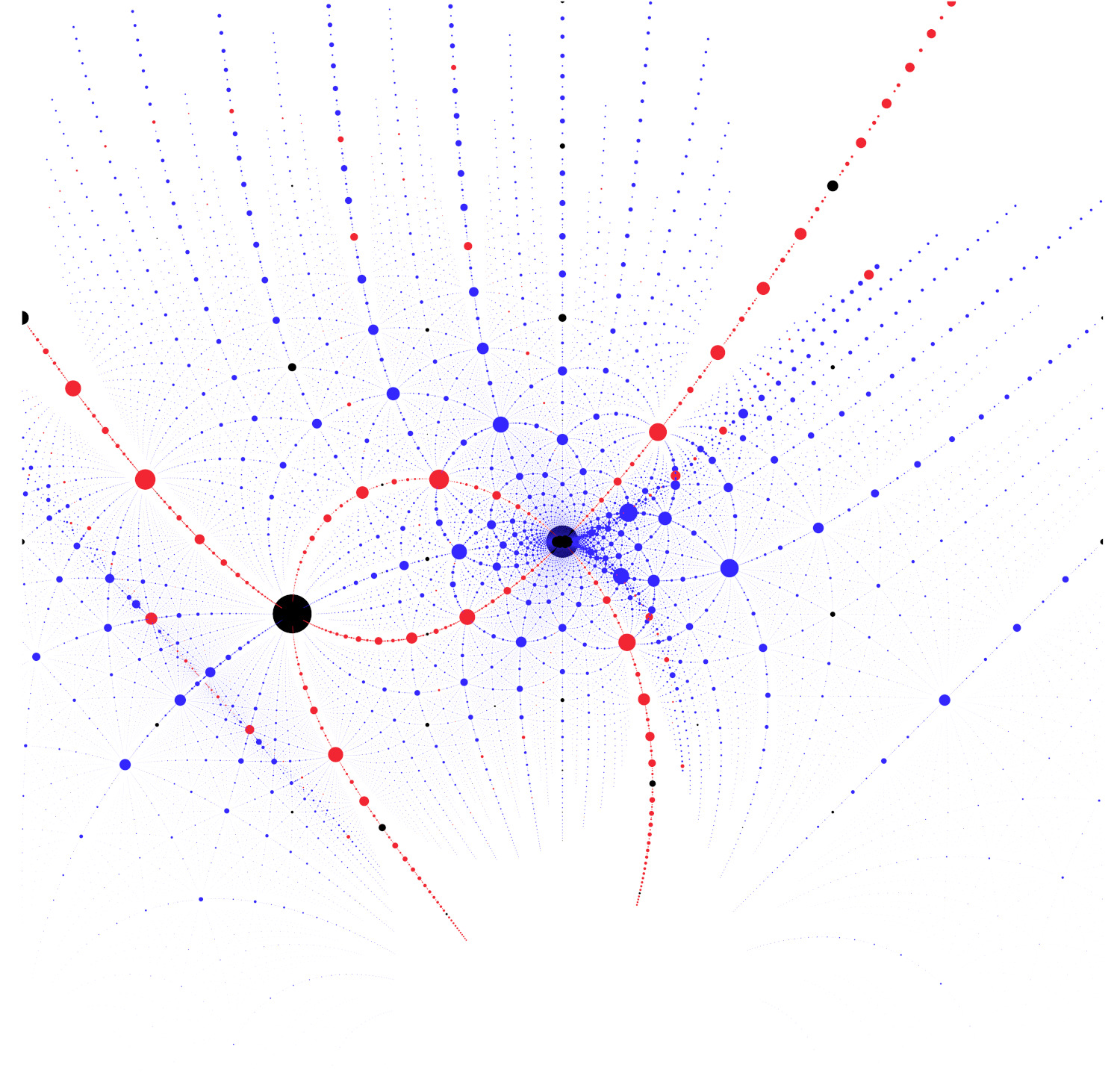}
	\caption{$a x^4 + b x^3 + b x^2 + b x + c = 0$}
    \label{fig:Starscapes_e}
\end{subfigure}
\begin{subfigure}[b]{0.42\textwidth} 
    \centering
	\includegraphics[width=\textwidth]{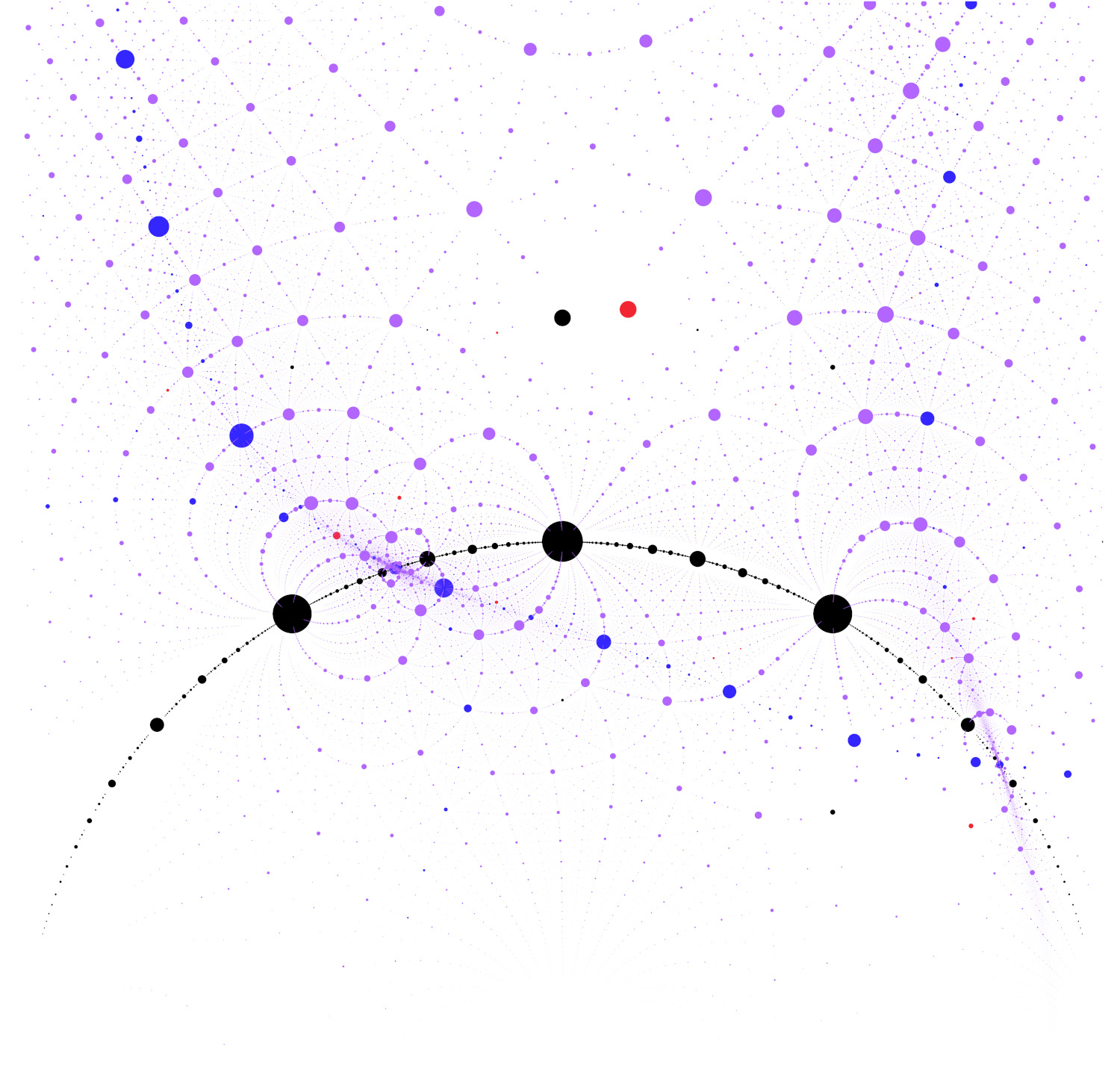}
	\caption{$a x^5 + b x^3 + c x^2 + b x + a = 0$}
    \label{fig:Starscapes_f}
\end{subfigure}
\caption{Complex algebraic numbers (roots of each polynomial family with $a,b,c \in \ZZ$) sized by the root discriminant in the hyperbolic metric. Quadratic numbers are black, cubics red, quartics blue and quintics purple. }
\label{fig:Starscapes}
\end{figure}

The starscapes shown in Figure \ref{fig:Initial} of the quadratics (\ref{fig:Initial_quadratics}) and the family of cubics (\ref{fig:Initial_cubic_family}) have a striking self-similar curvilinear structure:  the figures seem to be populated with beaded necklaces of discs subdividing the plane into smaller regions criss-crossed with similar, finer, necklaces. 

This basic pattern occurs quite generally. To give all polynomials of a given degree each coefficient must be allowed to change freely. This defines topological space, the  ``coefficient space,'' of polynomials where each coefficient gives a dimension\footnote{This space starts as $\RR^n$, but with polynomials multiplying all coefficients by a constant does not change the roots of the polynomials. It is natural, therefore to consider such polynomials as equivalent. This gives projective geometry one dimension lower, as discussed in Section~\ref{sec:ProjGeo}.}. The total dimension of all polynomials of a given degree is one greater than the degree. It is natural to think of subspaces of this space and the pattern we describe above seems to appear any time one works with a three dimensional linear subspace in coefficient space; more examples appear in Figure \ref{fig:Starscapes}. 

As the degree increases, the curves and their relationships become more complicated, but the basic motif continues.  We hope these images justify the term ``algebraic starscapes.''  When the family of polynomials is two-dimensional, we obtain a ``linear starscape'' (that is, a single beaded necklace threading through the plane; see Figure \ref{fig:Lines}).  When a family of polynomials is three-dimensional, we obtain ``planar starscapes'' such as those in Figure \ref{fig:Starscapes}.  One may continue this to higher dimension, but in four-dimensional families such as all cubics (Figure \ref{fig:Initial_cubics}), the collapse to the complex plane produces much more complicated, less immediately patternful, although nonetheless enticing, pictures.

\begin{figure}[h!tbp]
\centering
\includegraphics[width=.45\textwidth]{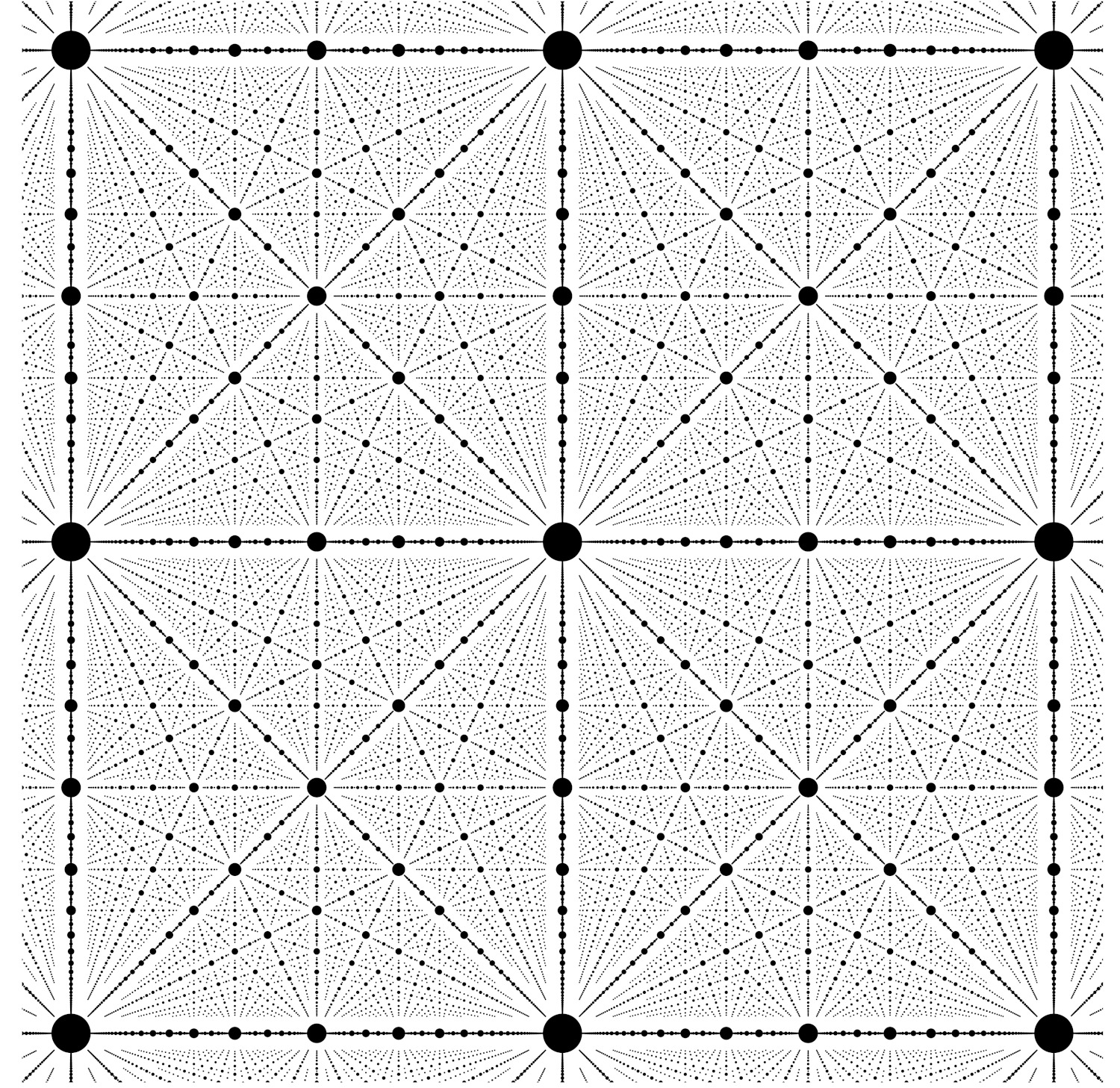}
	\caption{The points $(b/a,c/a)$, for $a,b,c \in \ZZ$ with radius proportional to $1/a$. To see how this links to the view of a lattice for an observer, see Figure \ref{fig:RPn}.}
\label{fig:Rationals}
\end{figure}

The simplest version of the basic motif that seems to pervade these images appears when looking at a lattice in perspective, as you can see in Figure \ref{fig:Rationals}. In this case the points only form straight lines. The one dimensional version of this effect is sometimes called the orchard illusion and can be experienced when driving past an orchard planted on a grid (Figure \ref{fig:Orchard}). 

\begin{figure}[h!tbp]
\centering
\includegraphics[width=.8\textwidth]{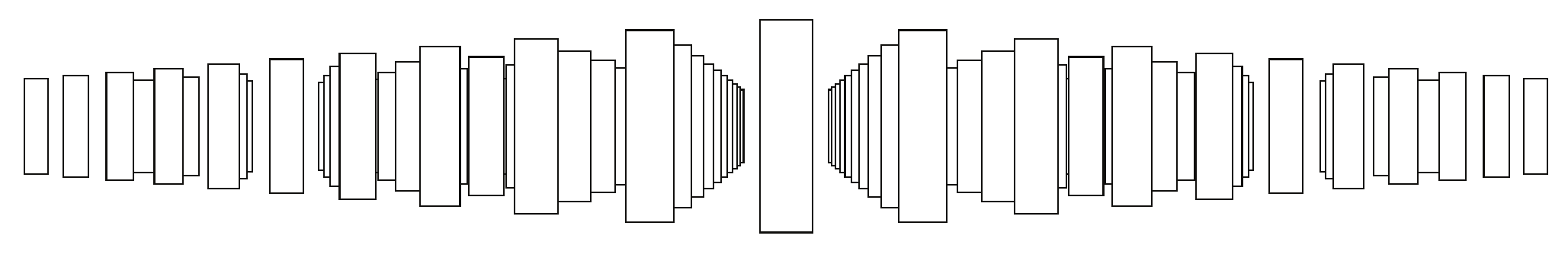}
\caption{A sideways view of a grid of rectangles in perspective forming the orchard illusion.}
\label{fig:Orchard}
\end{figure}

\begin{figure}[h!tbp]
\centering
\includegraphics[width=.8\textwidth]{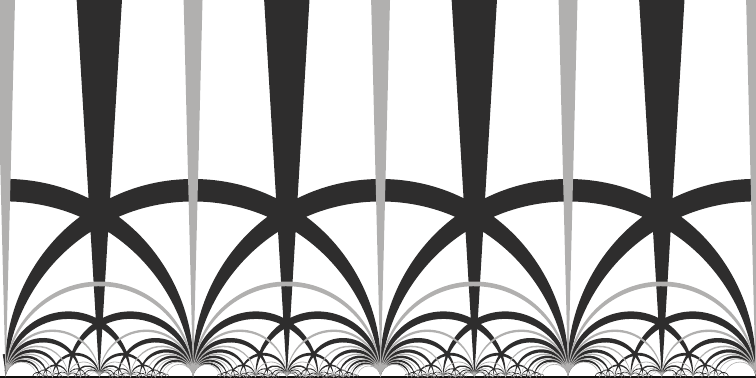}
\caption{A tiling by infinite polygons on the hyperbolic plane (a partition of the hyperbolic plane by the action of $\PSL(2;\ZZ)$). The curves seen here are some of those seen in Figure \ref{fig:Initial_quadratics}.}
\label{fig:Modular}
\end{figure}

In the quadratic case (Figure \ref{fig:Initial_quadratics}) the lines might actually seem familiar to geometers as they are the straight lines (geodesics), not in euclidean geometry, but in the upper half plane model of the hyperbolic plane. In fact Figure \ref{fig:Initial_quadratics} is a regular tiling of the upper half plane by a single tile, under the action of a group of symmetries preserving the hyperbolic distance.  More precisely, the action is that of the \emph{modular group} $\PSL(2;\ZZ)$ (Figure \ref{fig:Modular}).

\begin{figure}[h!tbp]
\centering
\includegraphics[width=.42\textwidth]{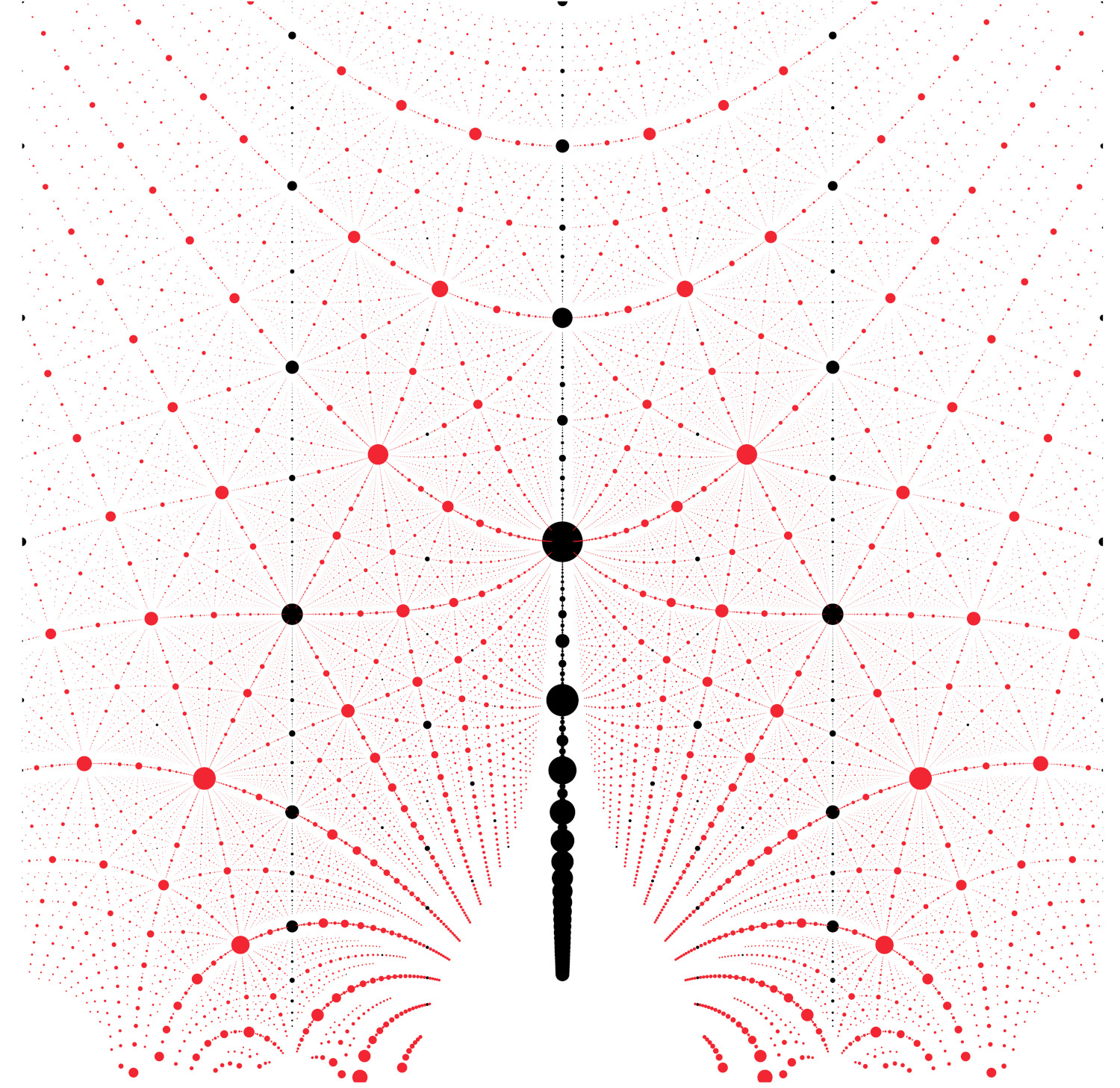}
	\caption{The depressed cubics, sized by the root discriminant, plotted in the euclidean metric (meaning, radii are interpreted as euclidean). Note how the plot is very dense at the bottom, but light at the top.}
\label{fig:CubicFamilyA0BCEuclidean}
\end{figure}

While the link to hyperbolic geometry and $\PSL(2;\ZZ)$ is especially strong in the quadratic case, it is more generally useful in understanding starscapes.  From an aesthetic perspective, using the hyperbolic metric improves the look of the images close to the real line, as seen in Figure \ref{fig:CubicFamilyA0BCEuclidean}.  There are more mathematical justifications for this approach described in later sections.

The observations we have described so far constitute a na\"ive visual approach to the images produced:  we are simply asking what it is we are seeing. The underlying geometric explanation, especially for the quadratic and cubic cases, is developed in far more detail in Section \ref{sec:Geometry}, and the deeper connections to the study of Diophantine approximation in Sections \ref{sec:DiophantineApproximation} and \ref{sec:Dio-quad}.

\subsection{Mostly harmless}

\begin{figure}[h!tbp]
\centering
\begin{subfigure}[b]{0.42\textwidth} 
    \centering
	\includegraphics[width=\textwidth]{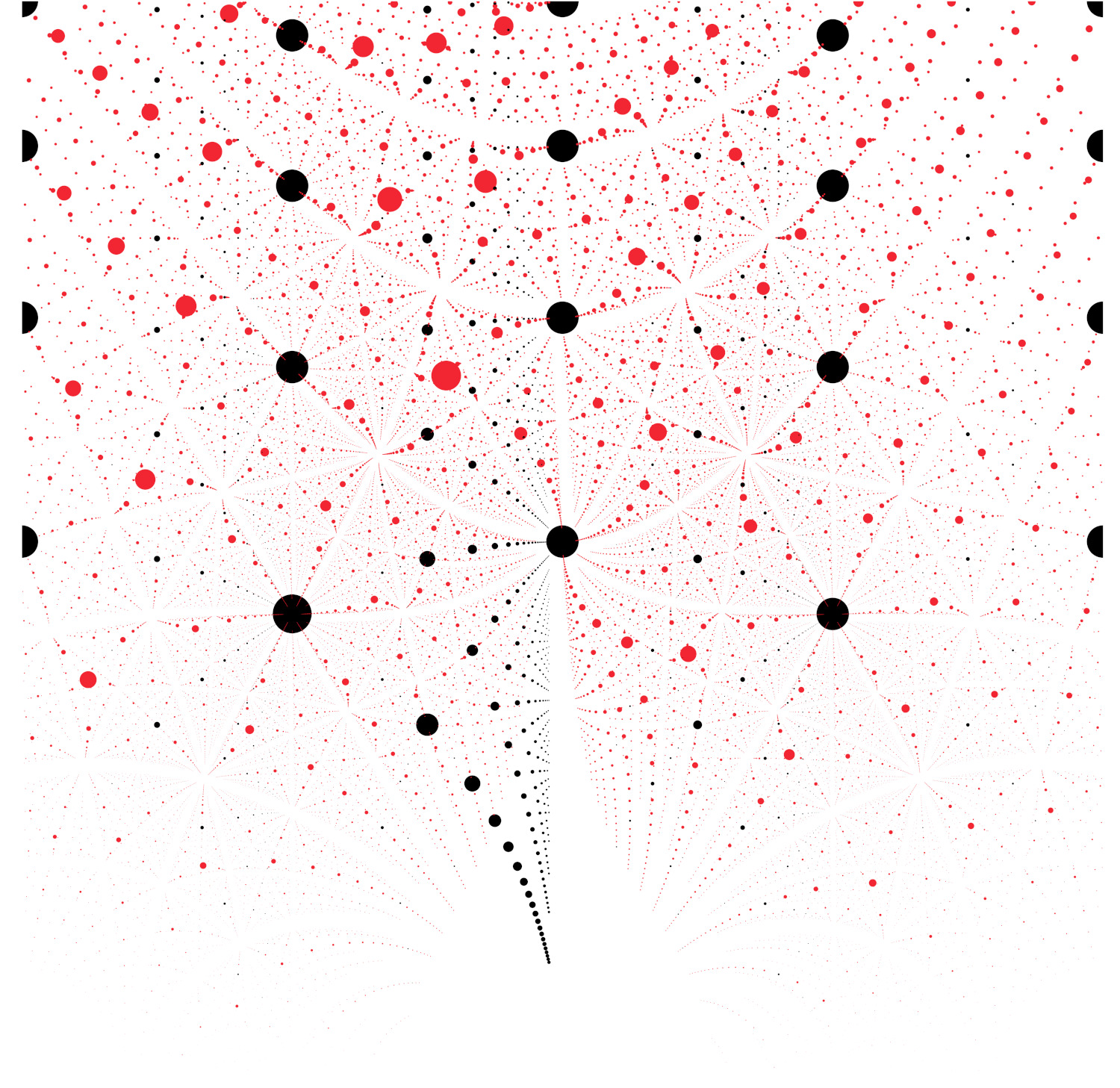}
	\caption{$a x^3 + x^2 + b x + c = 0$}
    \label{fig:AffineStarscapes_a}
\end{subfigure}
\begin{subfigure}[b]{0.42\textwidth} 
    \centering
	\includegraphics[width=\textwidth]{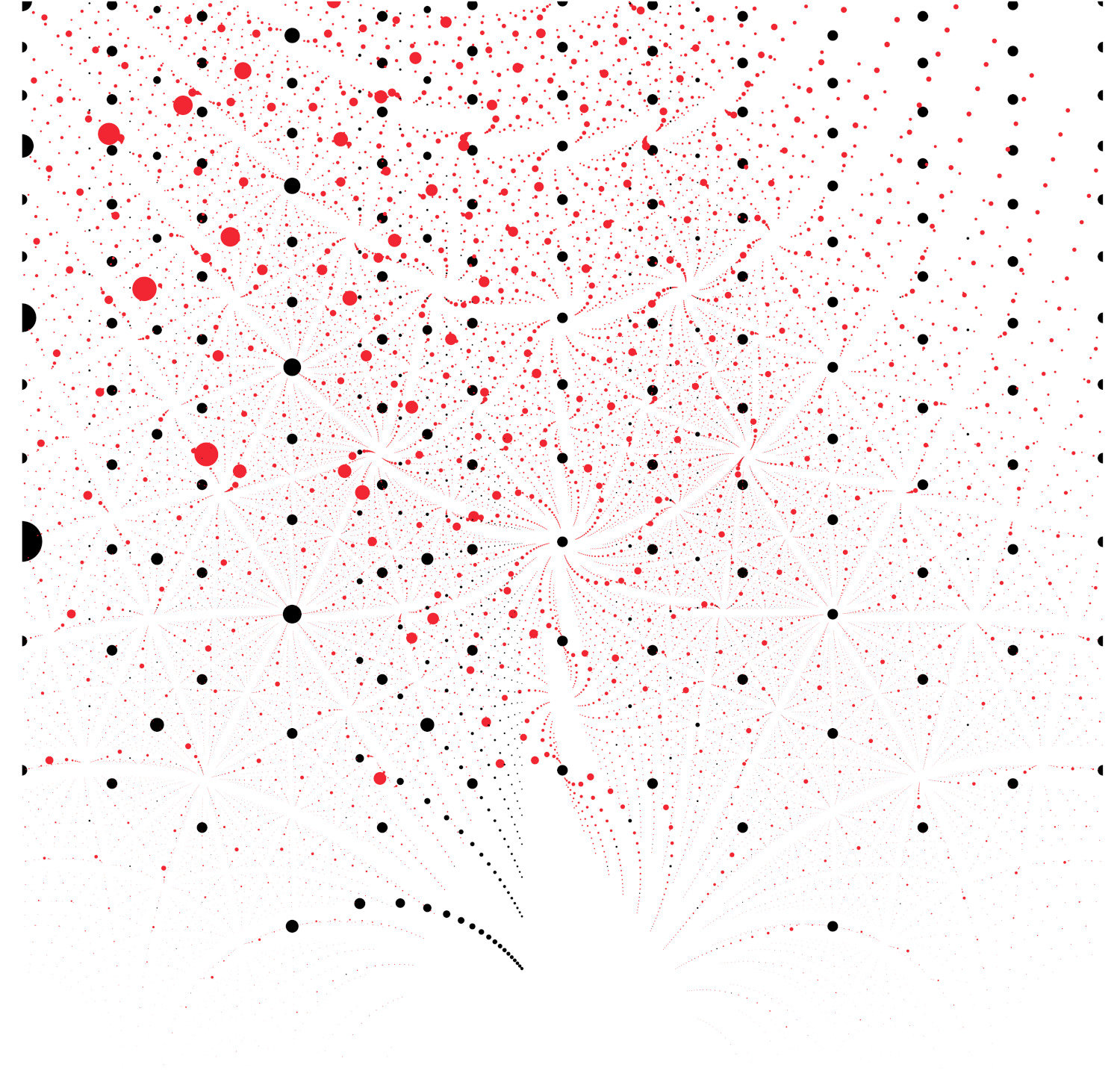}
	\caption{$a x^3 + 3 x^2 + b x + c = 0$}
    \label{fig:AffineStarscapes_b}
\end{subfigure}
\begin{subfigure}[b]{0.42\textwidth} 
    \centering
	\includegraphics[width=\textwidth]{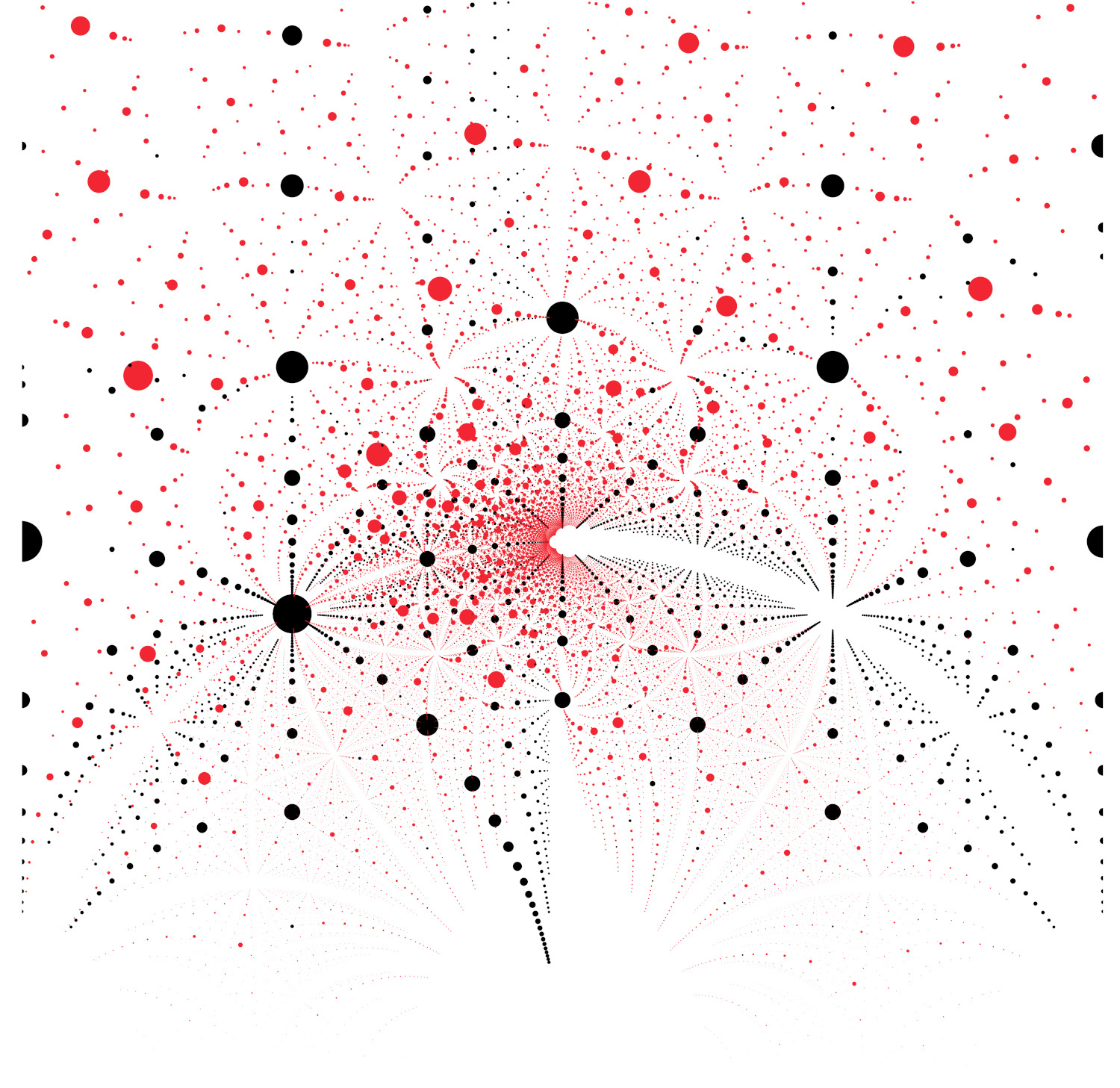}
	\caption{$a x^3 + (c+1) x^2 + b x + c = 0$}
    \label{fig:AffineStarscapes_c}
\end{subfigure}
\begin{subfigure}[b]{0.42\textwidth} 
    \centering
	\includegraphics[width=\textwidth]{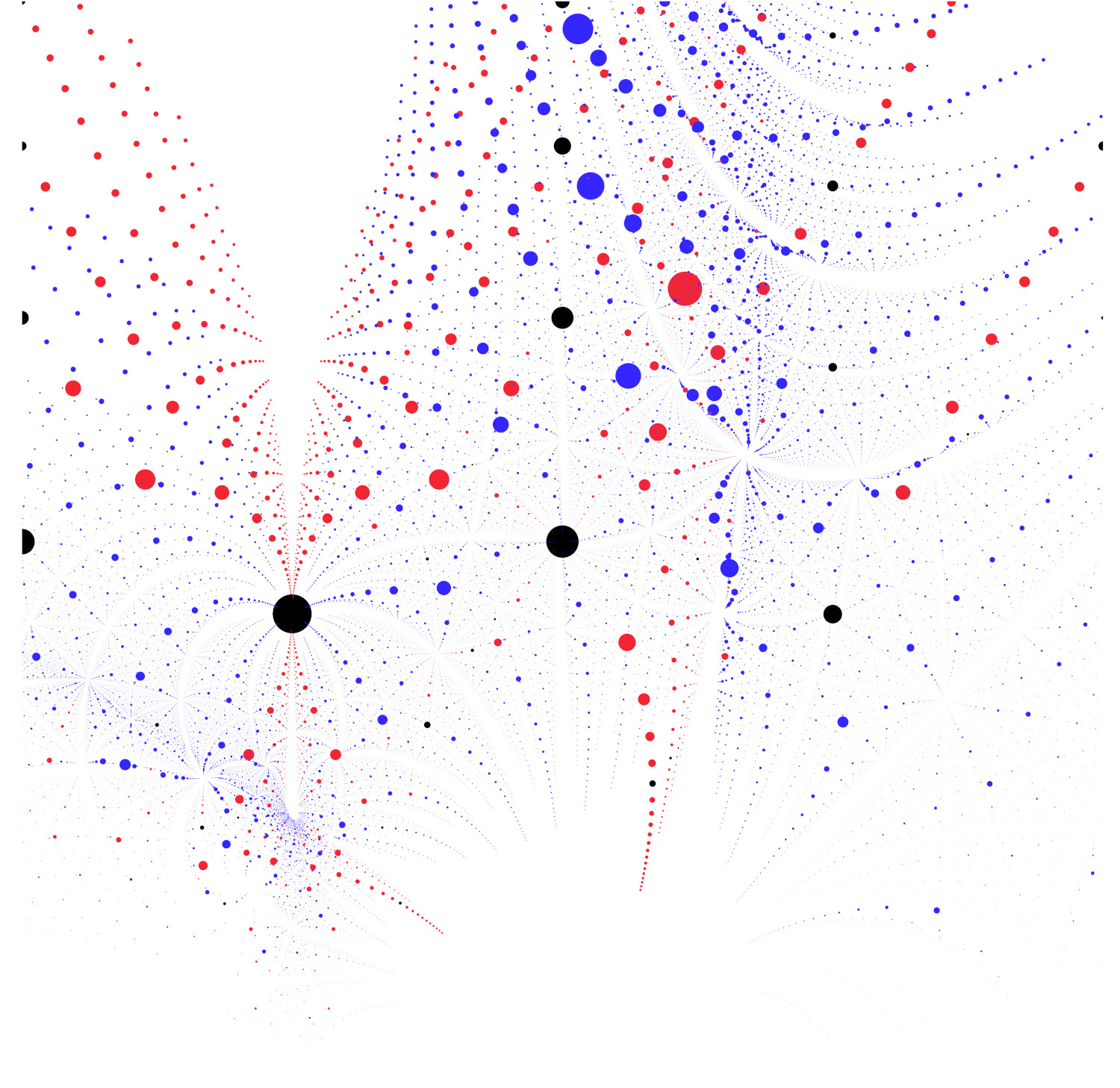}
	\caption{$a x^4 + x^3 + b x^2 + b x + c = 0$}
    \label{fig:AffineStarscapes_d}
\end{subfigure}
\caption{Roots on affine planes in coefficient space.}
\label{fig:AffineStarscapes}
\end{figure}

The images shown in Section \ref{sec:Starscapes} are compelling enough to encourage wider investigation. For example, one might move the families considered away from 0 to give affine subspaces of the coefficient space, as shown in Figure \ref{fig:AffineStarscapes}. 

\begin{figure}[h!tbp]
\centering
\begin{subfigure}[b]{0.42\textwidth} 
    \centering
	\includegraphics[width=\textwidth]{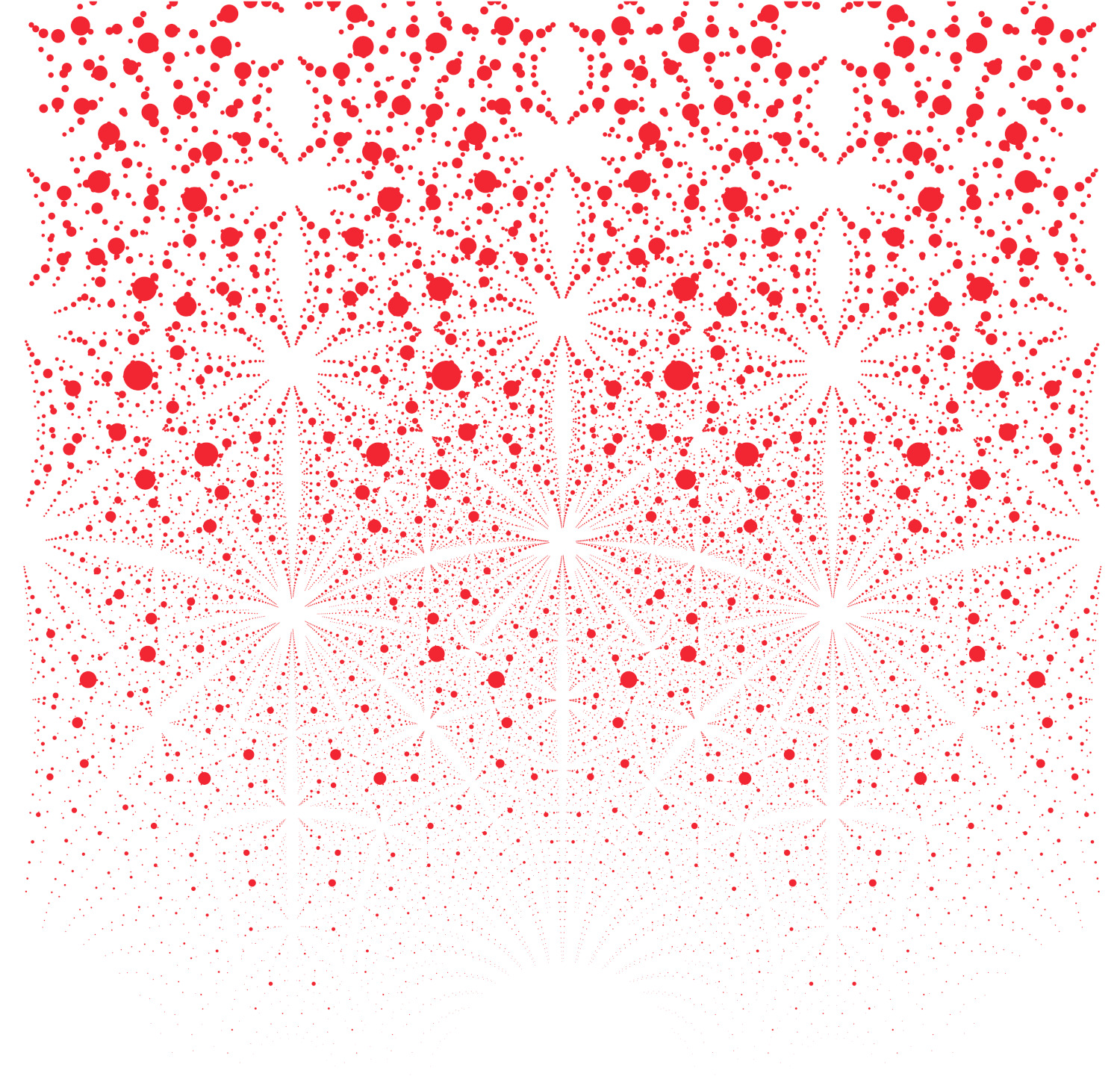}
	\caption{Cubic algebraic integers}
    \label{fig:CubicIntegers}
\end{subfigure}
\begin{subfigure}[b]{0.42\textwidth} 
    \centering
	\includegraphics[width=\textwidth]{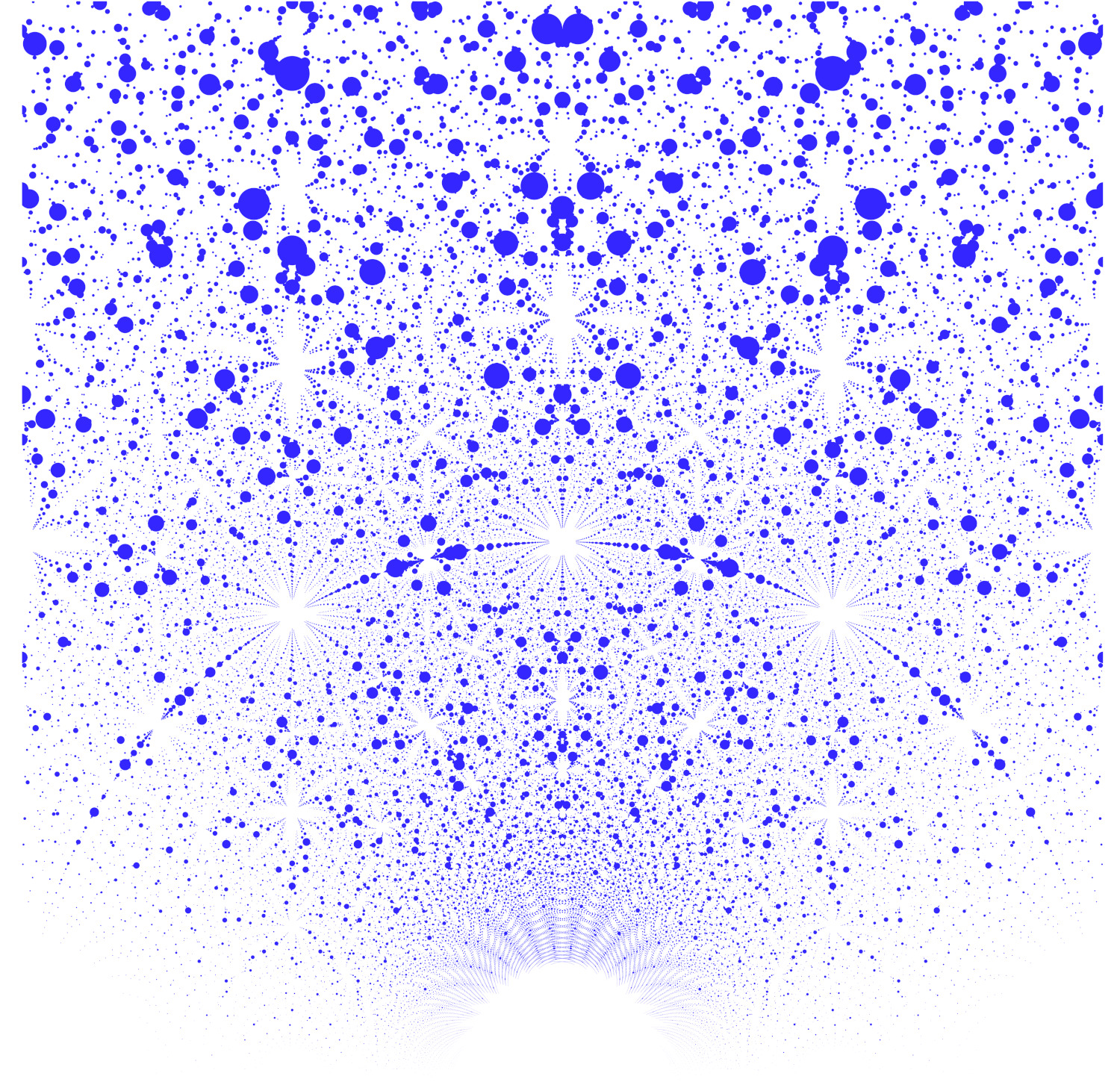}
	\caption{Quartic \emph{unit} algebraic integers}
    \label{fig:QuarticUnits}
\end{subfigure}
	\caption{Algebraic integers and unit integers.}
\label{fig:IntegersAndUnits}
\end{figure}

In these affine subspaces, a couple have particular interest:  the algebraic integers (where the leading coefficient is 1) and the algebraic integer units (where the leading and constant coefficients are both 1). These are shown for the cubics and quartics in Figure \ref{fig:IntegersAndUnits}.

\begin{figure}[h!tbp]
\centering
\includegraphics[width=.45\textwidth]{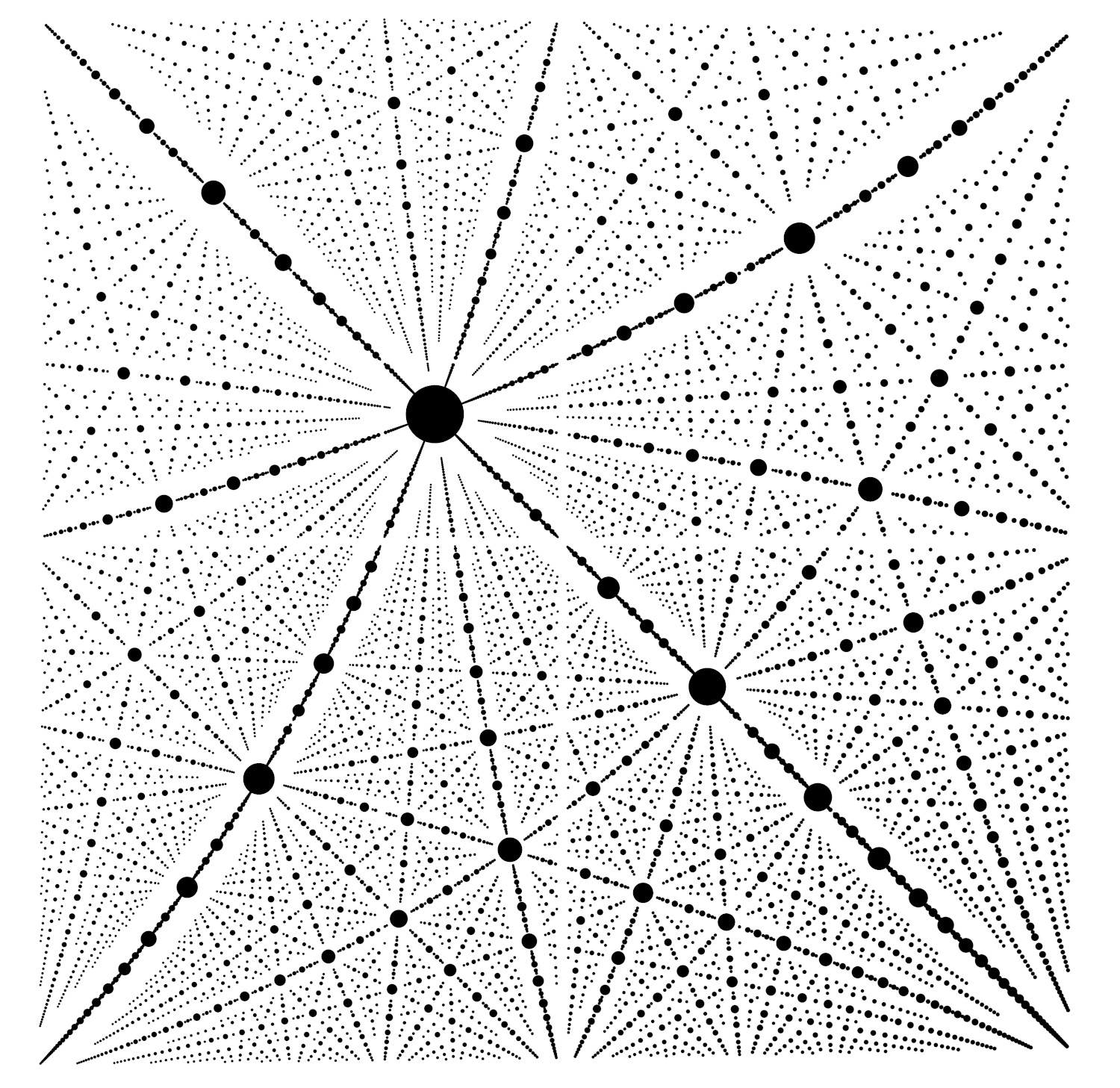}
	\caption{The real quadratics plotted against their algebraic conjugate, with one root (x-axis) between $-1$ and $0$ and the other between $1$ and $2$. The large dot is the Golden ratio $(\frac{1-\sqrt{5}}{2},\frac{1+\sqrt{5}}{2})$.}
\label{fig:RealQuadratics}
\end{figure}

Another approach is to consider real roots, or tuples of roots.  For example, quadratics with real roots can be plotted in $\R \times \R$ (Figure \ref{fig:RealQuadratics}), though the lack of an ordering on the roots will cause each to appear twice. 

A powerful property of the images of the quadratics with complex roots is that they show \emph{all} the information about the roots, as the complex roots come as a complex conjugate pair (so that all the information is shown with just one of them). To extend this to the cubics requires an additional dimension. In this case the cubic polynomials with complex roots always have an additional real root. 

\begin{figure}[h!tbp]
\centering
\begin{subfigure}[b]{0.45\textwidth} 
    \centering
	\includegraphics[width=\textwidth]{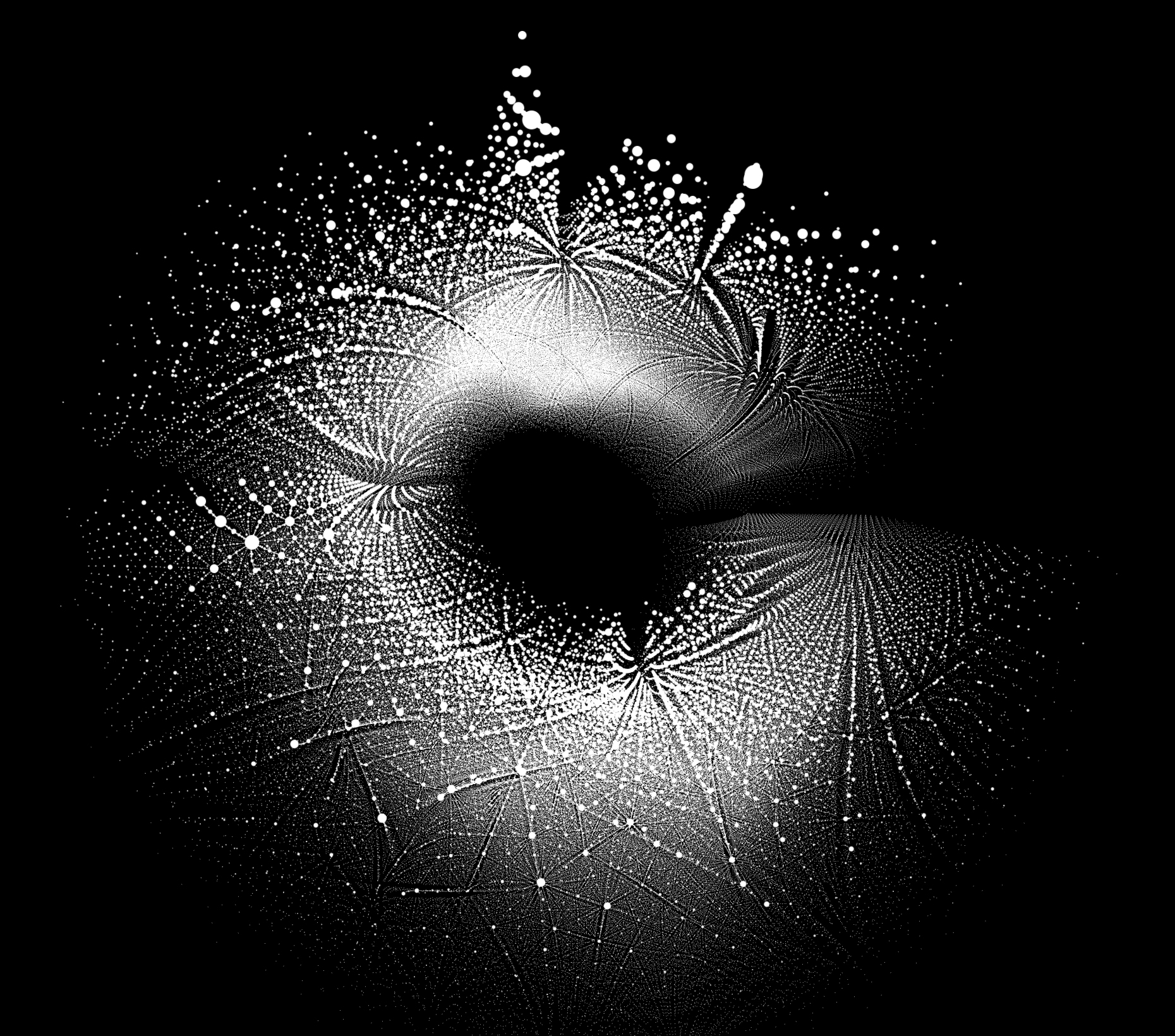}
	\caption{}
    \label{fig:AllCubicsin3d}
\end{subfigure}
\begin{subfigure}[b]{0.45\textwidth} 
    \centering
	\includegraphics[width=\textwidth]{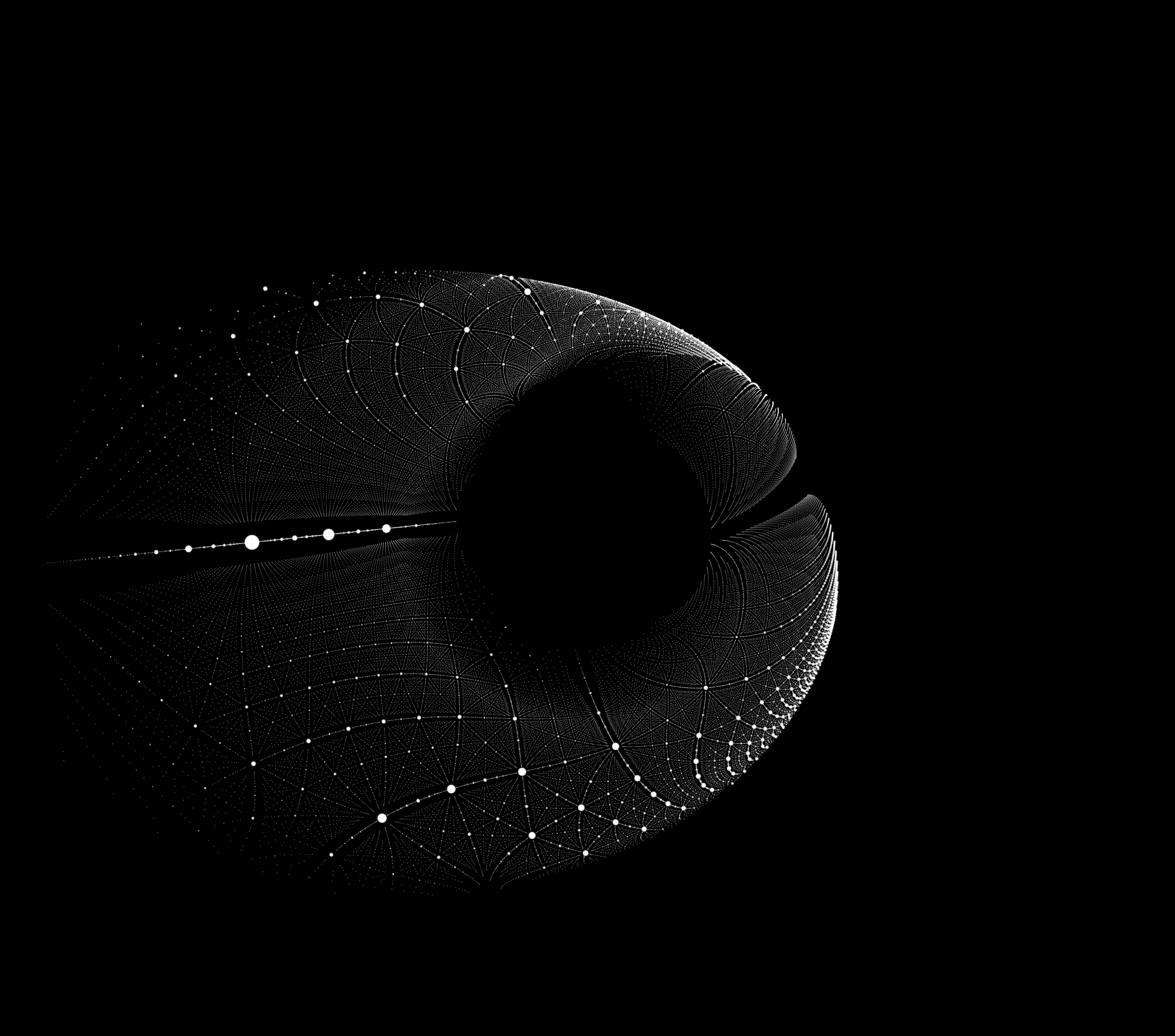}
	\caption{}
    \label{fig:CubicFamilyABACin3d}
\end{subfigure}
	\caption{All cubic polynomials shown in the unit tangent bundle to $\HH^2$ (\ref{fig:AllCubicsin3d}) and the roots of $a x^3 + b x^2 + a x + c = 0$ (\ref{fig:CubicFamilyABACin3d}), forming a M\"{o}bis strip within it. Both are shown from the same angle so if you look carefully you can see the second image in the first. Images are screenshots from SL(View) by David Dumas \cite{slview}.}
\label{fig:cubicsin3d}
\end{figure}

To take into account this additional information, we could take a space in $\C\times\R$ given by the upper half-plane for the complex root and the real line for the real root. We can then add some rather beautiful geometry to this space, described in detail in Section \ref{ssec:CubicGeometry}. This corresponds to considering the upper half plane of $\C$ as the hyperbolic plane. In this model the real line in $\C$ lies on the boundary of the hyperbolic plane, which contains one additional point, the point at infinity that pulls the real line back into a circle by connecting the two ends. This can be considered as the difference between considering the slope and angle of a line on the plane. As the slope gets more positive or negative the line gets closer to vertical. As a slope this is not obtainable, but it is can be considered as an angle. The complex root can therefore look towards any point on this circular boundary. 

Considering the pair of a point and direction in a space gives the geometry of the ``unit tangent bundle''. For hyperbolic geometry this can be considered to be a solid torus, where the circular slices are a disk model of the hyperbolic plane. This torus is a finite region of three dimensional space so, although we took a bit of a journey the result gives a powerful way to see the roots of cubic polynomials, as shown in Figure \ref{fig:cubicsin3d}. 

The cubic families we have considered nicely embed into this picture as 2d surfaces.  The polynomials $a x^3 + b x^2 + a x + c = 0$ having a complex root even create a M\"{o}bius strip (Figure \ref{fig:CubicFamilyABACin3d}). Both of these images are even more powerful when you can manipulate them yourself in 3d, and we encourage you to check out David Dumas' beautiful software SL(View) that we used to make these images ourselves \cite{slview}; see \url{algebraicstarscapes.com} for the datasets.

A different approach to using the unit tangent bundle is to draw arrows, rather than dots on the plane. Such images are shown in Figures~\ref{fig:Arrows} and \ref{fig:CoeffCubics}. 

There are many other spaces to explore that have the potential to reveal many aspects of the structure of algebraic numbers and illustrate various geometric ideas. Using the colour of the dots has the potential to give pictures with up to 6 dimensions of information (3 spatial and 3 colour (red, green and blue, for example)). As an example, more of the structure of Figure \ref{fig:Starscapes_d} is revealed when real roots are used to colour points, as shown in Figure \ref{fig:RealColoring}.

\begin{figure}[h!tbp]
\centering
\includegraphics[width=.42\textwidth]{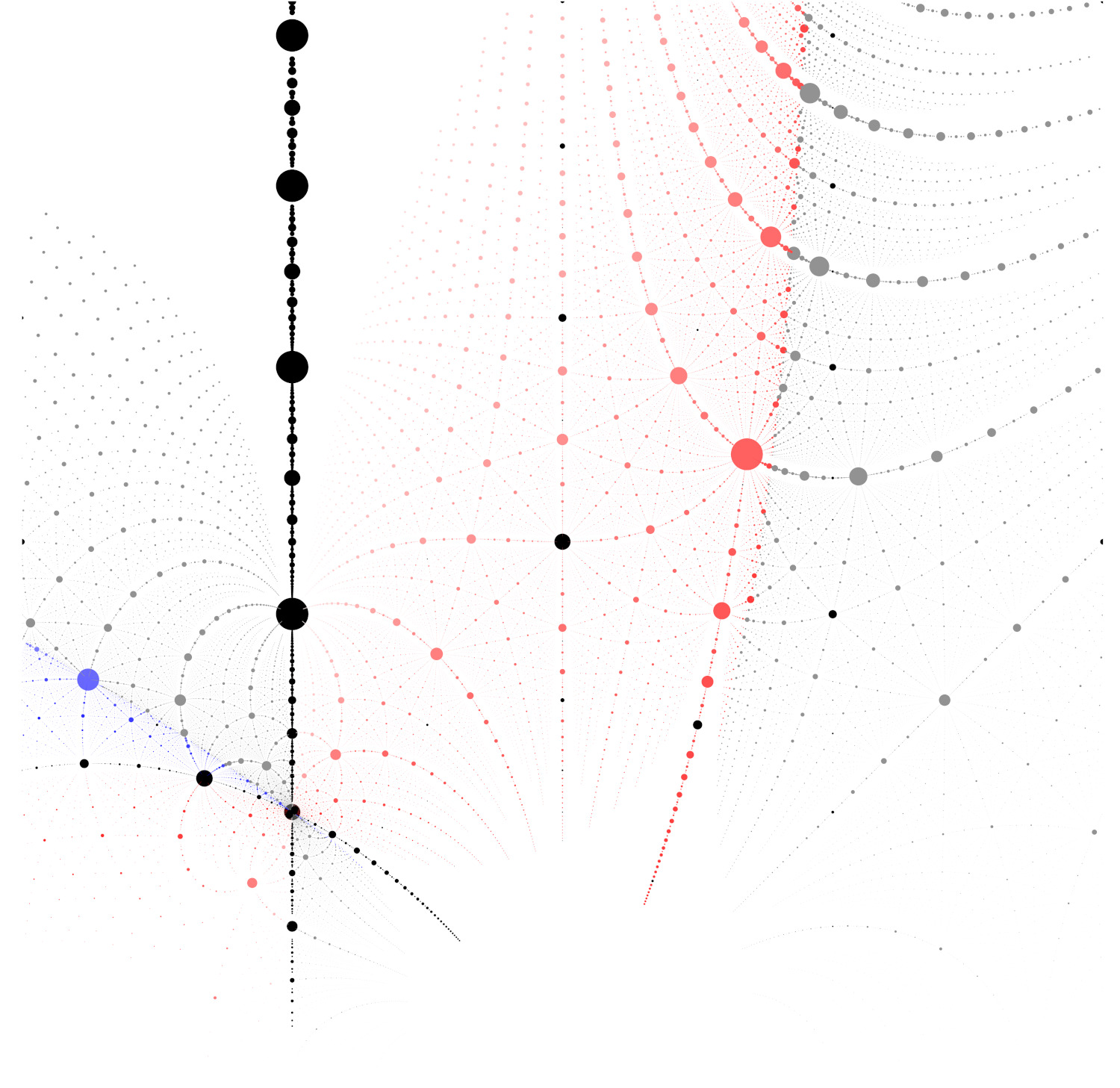}
	\caption{The starscape shown in Figure \ref{fig:Starscapes_d} coloured by a real root (simply the first given by Sage), red for negative and blue for positive, each fading to white as the absolute value increases. Quadratic Points are black and points with no real conjugate are shown in gray.}
\label{fig:RealColoring}
\end{figure}

\begin{figure}[h!tbp]
\centering
\begin{subfigure}[b]{0.6\textwidth} 
    \centering
	\includegraphics[width=\textwidth]{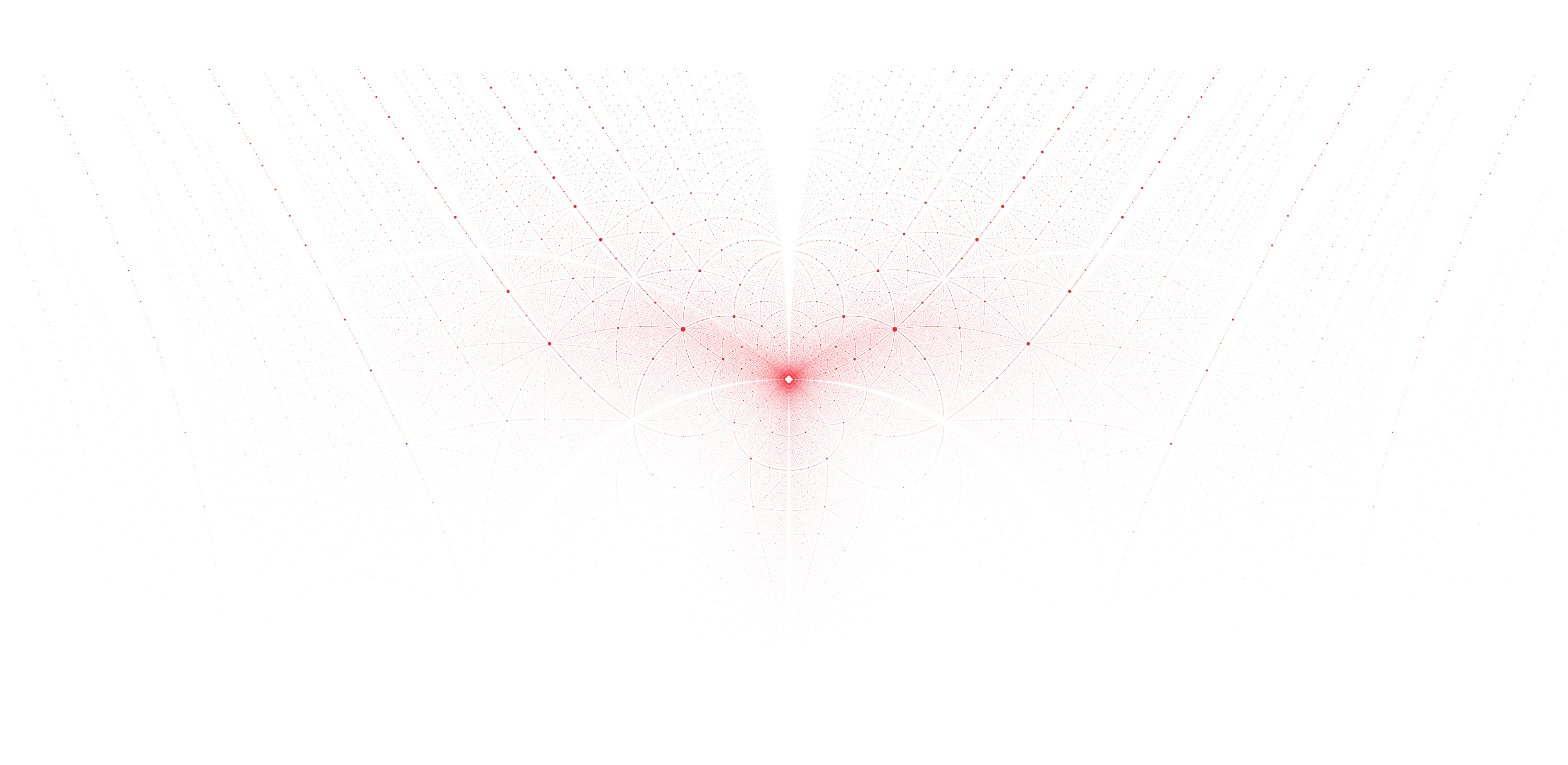}
	\caption{The 1-Norm on the coefficients (sum of absolute value).}
    \label{fig:DotSizes1Norm}
\end{subfigure}
\begin{subfigure}[b]{0.6\textwidth} 
    \centering
	\includegraphics[width=\textwidth]{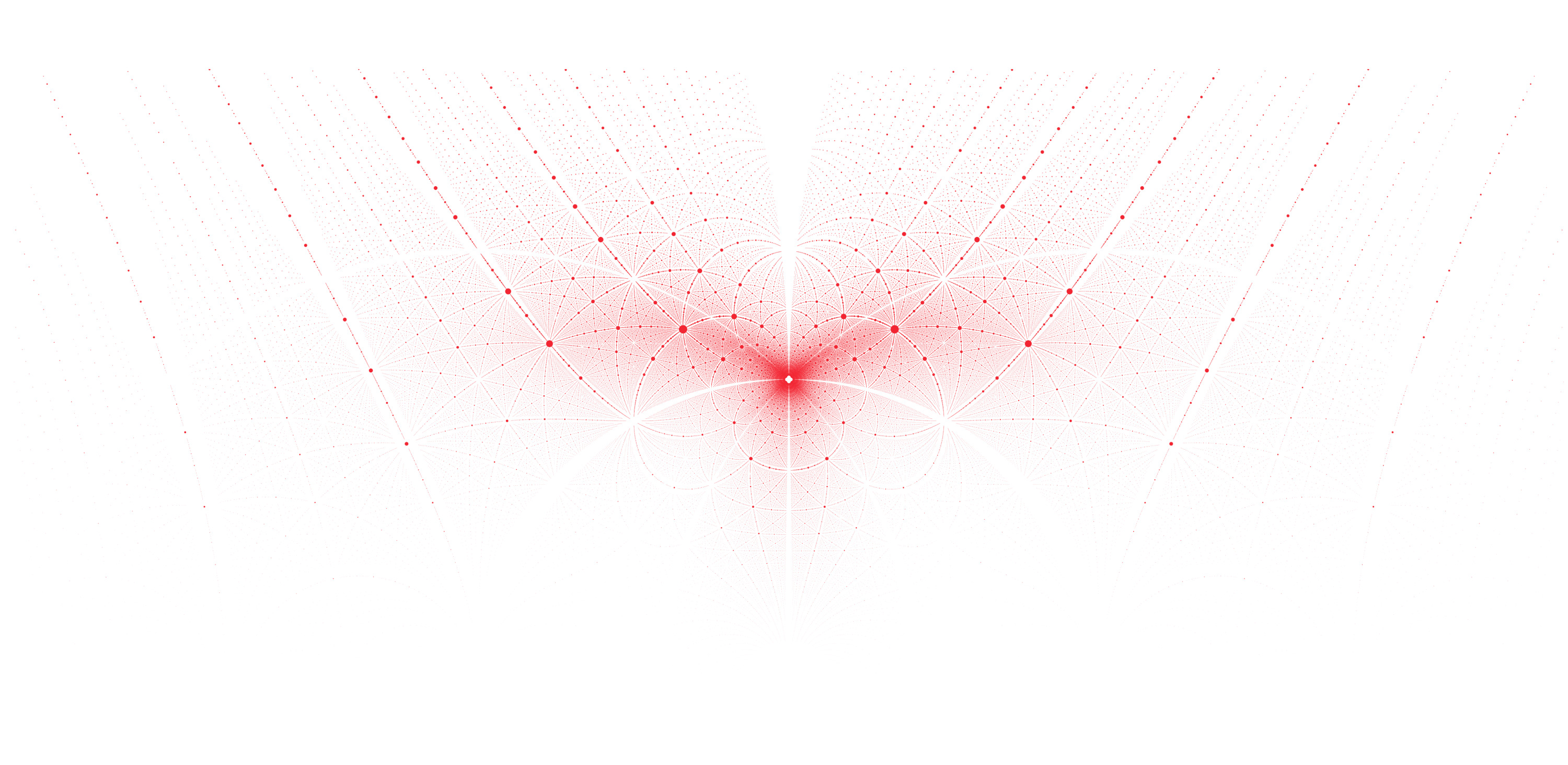}
	\caption{The standard euclidean norm on the coefficients.}
    \label{fig:DotSizes2Norm}
\end{subfigure}
\begin{subfigure}[b]{0.6\textwidth} 
    \centering
	\includegraphics[width=\textwidth]{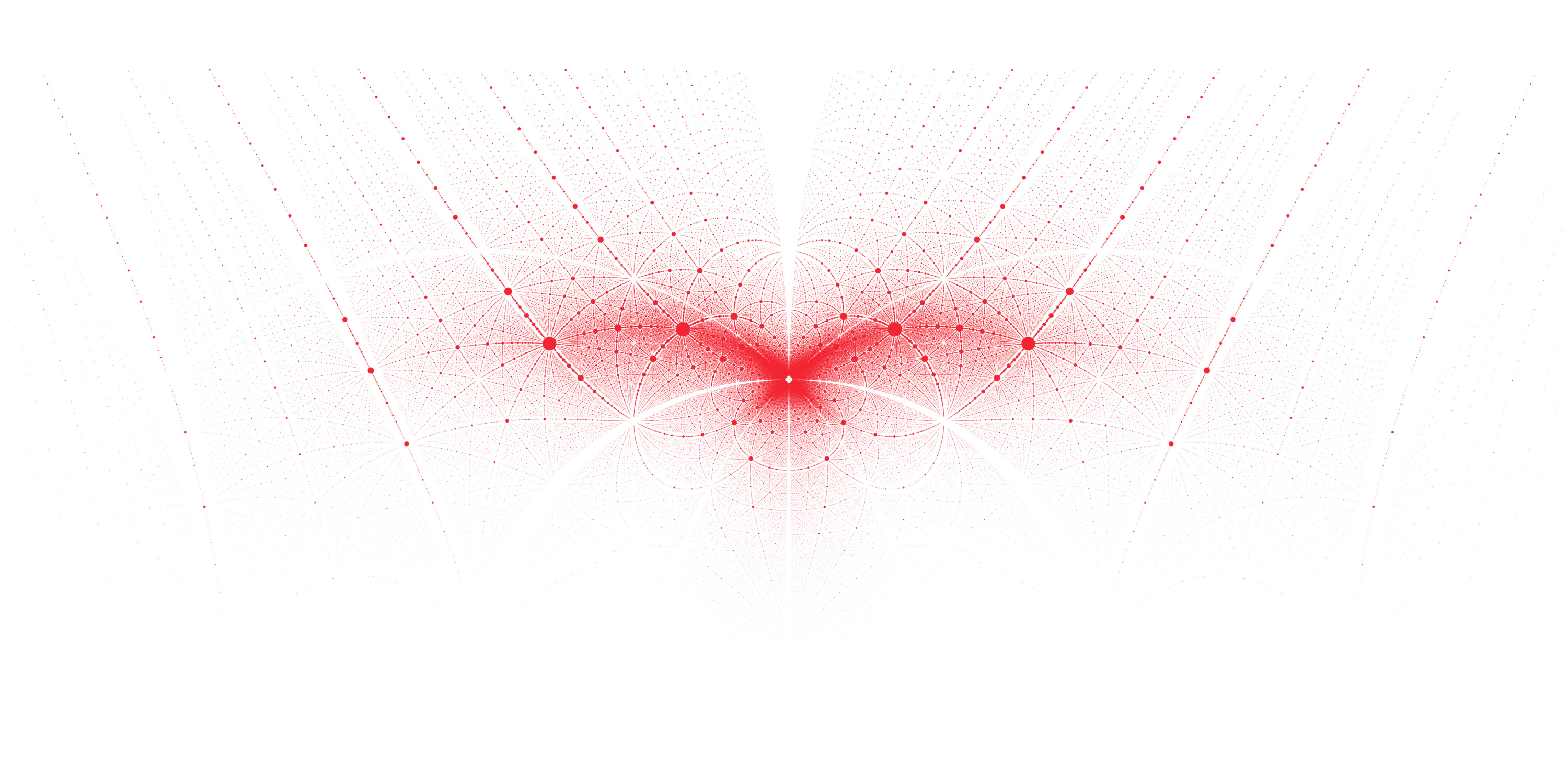}
	\caption{The $\infty$-Norm giving the largest absolute coefficient.}
    \label{fig:DotSizesInftyNorm}
\end{subfigure}
\caption{Complex roots of polynomials of the form $a x^3 + b x^2 + c x + a = 0$, using different dot sizing from the geometry of the coefficients, given by the inverse of the given quantity. All points with $a,b,c$ between $-40$ and $40$ are plotted, in the region between $-2.5$ and $2.5$ on the real axis and $0$ and $2$ on the imaginary axis, for a total of $231710$ roots.}
\label{fig:DotSizesCoeff}
\end{figure}

The challenge is to produce images that are attractive, informative or, ideally, both. From our explorations so far we have found that chasing the first is a surprisingly reliable (though not guaranteed) path to the second. 

\begin{figure}[h!tbp]
\centering

\begin{subfigure}[b]{0.6\textwidth} 
    \centering
	\includegraphics[width=\textwidth]{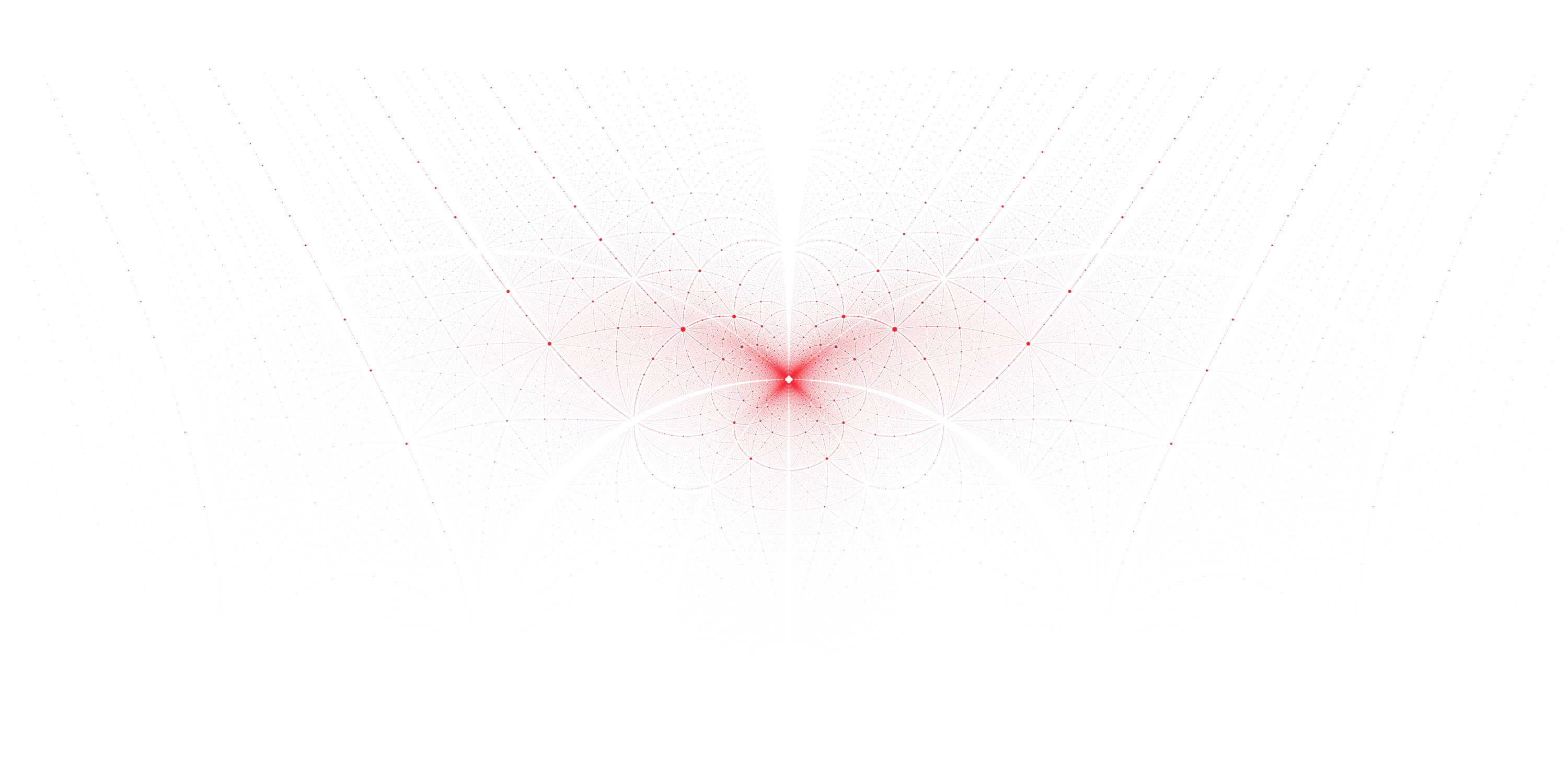}
	\caption{The Weil Height, or Mahler measure.}
    \label{fig:DotSizesMahler}
\end{subfigure}
\begin{subfigure}[b]{0.6\textwidth} 
    \centering
	\includegraphics[width=\textwidth]{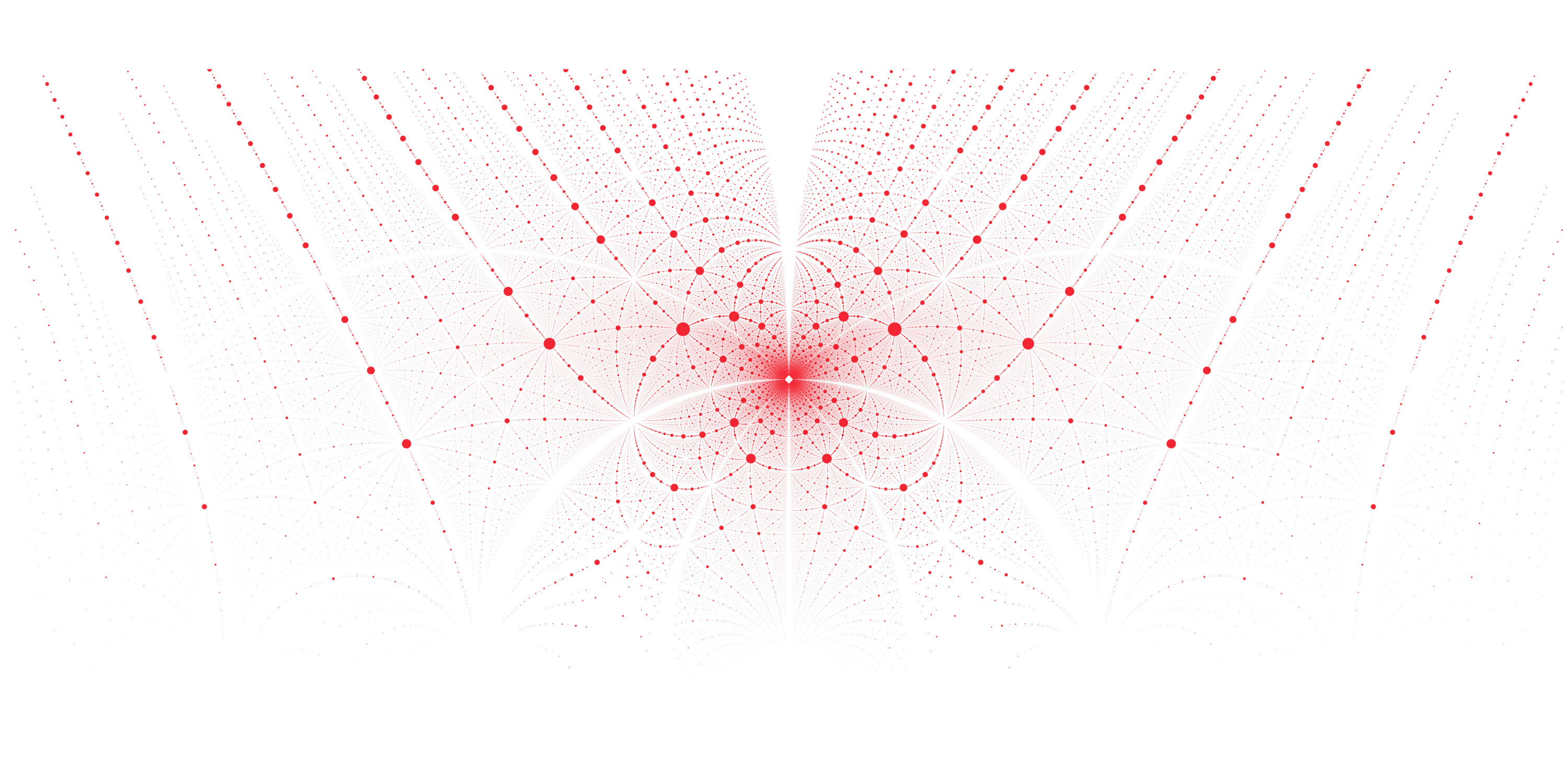}
	\caption{The root discriminant as used more generally in the paper.}
    \label{fig:DotSizesDisc}
\end{subfigure}

	\caption{Companion to Figure \ref{fig:DotSizesCoeff}, showing two sizings based on the polynomial and number theory of the roots.}
\label{fig:DotSizesRoots}
\end{figure}

\subsection{A celestial dance}

In all our images so far, there is a feeling that larger dots repel each other, like binary stars locked in a mutual orbit, unable to approach.  We also see remarkable spiraling trajectories of shrinking dots surrounding large ones, like the arms of small galaxies, as in Figure \ref{fig:Initial_Cubics_detail}.  We have been using the root discriminant as the way to size the dots:  this provides a rough measure of the ``complexity'' of an algebraic number.  The notion of needing increasingly complicated algebraic numbers to successively better and better approximate a target is a classic idea in number theory, and is explored in the field of \emph{Diophantine approximation}. 

The paradigm is that as the polynomial gets more complicated, the dots representing its roots are drawn smaller. 
There are many notions of complexity one might use to size the points. 
Ideas of ``complicated'' are generally tied to the coefficients (as in the discriminant); Figure \ref{fig:DotSizesCoeff} shows some classic metrics on the coefficients.  In Figure \ref{fig:DotSizesMahler} we show the Weil height (or Mahler measure), a more sophisticated way to show complexity. These are all discussed in more detail in Section \ref{sec:DiophantineApproximation}.  Compare with the discriminant in Figure \ref{fig:DotSizesDisc}.

From a geometric perspective, we've seen that this type of repulsion effect can arise from a lattice in projection, with the dots appearing smaller in some sense proportional to their distance. If we consider the lattice of points $(p,q)$ for $p,q \in \ZZ$, we can map to a line by taking $\frac{p}{q}$ and get the rational numbers, as in Figure \ref{fig:QP1} and its result, Figure \ref{fig:RationalStarscape}.  In this setup, the points are sized inversely to their denominators.  But as it turns out, the traditional measure of arithmetic complexity of a rational number is in fact, simply its denominator!  The geometric aligns with the arithmetic.  This is the most elementary example of the relationship between geometry and arithmetic that plays out throughout the paper. 

\begin{figure}[h!tbp]
\centering
\includegraphics[width=.8\textwidth]{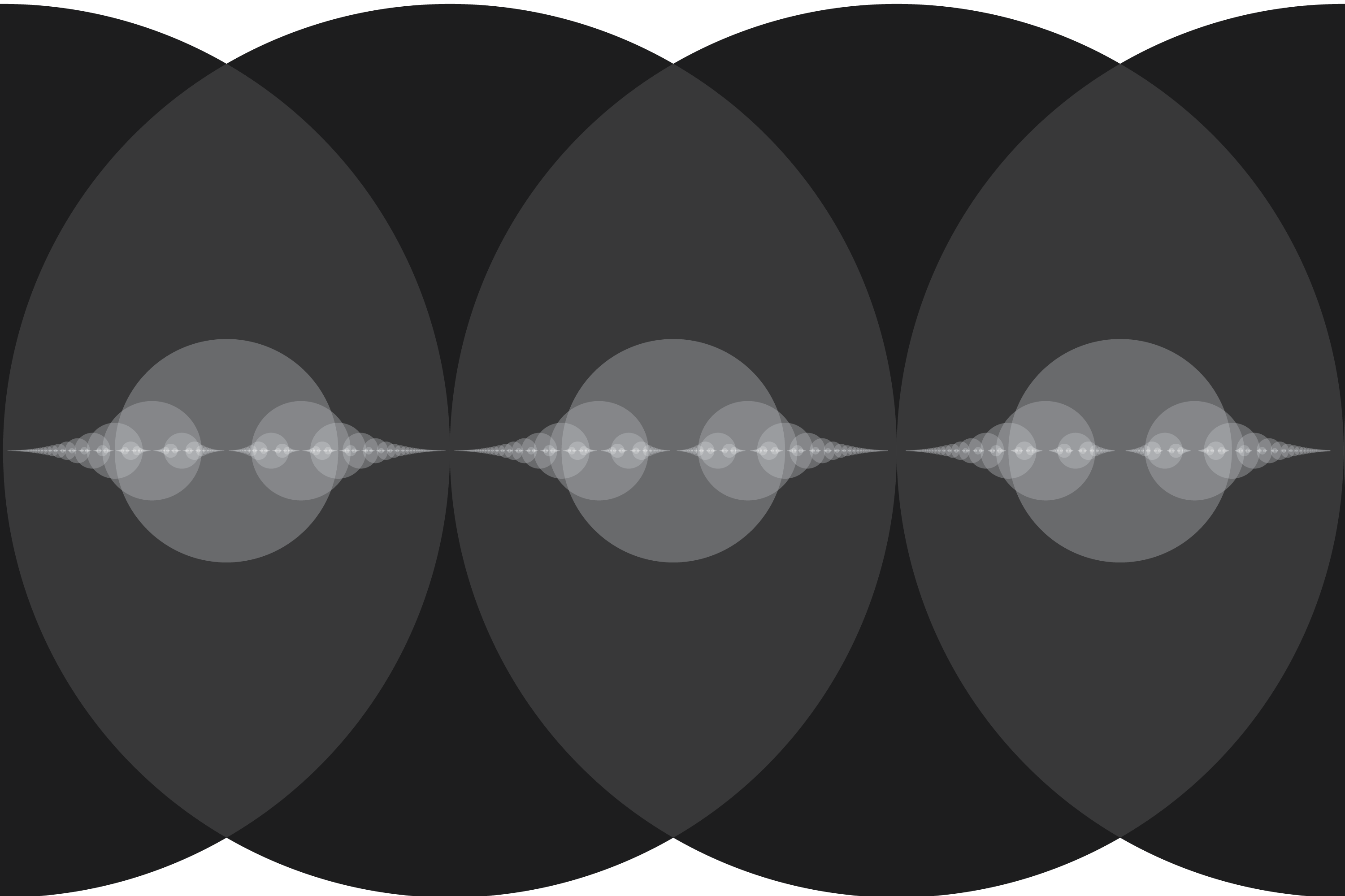}
	\caption{Disks centred at rational numbers $\frac{p}{q}$ between $0$ and $3$ with radius $\frac{1}{q^2}$, illustrating Dirichlet's theorem.}
\label{fig:Dirichlet}
\end{figure}

A natural arithmetic question is to ask how well a real number can be approximated by rationals. The first answer to this, of course, is that the rational numbers are dense, so you can get approximations as close as you wish.  On the other hand, one might consider $\frac{22}{7}$ to be a surprisingly good approximation for $\pi$, given the simplicity of the numerator and denominator.  After all, $\left|\pi - \frac{22}{7}\right| < 0.0013$.  To quantify this, it's convenient to imagine a cost (the complexity) of a rational number, with higher denominators considered more expensive. We can then ask which approximations are particularly good value for money. 

To illustrate such approximations, we could surround each rational number with a disk, where more expensive points have smaller disks; then we could call an approximation to $\alpha \in \RR$ \emph{good} if the corresponding disk covers $\alpha$.  This notion is, of course, very dependent on the disk sizing.  It turns out a sizing of $\frac{1}{q^2}$ is a sort of cusp in behaviour, as demonstrated by the following theorem\footnote{The theorem as stated here is most commonly known as Dirichlet's Approximation Theorem, but this theorem was already known to Legendre \cite{Legendre} as a result of the study of continued fractions.  The proof we give in Section \ref{ssec:ClassicDiophantineApproximation} is Dirichlet's and actually gives a stronger \emph{asymptotic approximation} result.} illustrated in Figure \ref{fig:Dirichlet}.

\begin{theorem}[Dirichlet's Approximation Theorem]
\label{thm:Dirichlet_intro}
  For any $\alpha \in \RR$, $\alpha$ is irrational if and only if there exist infinitely many distinct $p/q \in \QQ$ such that
  \[
    \left| \alpha - p/q \right| < 1/q^2.
  \]
\end{theorem}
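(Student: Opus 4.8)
The plan is to prove the two directions of the biconditional separately, using the classical pigeonhole argument for the hard direction and an elementary observation for the easy one.

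\medskip

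\noindent\textbf{The easy direction ($p/q\in\QQ$ approximated only finitely well).} Suppose $\alpha = a/b \in \QQ$ in lowest terms. If $p/q \neq a/b$, then $\left|\alpha - p/q\right| = |aq - bp|/(bq) \geq 1/(bq)$, since the numerator $|aq-bp|$ is a nonzero integer. Hence the inequality $\left|\alpha - p/q\right| < 1/q^2$ forces $1/(bq) < 1/q^2$, i.e.\ $q < b$. Only finitely many rationals $p/q$ with $q < b$ lie within distance $1$ of $\alpha$, so there can be only finitely many solutions. This shows that if there are infinitely many approximations then $\alpha$ must be irrational.

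\medskip

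\noindent\textbf{The hard direction ($\alpha$ irrational $\Rightarrow$ infinitely many approximations).} Here I would use Dirichlet's pigeonhole argument. Fix an integer $N \geq 1$. Consider the $N+1$ fractional parts $\{0\cdot\alpha\}, \{1\cdot\alpha\}, \dots, \{N\cdot\alpha\}$, all lying in $[0,1)$. Partition $[0,1)$ into the $N$ subintervals $[j/N, (j+1)/N)$ for $j = 0, \dots, N-1$. By pigeonhole, two of the fractional parts, say $\{m\alpha\}$ and $\{n\alpha\}$ with $0 \le m < n \le N$, lie in the same subinterval, so $|\{n\alpha\} - \{m\alpha\}| < 1/N$. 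Writing $q = n - m$ (so $1 \le q \le N$) and letting $p$ be the nearest integer to $q\alpha$, this gives $|q\alpha - p| < 1/N \le 1/q$, hence $\left|\alpha - p/q\right| < 1/(qN) \le 1/q^2$. This produces one good approximation for each $N$.

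\medskip

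\noindent\textbf{The main obstacle: producing \emph{infinitely many distinct} approximations.} The pigeonhole step only gives one solution per $N$, and a priori the same pair $p/q$ could be returned for every $N$. This is exactly where irrationality of $\alpha$ is used. Given any finite list of solutions $p_1/q_1, \dots, p_r/q_r$, the quantities $|q_i \alpha - p_i|$ are all strictly positive (since $\alpha$ is irrational, $q_i\alpha \neq p_i$), so we may choose $N$ large enough that $1/N < \min_i |q_i\alpha - p_i|$. The approximation $p/q$ obtained from this $N$ satisfies $|q\alpha - p| < 1/N < |q_i\alpha - p_i|$ for every $i$, so it differs from all of $p_1/q_1, \dots, p_r/q_r$ (and in particular is a genuinely new fraction, even after reducing to lowest terms, since reducing only decreases $|q\alpha - p|$ --- wait, reducing changes the value of $|q\alpha-p|$, so more carefully: the \emph{value} $p/q$ is new because if it equalled some $p_i/q_i$ then $|q\alpha-p|$ and $|q_i\alpha-p_i|$ would be positive rational multiples of the same irrational number $|\alpha - p_i/q_i|$, and one checks the multiplier $q \ge q_i$ cannot make it strictly smaller). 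Iterating, we obtain infinitely many distinct rationals satisfying the inequality. As the footnote indicates, tracking the relation $|q\alpha - p| < 1/N$ rather than just $\left|\alpha - p/q\right| < 1/q^2$ additionally yields the stronger asymptotic statement that $\left|\alpha - p/q\right| < 1/q^2$ holds with $q \to \infty$.
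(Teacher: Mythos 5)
Your proof is correct and follows essentially the same route as the paper's: the pigeonhole argument of Dirichlet for the forward direction, the integrality lower bound $|\alpha - p/q| \ge 1/(bq)$ for the converse, and iteration with $1/N$ below the minimum of the previous $|q_i\alpha - p_i|$ to force new solutions. Your parenthetical worry about distinctness of the \emph{values} $p/q$ is the only muddled spot (the clean fix is to compare against lowest-terms representatives, for which $|q\alpha-p|$ can only grow under unreduced representations), but the paper's own proof glosses over this point entirely, so you are if anything more careful than the source.
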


\begin{figure}[h!tbp]
\centering
\includegraphics[width=.8\textwidth]{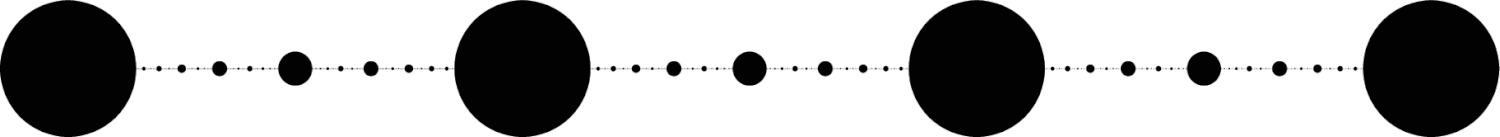}
\caption{Disks centred at rational numbers $\frac{p}{q}$ between $0$ and $3$ with radius $\frac{3}{20 q^2}$.}
\label{fig:RationalStarscape}
\end{figure}

This gives good approximations to irrational numbers (the $p/q$ that satisfy Theorem \ref{thm:Dirichlet_intro}) and also shows that the rational numbers cannot be easily approximated by other rationals, demonstrating the idea that the points are repelling each other. By scaling the points down, the repulsion rather than the notion of approximation becomes clearer, and we have an image close to a one dimensional version of the starscapes (Figure \ref{fig:RationalStarscape}).

These ideas and pictures lie at the heart of Diophantine approximation.  Linking geometry and number theory, they provide many of the key ideas that can be extended to the algebraic numbers.  It is therefore with geometry that we will start the next section, moving from a study of images to the mathematical ideas expressed in them.  In later sections, we will return to Diophantine approximation, motivated by the geometry to recast and reprove some variations on and extensions of standard results.

\section{Geometry}
\label{sec:Geometry}
The gallery images show some structures in the lattice of polynomials over the integers, viewed in two dimensions by drawing their roots in the complex plane.
Because polynomials are basic and important objects across mathematics, it should perhaps be no surprise (although it was one to the authors!) that their geometric story involves many familiar characters: from projective geometry and discrete groups, to hyperbolic space, the representation theory of $\mathrm{SL}_2(\mathbb{C})$ and symmetric powers of the sphere.
A recurring theme:

\begin{quote}
\emph{Patterns in the distribution of algebraic numbers are shadows of the geometry of their lattice of minimal polynomials.}
\end{quote}

Throughout the geometry section, we will formulate precise versions of the above statement in degrees two and three, where there is a beautiful connection of the geometry of polynomials to the geometry of the hyperbolic plane.  We attempt to increase the required prerequisites only gradually, to allow readers at many levels to chart their own paths through the material.
In particular, researchers may want to read the high level overview \emph{Section \ref{sec:BirdsEye}: A birds eye view} immediately below, and then move directly to any theorem of particular interest.
Students may wish instead to skip this fast-paced summary and begin reading at Section \ref{sec:ProjGeo}, which provides an introduction to projective space.

\subsubsection{A Bird's Eye View}
\label{sec:BirdsEye}
The main actor in this story is the roots map  $\RootMap$ sending the coefficients of a complex polynomial to its multi-set of roots.
This provides a two-way bridge between the space of polynomial solutions and their coefficients, relating the structures in these images directly to structures already present in their sets of minimal polynomials.
Making use of this bridge requires understanding what kind of information (topological, representation-theoretic, geometric) survives the trip.
We summarize three main themes here:

\begin{itemize}
  \item  The fundamental theorem of algebra implies that the roots map is a \emph{homeomorphism} between the spaces of roots and coefficients over $\CC$.
  Thus topological properties of collections of algebraic numbers are equivalent to topological properties of their corresponding sets of minimal polynomials.
  \item For $\FF\in\{\RR,\CC\}$, the roots map $\RootMap$ is equivariant with respect to natural $\PSL(2;\FF)$ actions on the spaces of roots and coefficients.
  This equips each of these with a notion of geometry preserved by $\RootMap$.
  Thus, geometric properties of the space of minimal polynomials determine geometric properties of the algebraic numbers.
  \item In small degree, this action has finitely many orbits, decomposing the space of polynomials into a union of homogeneous geometries for $\PSL(2;\FF)$.
\end{itemize}

By `geometric' throughout, we mean in the sense of homogeneous geometry, following Felix Klein.
In a vast generalisation of euclidean geometry, Klein proposed in his 1872 Erlagen Programm that a geometry is \emph{defined as a space $X$, together with the transitive action of a group $G$}.  This group action is  interpreted as the allowable, or `rigid' motions of this geometry, its transitivity implies the geometry is \emph{homogeneous} or behaves the same at every point\footnote{From this perspective the euclidean plane is $\R^2$ equipped with the group of rotations, translations and (glide) reflections.  Projective geometry, hyperbolic geometry, and de Sitter space are other common examples, which we will encounter throughout our journey.}.

In the sections that follow, we give a detailed analysis of the roots map in low degree, and introduce the necessary geometric objects as they arise.
 In particular, we use the geometry of homogeneous spaces for $\PSL(2;\RR)$ to 
 give an interpretation of the quadratic and cubic formulas in terms of familiar geometric spaces:

\begin{itemize}
    \item The space of real quadratics (those with real coefficients) with complex roots identifies with the hyperbolic plane.  Restricted to this subset, the roots map (the quadratic formula) is an isometry between the projective model (in coefficient space) and upper half plane model (in root space).
    \item The space of real cubics with complex roots is topologically a solid torus, and identifies with the unit tangent bundle to the hyperbolic plane.  The roots map is an isometry between the models of this geometry constructed from coefficients and roots, respectively.
    \item  Viewing the space of these cubics as the unit tangent bundle to $\HH^2$ gives a geometric factorization of the cubic formula.
Computing the complex and real roots amounts to a projection onto the base and fiber (with respect to a fixed trivialization) respectively.
    \end{itemize}

As the algebro-geometric study of polynomials of low degree spans centuries, these are almost certainly not new, but we do not know of a reference.
We prove them in this section (Theorems \ref{thm:QuadRoots}, \ref{cor:QuadFormula}, \ref{thm:cubicMain}, \ref{thm:cubicIsom} and \ref{thm:CubicFactor}) for the benefit of the reader.
These geometric interpretations provide both insight into the gallery images and new perspectives on results from number theory.  
We summarize some of these insights here.

\begin{itemize}
	\item The starscapes are naturally interpreted through hyperbolic geometry. Inversion in the unit circle and translation along a horocycle through $\infty$ preserve integer polynomials, explaining the evident $\SL(2;\ZZ)$ symmetry in figures such as Figure \ref{fig:Initial_quadratics} and Figure \ref{fig:Initial_cubics}.
\item The roots map is an isometry, so we can measure distances between quadratic numbers explicitly using the discriminant quadratic form on their minimal polynomials, which we will use in Section \ref{sec:Dio-quad}.
\item The identification of cubics having complex conjugate roots with the unit tangent bundle to $\HH^2$ suggests new ways of visualizing cubic numbers: as subsets of the solid torus (Figures \ref{fig:AllCubicsin3d}, \ref{fig:UTRoots}, \ref{fig:UTCoefs}) and as vector fields in $\CC$ (Figure \ref{fig:Arrows}).
\item Under this identification, two parameter families of cubics embed in $\CC$ via the roots map when everywhere transverse to the fibers of the unit tangent bundle.  This provides a condition on precisely when the projection onto the complex root is not a homeomorphism: compare and contrast Figures \ref{fig:AffineStarscapes_c} and \ref{fig:Fam7C}.
\end{itemize}

\subsubsection{Notation}
We briefly collect here some useful notation and conventions used throughout.
A \emph{complex polynomial of one variable} is a function $f\colon \CC\to\CC$ of the form $f(x)=a_nx^n+\cdots+a_1x+a_0$ for $a_0,\ldots a_n\in\CC$.
Throughout this section we focus instead on \emph{homogeneous polynomials}, which are polynomials in $x,y$ where each term has constant total degree, i.e.\ of the form $a_nx^n + a_{n-1}x^{n-1}y \cdots + a_1xy^{n-1} + a_0y^n$ for $a_0, \ldots, a_n \in \CC$. 
These generalize the single variable case (setting $y=1$ returns its single variable counterpart) with several advantages.
Chiefly, binary forms of degree $n$ naturally include all single variate polynomials of lower degree\footnote{For example the linear polynomial $2x+3$ can be thought of as the homogeneous linear polynomial $2x+3y$, or the homogeneous quadratic $2xy+3y^2$, or the homogeneous cubic $2xy^2+3y^3$, etc.}, providing spaces to study all algebraic numbers of bounded degree.

For a fixed degree $n$, we let $\Coefs_n$ be the space of all coefficients of all degree $n$ binary forms, and $\Roots_n$ be the space of all multi-sets of their roots (we drop the subscript when no confusion results).
For binary forms, it is convenient to allow roots to lie in the extended complex plane $\CP^1=\C\cup\{\infty\}$.
In this way, every polynomial of degree $n$ has exactly $n$ roots (with multiplicity)\footnote{Again taking the univariate polynomial $2x+3$ as an example, thought of as a linear homogeneous equation this has a single root.  But thought of as a quadratic, such linear equations have an additional root ``at infinity".}.
The roots map $\RootMap_n\colon\Coefs_n\to\Roots_n$ returns all roots of a polynomial as a function of its coefficients.
As the roots of a polynomial are unchanged by rescaling all coefficients by a constant, we abuse notation and also write $\RootMap_n$ for the roots map on scaling classes $\RootMap_n\colon\PP\Coefs_n\to\Roots_n$.
Similarly, we write $\Delta_n\colon\Coefs\to\CC$ for the discriminant, dropping $n$ when unambiguous. 

Often, we will identify a certain subset of polynomials with some geometric or topological space $X$.
Depending on our perspective, we may find it useful to think of these as either being built out of the polynomial's coefficients or its roots.
To keep these two conceptually distinct, we will often decorate $X$ accordingly, writing $X_\Coefs\subset \Coefs$ and $X_\Roots\subset\Roots$.
Some of the important spaces that arise in this way (and will be defined in the following sections) are the projective spaces $\RP^1,\CP^1$, their generalisations $\RP^n,\CP^n$, the symmetric powers of the sphere $\SP^n(\CP^1)$, and the hyperbolic plane $\HH^2$.

\subsection{Projective geometry and rational numbers}
\label{sec:ProjGeo}
An analysis of the rational numbers $\QQ$ provides a gentle introduction to the recurring themes of this section, and an excellent first introduction to projective geometry.
Secretly of course, this is just the story of linear polynomials over $\ZZ$ and their roots.
The simplicity of the roots map $\mathcal{R}(ax-b)=b/a$ allows us to suppress much of this formalism and focus on examples of our main idea:
\emph{complex patterns that reveal themselves when viewed as shadows of some higher dimensional structure.}

After introducing projective geometry, we provide two examples of this.
We give the geometric explanation for the otherwise strange mediant addition law $\tfrac{p}{q}\oplus\tfrac{r}{s}=\tfrac{p+r}{q+s}$ used in the construction of Farey sequences, and enumeration of the rationals via the Stern-Brocot tree.

\subsubsection{Projective Space}
Recall that every rational number $r$ is the quotient of two integers $r=p/q$, so it is natural to expect our analysis of $\QQ$ to be deeply intertwined with the lattice $\ZZ\times\ZZ$.  
However, to actually build $\QQ$ from $\ZZ\times\ZZ$, there are two minor complications to be resolved: (1) a pair $(p,q)\in\ZZ\times\ZZ$ does not uniquely determine a rational number, as $\tfrac{p}{q}=\tfrac{np}{nq}$, and (2) not all pairs $(p,q)\in\ZZ\times\ZZ$ determine rational numbers, as division by zero is undefined.
Both of these issues evaporate upon the realization that as \emph{ratios} of integers, we should not model rationals by \emph{points in} $\ZZ\times\ZZ$ but rather as \emph{slopes}.
This is more memorably stated as follows:

\begin{quote}
\emph{The rational numbers $\QQ$ are what the integer lattice $\ZZ\times\ZZ$ looks like if you stand at the origin $(0,0)$ and look around (Figure \ref{fig:QP1}).}
\end{quote}

\begin{figure}[h!tbp]
\centering
	\includegraphics[width=0.8\textwidth]{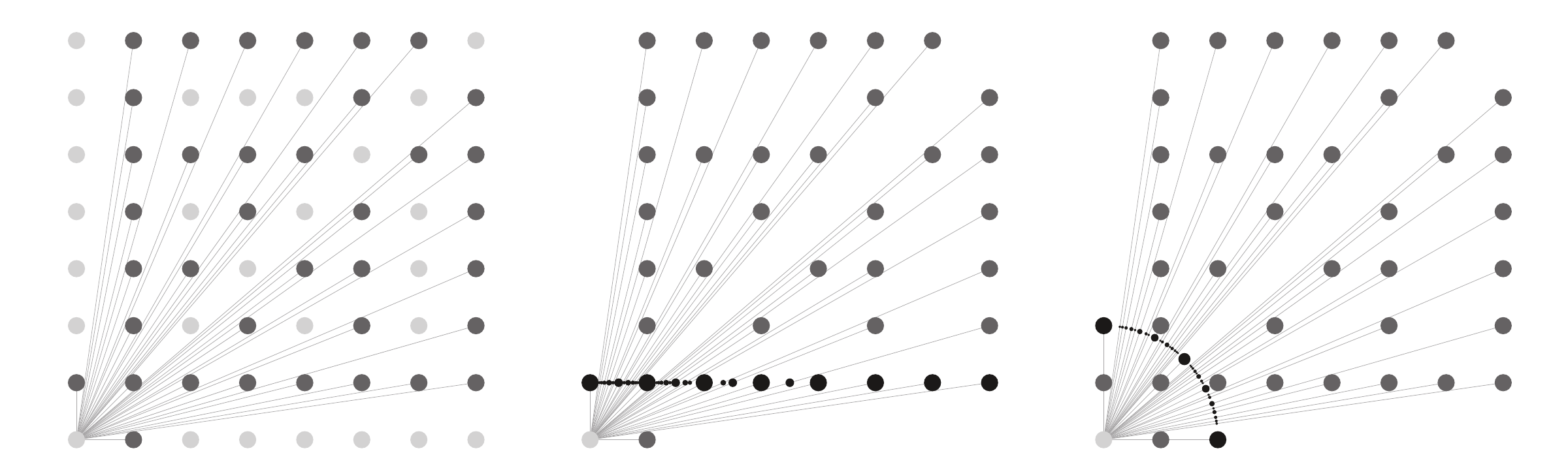}
	\caption{The rational numbers as the projection of integer lattice (left) onto an affine patch (middle) and visual sphere (right).  The darker point on the left represent the first point on each rational slope, as viewed from the origin (lower left dot).}
\label{fig:QP1}
\end{figure}

Formally, considering objects up to scaling is called \emph{projectivization}, and scaling classes of pairs of integers form the \emph{rational projective line} or $\QP^1$:

$$\QP^1=\bigslant{\{(p,q)\neq\vec{0}\mid p,q\in\ZZ\}}{(p,q)\sim (np,nq)}.$$
The points of $\QP^1$ are denoted $[p:q]$.
 Note that by definition these points are unchanged by a global scaling of their coordinates, resolving issue (1) above: for example $[p:q]=[2p:2q]=[\tfrac{p}{q}:1]$.
 Thus, one may recover the usual perspective of the rational numbers as points on a line from this, by imagining a screen (called an \emph{affine patch}) placed a unit distance in front of one's eyes.
 The rational number $[p:q]$ projects onto this screen to $[\tfrac{p}{q}:1]$ (Figure \ref{fig:QP1}, middle).

This definition automatically resolves point (2) as well, and gives a rigorous interpretation for $1/0$ as the projective point $[1:0]\in\QP^1$ associated to the horizontal axis in the plane.
This line does not intersect the affine patch $\{[x:1]\mid x\in\QQ\}$, so it is not a rational number but a \emph{point at infinity}, often denoted $[1:0]=\infty$.
The rational projective line fixes the asymmetry of $\QQ$'s relation to $\ZZ\times\ZZ$ by adding a single point:
$\QP^1=\QQ\cup\{\infty\}$.
There are many contexts in which scaling classes are the right objects to consider, and this construction generalizes far beyond the rational numbers.

\begin{definition}
Let $\FF$ be a field, and $n\in\NN$.  Then $n$-dimensional projective space over $\FF$, denoted $\FF\PP^n$, is given by scaling classes of the nonzero elements of $\FF^{n+1}$ up to elements of $\FF^\times$:
$$\FF\PP^n=\bigslant{\{\mathbf{x}=(x_1,\ldots, x_{n+1})\neq\vec{0}\mid x_i\in\FF\}}{\left(\mathbf{x}=a \mathbf{x},\;\; a\in\FF^\times\right)}.$$
An equivalence class in $\FF\PP^n$ is denoted $[\mathbf{x}]=[x_1:x_2:\cdots:x_{n+1}]$.
Given $X\subset\FF^{n+1} \backslash \vec{0}$, we write $\PP X=\{[x]\mid x\in X\}\subset\FF\PP^n$ for the scaling classes of all elements in $X$.
An \emph{affine patch} of $\FF\PP^n$ is a subset homeomorphic to $\FF^n$ equivalent to $\{[x_1:x_2:\cdots:x_n:1]\}$ under some change of coordinates.
\end{definition}

\begin{figure}[h!tbp]
\centering
	\includegraphics[width=0.75\textwidth]{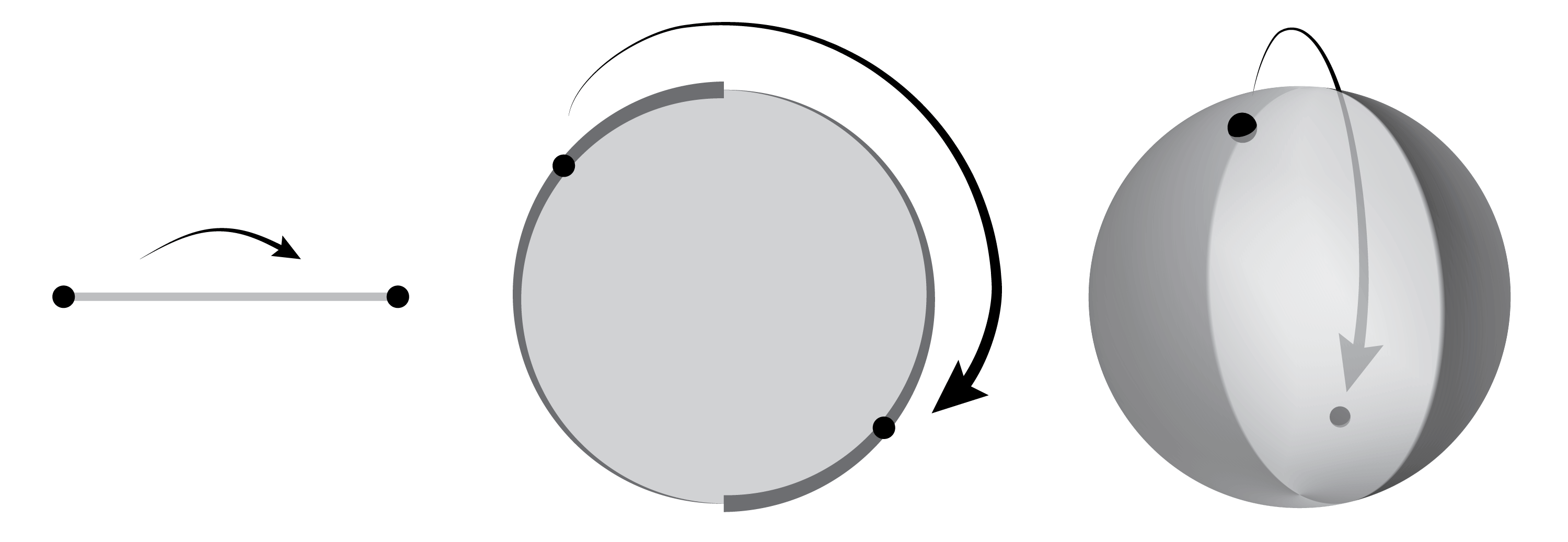}
	\caption{Real projective space can be constructed from a solid ball by identifying its boundary via the antipodal map.  Here we see this in dimensions 1, 2 and 3.}
\label{fig:RPn}
\end{figure}

\begin{remark}\label{rem:FP1}
For any field $\FF$, we may see that $\FF\PP^1\cong\FF\cup\{[1:0]\}=\FF\cup\{\infty\}$ by the same argument as for $\QQ$.
In fact, $\FF\PP^1$ is the \emph{one point compactification of $\FF$}. For example, $\RP^1$ is a circle and $\CP^1$ is a sphere.
\end{remark}

For us, a main advantage of this perspective\footnote{In addition to managing to rigorously make sense of $1/0$ of course!} is to aid in switching back and forth between thinking about objects in $\FF\PP^n$ and objects in the higher dimensional space $\FF^{n+1}$.
We give two simple, but surprisingly beautiful applications of this below.

\subsubsection{$\QP^1$ and the Stern-Brocot Tree}
\label{sec:Stern-Brocot}
A means of representing the lowest-terms representative of each rational number by an infinite binary tree was discovered independently by Stern and Brocot in the mid 1800s.
This tree is depicted in Figure \ref{fig:TreeMediant}.
The construction of the tree is inductive, where the elements of the $n^{th}$ row are produced from those in prior rows by taking \emph{mediants}.
The \emph{mediant of two fractions} $\tfrac{p}{q}$ and $\frac{r}{s}$ in lowest terms is the fraction $\frac{p+q}{r+s}$, which appears to be a rather algebraically unnatural construction\footnote{However, it is certainly a visually natural thing to try given our representation of rationals as quotients - indeed probably the most common mistake when first learning arithmetic is to add rationals by taking their mediant!}.
Taking mediants is a common step in such enumeration sequences (including the Farey sequence, a source of much beautiful mathematics on $\QP^1$), but is best understood not as an \emph{algebraic operation} on $\QQ$ but rather as a \emph{geometric operation} on $\QP^1$.
Indeed, before projectivizing, the points $(p,q)$ and $(r,s)$ are vectors in $\RR^2$, and here the mediant operation is simply \emph{vector addition}!
This is completely natural geometrically on $\QP^1$: given two points, lift to their representatives in $\ZZ^2$ which are closest to the origin.
These two vectors determine two sides of a parallelogram, whose main diagonal connects you directly to the mediant.

 \begin{figure}[h!tbp]
\centering
	\includegraphics[width=0.9\textwidth]{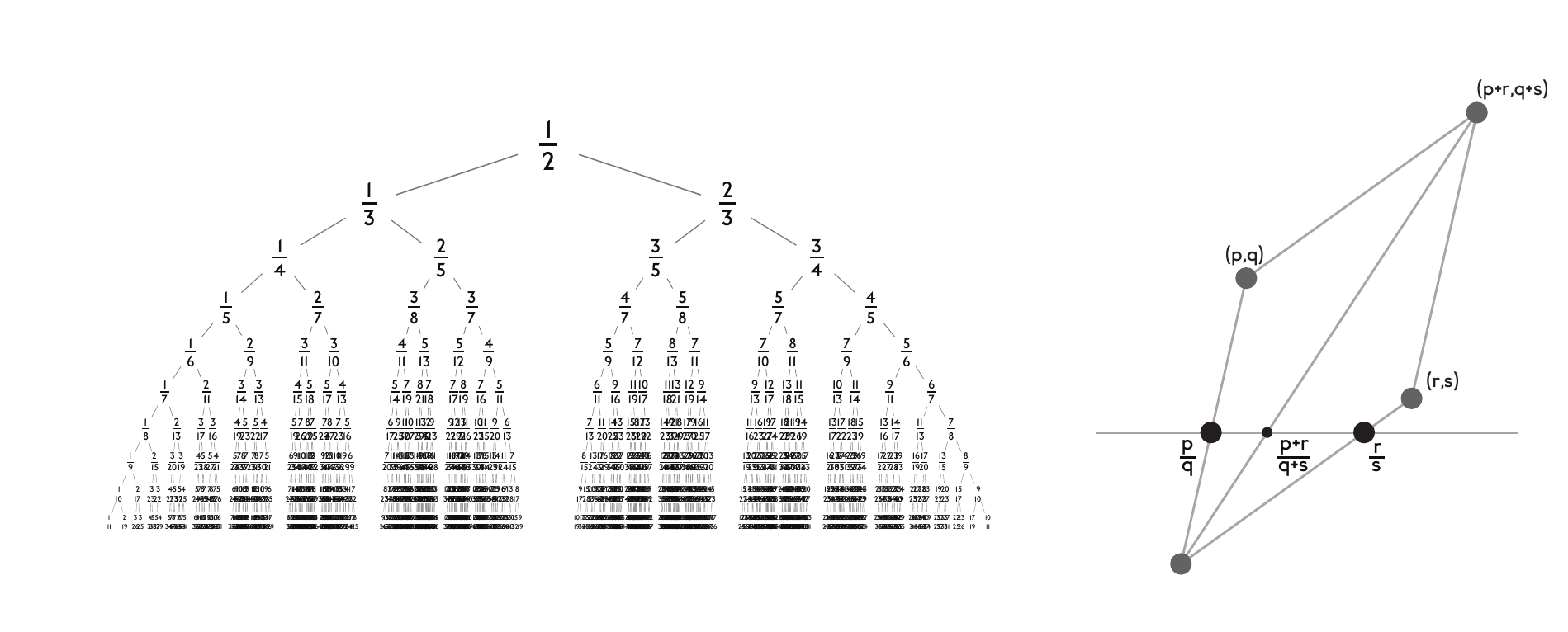}
	 \caption{The Stern-Brocot Tree enumerating the rationals, and a geometric description of the mediant of two fractions, essential to its construction.}
\label{fig:TreeMediant}
\end{figure}

\subsubsection{The Roots Map}
\label{sssec:RootsMap}
We briefly record the roots to linear equations $ax+b$ in our standard notation, for later use when discussing cubics.
Over $\CC$, the space of coefficients identifies naturally with $\CC^2=\{(a,b)\mid a,b\in\CC\}$, and so up to scaling we have $\PP\Coefs=\CP^1$.
The space of their roots is also $\CP^1$, thought of as the extended complex plane; as $ax+b$ (thought of as the homogeneous equation $ax+by$ on $\CP^1$), has the unique root $[-b:a]=[-b/a:1]$ when $a\neq 0$ and otherwise $\infty=[1:0]$.
Thus the roots map here is a linear isomorphism of $\CP^1$ with itself:
$$\RootMap_1\colon \CP^1_\Coefs\to\CP^1_\Roots\hspace{1cm}[a:b]\mapsto[-b:a]$$
This gives a natural identification between the spaces of roots and (projectivized) coefficients, allowing us to simplify the stories above.
In the following sections, $\RootMap$ will continue to be a homeomorphism $\PP\Coefs\to\Roots$, and topologically the mapping from a polynomial's coefficients to its solutions is still equivalent to projectivization.
However, $\RootMap$ is no longer such a simple isomorphism, and much of the work involved in accurately transferring information from coefficients to roots involves a careful analysis of $\RootMap$ and the symmetries it preserves.

\subsection{Hyperbolic geometry and quadratic numbers}
\label{ssec:QuadraticGeometry}
Some of the striking images in the gallery involve \emph{quadratic numbers}, or solutions in $\C$ to degree-two polynomials with integer coefficients $ax^2+bx+c$.
The suggestively ``hyperbolic'' nature of these (Figure \ref{fig:Initial_quadratics}) is no accident; and the goal of this section is to make this connection explicit.
In particular, we prove the following.

\begin{theorem}
Let $\HH^2_{\Coefs}$ be the projectivized set of coefficients of real quadratics with complex roots and $\HH^2_\Roots$ be the set of their root-sets, equipped with the following metrics:
\begin{itemize}
\item $\HH^2_\Coefs=\{[a:b:c]\mid b^2<4ac\}$ is given the projectively invariant metric it inherits as a convex subset of $\RP^2$,
\item $\HH^2_{\Roots}=\{\{x\pm iy\}\mid x,y\in\R,y>0\}$ is given the M\"{o}bius-transformation invariant metric from identification with the upper half plane $\subset\CP^1$.
\end{itemize}

Both of these spaces are isometric to the hyperbolic plane.
Furthermore, the quadratic formula, given as the map $[a:b:c]\mapsto \left\{\frac{-b\pm i\sqrt{4ac-b^2}}{2a}\right\}$ is an isometry between these two metrics.
\label{thm:QuadRoots}
\end{theorem}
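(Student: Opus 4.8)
The plan is to establish the theorem in three stages: first identify $\HH^2_\Coefs$ as a model of the hyperbolic plane, then identify $\HH^2_\Roots$ as the familiar upper half-plane model, and finally check that the quadratic formula intertwines the two group actions, so that it is automatically an isometry. For the first stage, I would use the classical Klein--Beltrami (projective) model of $\HH^2$: a convex open subset of $\RP^2$ bounded by a smooth conic carries a canonical projectively-invariant metric (the Hilbert metric, which in this case is hyperbolic). Here the relevant conic is the discriminant locus $\{b^2 = 4ac\}$, which is a nondegenerate conic in $\RP^2$ (its associated quadratic form $b^2 - 4ac$ has signature $(1,2)$), and the region $\{b^2 < 4ac\}$ is one of the two components it bounds, namely the convex ``interior.'' So $\HH^2_\Coefs$ is a projective model of $\HH^2$, with isometry group the subgroup of $\PGL(3;\RR)$ preserving this conic, i.e.\ $\mathrm{PO}(1,2) \cong \PSL(2;\RR)$ (up to the usual index-two subtleties).

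For the second stage, $\HH^2_\Roots$ is literally the set of conjugate pairs $\{x \pm iy\}$ with $y > 0$; sending such a pair to its representative $x + iy$ in the open upper half-plane is a bijection onto $\HH^2$ with its standard metric $ds^2 = (dx^2 + dy^2)/y^2$, whose orientation-preserving isometry group is $\PSL(2;\RR)$ acting by M\"obius transformations. The key linking step is the equivariance already emphasized in the paper: $\RootMap$ intertwines the natural $\PSL(2;\RR)$-actions on $\Coefs$ and on $\Roots$. Concretely, I would spell out that $g \in \SL(2;\RR)$ acts on binary quadratic forms by the standard substitution $x \mapsto \alpha x + \beta y$, $y \mapsto \gamma x + \delta y$ (the degree-$2$ symmetric power representation), that this preserves the discriminant form $b^2 - 4ac$ and hence preserves $\HH^2_\Coefs$, and that under $\RootMap$ this corresponds exactly to the M\"obius action $z \mapsto (\alpha z + \beta)/(\gamma z + \delta)$ on the roots. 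Both actions are transitive on their respective $\HH^2$'s (the projective action is transitive on the interior of a conic; the M\"obius action is transitive on $\HH^2$). Since $\RootMap$ is an equivariant bijection between two transitive $\PSL(2;\RR)$-spaces with invariant metrics, it suffices to check that it sends one point to the appropriate point and matches the metric at that single point (using a point with large stabilizer, e.g.\ the form $x^2 + y^2$, i.e.\ $[1:0:1]$, whose root is $i$, and whose stabilizer $\mathrm{PSO}(2)$ acts by rotations fixing $i$). Then equivariance propagates the isometry everywhere.

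In more detail, the concrete steps in order would be: (1) verify $b^2 - 4ac$ is a nondegenerate form of signature $(1,2)$ and that $\{b^2 < 4ac\}$ is the convex component, so the Hilbert metric there is hyperbolic; (2) write down the $\SL(2;\RR)$-action on coefficient triples $(a,b,c)$ induced by the substitution action on $ax^2 + bxy + cy^2$, and verify it preserves $b^2-4ac$; (3) verify $\RootMap([a:b:c]) = \{(-b \pm i\sqrt{4ac-b^2})/(2a)\}$ is equivariant, i.e.\ $\RootMap(g \cdot [a:b:c]) = g \cdot \RootMap([a:b:c])$ with the M\"obius action on the right — this is the ``root of a transformed polynomial is the transform of the root'' identity; (4) check transitivity of both actions and compute the stabilizer of $[1:0:1]$ (resp.\ of $i$), confirming both are $\mathrm{PSO}(2)$ and $\RootMap([1:0:1]) = \{i\} = i$; (5) conclude that $\RootMap$ pulls back the upper-half-plane metric to a $\PSL(2;\RR)$-invariant metric on $\HH^2_\Coefs$, and since invariant metrics on a transitive space are unique up to scale, it equals the Hilbert metric up to a constant — and the constant is $1$ by matching at the base point.

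I expect the main obstacle to be the bookkeeping in step (3): being careful about which convention ($\SL(2;\RR)$ vs.\ $\PGL$, left vs.\ right action, $g$ vs.\ $g^{-1}$, the precise symmetric-power formula on coefficients) makes the equivariance come out as stated rather than with a transpose-inverse twist, and keeping track of the $\pm$ ambiguity in $\PSL$ vs.\ the orientation issue (the full isometry group of $\HH^2$ includes reflections, so one should be clear whether $\RootMap$ is orientation-preserving and whether $\HH^2_\Coefs$'s full projective symmetry group is $\PSL(2;\RR)$ or $\mathrm{PGL}(2;\RR)$). A secondary, more conceptual point to handle carefully is the normalization of the Hilbert/projective metric on a conic domain so that it has curvature $-1$ (rather than $-1/4$ or similar), which is what makes the matching constant in step (5) equal to $1$; I would fix this by the base-point computation rather than quoting a formula. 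None of the individual computations is hard, but assembling them so the isometry is literally an isometry (not merely a conformal or homothetic map) is where care is needed.
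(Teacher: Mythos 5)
Your proposal is correct and follows essentially the same route as the paper: both identify $\HH^2_\Coefs$ with the projective (Klein) model bounded by the discriminant conic, identify $\HH^2_\Roots$ with the upper half-plane, and deduce the isometry from the $\PSL(2;\RR)$-equivariance of the roots map (the representation $\rho$ of Equation \eqref{eqn:Rep}) together with transitivity of the actions. Your step (5) --- pinning down the overall scale via uniqueness of invariant metrics and a base-point check at $[1:0:1]\mapsto i$ --- is if anything slightly more careful than the paper, which writes down the two normalized distance formulas \eqref{eqn:CoefsMetric} and \eqref{eqn:RootsMetric} and asserts that the equivariant homeomorphism preserves them.
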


This theorem is our main goal in Section \ref{ssec:QuadraticGeometry}; we will take some time to explain the metrics and invariances just referenced. 
Throughout we attempt to emphasize how how natural symmetry considerations might lead one to have conjectured this theorem in the first place.

\subsubsection{Complex Quadratics}
Irreducible quadratics over $\RR$ have pairs of complex conjugate roots, which makes the complex numbers integral to our discussion.
As such, we begin with a discussion of homogeneous quadratics over $\CC$.
This has several advantages\footnote{As $\CC$ is algebraically closed, the space of roots is easy to describe, defining the roots map does not require passing to a field extension, and the $\PSL(2;\CC)$ symmetry can be exhibited at its most natural level of generality.}, and after getting comfortable here we will restrict back to real coefficients to prove Theorem \ref{thm:QuadRoots}.
In this section we precisely define the space of coefficients, space of roots, and the map $\RootMap\colon\Coefs\to\Roots$ for complex homogeneous quadratics.
This lays the foundation for a bridge between properties of quadratic numbers and properties of their minimal polynomials, by the following observation.

\begin{observation}
\label{obs:RootsHomeo}
The roots map $\RootMap\colon\Coefs\to\Roots$ is a homeomorphism from the space of complex homogeneous quadratics to the multi-sets of their roots in $\CP^1$.
\end{observation}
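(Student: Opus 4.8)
The plan is to exhibit $\RootMap$ explicitly on complex homogeneous quadratics, identify the relevant source and target spaces concretely, and then check bijectivity and bicontinuity by hand, leveraging the fundamental theorem of algebra. First I would set up coordinates. Write a complex binary quadratic as $f(x,y)=ax^2+bxy+cy^2$ with $(a,b,c)\in\CC^3\setminus\{\vec 0\}$; since roots are unchanged under global rescaling, the honest domain is $\PP\Coefs\cong\CP^2$. A nonzero quadratic form always factors as $f(x,y)=a(x-r_1y)(x-r_2y)$ when $a\neq 0$, or $f(x,y)=y(bx+cy)$ when $a=0$, so its root multi-set is a well-defined element of $\SP^2(\CP^1)$, the $2$-fold symmetric power of the sphere, where $\infty=[1:0]$ appears among the roots precisely when $a=0$. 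Thus I would take $\Roots=\SP^2(\CP^1)$ and define $\RootMap([a:b:c])=\{r_1,r_2\}$ via this factorization, noting that the elementary-symmetric relations $r_1+r_2=-b/a$, $r_1r_2=c/a$ make $\RootMap$ well-defined on scaling classes.

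The core of the argument is then a clean back-and-forth. For surjectivity and injectivity, observe that sending a multi-set $\{r_1,r_2\}$ of points in $\CP^1$ to the coefficients of $\prod_i(\text{linear form vanishing at }r_i)$ — i.e. to the point $[a:b:c]$ with $(a,b,c)$ the (homogenized) elementary symmetric functions — is a well-defined inverse: homogenizing handles the point at infinity uniformly (a root at $\infty$ contributes the linear factor $y$, forcing $a=0$), and two monic-up-to-scaling quadratic forms with the same root multi-set coincide projectively because a degree-two form is determined by its zero divisor on $\CP^1$. This is essentially the fundamental theorem of algebra together with unique factorization in $\CC[x,y]$ restricted to forms of degree two; I would state it at that level of generality rather than re-deriving it. Continuity in both directions is the remaining point: $\RootMap$ is continuous because the roots of a polynomial vary continuously with its coefficients (a standard consequence of Rouché's theorem, or of the fact that $\SP^2(\CP^1)$ is the quotient of $(\CP^1)^2$ by the $S_2$-action and the coefficient map $(\CP^1)^2\to\CP^2$ is a polynomial, hence continuous, and descends); and the inverse is continuous because it too is given by the polynomial formula $\{r_1,r_2\}\mapsto[1:-(r_1+r_2):r_1r_2]$ on the affine patch where both roots are finite, with an analogous chart near a root at infinity, and these patch together.

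The main obstacle — really the only subtlety — is making the topology on $\Roots$ precise and checking that the two descriptions of $\Roots$ (as the symmetric power $\SP^2(\CP^1)$ with its quotient topology, versus as the set of unordered root-pairs with the topology transported from $\PP\Coefs$) agree, so that "homeomorphism" is a meaningful and correct claim. Concretely, I would verify that $\SP^2(\CP^1)$ with the quotient topology from $(\CP^1)^2$ is Hausdorff and compact, that the induced map $\SP^2(\CP^1)\to\CP^2$ coming from elementary symmetric functions is a continuous bijection from a compact space to a Hausdorff space — hence automatically a homeomorphism — and then identify this map with $\RootMap^{-1}$. This "continuous bijection from compact to Hausdorff" trick lets me avoid estimating root perturbations explicitly and disposes of bicontinuity in one stroke; everything else is bookkeeping about the point at infinity, which the homogeneous (binary-form) formalism was set up precisely to make painless.
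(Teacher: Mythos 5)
Your proposal is correct and follows essentially the same route as the paper: identify $\Roots$ with $\SP^2(\CP^1)$ as the $\ZZ_2$-quotient of $\CP^1\times\CP^1$, get bijectivity from the fundamental theorem of algebra (every unordered pair determines a scaling class and conversely), and conclude homeomorphism from continuity. The one place you go further is in justifying bicontinuity via the continuous-bijection-from-compact-to-Hausdorff argument applied to the elementary-symmetric-function map; the paper simply asserts ``together with the continuity of $\RootMap$, this implies the roots map is a homeomorphism,'' so your version supplies a detail the paper leaves implicit.
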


The polynomial $f=ax^2+bxy+cy^2$ is determined by its coefficients $a,b,c\in\CC$, so $\Coefs\cong\CC^3$.
We denote the scaling class of a quadratic $f=ax^2+bxy+cy^2$ by $[f]$, with coefficients $[a:b:c]\in\PP\Coefs=\PP(\CC^3)=\CP^2$ in the complex projective plane.
The roots map $\RootMap$ is just the quadratic formula; taking the polynomial with coefficients $[a:b:c]$ with $a\neq 0$ to its solutions $[\tfrac{-b}{2a}\pm\tfrac{\sqrt{b^2-4ac}}{2a}:1]$, or $[-b\pm\sqrt{b^2-4ac}:2a]$ after clearing denominators\footnote{
It is quick to check that as $r\to\infty$ the coefficients of the polynomial $(x-ry)(ax+by)$ converge projectively to $[0:a:b]$ and the roots to $\{[1:0],[-b:a]\}=\{\infty,-b/a\}$.
Thus the quadratic formula extends continuously to linear equations interpreted as `quadratics with roots at infinity' as noted in Section \ref{sec:ProjGeo}.}.
The space of roots comes with no natural ordering.
Thus, $\Roots$ does not identify with the space of \emph{ordered pairs} $\CP^1\times\CP^1$ of points in the extended complex plane, but rather the set of \emph{unordered pairs in} $\CP^1$.
 This space is called the \emph{second symmetric power of} $\CP^1$, and we write $\Roots=\SP^2(\CP^1)$.
 An explicit construction of this space results from the quotient of $\CP^1\times\CP^1$ by the action of $\ZZ/2\ZZ$ swapping coordinates.

 \begin{remark}
 \label{rem:MobiusBand}
It may be helpful to pause here to gain some intuition from the lowest-dimensional example of a nontrivial symmetric power.
The second symmetric power of a circle $\SP^2(\SS^1)$ is the quotient of a torus $\SS^1\times\SS^1$ by the involution $(x,y)\mapsto(y,x)$
 This fixes the diagonal $(x,x)$, and the quotient is a M\"{o}bius strip with these points as the boundary curve (Figure \ref{fig:Mob}).

 \end{remark}
 
 \begin{figure}[h!tbp]
\centering
\includegraphics[width=0.95\textwidth]{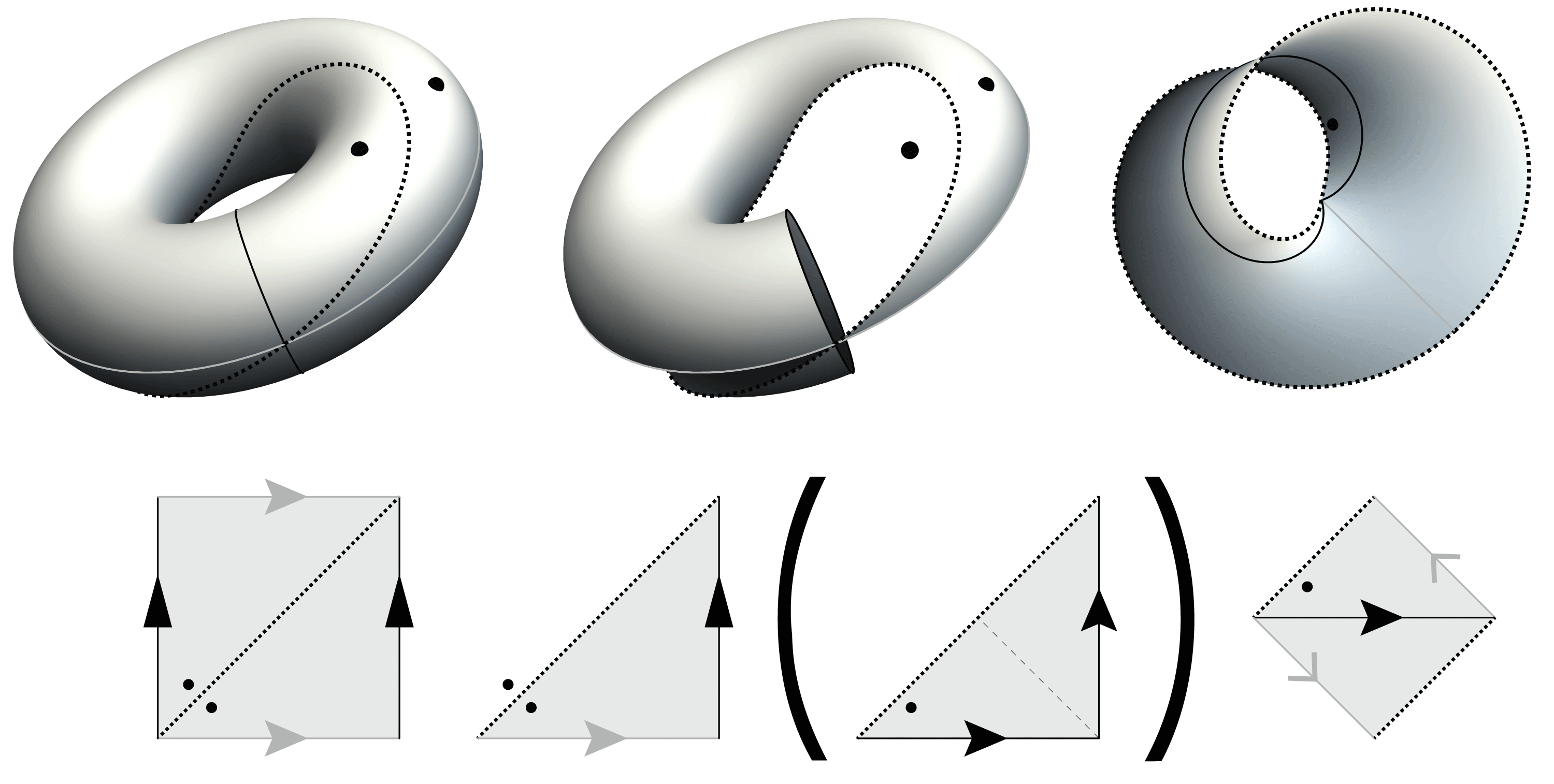}
\caption{The space $\SP^2(\RP^1)$ of unordered pairs of points on the circle $\RP^1 \cong \SS^1$ is homeomorphic to a M\"{o}bius band.}
\label{fig:Mob}
\end{figure}

The space of quadratic polynomials parameterized by their roots is $\SP^2(\CP^1)$, a space built from the sphere in much the same way as the M\"{o}bius strip $\SP^2(\mathbb{S}^1)$ was built from the circle in Remark \ref{rem:MobiusBand}.  See the text following Observation \ref{obs:RootsHomeo}, for the discussion of this.
Not only do every pair of points in $\CP^1$ determine a scaling class of quadratics, but as a consequence of the fundamental theorem of algebra, every scaling class has two roots (with multiplicity) determining it.
Together with the continuity of $\RootMap$, this implies that the roots map is a homeomorphism\footnote{This is an important argument in its own right, for it shows topologically the symmetric power $\SP^2(\SS^2)$ is the complex projective plane in disguise.  In particular, it is a closed manifold.  Compare this with Remark \ref{rem:MobiusBand} where $\SP^2(\SS^1)$ is a manifold with boundary.  This generalizes to arbitrary degree $n$, and the roots map provides a homeomorphism $\RootMap_n\colon\CP^n\stackrel{\sim}{\to}\SP^n(\CP^1)$.
} from $\Coefs=\CP^2$ to $\Roots=\SP^2(\CP^1)$.
Thus topological properties of (projectivized) collections of quadratic polynomials completely determine the topology of their collection of roots.
To strengthen this connection, we next focus on natural symmetries of the space of polynomials which are preserved by the roots map. We will see these symmetries are actually \emph{isometries} of natural choices of metrics on both the space of coefficients and roots, which will be instrumental in our proof of the main theorem, \ref{thm:QuadRoots}.

\subsubsection{$\PSL(2;\CC)$ Symmetry}
\label{sec:PSL_Sym}
From both the roots and coefficients perspectives, quadratics are intimately tied to complex projective spaces.
But these spaces are only half the story. To work geometrically, we must also describe their groups of allowable motions.
As projective space $\CP^n$ is just scaling classes of vectors in $\CC^{n+1}$, the most natural group of symmetries is the linear group $\GL(n+1;\CC)$ acting on these scaling classes via $A.[v]=[Av]$ for $A\in \GL(n;\CC)$, $[v]\in\CP^n$.
For simplicity, we may consider these matrices up to constant multiples as well, and take the \emph{projective special linear group} $\PSL(n+1;\CC)=\{[A]\mid \det{A}=1\}$ as the symmetries of $\CP^n$.
For points in $\CP^1$ thought of as $\C\cup\{\infty\}$, these are known as \emph{M\"{o}bius transformations}: the transformation corresponding to $\left(\begin{smallmatrix}p&q\\r&s\end{smallmatrix}\right)$ acts on $z\in\CC$ as
$$
\begin{pmatrix}
p&q\\r&s
\end{pmatrix}.z
=\left[\begin{matrix}
p&q\\r&s
\end{matrix}\right].[z:1]
=\left[\begin{pmatrix}
p&q\\r&s
\end{pmatrix}
\begin{pmatrix}
z\\1
\end{pmatrix}
\right]
=\left[\begin{pmatrix}
pz+q\\rz+s
\end{pmatrix}\right]
=\left[\frac{pz+q}{rz+s}:1\right]
=\frac{pz+q}{rz+s}.$$

Thus, $\PP\Coefs\cong\CP^2$ naturally has the symmetries of $\PSL(3;\CC)$.
 As $\Roots=(\CP^1\times\CP^1)/\ZZ_2$ is built from the complex projective line, the natural symmetries of the space of roots are $\PSL(2;\CC)$, inherited from $\CP^1$ via $[A].\{[z],[w]\}=\{[Az],[Aw]\}$.
Initially these two notions of geometry are completely independent, constructed from our models of $\Roots$ and $\Coefs$ respectively.
However, given that $\RootMap\colon\Coefs\to\Roots$ is a homeomorphism, we may use it to compare the symmetries on one side to the other.
The main observation of this section is that these two actions are actually \emph{compatible}\footnote{This picture may already be familiar from representation theory: the action of $\PSL(V)$ on a 2-dimensional complex vector space $V$ induces actions on symmetric powers of $V$, which are the irreducible representations of $\PSL(V)$.  Projectivizing this picture under an identification $V=\CC^2$ yields the result we exposit here.} with each other.

\begin{observation}
	\label{obs:proj}
Moving the space of roots by a (projective) linear transformation is represented on the space of coefficients by a (projective) linear transformation as well.
That is, conjugation of this action by $\RootMap^{-1}$ embeds $\PSL(2;\CC)$ into $\PSL(3;\CC)$.
\end{observation}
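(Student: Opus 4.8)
The plan is to show that the $\PSL(2;\CC)$ action on $\Roots = \SP^2(\CP^1)$, transported back to $\PP\Coefs = \CP^2$ via the homeomorphism $\RootMap$, is realized by projective-linear transformations, i.e.\ lands inside $\PSL(3;\CC)$. The cleanest route is to identify the explicit linear representation. Recall $\RootMap$ sends $[a:b:c]$ to the unordered root-set of $f = ax^2 + bxy + cy^2$, and conversely a root-set $\{[z_1:w_1],[z_2:w_2]\}$ determines $[f]$ as the class of $(w_1 x - z_1 y)(w_2 x - z_2 y)$. So for $M \in \PSL(2;\CC)$, the conjugated action $\widetilde M = \RootMap^{-1} \circ M_* \circ \RootMap$ takes the coefficient class of $f(x,y)$ to the coefficient class of the quadratic whose roots are the $M$-images of the roots of $f$. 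The key computational observation is that this quadratic is simply $f \circ M^{-1}$, the pullback of the binary form $f$ by $M^{-1}$ acting linearly on the variables $(x,y)$: if $[z:w]$ is a root of $f$, then $M\cdot[z:w]$ is a root of $(f\circ M^{-1})$, since $(f\circ M^{-1})(M\cdot(z,w)) = f(z,w) = 0$. (One checks this extends continuously to the degenerate/linear cases using the footnote's convention, so it is valid on all of $\CP^2$.)

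First I would make this precise: write $M = \left(\begin{smallmatrix} p & q \\ r & s\end{smallmatrix}\right)$ with $\det M = 1$, and compute the coefficients of $g(x,y) := f(M^{-1}(x,y)^{\mathsf T})$ as explicit quadratic (in fact bilinear-in-each-argument, hence linear overall) functions of $(a,b,c)$. This is the standard degree-$2$ symmetric-power representation $\mathrm{Sym}^2$ of $\SL(2;\CC)$ on the $3$-dimensional space of binary quadratic forms. Concretely, substituting $x \mapsto sx - qy$, $y \mapsto -rx + py$ (the entries of $M^{-1}$) into $ax^2 + bxy + cy^2$ and collecting terms expresses $(a',b',c')$ as the image of $(a,b,c)$ under a $3\times 3$ matrix whose entries are quadratic polynomials in $p,q,r,s$. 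I would not grind through this; I would simply note that it defines a group homomorphism $\rho\colon \SL(2;\CC) \to \GL(3;\CC)$ (functoriality of $\mathrm{Sym}^2$), that $\det \rho(M) = (\det M)^{3} = 1$ so $\rho$ lands in $\SL(3;\CC)$, and that $\rho(-I) = I$ (since $-I$ acts trivially on degree-$2$ forms), so $\rho$ descends to $\PSL(2;\CC) \to \PSL(3;\CC)$.

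Next I would verify that $\rho$, so defined on coefficient space, is indeed the conjugate $\RootMap^{-1} \circ M_* \circ \RootMap$ of the root-space action — this is exactly the content of the root-pullback observation above, i.e.\ $\RootMap(\rho(M)[f]) = M_*\,\RootMap([f])$ as unordered root-sets. Finally I would check injectivity of the descended map $\PSL(2;\CC)\to\PSL(3;\CC)$: the kernel consists of $M$ with $f\circ M^{-1}$ projectively proportional to $f$ for every quadratic $f$; taking $f = x^2$, $f = y^2$, $f = xy$ forces $M$ to be scalar, so $[M]$ is trivial in $\PSL(2;\CC)$. Hence the conjugated action embeds $\PSL(2;\CC)$ into $\PSL(3;\CC)$, as claimed.

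\textbf{Main obstacle.} The genuinely delicate point is not the linear algebra but the \emph{continuity/extension at the boundary}: $\RootMap$ is a homeomorphism $\CP^2 \to \SP^2(\CP^1)$ only after one adopts the convention that linear forms are ``quadratics with a root at $\infty$'' and that the quadratic formula extends accordingly (the footnotes after Observation~\ref{obs:RootsHomeo}). One must confirm that the pullback description $[f]\mapsto[f\circ M^{-1}]$ agrees with $\RootMap^{-1}\circ M_*\circ\RootMap$ on these degenerate classes too — e.g.\ when $a=0$ so one root is at $\infty$ and $M$ moves $\infty$ to a finite point. This is straightforward since $f\circ M^{-1}$ is manifestly defined and continuous on \emph{all} of $\CP^2$ and agrees with the root-pullback formula on the dense open locus of honest quadratics with distinct finite roots; continuity then pins it down everywhere. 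So the obstacle is really just bookkeeping, and the cleanest presentation is the representation-theoretic one flagged in the paper's own footnote: the action is $\mathrm{Sym}^2$ of the standard representation, projectivized.
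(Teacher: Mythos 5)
Your proposal follows essentially the same route as the paper: the paper also defines the conjugated action by $[A].[f]=[f\circ A^{-1}]$, carries out the substitution $x\mapsto sx-qy$, $y\mapsto py-rx$ explicitly, and observes that collecting terms in $x^iy^j$ exhibits the new coefficient vector as a linear image of $[a:b:c]$ (the matrix in \eqref{eqn:Rep}); its footnote already flags the $\mathrm{Sym}^2$ interpretation you lean on. Your version just trades the explicit matrix for functoriality of $\mathrm{Sym}^2$, which is fine, and your attention to the degenerate locus (roots at $\infty$) matches the paper's footnoted continuity argument.

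One concrete slip: your injectivity check fails as stated. A non-scalar diagonal matrix $M=\mathrm{diag}(p,s)$ with $ps=1$ sends $x^2\mapsto s^2x^2$, $y^2\mapsto p^2y^2$, $xy\mapsto xy$, so each of your three test forms is preserved \emph{up to projective scale}; they only force $M$ to be diagonal, not scalar. You need a fourth form whose roots avoid $\{[1:0],[0:1]\}$, e.g.\ $(x+y)^2$, which forces $p=s$. Cleaner still: since the conjugated action is $\RootMap^{-1}\circ M_*\circ\RootMap$ and $\RootMap$ is a bijection, faithfulness on $\CP^2_\Coefs$ is equivalent to faithfulness of $\PSL(2;\CC)$ on $\SP^2(\CP^1)$, which follows from faithfulness on $\CP^1$ by looking at double roots $\{z,z\}$. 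With that repair the argument is complete.
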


One may then directly convert any knowledge about this representation into geometric statements binding the spaces of roots and coefficients yet closer together.
This $\PSL(2;\C)$ action on the space of coefficients has a natural description: conjugation by the roots map simply declares that 
$[A]\in\PSL(2;\CC)$ send the polynomial with roots $\{[z],[w]\}$ to the polynomial with roots $\{[Az],[Aw]\}$.
Writing this out explicitly, let $A=\left(\begin{smallmatrix}p&q\\r&s\end{smallmatrix}\right)\in\PSL(2;\CC)$ and $f$ be a homogeneous quadratic with coefficients $[f]=[a:b:c]$.
Then $[A].[f]=[f\circ A^{-1}]$, which gives the following.

$$
f\left(
\left[\begin{matrix}
s&-q\\-r&p
\end{matrix}\right]
\left[\begin{matrix}
x\\y
\end{matrix}\right]
\right)
=f\left(
\left[\begin{matrix}
sx-qy\\py-rx
\end{matrix}\right]\right)
=
a(sx-qy)^2+b(sx-qy)(py-rx)+c(py-rx)^2
$$

Expanding and collecting like terms in $x^iy^j$ shows that the new coefficient vector is \emph{a linear transformation of $[a:b:c]$ involving $p,q,r,s$}, which confirms Observation \ref{obs:proj}. 
Specifically, the coefficients of $[A].[f]$ satisfy
\begin{equation}
\label{eqn:Rep}
 \left[
\begin{matrix}p&q\\r&s\end{matrix}
\right].
[a:b:c]
=\left[
\begin{pmatrix}
s^2&-rs&r^2\\
-2qs&qr+ps&-2pr\\
q^2&-pq&p^2
\end{pmatrix}
\begin{pmatrix}a\\b\\c\end{pmatrix}
\right].
\end{equation}

This $3\times 3$ matrix is the \emph{representation} of $\left(\begin{smallmatrix}p&q\\r&s\end{smallmatrix}\right)$ acting on $\Coefs=\CP^2$.
Let $\rho\colon\PSL(2;\CC)\to\PSL(3;\CC)$ denote this representation.
 By construction, for each $A\in\PSL(2;\CC)$ we have $f(A.[x\nolinebreak:\nolinebreak y])=(\rho(A).f)([x:y])$.
This implies the roots map is $\rho$-\emph{equivariant}, satisfying the following important identity:
\begin{equation}
\label{eqn:Rho_Equivariant}
   \RootMap\left(\rho(A).f\right)=A.\RootMap(f).
\end{equation}

In principle, this formula for the action lets us completely compute anything we desire about the relationship between the spaces of roots and coefficients (including the quadratic formula itself, Corollary \ref{cor:QuadFormula}).
This $\PSL(2;\CC)$ action divides the space of quadratics into two components: those with a double root, and (the generic case) those with distinct roots. 
Each of these components forms a singe $\PSL(2;\CC)$ orbit, which we may see as follows.
For quadratics with a double root, note for any $a=[a_1:a_2]\in\CP^1$ the symmetry\footnote{indeed there are many choices for $A$: if $a\in\CC$ then $A=\left(\begin{smallmatrix}1&a\\0&1\end{smallmatrix}\right)$ also works.} $A=\left(\begin{smallmatrix}a_2&-a_1\\a_1&a_2\end{smallmatrix}\right)\in\PSL(2;\CC)$ takes $f=x^2$ to the quadratic $(a_2x-a_1y)^2$ with double root $a$. More abstractly, one may deduce this for the component containing quadratics with distinct roots from the fact that $\PSL(2,\CC)$ acts freely and transitively on distinct triples in $\CP^1$ (see \cite{richter2011perspectives} for this and other useful facts in projective geometry).  That is, given any quadratic $f$ with roots $a,b\in\CP^1$, choosing any arbitrary $c\not\in\{[0:1],[1:1],a,b\}$ there is a unique $A\in\PSL(2;\CC)$ taking $(a,b,c)$ to $([0:1],[1:1])$, and thus taking $f$ to the fixed quadratic $x(x-y)$.
Because the action of $\PSL(2;\CC)$ is transitive on each of these components, they can be interpreted as homogeneous geometries in the sense of Klein.
This allows us to use geometric properties to understand the behavior of the roots map.

\subsubsection{Real Quadratics} 
\label{ssec:RealQuad} We now turn to quadratics with real coefficients.
The spaces of real quadratics naturally inherit their topology and geometry as subsets of the corresponding spaces over $\CC$.
This reduces the space of coefficients from $\CC^3$ to $\RR^3$, so we work with the projective plane $\PP\Coefs=\RP^2$, and its symmetries $\PSL(3;\RR)$.
The space of roots is more complicated to describe due to the fact that real quadratics may have complex roots.
However, as the roots map is a homeomorphism over $\CC$, we can immediately determine its topology: $\Roots=\RootMap(\PP\Coefs)=\RootMap(\RP^2)\cong\RP^2$.
We will denote these two models of projective space as  $\RP^2_\Coefs$, $\RP^2_\Roots$ in what follows.

$$
\RP^2_\Coefs =\left\{[a:b:c]\mid (a,b,c)\in\RR^3\backslash \mathbf{0}\right\}\subset\CP^3
$$
$$
\RP^2_\Roots =\left\{\{x,y\}\mid x,y\in\RP^1\right\}\bigcup\left\{\{x\pm i y\}\mid x,y\in\RR,y>0\right\}\subset\SP^2(\CP^1)
$$

The natural symmetry group on the roots restricts to  $\PSL(2;\RR)$, and equation \eqref{eqn:Rep} confirms that the representation faithfully translates this to a subgroup of $\PSL(3;\RR)$ on the coefficients.
Thus $\RootMap\colon\RP^2_\Coefs\to\RP^2_\Roots$ remains equivariant with respect to the $\PSL(2;\RR)$ symmetries on each side.
Recall that over $\CC$, this action divided the space of quadratics into two orbits.
Any two polynomials with distinct roots are related to one another by a symmetry transformation, as are any two with a double root; but $\PSL(2;\CC)$ cannot convert one type into the other.
Over $\RR$ the story gets more interesting, as the generic case (polynomials with distinct roots) splits.

\begin{observation}
The $\PSL(2;\RR)$ action divides the space of real quadratics into three orbits\footnote{On $\RP^2_\Coefs$, the orbit of quadratics with double root is exactly the discriminant locus (the set of polynomials whose discriminant is equal to zero); its complement is the union of the other two orbits.}: (1) quadratics with a double real root, (2) quadratics with distinct real roots, and (3) quadratics with complex conjugate roots. 
\end{observation}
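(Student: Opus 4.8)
The plan is to transfer the statement to the space of roots, where it becomes a claim about how $\PSL(2;\RR)$ moves configurations of two points in $\CP^1$. By the $\rho$-equivariance \eqref{eqn:Rho_Equivariant} of $\RootMap$, the $\PSL(2;\RR)$-orbit structure on $\RP^2_\Coefs$ coincides with that of the induced action on $\RP^2_\Roots\subset\SP^2(\CP^1)$, and from the displayed description of $\RP^2_\Roots$ every real quadratic is carried by $\RootMap$ to exactly one of: (1) a doubled point $\{p,p\}$ with $p\in\RP^1$; (2) a pair of distinct points $\{p,q\}\subset\RP^1$; or (3) a complex-conjugate pair $\{z,\bar z\}$ with $z$ in the upper half-plane $\HH^2\subset\CP^1$. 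The cases where a root sits at $\infty=[1:0]$ (that is, $a=0$) are absorbed into (1) and (2) by the projective bookkeeping of Section \ref{sec:ProjGeo}. Because a real M\"obius transformation carries $\RP^1$ bijectively onto $\RP^1$ and commutes with complex conjugation (hence preserves $\HH^2$), each of these three subsets is $\PSL(2;\RR)$-invariant, and they are visibly pairwise disjoint. So it remains only to show $\PSL(2;\RR)$ acts transitively on each.

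For (1) and (2) I would restrict to $\RR$ the arguments already given over $\CC$ in Section \ref{sec:PSL_Sym}: $\PSL(2;\RR)$ acts transitively on $\RP^1$, in fact sharply $3$-transitively, so any doubled point moves to $[0:1]$ (carrying $f$ to the normal form $x^2$) and any unordered pair of distinct real points moves to $\{[0:1],[1:0]\}$ (carrying $f$ to $xy$). For (3) — the only genuinely new case — I would observe that $\{z,\bar z\}\mapsto z$ is a bijection from complex-conjugate pairs onto $\HH^2$ intertwining the induced action on $\SP^2(\CP^1)$ with the standard action of $\PSL(2;\RR)$ on $\HH^2$; here one uses that a real matrix $A$ satisfies $A\bar z=\overline{Az}$ and preserves the upper half-plane. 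Since $\PSL(2;\RR)$ is transitive on $\HH^2$ — for instance $\left(\begin{smallmatrix}\sqrt{y}&x/\sqrt{y}\\0&1/\sqrt{y}\end{smallmatrix}\right)$ sends $i$ to $x+iy$ — every quadratic with complex roots is equivalent to the one with roots $\{i,-i\}$, namely $x^2+y^2$, so (3) is a single orbit.

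Everything here is routine once the problem is pushed to the roots side; the two places demanding care are the roots-at-infinity cases (they must be threaded through the projectivization, not discarded) and the verification in (3) that the $\SP^2(\CP^1)$-action genuinely restricts to the standard $\HH^2$-action. That last point is the heart of the matter: it is precisely the reality of the coefficients, hence of the M\"obius transformations, that forbids mixing real and complex-conjugate root pairs, which is why the single $\PSL(2;\CC)$-orbit of distinct-root quadratics breaks into the two $\PSL(2;\RR)$-orbits (2) and (3). As an alternative device for the invariance claim, one could instead work on the coefficient side and note that the sign of the discriminant $b^2-4ac$ is a well-defined $\PSL(2;\RR)$-invariant on $\RP^2_\Coefs$: it is fixed by the $\rho(A)$ of \eqref{eqn:Rep} (the classical $\SL_2$-invariance of the discriminant of a binary quadratic form) and scales by a positive factor under rescaling of $(a,b,c)$; but the root-configuration picture makes transitivity more transparent.
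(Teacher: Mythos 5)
Your argument is correct and follows essentially the same route as the paper, which likewise transfers the question to root space via the equivariance of $\RootMap$, identifies the three root configurations (doubled point of $\RP^1$, distinct pair in $\RP^1$, complex-conjugate pair in the upper half plane), and asserts transitivity on each piece; you simply make the transitivity checks explicit. One small slip: over $\RR$ it is $\PGL(2;\RR)$, not $\PSL(2;\RR)$, that is sharply $3$-transitive on $\RP^1$ (the latter preserves the cyclic orientation), but the fact you actually use --- transitivity of $\PSL(2;\RR)$ on \emph{unordered} pairs of distinct points of $\RP^1$ --- does hold, so nothing breaks.
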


The orbit (1) is homeomorphic to a circle\footnote{In fact, from here, elementary topology completes the story as all separating circles divide $\RP^2$ into a M\"obius strip and a disk, giving the topological type of (2) and (3) respectively.}
as any point $[p:q]\in\RP^1$ determines a quadratic $f=(px-qy)^2$ with double root at $[p:q]$.
The orbit (2) of quadratics with distinct real roots identifies with the space of unordered pairs of points of $\RP^1$, which we saw in Remark \ref{rem:MobiusBand} to be the M\"obius band.
We are mainly interested in orbit (3), which consists of unordered pairs of \emph{complex conjugates} in $\CC\backslash\RR$.
Each such pair contains a unique point $x+iy$ with $y>0$, so we identify this with the upper half plane in $\CC$.
As the $\PSL(2;\RR)$ action is transitive on each orbit, all three of these pieces inherit the structure of a homogeneous geometry, listed below.

\begin{figure}[h!tbp]
\centering
\begin{subfigure}[b]{0.22\textwidth} 
    \centering
	\includegraphics[width=\textwidth,page=1]{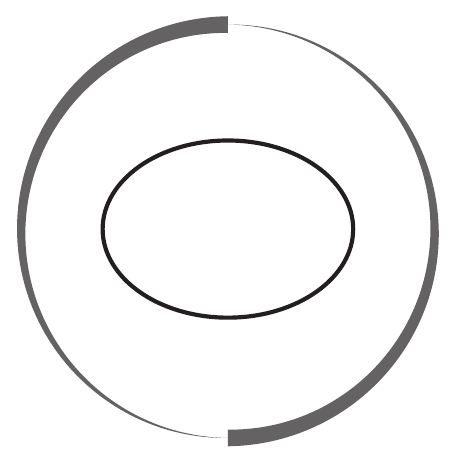}
	\caption{Double Root\\~}
    \label{fig:QuadDoubleRoot}
\end{subfigure}
\begin{subfigure}[b]{0.22\textwidth} 
    \centering
	\includegraphics[width=\textwidth,page=3]{Figures/QuadraticDivisions.pdf}
		\caption{2 Real Roots\\~}
    \label{fig:QuadPosDisc}
\end{subfigure}
\begin{subfigure}[b]{0.22\textwidth} 
    \centering
	\includegraphics[width=\textwidth,page=2]{Figures/QuadraticDivisions.pdf}
	\caption{2 Complex Conjugate Roots}
    \label{fig:QuadNegDisc}
\end{subfigure}
\caption{The four $\PSL(2;\RR)$? orbits on the $\RP^2$ of real quadratics, as subsets of $\RP^2$.  These are the three homogeneous spaces of Observation \ref{obs:Quadratic_HomogeneousSpace}
As in Figure \ref{fig:RPn}, the topology of each component is recovered by identifying points on the boundary circle via the antipodal map.
Compare to the analogous decomposition of $\RP^3$ for cubics in Figure \ref{fig:CubicRp3}.
}
\label{fig:QuadraticRp2}
\end{figure}

\begin{observation}
\label{obs:Quadratic_HomogeneousSpace}
The $\PSL(2;\RR)$ action on the projective plane of real quadratics divides it into three disjoint homogeneous geometries.
\begin{enumerate}
    \item The geometry of quadratics with a double root is the familiar geometry of the real projective line.
    \item The geometry of quadratics with distinct real roots is \emph{de Sitter space}, a two dimensional geometry relevant to relativistic physics\footnote{De Sitter geometry is a particular geometry not of space, but rather of \emph{spacetime}.  In this particular case, De Sitter 2-space describes a world with one space and one time dimension of positive curvature.  Geometrically, this is just the hyperboloid of 1-sheet $x^2+y^2-z^2=1$ in $\RR^3$ equipped with the action of $\SO(2,1)$.}.
    \item The geometry of quadratics with complex conjugate roots is  a hyperbolic plane (the disk in Figure     \ref{fig:QuadNegDisc} equipped with the projective action of $\PSL(2;\RR)$.
\end{enumerate}
\end{observation}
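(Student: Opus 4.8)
The plan is to verify each of the three items by (a) identifying the orbit set-theoretically, (b) exhibiting the symmetry group acting transitively on it, and (c) matching this $(X,G)$ pair against the stated model geometry. Item (1) is essentially already done: the map $[p:q]\mapsto (px-qy)^2$ is a homeomorphism $\RP^1\to\{\text{double-root quadratics}\}$, it is $\PSL(2;\RR)$-equivariant (by the equivariance of $\RootMap$, equation \eqref{eqn:Rho_Equivariant}, since a double root is just an unordered pair $\{a,a\}$ and the action sends $\{a,a\}\mapsto\{Aa,Aa\}$), and the $\PSL(2;\RR)$ action on $\RP^1$ by M\"obius transformations is the defining homogeneous geometry of the projective line. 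So for (1) there is nothing more to do than cite the equivariance of $\RootMap$ and the definition of $\RP^1$ as a homogeneous space.

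For (2) and (3) I would work on the roots side, where $\RP^2_\Roots$ sits inside $\SP^2(\CP^1)$. Orbit (2) is the set of unordered pairs of distinct points of $\RP^1$, which we already identified (Remark \ref{rem:MobiusBand}) with the M\"obius band; orbit (3) is the set of complex-conjugate pairs $\{x\pm iy\}$ with $y>0$, identified with the upper half-plane $\HH^2\subset\CP^1$ via $\{x\pm iy\}\leftrightarrow x+iy$. Transitivity of $\PSL(2;\RR)$ on (3) is immediate: $\PSL(2;\RR)$ acts transitively on the upper half-plane by M\"obius transformations (standard), and this action is exactly the restriction of the action on unordered conjugate pairs because a real M\"obius transformation commutes with complex conjugation, hence sends $\{x\pm iy\}$ to $\{Az\pm i\,(\cdots)\}$ preserving the half-plane. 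Since the half-plane model of $\HH^2$ with its $\mathrm{Isom}_0(\HH^2)\cong\PSL(2;\RR)$ action is by definition the hyperbolic plane as a homogeneous geometry, (3) follows. For (2), $\PSL(2;\RR)$ is sharply $3$-transitive on $\RP^1$, hence transitive on unordered pairs of distinct points; the point-stabilizer computation (stabilizer of $\{0,\infty\}$ is the diagonal torus together with $z\mapsto 1/z$) shows the homogeneous space is $\PSL(2;\RR)/(\mathbb{R}^\times\rtimes\ZZ_2)$, a $2$-dimensional space, and the invariant structure it carries is the Lorentzian (de Sitter) one — concretely one checks the orbit is an open M\"obius band and the invariant metric coming from the Killing form on the symmetric-space complement has signature $(1,1)$, which is precisely de Sitter $2$-space, i.e. the one-sheeted hyperboloid with its $\SO(2,1)$ action.

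Two bookkeeping points remain. First, one should confirm these three sets genuinely exhaust $\RP^2_\Roots$ and are pairwise disjoint: a real quadratic either has two distinct real roots, a double real root, or a conjugate pair of non-real roots — this is the sign of the discriminant $b^2-4ac$ being positive, zero, or negative, and the three cases are $\PSL(2;\RR)$-invariant because $\Delta$ transforms by a square under the representation $\rho$ (visible from the fact that $\rho$ lands in $\PSL(3;\RR)$ preserving the conic $b^2=4ac$). Second, one can optionally transport everything to the coefficient side to match the pictures in Figure \ref{fig:QuadraticRp2}: the double-root orbit is the conic $b^2=4ac$ in $\RP^2_\Coefs$, which separates $\RP^2$ into the convex disk $\{b^2<4ac\}$ (orbit (3), matching $\HH^2_\Coefs$ from Theorem \ref{thm:QuadRoots}) and its M\"obius-band complement $\{b^2>4ac\}$ (orbit (2)); the invariant projective metric on the convex side is hyperbolic (Beltrami--Klein model) and on the non-convex side is de Sitter, consistent with the standard fact that a conic in $\RP^2$ cuts it into a Klein model of $\HH^2$ and a de Sitter plane. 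The main obstacle is the honest identification in item (2) of the invariant geometry as \emph{de Sitter} rather than merely ``some homogeneous $2$-space'': this requires either the Killing-form signature computation or the projective-metric argument on the non-convex component of the conic's complement, and is the one place where a genuine (if short) calculation is unavoidable.
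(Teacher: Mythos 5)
Your proposal is correct and follows essentially the same route as the paper: the paper identifies the three orbits by the sign of the discriminant, establishes transitivity on each via the action on root-sets, and justifies the de Sitter/hyperbolic identifications through the invariance of the quadratic form $\Delta(a,b,c)=b^2-4ac$ under $\rho$ (so that $\rho(\PSL(2;\RR))\subset\SO(\Delta)$ and the positive/zero/negative cones projectivize to de Sitter space, $\RP^1$, and the Klein model respectively) — which is exactly your ``optional'' coefficient-side argument. One tiny quibble: $\PSL(2;\RR)$ is not sharply $3$-transitive on $\RP^1$ (that is $\PGL(2;\RR)$); but $2$-transitivity, which $\PSL(2;\RR)$ does have, already gives transitivity on unordered distinct pairs, so your conclusion stands.
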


\subsubsection{Hyperbolic Geometry and the Roots Map}
\label{sec:QuadHypGeo}

It is the real quadratics with complex roots which are responsible for some of the beautiful images in the gallery such as Figure \ref{fig:Initial_quadratics}, so we study the hyperbolic geometry which underlies them\footnote{Hyperbolic geometry, commonly denoted $\HH^2$, is the unique two dimensional geometry with constant negative curvature, and was the first non euclidean geometry discovered.
Negative curvature implies that $\HH^2$ violates Euclid's fifth postulate with an infinitude of parallel lines to a given line through any point not on it.
For an introductory treatment of the hyperbolic plane, see \cite{anderson1999hyperbolic}.}.
An abstract understanding of hyperbolic space is insufficient for our goals, which rely on an explicit understanding of the geometry of the roots map $\RootMap$.  
Thus we need to consider both the model $\HH^2_\Coefs$ of hyperbolic geometry given by the coefficients of these polynomials, and model $\HH^2_\Roots$ formed by their roots.  For an excellent exposition of these models and more, see \cite{HyperbolicExposition}.

We begin with $\HH^2_\Roots$.  Identifying this space with the upper half plane $\{x+iy\mid y>0\}\subset\CC$, the Riemannian metric\footnote{A Riemannian metric is a choice of inner product for each tangent space, which allows one to measure the length of vectors, and hence the arc length of curves.} for hyperbolic geometry is $ds^2=(dx^2+dy^2)/y^2$, which follows directly from the $\PSL(2;\R)$ action, translating the standard metric $dx^2+dy^2$ at $i$ around by the group action.

The length of paths in the hyperbolic plane is computed via integrating this infinitesimal arc length: given a curve $\gamma(t)=(x(t),y(t))$ defined for $t\in[a,b]$, its length is $\operatorname{Length}(\gamma)=\int_a^b\frac{x^\prime(t)^2+y^\prime(t)^2}{y(t)^2}dt$.
The geodesics of hyperbolic geometry are given by arcs of semicircles whose centers lie on the boundary $y=0$, together with vertical lines (circles of infinite radius).
These geodesics determine all length minimizing segments in the hyperbolic plane: if $p,q$ are two points in hyperbolic space and $\gamma$ is a geodesic passing through $p$ and $q$, then the segment of $\gamma$ connecting them achieves the minimum distance among all curves joining $p$ to $q$.
This allows us to compute explicitly the distance function on hyperbolic space: if $x_1+iy_1$ and $x_2+iy_2$ in $\HH^2_\Roots$, the distance between them is given by

\begin{equation}
\label{eqn:RootsMetric}
    d_\Roots(x_1+iy_i,x_2+iy_2)=
    \operatorname{acosh}\left(1+\frac{(x_2-x_1)^2+(y_2-y_1)^2}{2y_1y_2}\right)
\end{equation}

Hyperbolic geometry has a well-defined notion of an \emph{ideal boundary}, consisting of points at infinity.
For the upper half plane model, we may describe these points as the idealized endpoints of geodesics: if $\gamma$ is any hyperbolic geodesic, the limits $\lim_{t\to\pm\infty}\gamma(t)$ lie on the ideal boundary.
Concretely, this consists of all points on the real line $y=0$ (the endpoints of all semicircle geodesics, and one endpoint of each vertical geodesic) together with a single additional point traditionally labeled $\infty$ (denoting the idealized endpoint of all vertical geodesics not lying on the real line).  See \cite{anderson1999hyperbolic}, Chapter 1 for more details on the upper half plane model and its ideal boundary.

Next, we turn to $\HH^2_\Coefs$.
Here the basic geometry is likely familiar from the study of conic sections, often studied in high school mathematics. 
Generic real quadratics have either two real roots or a pair of complex conjugate roots, as determined by the discriminant $\Delta(ax^2+bx+c)=b^2-4ac$ being positive or negative, respectively.
In fact, this and more can be explicitly recovered from studying the $\PSL(2;\RR)$ action given by the representation $\rho$ on $\RP^2_\Coefs$.
We focus here on the polynomials with complex roots; similar reasoning applies in the other case.
As $\PSL(2;\RR)$ acts transitively on $\HH^2_\Roots$, we may recover the entire space as the orbit of any point.
Using the defining property $\RootMap(\HH^2_\Coefs)=\HH^2_\Roots$ and equation \eqref{eqn:Rho_Equivariant}, we see that
\begin{equation}
\HH^2_\Roots=\PSL(2;\RR).\{\pm i\}\implies \HH^2_\Coefs=\rho\left(\PSL(2;\RR)\right).[1:0:1],
\end{equation}
as $[1:0:1]$ are the coefficients of $x^2+1$, with roots $\{\pm i\}$.
Computing this orbit\footnote{The entire action of $\PSL(2;\RR)$ on $\RR^3$ via the representation $\rho$ preserves the quadratic form $\Delta$, and the inner product $\langle (a_1,b_1,c_1),(a_2,b_2,c_2)\rangle=b_1b_2-2a_1c_2-2a_2c_1$ for which $\Delta(v)=\langle v,v\rangle$.
Thus the symmetries of $\HH^2_\Coefs$ are contained in the \emph{special orthogonal group} of this form $\SO(\Delta)$,  the orthogonal group of a quadratic form $q$ is the group of all matrices with determinant 1 whose action leaves $q$ invariant: $\SO(q)=\{A\in\SL(n;\RR)\mid q(v,w)=q(Av,Aw)\}$.  The indefinite orthogonal group $\SO(\Delta)$ has two components, determined by whether or not a symmetry preserves or swaps the two sheets of the hyperboloid.  The representation $\rho$ is actually an isomorphism onto the connected component of the identity: $\PSL(2;\RR)\cong \SO_0(\Delta)$.},
    the hyperbolic space $\HH^2_\Coefs$ is the projectivization of the negative cone\footnote{A similar description can be given for the two other geometries of quadratic polynomials.
    The $\RP^1$ of polynomials with double roots is the zero set of $\Delta$, or the projectivization of the light cone $b^2=4ac$.
   The space of polynomials with distinct real roots corresponds to points on which $\Delta$ is positive, which projectively forms a M\"obius band.  This space also has a natural notion of geometry, coming not from the hyperbolic plane but from relativity (it is called $1+1$ dimensional de Sitter space, but is beyond the scope of this paper.)
} of the discriminant $\Delta$; that is $\HH^2_\Coefs=\{[a:b:c]\mid 4ac>b^2\}$.
Topologically $\HH^2_\Coefs$ is a disk, but depending on the affine patch of $\RP^2$ that we choose it may take different forms.
Indeed, the patch $[1:b:c]$, corresponding to taking monic representatives, represents $\HH^2_\Coefs$ as the interior of a paraboloid, with one point on its ideal boundary at infinity: $\HH^2_\Coefs=\{[1:b:c]\mid 4c>b^2\}$.

\begin{figure}[h!tbp]
\centering
	\includegraphics[width=0.5\textwidth]{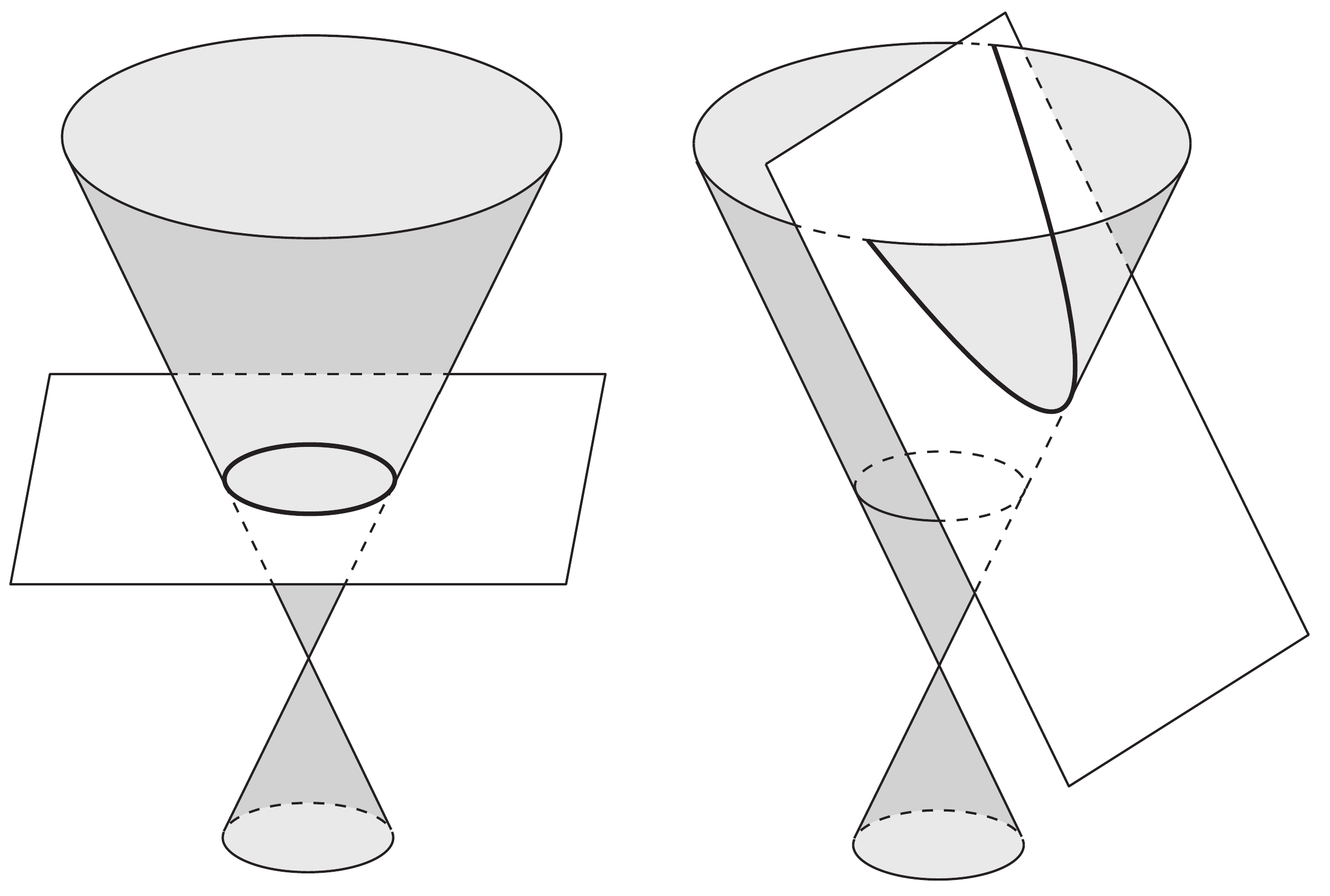}
	\caption{The cone determined by the discriminant in $\mathbb{R}^3$, together with two affine patches giving the usual Klein model (left) and a parabola model (right) of $\mathbb{H}^2$.
	}
\label{fig:Initial-reallyanotherone}
\end{figure}

As $\HH^2_\Coefs$ is a properly convex set in $\RP^2$, it can be endowed with a natural metric invariant under projective transformations\footnote{This is called the Hilbert metric.  Such a metric may be defined for any convex subset $\Omega\subset\RP^2$ not containing any entire projective line, and realizes a model of hyperbolic geometry precisely when $\Omega$ is bounded by a nonsingular conic section.}.  See \cite{HyperbolicExposition} for a more detailed analysis of this projective, or \emph{Klein Model} of the hyperbolic plane.

As the discriminant carries all the geometry associated with the $\PSL(2;\RR)$ symmetries, there is a nice description of this metric in terms of $\Delta$.
This is most apparent for the Riemannian metric after pulling back to the hyperboloid in $\RR^3$: if $v\in\RR^3$ is any tangent vector to the hyperboloid, than its infinitesimal arc length is simply
$$ds^2=\Delta(v).$$

The geodesics in this metric are straight line segments in any affine patch containing $\HH^2_\Coefs$, and the distance between two points $f_1=[a_1:b_1:c_1]$ and $f_2=[a_2:b_2:c_2]$ is given by
\begin{equation}
\label{eqn:CoefsMetric}
d_\Coefs(f_1,f_2)=\acosh{\left(\frac{-\langle f_1,f_2\rangle}{\sqrt{\Delta(f_1)\Delta(f_2)}}\right)}
=\acosh{\left(
\frac{2a_1c_2+2a_2c_1-b_1b_2}
{
\sqrt{(4a_1c_1-b_1^2)(4a_2c_2-b_2^2)}
}
\right)}
\end{equation}

Bringing this all together, we have seen the natural $\PSL(2;\RR)$ actions on both the spaces of roots and coefficients endow the space of real quadratics with complex conjugate roots with the homogeneous geometry of the hyperbolic plane.
We are now in a position to prove the main theorem of this section, following the outline proposed at the beginning.

\begin{theorem}
\label{thm:hypIso}
Let $\HH^2_{\Coefs}=\{[a:b:c]\in\RP^2\mid 4ac>b^2\}$ be equipped with the projectively invariant metric (equation \ref{eqn:CoefsMetric}) it inherits as a convex subset of $\RP^2$, and $\HH^2_{\Roots}=\{\{x\pm iy\}\mid x,y\in\R,y>0\}$ be equipped with the hyperbolic metric (equation \ref{eqn:RootsMetric}) arising from its identification with the upper half plane.
Then the restricted roots map $\RootMap\colon\HH^2_\Coefs\to\HH^2_\Roots$ is an isometry.
\end{theorem}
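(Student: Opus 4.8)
The plan is to prove the statement by pure symmetry, as foreshadowed in the narrative of this section, so that the only computation required is the evaluation of both metrics on a single pair of points. The idea: $\RootMap$ restricts to a $\PSL(2;\RR)$-equivariant diffeomorphism $\HH^2_\Coefs\to\HH^2_\Roots$; both $d_\Coefs$ and $d_\Roots$ are invariant under the corresponding (intertwined) $\PSL(2;\RR)$ actions; these actions are transitive, with point-stabilizer a circle acting on the tangent plane by rotations; hence the Riemannian metric pulled back from $\HH^2_\Roots$ and the one defining $d_\Coefs$ are proportional, and a one-line computation pins the constant to $1$.

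First I would record that $\RootMap$ restricts to a homeomorphism $\HH^2_\Coefs\to\HH^2_\Roots$: this is immediate from Observation \ref{obs:RootsHomeo} together with the orbit identity $\RootMap(\HH^2_\Coefs)=\HH^2_\Roots$ established above. On the affine patch of monic quadratics the map is $[1:b:c]\mapsto\big(\tfrac{-b}{2},\tfrac{\sqrt{4c-b^2}}{2}\big)$, which is real-analytic with polynomial inverse, so $\RootMap|_{\HH^2_\Coefs}$ is a diffeomorphism. Consequently, pulling back the upper half-plane Riemannian metric $(dx^2+dy^2)/y^2$ (whose distance function is $d_\Roots$) gives a genuine Riemannian metric on $\HH^2_\Coefs$, with distance function $(f_1,f_2)\mapsto d_\Roots(\RootMap(f_1),\RootMap(f_2))$; likewise $d_\Coefs$ is the distance function of the Klein-model Riemannian metric on $\HH^2_\Coefs$.

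The heart of the argument is that these two Riemannian metrics on $\HH^2_\Coefs$ are both invariant under the $\PSL(2;\RR)$ action given by the representation $\rho$ of \eqref{eqn:Rep}. Indeed $\rho$ maps into $\SO(\Delta)$, so $\rho(\PSL(2;\RR))$ consists of projective transformations preserving the convex domain $\HH^2_\Coefs$, hence isometries of its projectively-invariant Hilbert metric; invariance of the pulled-back metric follows from the equivariance $\RootMap\circ\rho(A)=A\circ\RootMap$ of \eqref{eqn:Rho_Equivariant} together with the M\"obius-invariance of the upper half-plane metric. Since $\PSL(2;\RR)$ acts transitively on $\HH^2_\Roots$ (the upper half-plane being homogeneous) and $\RootMap$ is equivariant, it acts transitively on $\HH^2_\Coefs$ as well, and the stabilizer of $[1:0:1]$ (the class of $x^2+1$) is the rotation circle $\mathrm{PSO}(2)$. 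As $\mathrm{PSO}(2)$ acts on the $2$-dimensional tangent space at $[1:0:1]$ with no invariant line, the invariant inner product there is unique up to a positive scalar; propagating this across the space by transitivity shows the two Riemannian metrics are proportional, so $d_\Roots(\RootMap(f_1),\RootMap(f_2))=\lambda\,d_\Coefs(f_1,f_2)$ for all $f_1,f_2$ and some $\lambda>0$.

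It remains to check $\lambda=1$, which I would do by evaluating both sides on $f_1=[1:0:1]$ and $f_2=[1:0:t]$ with $t>1$ --- the monic quadratics $x^2+1$ and $x^2+t$, with $\RootMap(f_1)=\{\pm i\}$ and $\RootMap(f_2)=\{\pm i\sqrt t\}$, identified with the upper half-plane points $i$ and $i\sqrt t$. Equation \eqref{eqn:CoefsMetric} gives $d_\Coefs(f_1,f_2)=\acosh\!\big(\tfrac{t+1}{2\sqrt t}\big)=\acosh\!\big(\cosh(\tfrac12\ln t)\big)=\tfrac12\ln t$, while equation \eqref{eqn:RootsMetric} gives $d_\Roots(i,i\sqrt t)=2\ln\!\big(\tfrac{(\sqrt t-1)+(\sqrt t+1)}{2t^{1/4}}\big)=\tfrac12\ln t$; the two agree, so $\lambda=1$. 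I expect the one genuinely non-bookkeeping ingredient to be the uniqueness-up-to-scalar of an invariant metric under a transitive action whose isotropy acts without invariant subspaces on the tangent space; everything else is assembling equivariance and transitivity facts already in hand, together with this last scalar check. A reader who prefers brute force can instead substitute the general formula $\RootMap[a:b:c]=\big(\tfrac{-b}{2a},\tfrac{\sqrt{4ac-b^2}}{2a}\big)$ (choosing the representative with $a>0$) into \eqref{eqn:RootsMetric} and verify directly that it equals \eqref{eqn:CoefsMetric}; but, as this section's narrative emphasizes, that calculation is unenlightening.
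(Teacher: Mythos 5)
Your proof is correct and follows the same route as the paper: the paper likewise deduces the theorem from the $\rho$-equivariance of $\RootMap$ together with the $\PSL(2;\RR)$-invariance of both metrics, concluding that an equivariant homeomorphism of homogeneous geometries must preserve the metric. You in fact go one step further than the paper's rather terse argument by making explicit why invariance forces the two metrics to coincide --- uniqueness of the invariant inner product up to scale, since the isotropy circle acts on the tangent plane without an invariant line, plus the normalization check on $x^2+1$ and $x^2+t$ --- and both of your computations there are correct.
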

\begin{proof}

Observation \ref{obs:RootsHomeo} implies the roots map is a homeomorphism on the total space of projectivized quadratics and their roots, so restricting to polynomials with a complex conjugate pair of roots, $\mathcal{R}$ remains a homeomorphism from $\HH^2_\Coefs$ to $\HH^2_\Roots$.  
Equipped with their respective metrics $d_\Coefs$ and $d_\Roots$ referenced in the theorem statement, we show that $\mathcal{R}$ is an isometry by proving for each pair of quadratics $f,g$ with complex roots,
$$d_\Coefs(f,g)=d_\Roots(\mathcal{R}(f),\mathcal{R}(g)).$$
Fix such an $f$ and $g$.  Because hyperbolic geometry is homogeneous, there is some isometry $A$ of $\mathbb{H}^2_\Coefs$ which takes $f$ to any point of our choosing.  To leverage this symmetry in our computations, we choose $A$ to be the isometry taking $f$ to $[1:0:1]$, which represents the polynomial $x^2+1$.  But furthermore the hyperbolic plane is isotropic (looks the same in every direction); consequently we may find another isometry $B$ which fixes $Af=[1:0:1]$ and rotates about it, taking $A.g$ to a point of the form $[1:0:a^2]$ lying on the geodesic $[1:0:t]$ through $[1:0:1]$.
As $C=BA$ is an isometry it leaves distances invariant, and so
$$d_\Coefs(f,g)=d_\Coefs(C.f,C.g)=d_\Coefs\left([1:0:1],[1:0:a^2]\right)$$
This final quantity is straightforwward to compute directly from the definition of the distance function:
\begin{equation}d_\Coefs\left([1:0:1],[1:0:a^2]\right)=\acosh\left(\frac{2+2a^2-0}{\sqrt{(-4)(-4a^2)}}\right)=\acosh\left(\frac{1}{2}\left(a+\frac{1}{a}\right)\right)
\label{eqn:RootMapIsom_CoefSide}\end{equation}
Now, we turn to the computation of $d_\Roots(\mathcal{R}(f),\mathcal{R}(g))$.  The symmetry $C$ we leveraged in Equation \ref{eqn:RootMapIsom_CoefSide} is a linear transformation preserving the hyperboloid $\Delta(f)=-1$ in $\RR^3$, and hence $C$ lies in the image of the representation $\rho$, so we may write $C=\rho(M)$ for some $M\in\PSL(2;\RR)$.
Now, we can use the equivariance of the root map $\mathcal{R}$ to simplify things: 
$$d_\Roots\left(\mathcal{R}(f),\mathcal{R}(g)\right)=d_\Roots\left(M.\mathcal{R}(f),M.\mathcal{R}(g)\right)=d_\Roots\left(\mathcal{R}\left(\rho(M).f\right),\mathcal{R}\left(\rho(M).g\right)\right)$$
where the first equality follows as $M$ is an isometry of $d_\Roots$, and the second is equivariance (Equation \ref{eqn:Rho_Equivariant}).
But this is just the distance between the roots of $C.f=x^2+1$ and $C.g=x^2+a^2$; which is the length of the hyperbolic geodesic connecting $i$ to $ia$ in the upper half plane.
Using the expression for $d_\Roots$ in Equation \ref{eqn:RootsMetric}, we see
\begin{equation}d_\Roots(i,ia)=\acosh\left(1+\frac{(0-0)^2+(1-a)^2}{2(1)(a)}\right)=\acosh\left(\frac{1}{2}\left(a+\frac{1}{a}\right)\right)
\label{eqn:RootMapIsom_RootSide}\end{equation}
Putting these two computations together, we see that for any $f,g$ the distance $d_\Coefs(f,g)$ in the domain is equal to the distance $d_\Roots(\mathcal{R}(f),\mathcal{R}(g))$ in the range, so $\mathcal{R}$ is an isometry as claimed.

\end{proof}

We may use this to derive the quadratic formula from hyperbolic geometry.
Fix any quadratic $f$ (say, $x^2+1$) with complex root $r$ (here $i$) in the upper half plane, and let $A\in\PSL(2;\R)$.
By equivariance, the polynomial with root $A.r$ is $\rho(A).f$ for $\rho\colon\PSL(2;\R)\to\PSL(3;\R)$ the representation from Equation \ref{eqn:Rep}.
Writing this out, we see the polynomial with roots $x\pm iy=\left(\begin{smallmatrix}y&x\\0&1\end{smallmatrix}\right).\{\pm i\}$ has coefficients $\rho\left(\left(\begin{smallmatrix}y&x\\0&1\end{smallmatrix}\right)\right).[1:0:1]=[1:-2x:x^2+y^2]$.
This is an explict formula for the inverse of the roots map, $\RootMap^{-1}\colon\HH^2_\Roots\to\HH^2_\Coefs$, sending the roots $\{x\pm iy\}$ to the polynomial with (projectivized) coefficients $[1:-2x:x^2+y^2]$.
Inverting the relation $[1:-2x:x^2+y^2]=[a:b:c]$ gives $x=-b/2a$ and $y=\sqrt{c/a-x^2}$, or
$$x+iy=\frac{-b}{2a}+i\frac{\sqrt{4ac-b^2}}{2a}.$$

Theorem \ref{thm:hypIso} allows us to compute any geometric quantity of interest using either the roots or coefficients.
This simplifies certain calculations.
In particular, if $\alpha,\beta\in\CC$ are complex roots of the real quadratics $f_\alpha,f_\beta$ respectively, we may avoid using equation \eqref{eqn:RootsMetric} to compute $d_\Roots(\alpha,\beta)$, and instead compute $d_\Coefs(f_\alpha,f_\beta)$ using equation \eqref{eqn:CoefsMetric}.
This is used for the results in Section \ref{sec:Dio-quad}.

\begin{figure}[h!tbp]
\centering
	\includegraphics[width=0.85\textwidth]{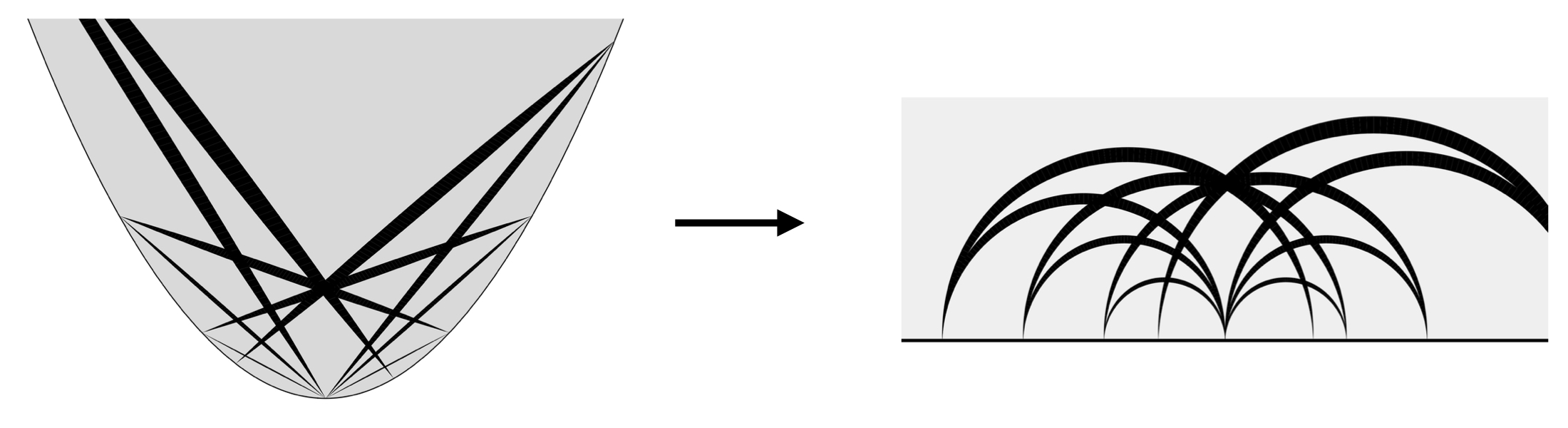}
\caption{Isometry from the projective model $\{[a:b:c]\mid 4ac>b^2\}$ and the upper half plane model of the hyperbolic plane, given by the quadratic formula.}
\label{fig:hypIso}
\end{figure}

\begin{corollary}
\label{rem:QuadFormula}
\label{cor:QuadFormula}
After the projective change of coordinates $\RP^2\to\RP^2$ given by $[a:b:c]=[\tfrac{w+u}{2}:v:\tfrac{w-u}{2}]$, the quadratic formula is precisely the usual isometry from the Klein disk model of $\HH^2$ to the upper half plane model, 
$$[u:v:1]\mapsto \frac{-v\pm i\sqrt{1-u^2-v^2}}{1+u}.$$
\end{corollary}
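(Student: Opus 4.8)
The plan is to transport the explicit description of the roots map obtained in the paragraph preceding this corollary through the stated linear change of coordinates, and then to recognize the resulting domain as the Klein disk. By Theorem~\ref{thm:hypIso} the restricted roots map is already known to be an isometry $\RootMap\colon\HH^2_\Coefs\to\HH^2_\Roots$, so there is no geometry left to establish; what remains is only the bookkeeping of writing this isometry in the distinguished coordinates in which the Klein model appears as the standard open unit disk. Note first that the substitution $[a:b:c]=[\tfrac{w+u}{2}:v:\tfrac{w-u}{2}]$ is given by an invertible $3\times 3$ matrix and hence is an honest projective change of coordinates on $\RP^2$.

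First I would substitute $a=\tfrac{w+u}{2}$, $b=v$, $c=\tfrac{w-u}{2}$ into the formula $\RootMap([a:b:c])=\tfrac{-b}{2a}+i\tfrac{\sqrt{4ac-b^2}}{2a}$ derived above. The one elementary identity needed is $4ac-b^2=(w+u)(w-u)-v^2=w^2-u^2-v^2$, which gives $x=-v/(w+u)$ and $y=\sqrt{w^2-u^2-v^2}/(w+u)$. Homogeneity then lets me pass to the affine patch $w=1$, landing exactly on the claimed expression $\frac{-v\pm i\sqrt{1-u^2-v^2}}{1+u}$, the $\pm$ recording the two complex conjugate roots.

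Next I would identify the source of this map. Since $\Delta(a,b,c)=b^2-4ac=-(w^2-u^2-v^2)$ in the new coordinates, the region $\HH^2_\Coefs=\{[a:b:c]\mid 4ac>b^2\}$ becomes $\{[u:v:w]\mid w^2>u^2+v^2\}$, whose affine patch $w=1$ is precisely the open unit disk $\{u^2+v^2<1\}$; moreover the invariant metric of equation~\eqref{eqn:CoefsMetric} is by construction the Hilbert metric of this properly convex body, i.e.\ the Klein model of $\HH^2$. It then remains to check that the displayed map is the textbook isometry from the Klein disk to the upper half plane, and for this I would verify a few normalizing features: the center $[0:0:1]$ maps to $i$; the ideal boundary circle $u^2+v^2=1$ maps into $\RR=\partial\HH^2_\Roots$ (there $y=0$); and the boundary point $[1:0:1]$ (where $1+u=2$ but $1-u^2-v^2=0$, or more to the point the limit $u\to 1$) is sent appropriately to the boundary, with the antipodal boundary point $[-1:0:1]$ going to $\infty$. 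Since both the displayed map and the standard reference isometry are isometries $\HH^2\to\HH^2$, agreement on these normalizations forces equality, matching the form given in a standard reference such as \cite{HyperbolicExposition}.

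The substitution is the whole computation, and the only place requiring genuine care is the branch and sign conventions — which square root, which sheet of the hyperboloid of Observation~\ref{obs:Quadratic_HomogeneousSpace}, and the orientation of the target half plane — so the main (though minor) obstacle is aligning the ``$\pm$'' and the positive square root with the chosen affine patch and with the conventional statement of the Klein-to-upper-half-plane isometry.
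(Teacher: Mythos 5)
Your proposal is correct and follows essentially the same route as the paper: the corollary is simply the substitution $a=\tfrac{w+u}{2}$, $b=v$, $c=\tfrac{w-u}{2}$ into the roots-map formula $\tfrac{-b\pm i\sqrt{4ac-b^2}}{2a}$ derived in the preceding paragraph, using $4ac-b^2=w^2-u^2-v^2$ to recognize the domain $\{u^2+v^2<1\}$ as the Klein disk, with Theorem~\ref{thm:hypIso} supplying the isometry statement. Your extra normalization checks (center to $i$, boundary to $\RR$) are harmless but not needed beyond the direct computation.
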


Qualitatively, this provides a complete understanding of the roots map\footnote{As particular examples, the 1-parameter families of quadratics with coefficients $[1:0:t]$, and $[1:t:1]$ project under the roots map to the vertical geodesic and unit circle through $i\in\C$ respectively.
}: affine lines in the space of coefficients are geodesics in the projective model of $\mathbb{H}^2$, and so the roots of such a 1-parameter family of polynomials form a geodesic in the upper half plane model: generalized circles orthogonal to $\R\subset\C$.

There is also a very nice geometric interpretation of the roots map for quadratics with two real roots, a patch of which is visualized in Figure \ref{fig:RealQuadratics}.
We do not give many details here, as these quadratics do not occur in the starscape images.
Nonetheless, we cannot resist telling the beginning of the story.

\begin{observation}
    \label{obs:quad_PosDisc}
    The roots map for quadratics with a double root is the continuous extension of $\RootMap\colon\HH^2_\Coefs\to\HH^2_\Roots$ to the ideal boundary of the hyperbolic plane.
For polynomials of positive discriminant, the roots map decomposes geometrically as follows:
\begin{enumerate}
    \item Send the polynomial with coefficient vector $v=[a:b:c]$, thought of as a point in the M\"obius band $\RP^2\backslash\HH^2_\Coefs$, to the set $\{w_1,w_2\}\in\partial_\infty\HH^2$, where the line through $v$ and $w_i$ is tangent to the ideal boundary. See figure \ref{fig:PosDiscQF}.
    \item Follow by applying to each $w_i$ the homeomorphism $\partial_\infty\HH^2_\Coefs\to\partial_\infty\HH^2_\Roots$ sending the projectivized lightcone to the extended real line in $\CP^1$.  The resulting two points are the roots of $f(x)=ax^2+bx+c$.
\end{enumerate}
\end{observation}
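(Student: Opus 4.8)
The first assertion---that the roots map on quadratics with a double root is the continuous extension of $\RootMap\colon\HH^2_\Coefs\to\HH^2_\Roots$---requires essentially no new work, and I would dispose of it first. Over $\RR$ the roots map is a homeomorphism $\RP^2_\Coefs\to\RP^2_\Roots$ (the real restriction of Observation \ref{obs:RootsHomeo}), hence automatically continuous across all three $\PSL(2;\RR)$ orbits; it restricts to the isometry of Theorem \ref{thm:hypIso} on the open disk $\HH^2_\Coefs$ and carries the double-root conic $\Gamma=\{[a:b:c]\mid b^2=4ac\}$---the projectivized lightcone of $\Delta$---bijectively onto the coincident pairs $\{\{x,x\}\mid x\in\RP^1\}$, which is exactly $\partial_\infty\HH^2_\Roots\cong\RP^1\subset\CP^1$. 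Explicitly, the induced boundary homeomorphism $\phi\colon\partial_\infty\HH^2_\Coefs\to\partial_\infty\HH^2_\Roots$ sends the coefficient vector $[1:-2x:x^2]$ of $(t-x)^2$ to $x$ (and $[0:0:1]$ to $\infty$); this is precisely the $y\to0$ limit of the formula $\RootMap^{-1}(\{x\pm iy\})=[1:-2x:x^2+y^2]$ derived just after Theorem \ref{thm:hypIso}. This identifies the map occurring in step (2) of the Observation, and I will invoke it at the end.

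For the positive-discriminant case the plan is to combine the projective geometry of the conic $\Gamma$ with one line of algebra. Recall $\Delta$ has polar bilinear form $\langle(a_1,b_1,c_1),(a_2,b_2,c_2)\rangle=b_1b_2-2a_1c_2-2a_2c_1$ and that $\Gamma=\{[w]\mid\langle w,w\rangle=0\}$. Fix $v=[a:b:c]$ with $\Delta(v)=b^2-4ac>0$, so $v$ lies in the de Sitter (M\"obius-band) region, strictly outside $\Gamma$. The classical fact I would cite is that for such an exterior point the polar line $v^\perp=\{[u]\mid\langle v,u\rangle=0\}$ meets $\Gamma$ in exactly two points, and these are precisely the points where the two tangent lines to $\Gamma$ through $v$ touch $\Gamma$: indeed the tangent to $\Gamma$ at $[w]\in\Gamma$ is the line $w^\perp$, so that tangent passes through $v$ iff $\langle w,v\rangle=0$ iff $[w]\in v^\perp$. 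Hence the pair $\{w_1,w_2\}$ appearing in step (1) of the Observation is exactly $v^\perp\cap\Gamma$.

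It remains to identify this pair, which is where the short computation enters. Suppose first $a\neq 0$ and let $r_1,r_2=\tfrac{-b\pm\sqrt{b^2-4ac}}{2a}$ be the two real roots of $f(t)=at^2+bt+c$. The double-root polynomial $(t-r_j)^2$ has coefficient vector $w_j=(1,-2r_j,r_j^2)$, which lies on $\Gamma$, and moreover
$$\langle v,w_j\rangle=b(-2r_j)-2a\,r_j^2-2c=-2\bigl(ar_j^2+br_j+c\bigr)=0,$$
so $[w_j]\in v^\perp$ as well. Therefore $v^\perp\cap\Gamma=\{[w_1],[w_2]\}$, i.e.\ the tangency points of step (1) are the coefficient vectors of $(t-r_1)^2$ and $(t-r_2)^2$; applying the boundary homeomorphism $\phi$ of the first paragraph sends $[w_j]\mapsto r_j$, and $\{r_1,r_2\}$ is the set of roots of $f$. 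That is exactly step (2), completing the argument. The degenerate case $a=0$ (one root at $\infty$) is handled identically by passing to an affine patch in which that root becomes finite, or simply by continuity from the locus $a\neq 0$; the case $\Delta(v)=0$ is excluded because then $v\in\Gamma$ and $f$ has a double root, already covered by the first paragraph.

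The only genuinely geometric ingredient is the polar/tangent fact used in the second paragraph---standard but worth stating cleanly---while everything else reduces to the identity $\langle v,w_j\rangle=-2f(r_j)=0$ together with bookkeeping about the affine patch at infinity. I expect the one place to be careful is keeping the boundary homeomorphism $\phi$ and the lightcone-to-$\RP^1$ dictionary consistent with the sign and scaling conventions fixed after Theorem \ref{thm:hypIso}; no other obstacle should arise.
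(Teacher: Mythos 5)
Your argument is correct, but it is not the route the paper takes. The paper's proof is a one-point check: since $\rho(\PSL(2;\RR))$ preserves the discriminant form (hence the conic, tangency, and the polar pairing) and acts transitively on the positive-discriminant locus, it suffices to verify the claimed decomposition for a single polynomial; the authors take $f(x)=(x+1)(x-1)$ with $v=[1:0:-1]$, note the roots $\pm 1$ correspond to $w_i=[1:\pm2:1]$ on the lightcone, and verify tangency of the two lines directly. You instead prove the statement for an arbitrary exterior point by invoking pole--polar duality for the conic $\Gamma$: the tangent at $[w]\in\Gamma$ is $w^\perp$, so the tangency points from $v$ are $v^\perp\cap\Gamma$, and the identity $\langle v,(1,-2r_j,r_j^2)\rangle=-2f(r_j)=0$ identifies them with the double-root polynomials at the roots of $f$. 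Your version is more self-contained (it does not lean on the equivariance-plus-transitivity reduction, which the paper leaves implicit) and it makes visible \emph{why} the tangency points encode the roots, via the pleasant identity $\langle v,w_r\rangle=-2f(r)$; the paper's version is shorter and emphasizes the homogeneous-geometry philosophy running through the section. Your handling of the continuity statement and of the $a=0$ patch is also fine. One small presentational point: when you say $\{w_1,w_2\}=v^\perp\cap\Gamma$, it is worth noting explicitly (as you implicitly use) that the line through $v$ and $w_j$ \emph{is} the polar $w_j^\perp$, since both points lie on it, which is exactly the tangency asserted in step (1).
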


We may reduce the proof of this observation to checking its truth at a single point using two facts: the transitivity of 
this $\PSL(2;\RR)$ action (via $\rho$) on the space of quadratics with positive discriminant, and the fact that as linear linear transformations preserving the lightcone, $\rho(A)$ preserves the colledtion of tangent lines to the ideal boundary for each $A\in\PSL(2;\RR)$.
Choosing a point at which to verify the assertion:
note the polynomial $f(x)=(x+1)(x-1)$ has coefficients $v=[1:0:-1]$ and roots $\{1,-1\}\in\RR$.
These roots are identified with the polynomials $(x\pm 1)^2$ on the lightcone, so $w_i=[1:\pm 2:1]$, and the lines through $v$ and $w_i$ are easily verified to be tangent to the discriminant locus, as claimed.

\begin{figure}[h!tbp]
\centering
	\includegraphics[width=0.4\textwidth]{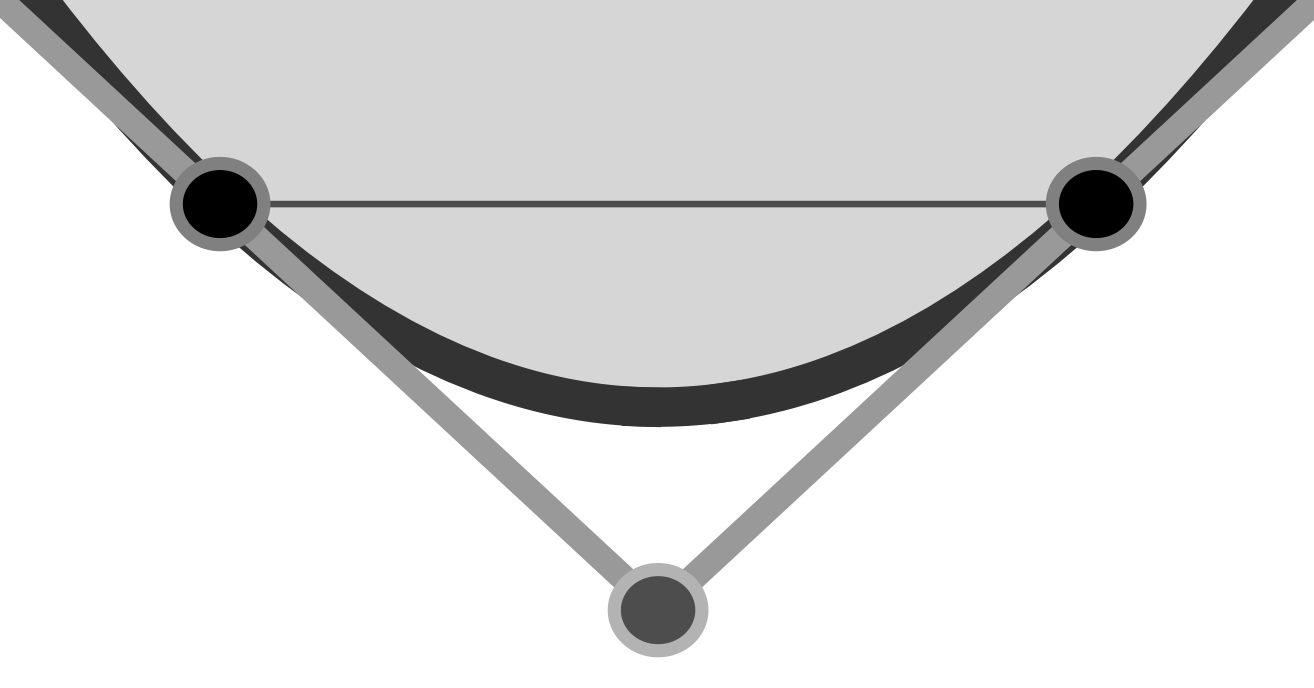}
\caption{The roots map for quadratics with two real roots associates a point exterior to the hyperbolic plane in $\RP^2_\Coefs$ to its two points of tangency with $\partial_\infty\HH^2_\Coefs$.}
\label{fig:PosDiscQF}
\end{figure}

\subsubsection{Applications to quadratic algebraic numbers}
\label{sec:applquad}

We now can apply this to the original case of interest: quadratic algebraic numbers and integer quadratic polynomials.
The integer polynomials form a $\mathbb{Z}^3$ lattice in the space $\R^3$ of coefficients, and its image in $\RP^2$ can be interpreted as what it would look like to see the integer lattice from the origin (much like we saw for $\QP^1$ and $\ZZ^2$ in Figure \ref{fig:QP1}).
The quadratics of interest lie inside of the cone cut out by the discriminant, with planes through the origin projecting to lines in the disk $\HH^2_\Coefs$.
And as the roots map realizes an isometry onto the upper half plane model (for specificity, by selecting the root in the complex conjugate pair with positive imaginary part), we know the image of these geodesics are also geodesics - here represented by circles orthogonal to the boundary.
This explains the small scale patterns visible everywhere in the picture of the integer quadratic numbers - they are just the perspective view of a cubic lattice, distorted by the isometry taking the Klein model to the upper half plane model.

\begin{figure}[h!tbp]
\centering
    \includegraphics[width=0.95\textwidth]{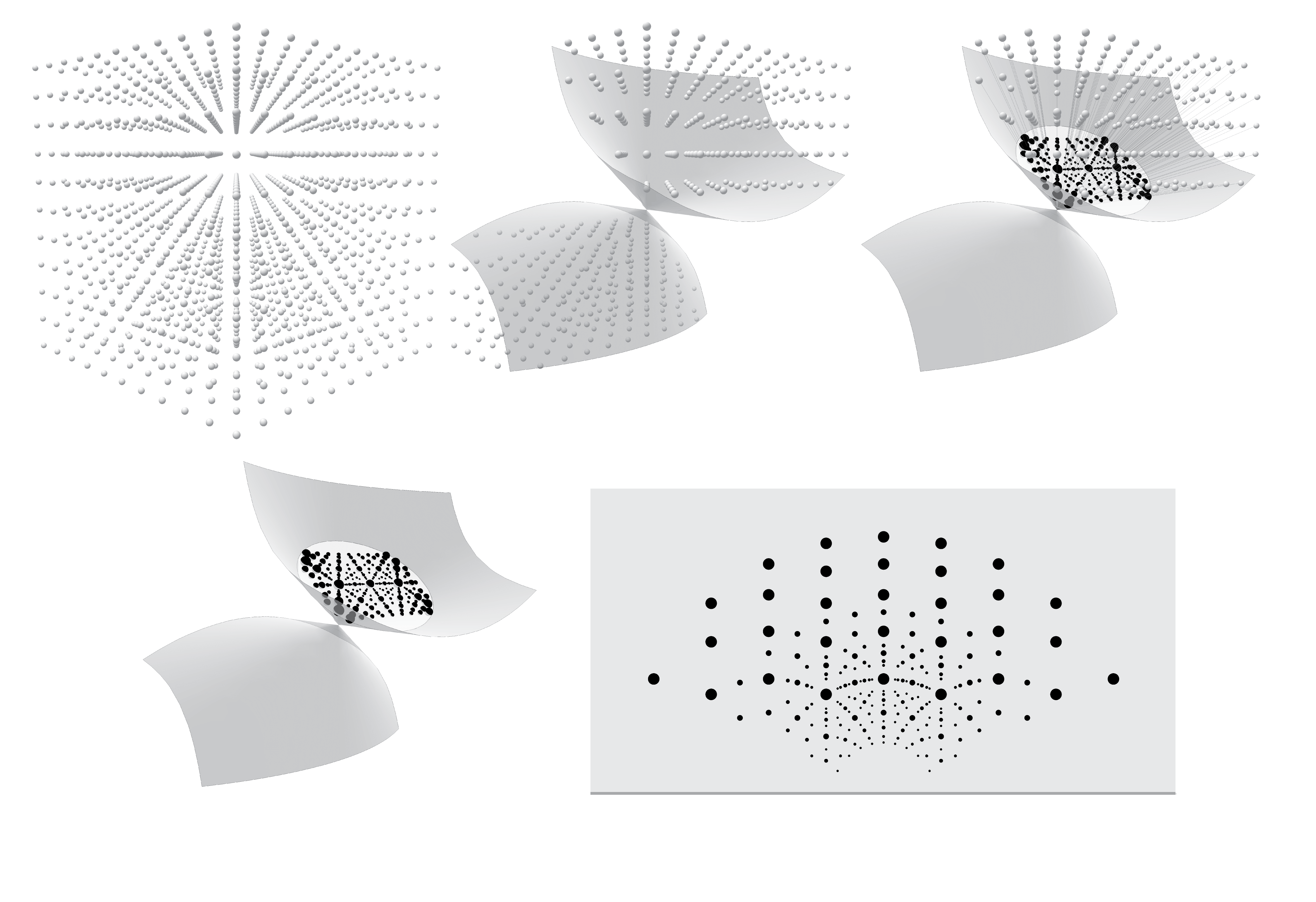}
\caption{
The lattice of integer quadratic polynomials, and its image under the roots map, decomposed as a sequence of geometric steps, from top left to bottom right. The lattice points lying within the light cone of the discriminant represent quadratics with a complex conjugate roots.  Their projectivization is a collection of points in a projective model of the hyperbolic plane, and the roots map is an isometry onto the upper half plane model.  See Theorem \ref{thm:hypIso}.
}
\label{fig:latticetoalgebraics}
\end{figure}

The natural $\PSL(2;\CC)$ action on roots and coefficients does not preserve the $\ZZ^3$ lattice.  The subgroup which does is isomorphic to $\PSL(2;\ZZ)$ (it is the intersection of $\rho(\PSL(2;\CC))$ with $\PSL(3;\ZZ)$, and it will play an important role in Sections \ref{sec:DiophantineApproximation} and \ref{sec:Dio-quad}).

The two-dimensional sublattices of $\mathbb{Z}^3$ will play a special role in what is to come.  The planes such a sublattice can span are exactly planes with rational normal vector.  These project to lines in $\HH^2_\Coefs$ which we will call \emph{rational geodesics}.  These are the dominant features in Figure \ref{fig:Initial_quadratics}.  There are a few important facts to collect about rational geodesics; these are just immediate consequences of the geometry.

\begin{observation}
	\label{obs:rat-geo}
	\begin{enumerate}
		\item For $z \in \CC$, its corresponding point $\{z, \overline{z}\}$ in root space pulls back to $[1:-z-\overline{z}:z\overline{z}]$, and so $z$ lies on a rational geodesic if and only if $1$, $z + \overline{z}$ and $z\overline{z}$ are $\QQ$-linearly dependent.
		\item Any two quadratic irrationalities $z, w \in \CC$ share a unique rational geodesic, since any two points in $\RP^2$ determine a unique line.  Any quadratic irrationality lies on infinitely many rational geodesics, and any two rational geodesics intersect at a quadratic irrationality.
		\item The images of rational geodesics under the roots map are exactly the hyperbolic geodesics of the upper half plane given by the upper half circles centred on a rational number, whose radius squared is rational.  In other words, the limit points form a conjugate pair of points in a real quadratic field, or a pair of rational points.
		\item The group $\PSL(2;\ZZ)$ acts as change of variables on the quadratic form associated to the geodesic\footnote{In this way the orbits of rational geodesics under $\PSL(2;\ZZ)$ are identified with the narrow ideal classes of real quadratic fields $K$.  See \cite[Section B.7]{Cox}.}. 
	\end{enumerate}
\end{observation}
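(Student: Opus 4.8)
The plan is to dispatch the four assertions in turn. Parts (1)--(3) reduce to elementary incidence geometry of points, lines and the discriminant conic in $\RP^2_\Coefs$, combined with the explicit form of the roots map; part (4) then invokes the classical theory of binary quadratic forms. Throughout I use the explicit inverse of the roots map computed just above Corollary~\ref{cor:QuadFormula}, which sends the root set $\{x\pm iy\}$ to the coefficient point $[1:-2x:x^2+y^2]$. For part (1), writing $z=x+iy$ we get $x=\tfrac12(z+\overline z)$ and $x^2+y^2=z\overline z$, so the pullback of $\{z,\overline z\}$ is $[1:-(z+\overline z):z\overline z]$, as claimed. By definition a rational geodesic is the projectivization of a plane through the origin in $\RR^3$ with rational normal vector, so a point $[a:b:c]\in\HH^2_\Coefs$ lies on \emph{some} rational geodesic exactly when the vector $(a,b,c)$ satisfies a nontrivial rational linear relation; applied to $(1,-(z+\overline z),z\overline z)$ this is precisely the stated $\QQ$-linear dependence of $1$, $z+\overline z$, $z\overline z$.

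For part (2), a quadratic irrationality $z$ has a minimal polynomial with integer coefficients, so its pullback is a \emph{rational} point of $\RP^2_\Coefs$. Two distinct points of $\RP^2$ span a unique line, and the line through two rational points is rational (its normal direction is the cross product of the two rational coordinate vectors); since $z$ and $w$ both lie in $\HH^2_\Coefs$ this line meets the disk, so it restricts to the unique shared rational geodesic. A single quadratic irrationality $z$ lies on infinitely many rational geodesics because the rational vectors orthogonal to the coordinate vector of $z$ form a two-dimensional rational subspace: pairwise non-proportional choices yield distinct rational lines through the interior point $z$, each of which (being a secant of the disk) meets $\HH^2_\Coefs$. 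Finally, two distinct rational lines in $\RP^2$ meet in a unique point whose coordinate vector (again a cross product) is rational; whenever that point lies in $\HH^2_\Coefs$ its discriminant is negative, so the associated integer quadratic has no rational root, hence is irreducible over $\QQ$, and the point is a quadratic irrationality.

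For part (3), write a rational geodesic as $\{[a:b:c]\in\HH^2_\Coefs\mid \alpha a+\beta b+\gamma c=0\}$ with $(\alpha,\beta,\gamma)\in\QQ^3\setminus\{0\}$. On the monic patch $a=1$ we have $b=-2x$ and $c=x^2+y^2$ in terms of the root $x+iy$, and substitution turns the linear equation into $\gamma(x^2+y^2)-2\beta x+\alpha=0$. When $\gamma\neq 0$ this is the semicircle $(x-\beta/\gamma)^2+y^2=(\beta/\gamma)^2-\alpha/\gamma$, centred at the rational point $\beta/\gamma$ with rational radius-squared; when $\gamma=0$ (which forces $\beta\neq 0$, since $a=0$ does not meet $\HH^2_\Coefs$) it is the vertical line $x=\alpha/2\beta$, the geodesic with ideal points $\infty$ and the rational $\alpha/2\beta$. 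Conversely each such geodesic occurs: the circle $(x-p)^2+y^2=r$ with $p,r\in\QQ$ is the image of the rational geodesic cut out by $(p^2-r)a+pb+c=0$, and the vertical line $x=q$ of that cut out by $2qa+b=0$. The ``limit points'' reformulation is then immediate, since a semicircle centred at $p$ with radius-squared $r$ has ideal points $p\pm\sqrt r$, a pair of rationals if $r$ is a square and otherwise a Galois-conjugate pair in the real quadratic field $\QQ(\sqrt r)$.

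For part (4), to a rational geodesic, coming from a plane $W\subset\RR^3$, I attach the restriction of the discriminant quadratic form $\Delta$ (equivalently, the inner product appearing in equation~\eqref{eqn:CoefsMetric}) to the rank-two sublattice $W\cap\ZZ^3$; choosing a lattice basis this is an integral binary quadratic form, indefinite because $W$ meets the negative cone of $\Delta$ and is a secant of its boundary conic, and it is well defined up to $\SL_2(\ZZ)$-change of basis. The subgroup of $\rho(\PSL(2;\CC))$ preserving $\ZZ^3$ is $\rho(\PSL(2;\ZZ))$ (as noted in Section~\ref{sec:applquad}), and its action on planes through the origin is precisely by such changes of basis, so orbits of rational geodesics under $\PSL(2;\ZZ)$ match $\SL_2(\ZZ)$-equivalence classes of the associated forms. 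Invoking the classical dictionary of Gauss between $\SL_2(\ZZ)$-classes of indefinite integral binary quadratic forms and the (narrow) ideal class group of the corresponding real quadratic order, together with the standard translation between forms and ideals, then identifies these orbits with ideal classes of real quadratic fields.

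I expect part (4) to be the main obstacle. Parts (1)--(3) are bookkeeping with rational points, lines and a conic in $\RP^2$, but making part (4) literally correct requires care about which flavour of class group appears (narrow versus wide), about the role of non-maximal orders (equivalently, about the primitivity and discriminant of the forms that actually arise from lattice planes), and about matching the $\rho(\PSL(2;\ZZ))$-action with the classical $\SL_2(\ZZ)$-action on binary forms with the correct sign and transpose conventions. Pinning down the precise correspondence, rather than one valid only up to an index-two or order-theoretic ambiguity, is where the real work lies.
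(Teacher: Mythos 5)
Your argument is correct, and it is essentially the intended one: the paper offers no proof at all here, dismissing all four items as ``immediate consequences of the geometry,'' so your write-up is a faithful and more careful elaboration of what the authors had in mind. Two of your refinements are genuine improvements on the statement as printed: in (2) you correctly condition the claim ``any two rational geodesics intersect at a quadratic irrationality'' on the intersection point of the two projective lines actually lying in $\HH^2_\Coefs$ (disjoint geodesics meet outside the disk), and in (3) you correctly include the vertical geodesics $x=q$, $q\in\QQ$, which are images of rational geodesics with $\gamma=0$ but are omitted from the paper's phrase ``upper half circles centred on a rational number.'' For (4), the paper itself associates the form via the normal vector $n=(n_2,n_1,n_0)$ (the endpoints of the geodesic are the roots of $n_2x^2+n_1x+n_0$, as noted in the paragraph following the observation), whereas you restrict the discriminant form to the lattice plane $W\cap\ZZ^3$; these yield the same $\SL(2;\ZZ)$-class up to the usual duality, and either way the caveats you flag (narrow versus wide class group, non-maximal orders, primitivity) are real imprecisions in the paper's one-line assertion rather than gaps in your reasoning.
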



The geometric description of the roots map for quadratics with two real roots (Observation \ref{obs:quad_PosDisc}) has a nice interpretation for geodesics. 
Realizing such a geodesic $\gamma$ as the intersection of a plane $\mathcal{S}$ with the cone of positive discriminant, denote the normal to $\mathcal{S}$ by $n$ (well defined up to scaling; lying outside the lightcone) and the endpoints of $\gamma$ by $x,y$ on the ideal boundary of $\HH^2_\Coefs$ (the projectivized lightcone of the discriminant).
In the projective model one may recover these endpoints $x,y$ (and hence the geodesic $\gamma$ itself) directly from the normal $n$ via a purely geometric construction.
There are precisely two planes containing the line defining $n$ which are tangent to the lightcone, and projectivizing these lines of tangency gives the ideal endpoints $\{x,y\}$ of the geodesic associated to $n$.
But by Observation \ref{obs:quad_PosDisc}, the map sending $n$ to its two points of tangency with $\partial_\infty\HH^2$, followed by the isometry from the projective to upper half plane model of $\HH^2$ is none other than the roots map on quadratics of positive discriminant.
Thus, if we think of a geodesic as determined by its normal vector $n=(n_2,n_1,n_0)$ (often computationally an attractive thing to do), we may directly recover\footnote{Topologically, this describes a map which takes the projectivization of the exterior of the lightcone (a M\"obius band) to the set of unordered pairs of distinct points on the circle (or $\SP^2(\mathbb{S}^1)$, which is also a M\"obius band, as depicted in Figure \ref{fig:Mob}).}
the geodesic from $n$ as its endpoints are precisely the roots of $n_2x^2+n_1x+n_0=0$.  This\footnote{Geometrically one may tell a beautiful story here quite analogous to Theorem \ref{thm:hypIso}, where the roots map is an isometry between a pair of Lorentzian metrics defined on each of these M\"obius bands, though investigation of this would take us too far afield from the goals of this paper.} is visible in Figure \ref{fig:PosDiscQF}.

\subsection{Beyond discworld: the geometry of cubics}
\label{ssec:CubicGeometry}
Similarly to the quadratic case, we begin with the space of all complex homogeneous polynomials together with the $\PSL(2;\CC)$ action arising from precomposition with M\"obius transformations.
Cubics are the highest degree\footnote{This action has finitely many orbits as $\PSL(2;\CC)$ acts simply transitively on ordered triples of distinct elements of $\CC\PP^1$. Thus, some of these orbits are open.  In higher degree, there are a continuum of orbits, which are parameterized by the moduli of $n$ possibly indistinct points in $\CP^1$ up to projective transformations (see also Remark \ref{rem:SymmDim}).} where this action is enough to equip the various open subsets of generic cubics (components of the complement of the discriminant locus) with the structure of homogeneous geometries.
Because $\PSL(2;\RR)$ is so tightly linked to $\HH^2$, the hyperbolic plane remains a prominent actor in this story.
Indeed, restricting to real cubics with negative discriminant, the complex root determines a point in $\HH^2$ and the real root a direction - identifying this geometry as the unit tangent bundle\footnote{Given a manifold $X$, the set of all tangent vectors to $X$ at a point $p$ is called the \emph{tangent space} to $X$ at $p$.  You can think of this as a 'linear approximation' to $X$ near that point.  If we restrict our attention to only unit vectors, we define the \emph{unit tangent space} at $p$ (for 2-dimensional geometries $X$, the unit tangent space at every point is just a circle).
Collecting all the tangent spaces for every point of $X$ gives the \emph{tangent bundle} $TX$ to $X$, and collecting only all unit vectors gives the \emph{unit tangent bundle} $\UT X$.  For an introduction smooth manifolds and their tangent bundles, see \cite{lee2019introduction}.}
to the hyperbolic plane.
Both the spaces of coefficients and roots form models of this geometry, related by the roots map, resulting in an analogous theorem to Theorem \ref{thm:QuadRoots}.

\begin{theorem}
\label{thm:UTH2}
	Let $\{[a:b:c:d]\mid \Delta_3(a,b,c,d)<0\}$ be the set of real cubics with exactly one real root, where $\Delta_3$ is the discriminant for cubics, and 
	$\{\{r,z,\overline{z}\}\mid r\in\RP^1,z\in \CC\backslash\RR\}$ be the set of their root-sets.
	Each of these spaces admits a natural $\PSL(2;\RR)$ action (the former by precomposing the polynomial with a linear transformation, the latter by applying a M\"obius transformation to each root).
	Finally, equipped with these actions, each of these spaces is isomorphic to the unit tangent bundle to the hyperbolic plane.
\end{theorem}

We will prove this theorem in two pieces, Proposition \ref{prop:UTH2} and Corollary \ref{prop:UT_Coefs}.
Beyond this, we will see that there is a natural way to equip each of these spaces with a Riemannian metric and with respect to these metrics, the roots map $\mathcal{R}$ is actually an \emph{isometry} (exactly analogous to the quadratic case).  After developing the necessary pieces, this is stated precisely and proven in Theorem \ref{thm:cubicIsom}.
Utilizing this geometry both on the spaces of roots and coefficients provides a geometric description of the cubic formula, which we again pre-emptively state here.
We state and prove the full version in Theorem \ref{thm:CubicFactor}.

\begin{theorem}
\label{thm:cubicMain}
On the set $X\subset\PP\Coefs$ of polynomials which have exactly one real root, we define the \emph{complex-root-map} $\RootMap_\CC$ as follows.
For $f\in X$ a polynomial, let $\RootMap_\CC(f)=z$ be the unique complex root of $f$ with positive imaginary part.
Then, equipping domain with the geometry of the hyperbolic plane's unit tangent bundle, the map $\RootMap_\CC$ factors geometrically as the projection $\UT\HH^2\to\HH^2$ onto the projective model $\mathbb{H}^2_\Coefs$ of the hyperbolic plane, followed by the isometry $\mathbb{H}^2_\Coefs\to\mathbb{H}^2_\Roots\subset\CC$ of Theorem \ref{thm:hypIso}.

\begin{center}
\begin{tikzcd}
\mathrm{Coefs} \arrow[r, "\cong" description] \arrow[rrrd, "\mathcal{R}_\mathbb{C}" description, bend left=49] & \mathrm{UT}\mathbb{H}^2 \arrow[d, "\pi" description]   &                                             &            \\
                                                                                                               & \mathbb{H}^2_\mathrm{Coefs} \arrow[r, "\mathcal{R}_2"] & \mathbb{H}^2_\mathrm{Roots} \arrow[r, hook] & \mathbb{C}
\end{tikzcd}
\end{center}
\end{theorem}

Said briefly, the map sending a real cubic to its complex root in the upper half plane used to draw cubic starscapes is topologically conjugate to the projection $\UT\HH^2\to\HH^2$ defining the hyperbolic plane's unit tangent bundle.  We may use this geometry to understand some of the interesting interesting 1 and 2 dimensional families of cubic polynomials, producing linear and planar starscapes respectively. 
Some such linear starscapes are highlighted in the planar starscapes below (Figure \ref{fig:Lines}).
In particular, this gives a simple condition for when a starscape (thought of as a projective subspace of the set of cubics) embeds in the complex plane under the projection onto the complex root (Figure \ref{fig:A0BClines}) and when it is singular, collapsing some curve to a point (Figure \ref{fig:ACBClines}).

\begin{figure}[h!tbp]
\centering
\begin{subfigure}[b]{0.42\textwidth} 
    \centering
	\includegraphics[width=\textwidth]{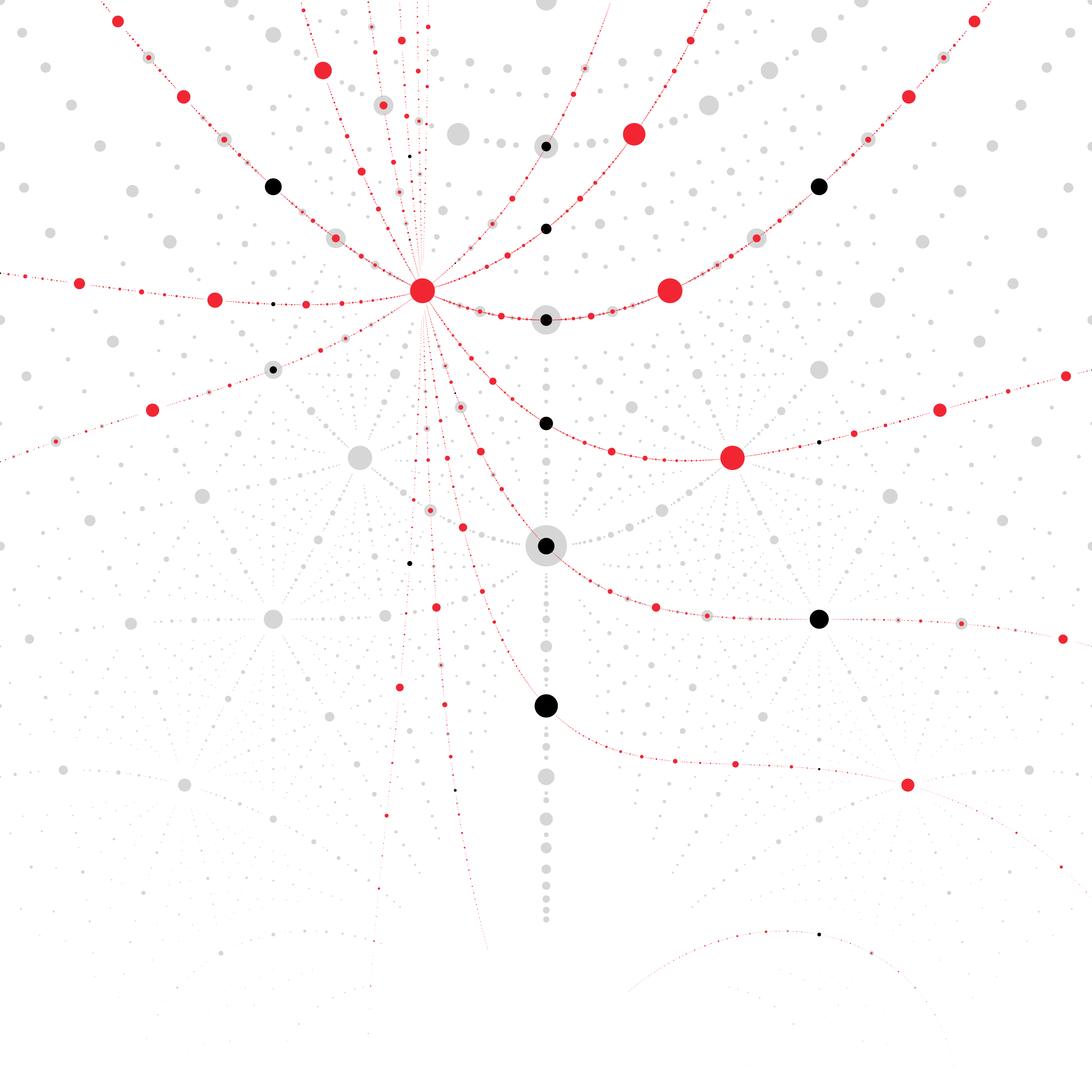}
	\caption{}
    \label{fig:A0BClines}
\end{subfigure}
\begin{subfigure}[b]{0.42\textwidth} 
    \centering
	\includegraphics[width=\textwidth]{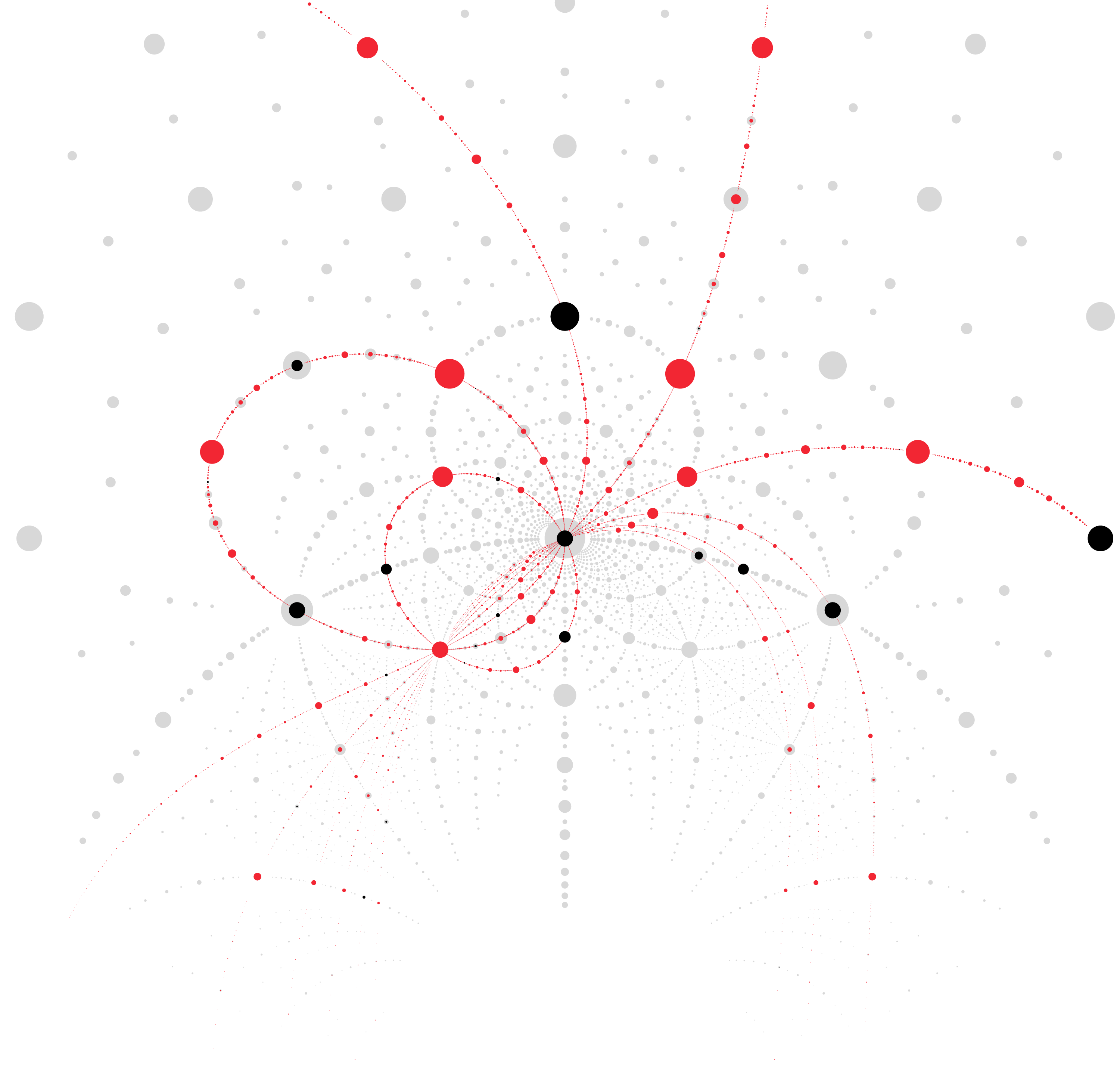}
	\caption{}
    \label{fig:ACBClines}
\end{subfigure}
\caption{{Some linear starscapes, shown in red and black, within planar starscapes, the other points are shown in grey. The black points are quadratic roots, where the real root is rational. 
In (\ref{fig:A0BClines}) some of the projective lines (corresponding to 2d sublattices of the lattice of coefficients) through the root of $x^3-2x^2+1$ are plotted in the depressed cubic starscape (also shown in Figure \ref{fig:Starscapes_a}). 
In (\ref{fig:ACBClines}) some of the lines through the root of $x^3-x^2-1$ are plotted in the starscape for the family $a x^3 + c x^2 + bx + c$ (also shown in Figure \ref{fig:Initial_cubic_family}). In the latter, these lines all also pass through $i$, which is the projection of a fibre $(d x + e)(x^2+1) = d x^3+e x^2+d x + e$.  
This can be seen in a different way on the right in Figure \ref{fig:Fam7C}.}}
\label{fig:Lines}
\end{figure}
\begin{corollary}
\label{cor:StarscapeEmbed1}

Let $\mathcal{S}$ be a one or two dimensional affine space of real cubics in the space of projectivized coefficients, and $\mathcal{R}_\mathbb{C}|_\mathcal{S}\colon \mathcal{S}\to\CC$ the projection onto the complex root.
Under the geometric identification of this space with the unit tangent bundle of the hyperbolic plane (Theorem \ref{thm:UTH2}), the space of coefficients is foilated by simple closed curves (fibers of the unit tangent bundle).
Then $\mathcal{R}_\mathbb{C}|_\mathcal{S}$ is an embedding
if and only if $\mathcal{S}$ is everywhere transverse to this fibration.

\end{corollary}

	For this failure of embedding to actually be \emph{visible} in a cubic starscape, it must occur at some cubic or quadratic number, and accounting for this gives the more refined statement of Corollary \ref{cor:transverse}.
Quadratics are too low-degree for any interesting analogs of this behavior to occur\footnote{For quadratics, the roots map is a homeomorphism on the entire space: there are no 2-dimensional subfamilies, and all 1-dimensional curves are geodesics.}, but it persists in higher degree (Figure \ref{fig:Starscapes}), making cubics an important testing ground.
We fill in the details of this picture below.

\subsubsection{General Complex and Real Coefficients}
\label{subsec:CubicCoefs}
	A homogeneous cubic in two variables has the form $ax^3+bx^2y+cxy^2+dy^3$, so the sets of coefficients of all such cubics naturally identifies with $\CC^4$.
As previously, we are mostly concerned with cubics only up to a global scaling, and so take the space of projectivized coefficients $\PP\Coefs=\CP^3$, together with the natural action of $\PSL(4;\CC)$ as our starting point.

From the perspective of their solutions, cubics may be identified with unordered triples of (possibly coincident) points in the extended complex plane, so $\Roots=\SP^3(\CP^1)$ is the third symmetric power of the sphere.
The symmetric power admits a natural action of $\PSL(2;\CC)$ coming directly from its usual action on $\CP^1$ by M\"obius transformations.
More precisely, the action of $A=\left(\begin{smallmatrix}p&q\\r&s\end{smallmatrix}\right)$ on the triple of points $\{z_1,z_2,z_3\}$ is

\begin{equation}
\label{eqn:ProjActionSP3}
A.\{z_1,z_2,z_3\}=\left\{\frac{pz_1+q}{rz_1+s},\frac{pz_2+q}{rz_2+s},\frac{pz_3+q}{rz_3+s}\right\}.\end{equation}

Again, the roots map $\RootMap\colon\PP\Coefs\to\Roots$ realizes a homeomorphism from $\PP\Coefs=\CP^3$ to $\Roots=\SP^3(\CP^1)$.
Conjugating the $\PSL(2;\CC)$ action on $\Roots$ by $\RootMap$ gives an action on $\PP\Coefs$, which is compatible with its natural $\PSL(4;\CC)$ action: it is the projectivization of the representation $\SL(2;\CC)\to\SL(4;\CC)$.
As in Section \ref{sec:PSL_Sym}, the explicit formula for this representation is defined by $\rho(A).f(x)=f(A^{-1}.x)$:
\begin{equation}
\label{eqn:IrrepActionRP3}
\begin{pmatrix}p&q\\r&s\end{pmatrix}\stackrel{\rho}{\longrightarrow}
\begin{pmatrix}s^3&-r s^2& r^2s &-r^3\\
-3 q s^2& 2q r s+ps^2&-qr^2-2prs&3pr^2\\3q^2s & -q^2r-2pqs&2pqr+p^2s&-3p^2r\\
q^3&pq^2&-p^2q&p^3\end{pmatrix}\end{equation}
Recall the $\PSL(2;\CC)$ action on the plane acts triply transitively (see \cite{richter2011perspectives} for an introduction to real and complex projective geometry).  This implies that the space of cubics divides up into three distinct orbits under this action: cubics with three distinct roots, cubics with a pair of coincident roots, and cubics with a triple root.
As each component is an orbit of the action, $\PSL(2;\CC)$ acts transitively on each.

\begin{remark}
\label{rem:SymmDim}
Here the (real) dimension of the space of homogeneous cubics is	6, which coincides with the dimension of $\PSL(2;\C)$.
Thus, cubics are the last dimension on which this $\PSL(2;\C)$ action remains transitive on the open subset of generic cubics, giving it the structure of a $\PSL(2;\C)$ homogeneous space.
\end{remark}

Restricting to real cubics replaces the space of coefficients $\CP^3$ with $\RP^3$, and its image under the roots map is an embedding of real projective 3-space in $\SP^3(\CP^1)$.
The restricted $\PSL(2;\RR)$ action divides $\RP^3$ into four components.
The discriminant locus $\Delta^0:=\PP\Delta^{-1}(0)$ consists of the union of the orbits with triple and double roots, and is homeomorphic to a torus\footnote{Note this torus is not smoothly embedded in the space of coefficients, and is singular along the circle parameterizing cubics with a triple root.}.
 Over $\RR$ the generic case splits into polynomials with three distinct real roots $\Delta^+$ (positive discriminant) and those with a complex conjugate pair of complex roots $\CCubic$ (negative discriminant).
 We see below each of $\Delta^\pm$ are individually homeomorphic to a solid torus, forming the standard Heegaard decomposition of $\RP^3$.

\begin{figure}[h!tbp]
\centering
\begin{subfigure}[b]{0.22\textwidth} 
    \centering
	\includegraphics[width=\textwidth]{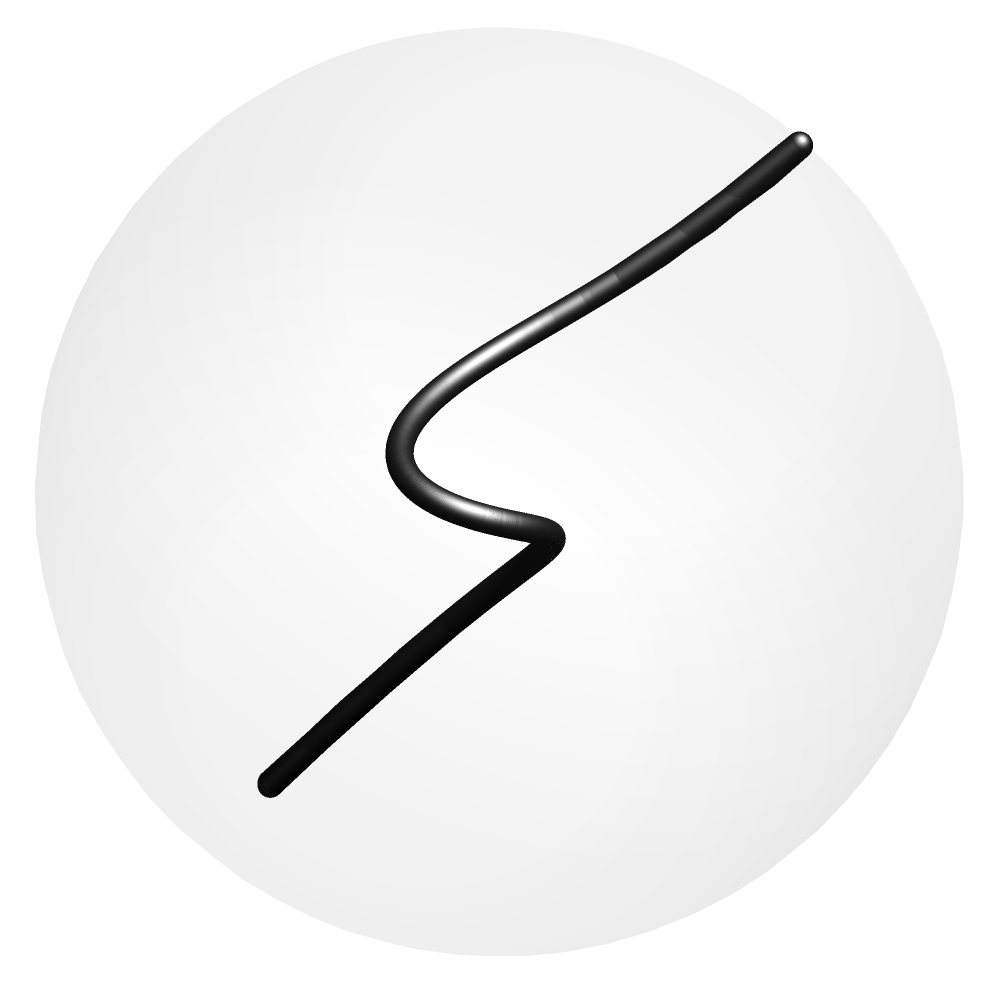}\\
		\includegraphics[width=\textwidth]{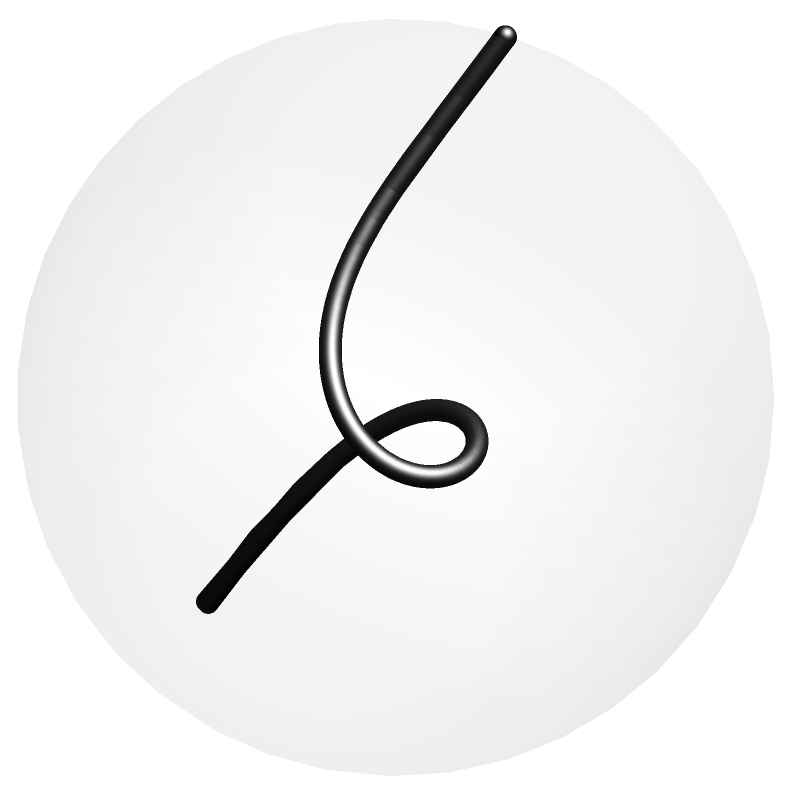}
	\caption{Triple Root\\~}
    \label{fig:CubicTripRoot}
\end{subfigure}
\begin{subfigure}[b]{0.22\textwidth} 
    \centering
	\includegraphics[width=\textwidth]{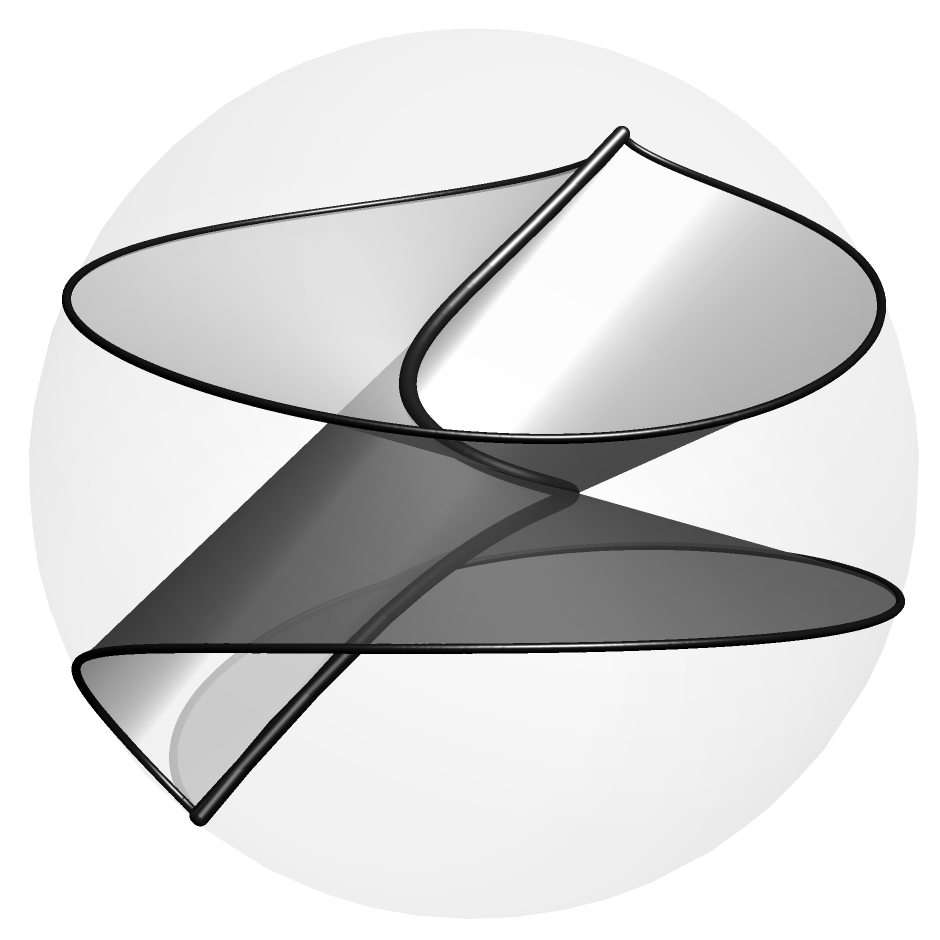}\\
		\includegraphics[width=\textwidth]{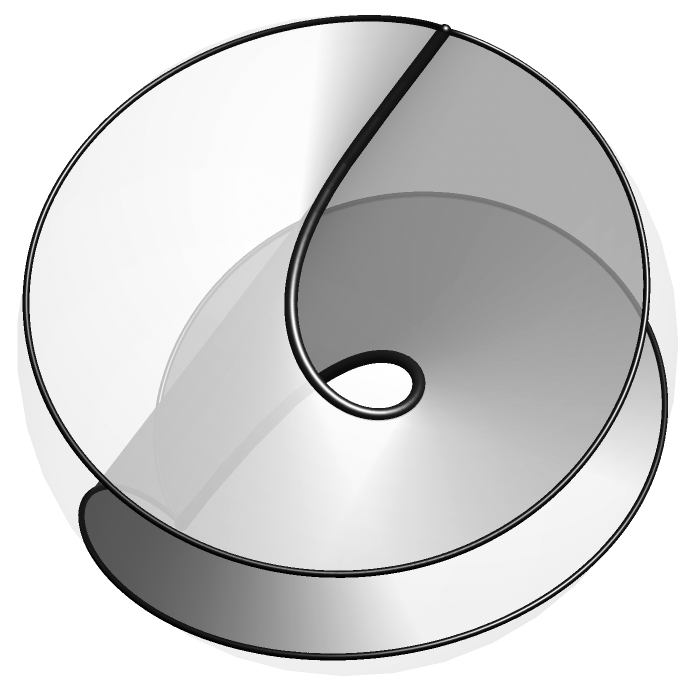}
	\caption{Double Root\\~}
    \label{fig:CubicDisc}
\end{subfigure}
\begin{subfigure}[b]{0.22\textwidth} 
    \centering
	\includegraphics[width=\textwidth]{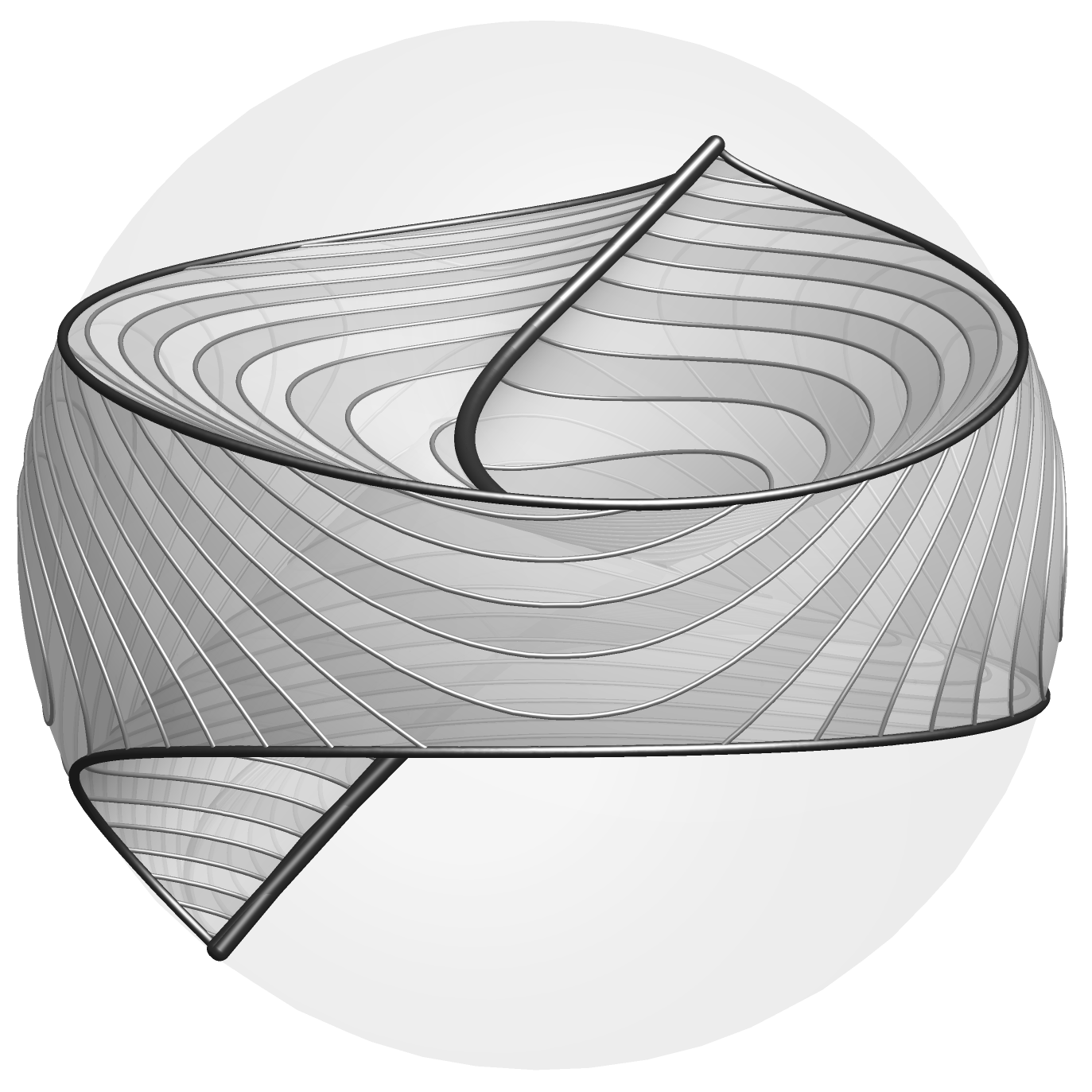}\\
	\includegraphics[width=\textwidth]{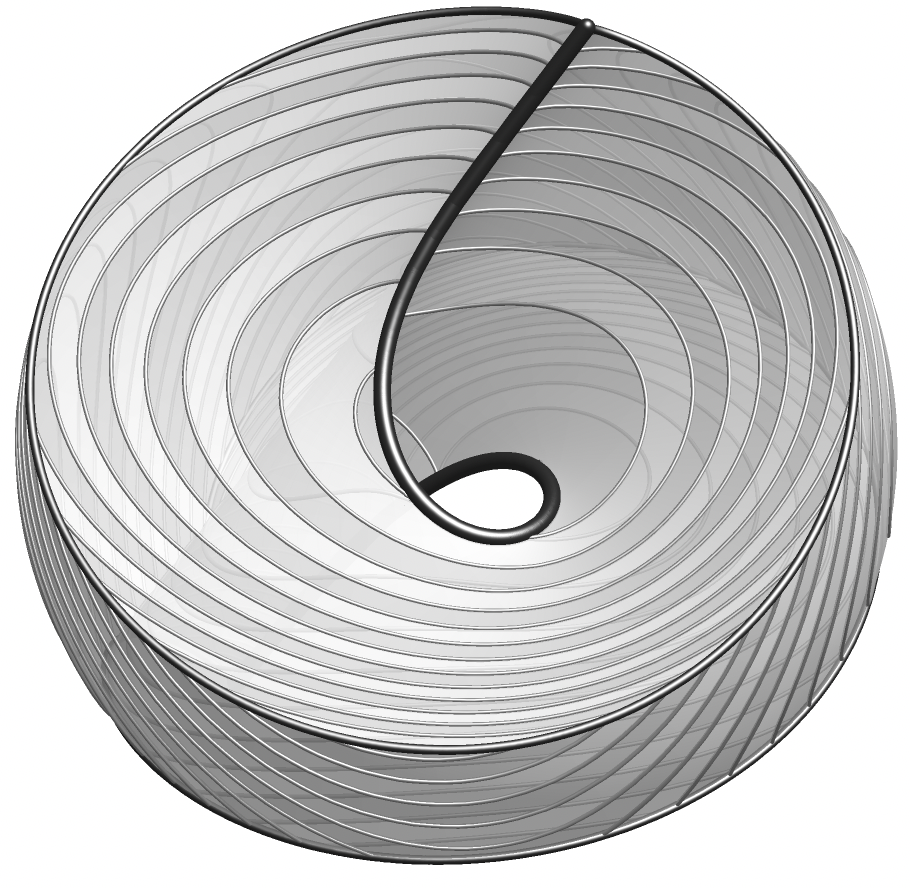}
		\caption{3 Real Roots\\~}
    \label{fig:PosDisc}
\end{subfigure}
\begin{subfigure}[b]{0.22\textwidth} 
    \centering
	\includegraphics[width=\textwidth]{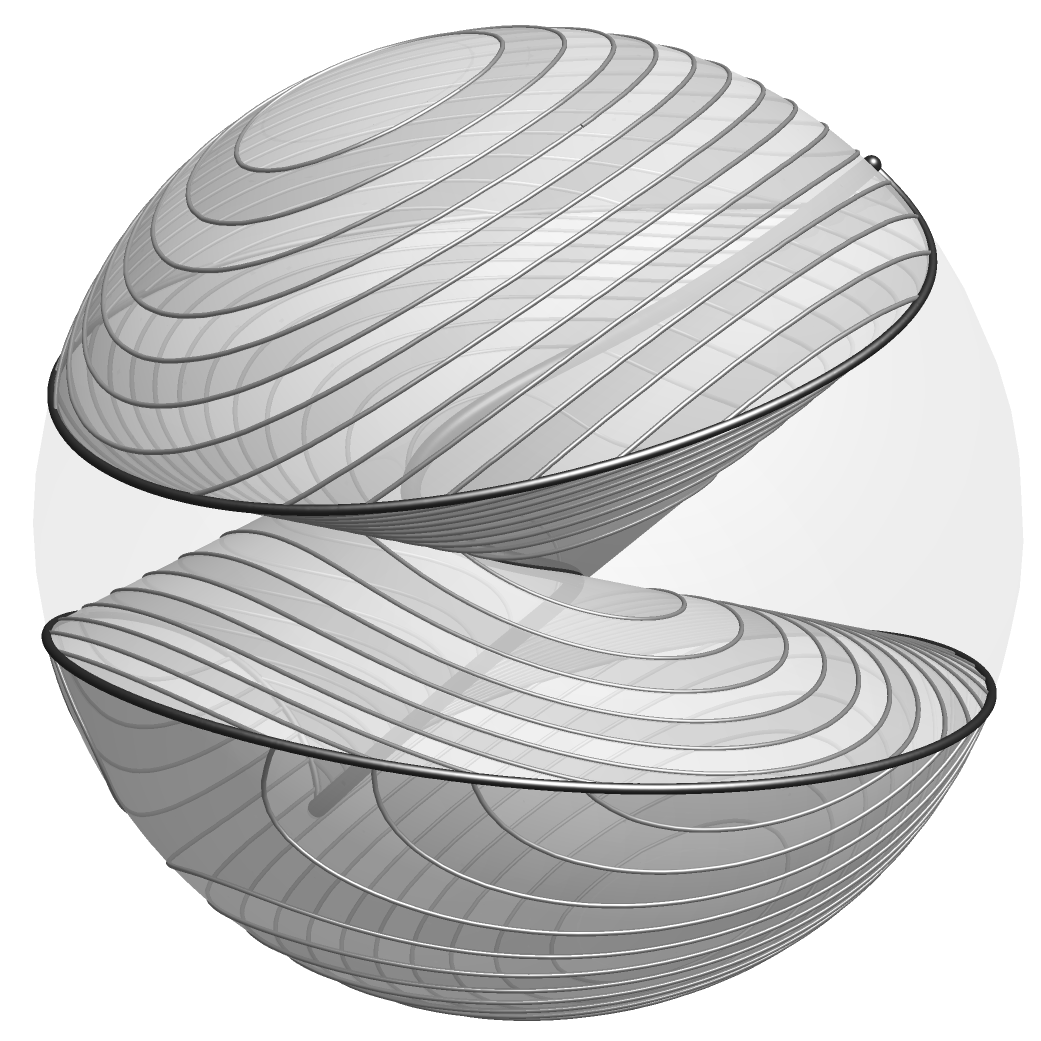}\\
	\includegraphics[width=\textwidth]{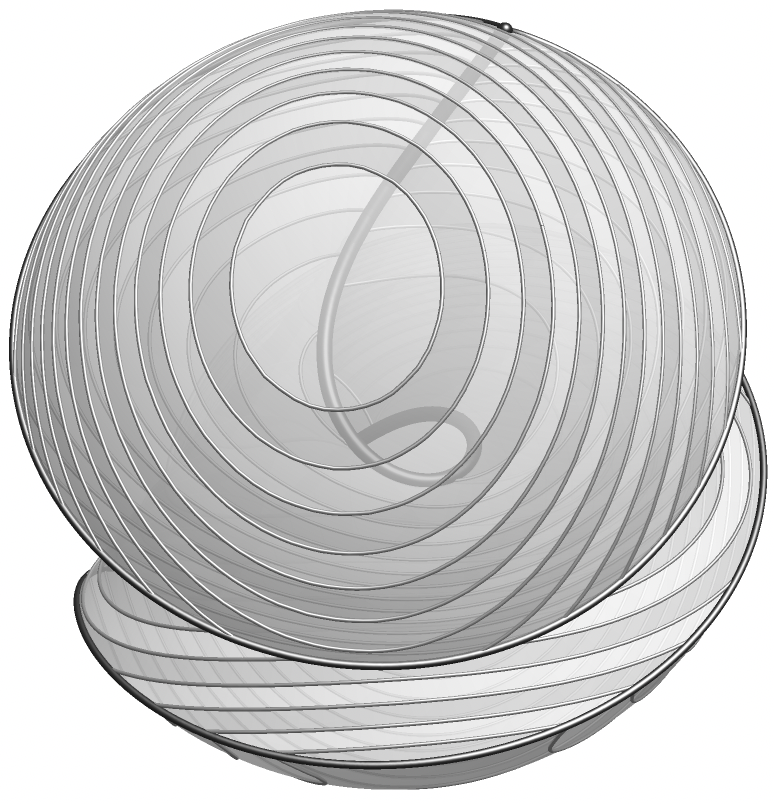}
	\caption{1 Real, 2 Complex Conjugate Roots}
    \label{fig:NegDisc}
\end{subfigure}
\caption{The four $\PSL(2;\RR)$ orbits on the $\RP^3$ of real cubics described in Section \ref{subsec:CubicCoefs}, as subsets of $\RP^3$.
Compare with the quadratic case in Figure \ref{fig:QuadraticRp2}.
As in Figure \ref{fig:RPn}, the topology of each component is recovered by identifying points on the boundary sphere via the antipodal map.
}
\label{fig:CubicRp3}
\end{figure}

The action of $\PSL(2;\RR)$ is transitive on all four components, so we may consider each as a homogeneous geometry.
Like in the complex case, via dimension count we see that $\dim \Delta^+=\dim \CCubic=\dim \PSL(2;\RR)$, so this action has at most a discrete stabilizer.
For $\Delta^+$, this stabilizer is the symmetric group on three elements\footnote{Thus the space of cubics with three distinct roots is an infinite volume three dimensional orbifold with a geometric structure modeled on $\widetilde{SL}_2(\RR)$}, as $\PSL(2;\RR)$ acts simply transitively on the space of ordered distinct triples in $\RP^1$.
For $\CCubic$ however, this action is free.
In the following section we look in detail at this geometry both from the coefficients and roots perspectives.

\subsubsection{Cubics and the Unit Tangent bundle to $\HH^2$}

Each cubic in $\CCubic$ has a real root and complex conjugate pair of complex roots.
The image under $\RootMap$ in $\SP^3(\CP^1)$ is easily described, as
$$\RootMap(\CCubic)=\left\{\{r,z,\overline{z}\}\mid r\in\RP^1,\; z=x+iy\;\textrm{for}\; x,y\in\R,y>0\right\}.$$
This is exactly an embedding of the product of $\RP^1$ and the upper half plane in $\SP^3(\CP^1)$, so this describes the homeomorphism of $\CCubic$ with a solid torus, as observed in Figure \ref{fig:NegDisc}.
As $z$ is a point in the upper half plane being acted on by $\PSL(2;\RR)$ it is tempting to think of it as a point of $\HH^2$.
From this perspective, the real root $r\in\RP^1$ lies on the ideal boundary of the upper half plane model of $\HH^2$, so we may think of it as specifying a \emph{direction}.
We make this precise in the following proposition, which is the first half of Theorem \ref{thm:cubicMain} highlighted in the introduction to this section.

\begin{proposition}

Let $\mathcal{R}(\CCubic)=\{\{r,z,\overline{z}\} \mid z=x+iy, x\in\RR,y\in\RR_+, r\in\RR\}$ denote the space consisting of the roots of all cubics in $\CCubic$, equipped with the $\PSL(2;\RR)$ action by M\"obius transformations acting on each root described above,
and let $\UT\mathbb{H}^2=\{(p,v)\mid p\in\HH^2,v\in T_p\HH^2,\|v\|=1\}$ denote the unit tangent bundle to the hyperbolic plane.
Define the map $\Phi\colon \mathcal{R}(\CCubic)\to\UT\HH^2$ by sending
each triple $\{r,z,\overline{z}\}$ to the point $z$ and the unit tangent vector $v\in T_z\HH^2$ such that $v$ is the initial tangent vector to the geodesic ray $\gamma$ starting at $z$ and limiting to the ideal point $r=\gamma(+\infty)$.
Then $\Phi$ is an isomorphism of geometries.

\label{prop:UTH2}
\end{proposition}
\begin{proof}

First we note that $\Phi$ is in fact a homeomorphism: given any point $z$ in the upper half plane and any real number $r$, there is a unique circle passing through $z$ which intersects the real axis at $r$.  Taking the tangent vector to this circle at $z$ defines the desired unit tangent $v$ and noting that geodesics are uniquely determined by these tangent vectors completes the proof that $\Phi$ is a bijection.
That $\Phi$ is in fact continuous with continuous inverse follows directly from constructions in Euclidean geometry\footnote{ Computing expressions for $\Phi$ and its inverse we see they are compositions of elementary (continuous) functions:
$\Phi(x\pm iy,r)=(v_1,v_2)_{x+iy}=\left(\frac{w}{\sqrt{1+w^2}},\frac{1}{\sqrt{1+w^2}}\right)_{x+iy}$ for $w=\frac{2x(x-r)}{x^2+y^2-r^2}$ and 
$\Phi^{-1}(v_1,v_2)_{x+iy}=\{x\pm iy, r\}=\left\{x\pm iy, x+\left(\frac{v_2}{v_1}+\sqrt{1+\left(\frac{v_2}{v_1}\right)^2}\right)y\right\}$} (since, in the upper half plane model here, all hyperbolic geodesics are simply Euclidean circles meeting the real axis along a diameter).

To see this is an isomorphism of geometries, we must further show the natural actions of $\PSL(2;\RR)$ are preserved by $\Phi$.
That is, we need that $A.\Phi(\{z,\overline{z},r\})=\Phi(A.\{z,\overline{z},r\})$ for all $A\in\PSL(2;\RR)$ and all points of $\RootMap(\Delta^-)$.
The action on $\RootMap(\CCubic)$ is given by equation \eqref{eqn:ProjActionSP3}, where $A$ acts on both $z, r$ by the same M\"obius transformation of $\CP^1$.
The action on $\UT\HH^2$ is given by the differential of its action on $\HH^2$ by isometries.
The compatibility of these actions follows immediately from the fact that geodesics (and thus their endpoints at infinity) are determined by their initial conditions.
That is, $\Phi(\{A.z,A.\overline{z},A.r\})$ is the tangent vector based at $A.z$ pointing in the direction of $A.r$, which is the image of the tangent vector $\Phi(\{z,\overline{z},r\})$ under $A$, by existence and uniqueness of solutions to the geodesic equation.

\end{proof}

\begin{figure}[h!tbp]
\centering
	\includegraphics[width=0.5\textwidth]{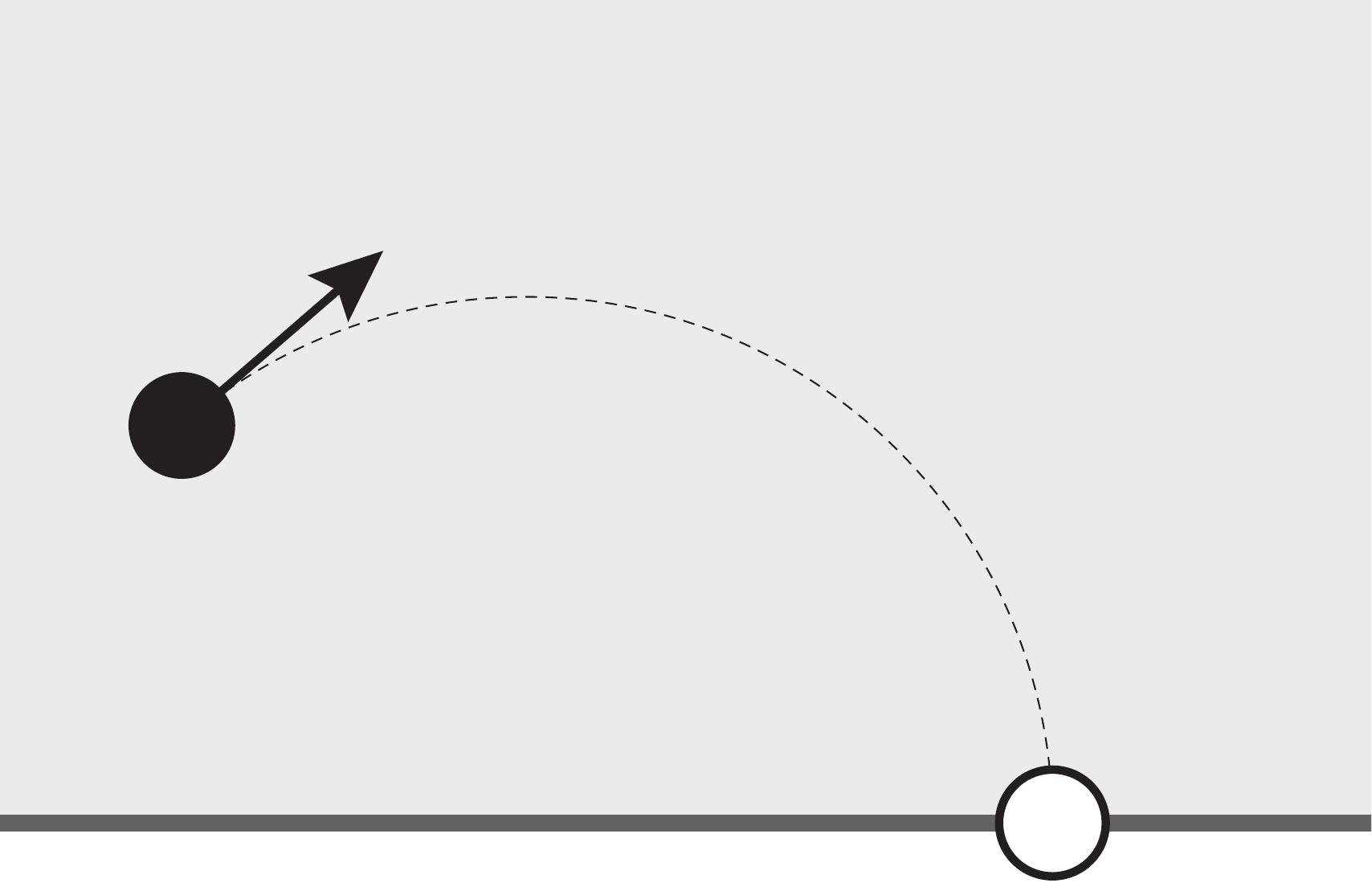}
	\caption{Cubic on upper halfplane (black point) and the real conjugate (white point), showing the geodesic between them and the direction from the complex root looking along the geodesic to the real root.}
\label{fig:cubic_tangent}
\end{figure}

Above we saw that the space $\mathcal{R}(\Delta^-)$ of roots of the cubics in $\Delta^-$ can be identified with the unit tangent bundle to $\HH^2$ by constructing a homeomorphism between these spaces which was equivariant with respect to the $\PSL(2;\RR)$ action on each side.  But, as we have already seen, the roots map $\mathcal{R}$ is itself a homeomorphism from $\Delta^-$ to $\mathcal{R}(\Delta^-)$, which is equivariant with respect to the $\PSL(2;\RR)$ actions on each.  Thus, this map defines an isomorphism of geometries $\Delta^-\cong\mathcal{R}(\Delta^-)$, and thus, transitively, between $\Delta^-$ and $\UT\mathbb{H}^2$.

\begin{corollary}

    The geometry of $\CCubic$ equipped with the $\PSL(2;\RR)$ action defined by Equation \ref{eqn:IrrepActionRP3} is isomorphic to the geometry of the unit tangent bundle to the hyperbolic plane.
    \label{prop:UT_Coefs}
    
\end{corollary}

From now on, we will denote this collection of polynomials as $\CCubic=\UT\HH^2_\Coefs$ and their associated roots as $\RootMap(\CCubic)=\UT\HH^2_\Roots$, to emphasize this geometric structure.
To be able to measure distances in these geometries (which right now are only spaces together with a group of symmetries) we need to specify a distance function on each of $\UT\HH^2_\Coefs$ and $\UT\HH^2_\Coefs$.  As in the case of the hyperbolic plane (Section \ref{ssec:RealQuad}) we specify this metric indirectly: first we define a norm on each tangent space (induced by an inner product: the Riemannian metric), then we define the length of curves by integrating the norm of their tangent vectors with respect to this, and finally we define the distance between two points to be the length of the shortest curve between them. See \cite{lee2019introduction} for an introductory account of Riemannian manifolds and their metrics.

\begin{definition}[The Metric on $\UT\HH^2_\Roots$]
\label{def:CubicRootMetric}

As a point in $\UT\HH^2_\Roots$ may be represented unambiguously as a pair $\{a,z\}$ for $a\in\RP^1$ and $z$ in the upper half plane, a tangent vector to $\UT\HH^2_\Roots$ at $\{a,z\}$ is a pair $\{u,v\}$ where $x\in\RR$ and $v\in\CC$.
Fixing the basepoint $\{0,i\}$, we define the norm squared of the tangent vector $\{u,v\}$ at $\{0,\pm i\}$ as
$$\|\{u,v\}\|^2=u^2+|v|^2.$$
That is, if we think of the tangent vector $(u,v_{\mathsf{Re}},v_{\mathsf{Im}})$ as a vector in $\RR^3$, we are employing the standard Euclidean inner product at the basepoint.
We use the free and transitive action of $\PSL(2;\RR)$ to translate this to every tangent space, equipping $\UT\HH^2_\Roots$ with a $\PSL(2;\RR)$ invariant Riemannian metric.

\end{definition}

\begin{definition}[The Metric on $\UT\HH^2_\Coefs$]
\label{def:CubicCoefMetric}
As a point in $\UT\HH^2_\Coefs$ may be represented unambiguously as a monic cubic $f$ (with coefficients the projective point $[1:a:b:c]$), a tangent vector to $\UT\HH^2_\Coefs$ at $f$ is an infinitesimal deformation of this cubic which leaves it monic.  That is, tangent vectors to $\UT\HH^2_\Coefs$ are represented by quadratic equations $u x^2+vx+w$ (we write this as $(u,v,w)$ when thinking of it as a tangent vector to the 3-dimensional affine patch $[1:a:b:c]$).
Fixing the basepoint $f=x(x^2+1)$, we define the norm squared of the tangent vector $(u,v,w)$ as 
$$\|(u,v,w)\|^2=\left(\frac{u-w}{2}\right)^2+\left(\frac{v}{2}\right)^2+w^2.$$
We use the free and transitive action of $\PSL(2;\RR)$ to translate this to every tangent space, equipping $\UT\HH^2_\Roots$ with a $\PSL(2;\RR)$ invariant Riemannian metric.
\end{definition}

We quickly comment on the form of the metric on $\UT\HH^2_\Coefs$, which is the natural choice\footnote{In fact, while any choice of inner product on the tangent space to our basepoint can be promoted to a Riemannian metric where $\PSL(2;\RR)$ acts by isometries, this metric is the most symmetric possible choice: its isometry group is 4-dimensional, whereas a generic inner product only leads to a 3-dimensional isometry group.}, with respect to the correct choice of affine patch.
We may embed the space of quadratics with complex roots into the space of cubics by taking a quadratic $f$ to the polynomial $xf(x)$ which has a unique real root at $x=0$.
Following the previous section, as the space of quadratics can be identified with the hyperbolic plane, we may expect this collection of cubics to look something like an embedding of a projective model of the hyperbolic plane.  And it does -- with respect to the original affine patch it appears as the \emph{paraboloid model} of the hyperbolic plane, but changing patch gives a round Klein disk, as in Figure \ref{fig:hypIso}: and it is with respect to an affine patch of this form that we take the metric to look Euclidean at the basepoint\footnote{More precisely, we take the Euclidean form of the metric on the affine patch where the following three curves of polynomials are orthogonal at the polynomial $x(x^2+1)$: (1)those with fixed imaginary root $i$, varying real root $t$, (2) those with fixed real root $0$, varying real part of complex root, and (3) those with real root $0$, varying imaginary part of complex root}.

These choices of Riemannian metrics will make clear the importance of hyperbolic geometry to the study of cubics.  Indeed, we will see in Proposition \ref{thm:cubicIsom} that these metrics are actually \emph{isometric to each other}, and that the roots map provides an isometry between them, in direct analogy to what the quadratic formula provided for $\mathbb{H}^2$.

To make use of this geometry in our analysis of cubics, we need to introduce some facts about the hyperbolic plane's tangent bundle.
First, the circle of unit tangent vectors at each point provides a foliation by circles: every point of $\UT\HH^2$ is the unit tangent vector to \emph{some unique point of $\HH^2$} and thus lies on a unique one of these circles.
But we may also define a collection of sections of $\HH^2\to\UT\HH^2$ of the unit tangent bundle such that every point lies on exactly one such hyperbolic plane. 
For a given ideal point $r\in\partial_\infty\HH^2$, at each $z\in\HH^2$ we select the unit vector $v_r\in T_z\HH^2$ which points\footnote{That is, choose $v$ so that the geodesic $\gamma_v$ with initial tangent $v$ at $z$ has $\lim_{t\to\infty+}\gamma(t)=r$.} to $r$.
This mapping $z\mapsto v_r\in T_z\HH^2$ provides one such section, and varying $r\in\partial_\infty\HH^2$ foliates the unit tangent bundle with translates of this.

It is straightforward to give an explicit example of one of each of these, passing through the basepoint for the space.  Considering $\UT\HH^2_\Coefs$ with $f=x(x^2+1)$ as the basepoint (the corresponding statements are equally true for $\UT\HH^2_\Roots$ and $\{0,\pm i\}$ as the basepoint), the circle fiber through $f$ is just all cubics in $\UT\HH^2_\Coefs$ with $x^2+1$ as their irreducible quadratic factor, and the hyperbolic plane through $f$ is all such cubics having $x$ as their linear factor.
Using the metric from Definition \ref{def:CubicCoefMetric}, these two spaces are seen to intersect each other orthogonally at $f$.
But as the metric on the entire space is built from the metric at this basepoint, we conclude that the same behavior is observed at every point.
We note this precisely in the following observation.

\begin{observation}	
\label{obs:Foliation_Roots}
The foliation of $\UT\HH^2_\Roots$ by circles (the fibers of the bundle) correspond to collections of cubics with a fixed complex root $\RP^1_{z}=\{\{a,z,\overline{z}\}\mid a\in\RP^1\}$, as the real root varies.
Conversely, fixing the real root $a\in\RP^1$ and letting the complex conjugate roots vary over $\CC\backslash\RR$ gives the foliation by disks $\HH^2_a=\{\{a,z,\overline{z}\}\mid z\not\in\RR\}$.
with respect to the Riemannian metric on $\UT\HH^2_\Roots$, these two foliations are orthogonal at every point.
The first consists of geodesic circles all of length $2\pi$, and the second of isometrically embedded hyperbolic planes.
\end{observation}

Together, these foliations provide a \emph{trivialization} of the unit tangent bundle: an explicit choice of homeomorphism\footnote{Note however this homeomorphism is \emph{not} an isometry of the metrics we have defined with the product metric on $\HH^2\times\RR$.} $\UT\HH^2\to\HH^2\times\SS^1$, sending each $v\in \UT\HH^2$ to its basepoint $z\in \HH^2$, and the angle $\theta$ that $v$ makes with respect to the direction field associated with some fixed ideal point.
Working in the upper half plane model, it is easiest to choose this as the direction field associated to $\infty$; as in the euclidean coordinates $z=x+iy$ this is simply the direction made with the vertical at every point.
Explicitly, the point $(z,r)$ in the space of roots, identified with the unit tangent vector $v\in T_z\HH^2$, is sent to $(z,\theta)$  for 
\begin{equation}\theta=2\arctan\left(\frac{r-x}{y}\right).
\label{eqn:Triv}
\end{equation}
\begin{remark}
\label{rem:Triv}
Using a M\"obius transformation to send the upper half plane to the unit disk, we may represent the space of roots as the interior of a solid torus of revolution in $\RR^3$, as in Figure \ref{fig:UTRoots}.  
This homeomorphism provides the beautiful pictures visible in Figure \ref{fig:cubicsin3d} produced by David Dumas' wonderful program.
\end{remark}

This geometric perspective provides a nice way of thinking about the space of roots, avoiding the complicated space $\SP^3(\CP^1)$ in which it was originally defined.
The upper half plane model of $\HH^2$ is a subset of $\CP^1$, so the map $\Phi$ in Proposition \ref{prop:UTH2} above explicitly identifies $\RootMap(\CCubic)$ with a subset of $\UT\CP^1$.
But, as $\CP^1$ is topologically the 2-sphere (Remark \ref{rem:FP1}), $\UT\CP^1$ is just the unit tangent bundle of the sphere\footnote{One shows the isometries of $\SS^2$ act freely and transitively on the unit tangent bundle, which then provides a diffeomorphism from $\mathrm{Isom}(\SS^2)=\SO(3)$ and $\UT\SS^2$. 
Finally, we recall that $\SO(3)$ is topologically real projective 3-space (for example, by noting that it is double covered by $\mathrm{SU}(2)\cong\SS^3$).}, which is topologically $\RP^3$.
As the space of coefficients is naturally a subset of $\RP^3$, this provides a uniform means of drawing both spaces, see Figures \ref{fig:UTRoots_CP1} and \ref{fig:UTCoefs}.

\begin{figure}[h!tbp]
\centering
\begin{subfigure}[b]{0.48\textwidth} 
    \centering
	\includegraphics[width=\textwidth]{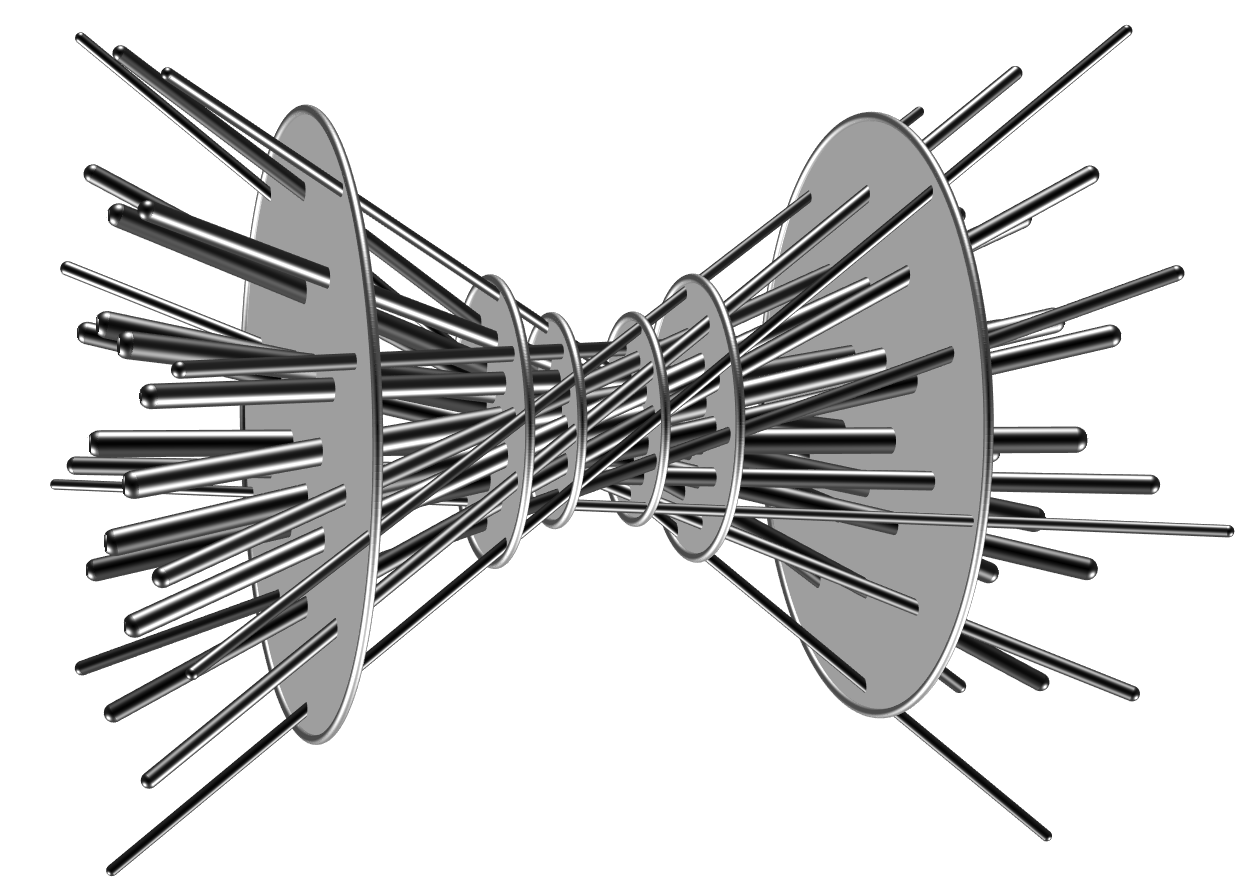}
	\caption{}
    \label{fig:UTRoots_CP1}
\end{subfigure}
\begin{subfigure}[b]{0.4\textwidth} 
    \centering
	\includegraphics[width=\textwidth]{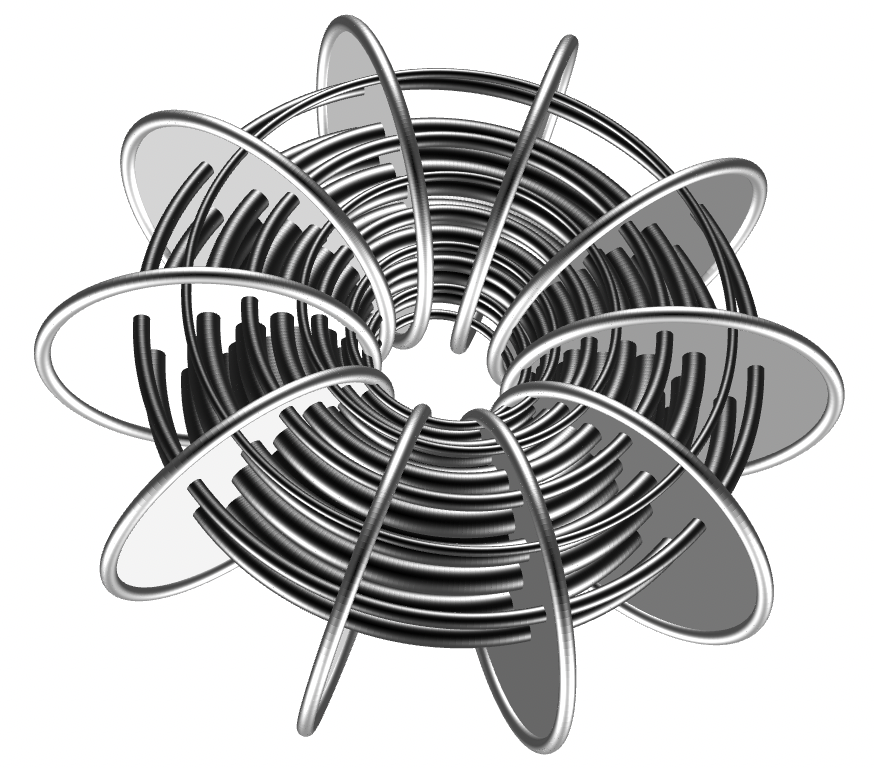}
	\caption{}
    \label{fig:UTRoots}
\end{subfigure}
	\caption{Foliation of $\UT\HH^2_\Roots$ by circles and hyperbolic planes.
In (A) we view $\UT\HH^2_\Roots$ in an affine patch of  $\UT\CP^1=\RP^3$, where the hyperbolic foliation is visible as parallel disks.
	In (B), it is realized as the interior of a torus of revolution in $\RR^3$ via Remark \ref{rem:Triv}.}
\label{fig:}
\end{figure}

\begin{remark}
	This also provides the means to represent cubics literally as unit tangent vectors in $\HH^2$: As $\theta$ is measured with respect to geodesics limiting to $\infty$ in the upper half plane, we may depict the cubic with roots $\{z,\overline{z},r\}$ by the unit vector at $z$ pointed along the geodesic\footnote{Using the coordinates $z=x+iy$ on the upper half plane, this vector is in the direction $(2y(r-x),(r-x)^2-y^2)$.} to $r$.  As this depicts a 3-dimensional space using 2-dimensions, we cannot understand the entire space of cubics this way - however it provides a useful means of looking at 2-dimensional families, such as Figure \ref{fig:Arrows}.
\end{remark}

\begin{figure}[h!tbp]
\centering
    \begin{subfigure}[b]{0.42\textwidth} 
        \centering
    	\includegraphics[width=\textwidth]{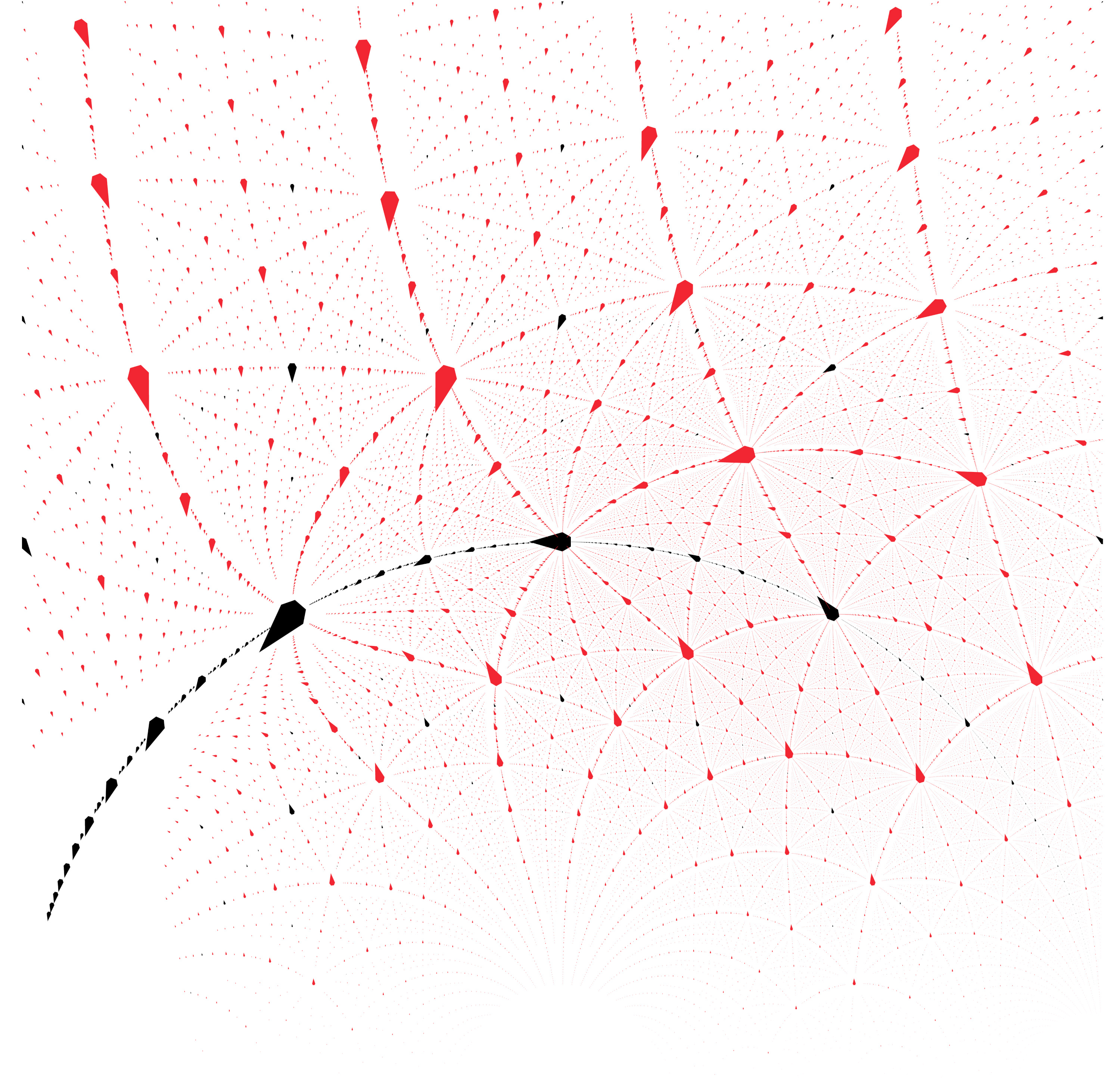}
    	\caption{}
        \label{fig:ArrowsACBC}
    \end{subfigure}
    \begin{subfigure}[b]{0.42\textwidth} 
        \centering
    	\includegraphics[width=\textwidth]{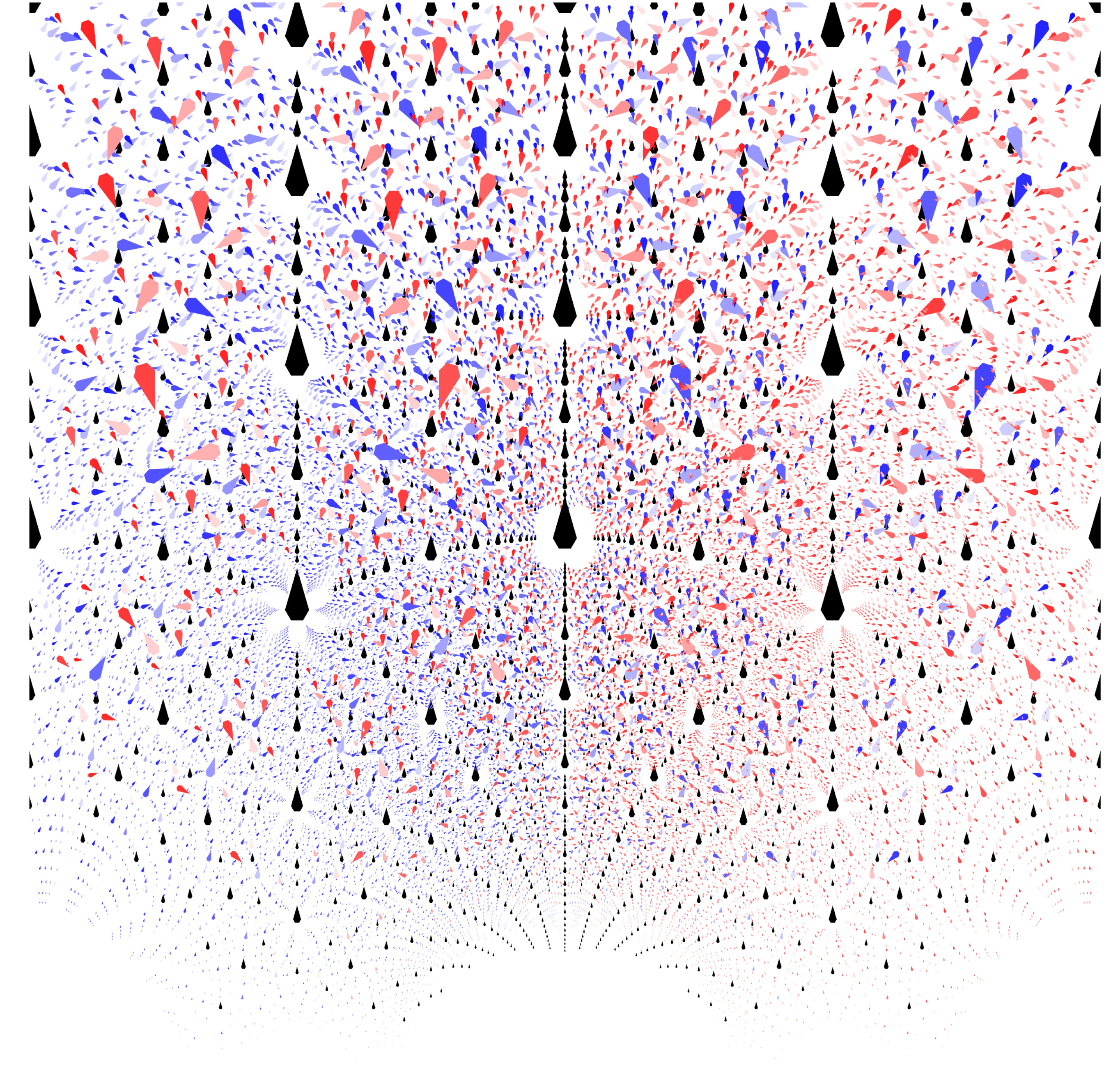}
    	\caption{}
        \label{fig:ArrowsCubics}
    \end{subfigure}
	\caption{Cubic numbers as a vector field, as shown in Figure \ref{fig:cubic_tangent}. The roots of the polynomial family $a x^3 + b x^2 + c x + a$ (\ref{fig:ArrowsACBC}), and all cubics (\ref{fig:ArrowsCubics}).}
\label{fig:Arrows}
\end{figure}

We next turn to the description of this geometry on the space of coefficients, which we likewise denote $\UT\HH^2_\Coefs$.
Here, the surface cut out by the discriminant represents the ideal boundary of the geometry, whose points correspond to the polynomials with negative discriminant, depicted in Figure \ref{fig:NegDisc}.
The important geometric information is a description of the $\SS^1$ fibers, which provide the structure of the unit tangent bundle, and a choice of section $\HH^2\to\UT\HH^2_\Coefs$ giving a trivialization.
Via Observation \ref{obs:Foliation_Roots}, the choice made in the roots model has a convenient description in terms of their corresponding polynomials, which we summarize below.

\begin{observation}	
\label{obs:Foliation_Coefs}
The trivialization in Observation \ref{obs:Foliation_Roots} is expressed in the space of coefficients as:
\begin{itemize}
	\item Each fiber of the $\SS^1$ foliation passes through a unique cubic with real root at infinity, identified with the quadratic $ax^2+bx+c\in\HH^2$.
The fiber passing through this point is parameterized by $[a:b-ra:c-rb:rc]$ in the space of coefficients, for $r\in\RP^1=\RR\cup\{\infty\}$.
\item Each fiber of the $\HH^2$ foliation has constant real root $r\in\RP^1$.
This fiber is parameterized by $[1:-2u-r:2ru-u^2-v^2:-ru^2-rv^2]$ for $u,v\in\R, v>0$ representing the varying complex root $u+iv\in\HH^2$.
\end{itemize}
Exactly as in Observation \ref{obs:Foliation_Roots}, with respect to the choice of metric in Definition \ref{def:CubicCoefMetric}, these two spaces of fibers are orthogonal at every point of intersection.
Every circle fiber has metric length $2\pi$ and every plane in the $\HH^2$ foliation is an isometrically embedded hyperbolic plane (thus justifying the name).
\end{observation}

As in the quadratic case, the natural geometry is more difficult to see at first from the coefficient perspective.
However, drawing $\UT\HH^2_\Coefs$ in the affine patch which puts quadratics at infinity, we see the hyperbolic foliation consists of copies of the now familiar \emph{parabola model} of $\HH^2$.
Choosing other affine patches may render (some of) these as copies of the more familiar Klein disk model.
These two foliations of the space of cubics, by hyperbolic planes and by circles, provide convenient means of keeping track of information about cubics.
We see some examples of this below as we seek a geometric understanding of the roots map.

\begin{figure}[h!tbp]
\centering
	\includegraphics[width=0.65\textwidth]{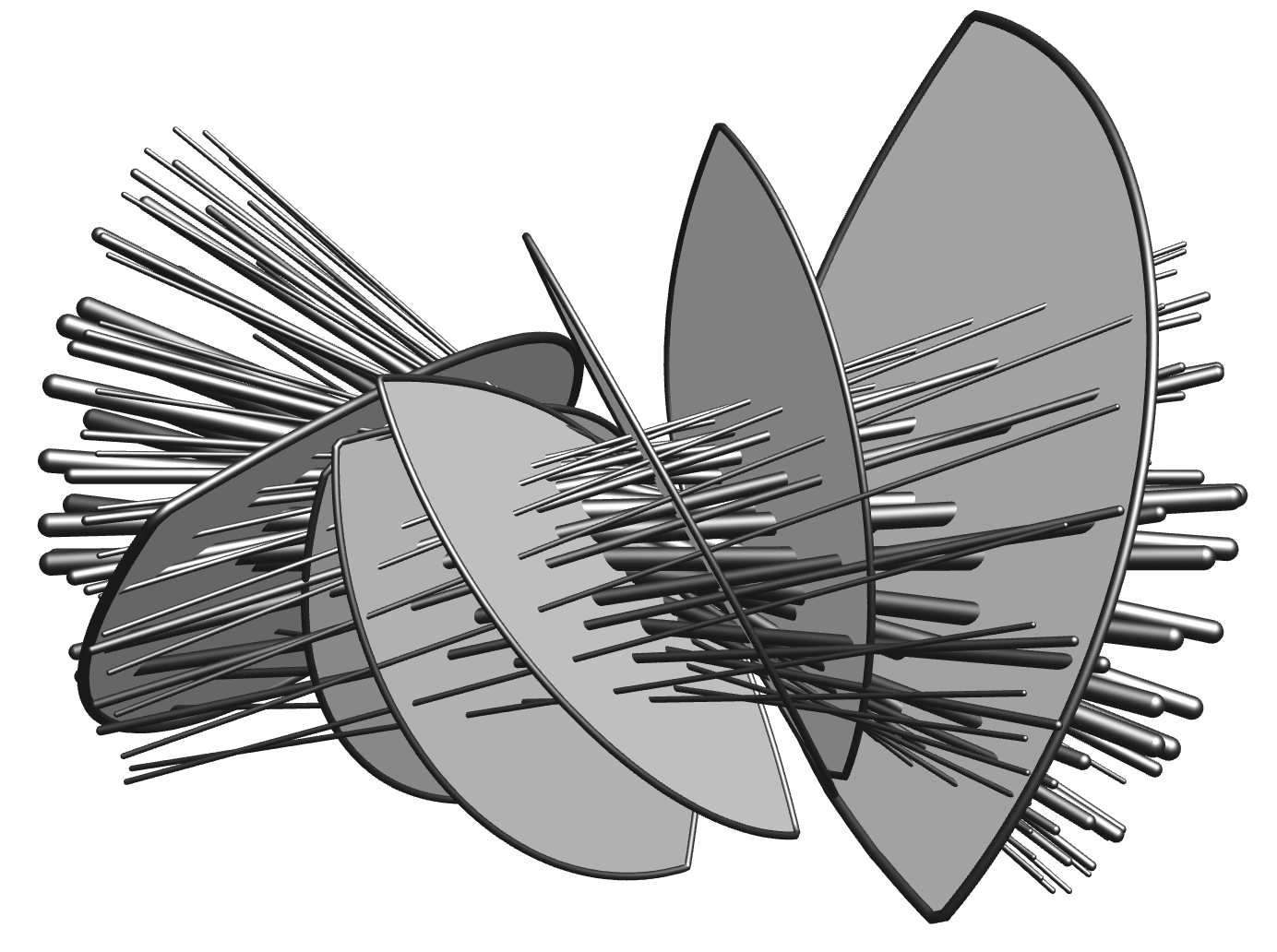}
\caption{Foliation of $\UT\HH^2_\Coefs$ by hyperbolic planes and circle fibers
}
\label{fig:UTCoefs}
\end{figure}

\subsubsection{Geometry of the Roots Map}
The roots map $\RootMap\colon\PP\Coefs\to\Roots$ restricts on the space of real cubics of negative discriminant to a homeomorphism $\UT\HH^2_\Coefs\to\UT\HH^2_\Roots$ equivariant with respect to the $\PSL(2;\RR)$ actions given by equations \eqref{eqn:ProjActionSP3} and \eqref{eqn:IrrepActionRP3}.
We noted earlier (Proposition \ref{prop:UTH2} and Corollary \ref{prop:UT_Coefs}) that this implies $\RootMap$ is an isomorphism of geometries in the sense of Klein.
Following this, we saw that both the space of roots and coefficients can be equipped with a natural choice of Riemannian metric in definitions \ref{def:CubicRootMetric} and \ref{def:CubicCoefMetric}.
We now strengthen our original proposition, and show that with respect to these two metrics in fact $\mathcal{R}$ is an isometry.
Because computing the actual geodesic metric distance here is quite challenging (see \cite{Divjak:2009aa} for a computation of the geodesic curves), we adopt a different approach than our proof of Theorem \ref{thm:hypIso} for quadratics, and work locally, leveraging the equivariance of $\mathcal{R}$ with respect to the group actions to reduce the problem to showing $\mathcal{R}$ induces an isometry of tangent spaces at a single point.
This technique relies on a useful lemma of Riemannian geometry, stated below.

\begin{lemma}
Let $(X,g_X)$ and $Y,g_Y)$ be Riemannian manifolds, each equipped with a transitive action of some Lie group $G$ by isometries.
Suppose further that $f\colon X\to Y$ is a diffeomorphism which is equivariant with respect to these $G$ actions.
Then, $f$ preserves the inner product at any point $p\in X$, it is an isometry.
\end{lemma}

The proof of this lemma is a straightforward computation using the fact that $G$ acts on both sides via isometries.  See again \cite{lee2019introduction} for an introduction to the tools utilized in such arguments.  We sketch the proof below.

\emph{Sketch.}
Let $p$ be any point in $X$. We wish to see that the map $f$ preserves the inner product at $p$.  But using the homogeneity of the $G$ action we can choose some isometry taking the fixed point $x$ in the theorem statement to $p$.
Then using equivariance of the $G$ action with respect to $f$, we see this same isometry element takes $f(x)$ to $f(p)$.
We can use this isometry to transfer any local computation at $p$ to a local computation at $x$, and similarly for $f(p)$ and $f(x)$.
Putting this all together, we see that our map $f$ preserves the inner product at $p$ if and only if it does at $x$.  But this latter condition was precisely our assumption: thus $f$ is a Riemannian isometry.

We now use this result to prove the proposition of interest.

\begin{proposition}
\label{thm:cubicIsom}
Let $\UT\HH^2_\Coefs$ be the set of real cubics with exactly one real root, $\UT \HH^2=\{[a:b:c:d]\mid \Delta_3(a,b,c,d)<0\}$, and $\UT\HH^2_\Roots=\{\{r,z,\overline{z}\}\mid r\in\RP^1,z\in \CC\backslash\RR\}$ be the set of their root-sets.
Equipping each with the $\PSL(2;\RR)$ metric identifying them with the unit tangent bundle to the hyperbolic plane, the roots map $\RootMap\colon\UT\HH^2_\Coefs\to\UT\HH^2_\Roots$ is an isometry.	
\end{proposition}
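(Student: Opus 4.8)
The plan is to run the same argument as in the quadratic case (Theorem~\ref{thm:hypIso}): since $\RootMap$ is a $\PSL(2;\RR)$-equivariant homeomorphism between two models of one homogeneous geometry, it should automatically be a metric isometry, \emph{provided} the two models are equipped with the same invariant metric. The equivariance and the homeomorphism are already in hand from the discussion preceding the statement: $\RootMap\colon\CP^3\to\SP^3(\CP^1)$ is a global homeomorphism by the fundamental theorem of algebra, it carries $\CCubic=\UT\HH^2_\Coefs$ onto $\RootMap(\CCubic)=\UT\HH^2_\Roots$, and because the representation \eqref{eqn:IrrepActionRP3} is defined by $\rho(A).f=f\circ A^{-1}$ one gets $\RootMap(\rho(A).f)=A.\RootMap(f)$ exactly as in \eqref{eqn:Rho_Equivariant}, the right-hand side using the M\"obius action \eqref{eqn:ProjActionSP3}. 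So the remaining task is to match up the metrics.

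First I would set up the torsor picture. By Section~\ref{subsec:CubicCoefs}, $\UT\HH^2_\Coefs=\CCubic$ is a single $\PSL(2;\RR)$-orbit on which the action is free, and by Proposition~\ref{prop:UTH2} (through the equivariant homeomorphism $\Phi$) the same is true of $\UT\HH^2_\Roots$; matching dimensions, each of $\UT\HH^2_\Coefs$, $\UT\HH^2_\Roots$ and $\UT\HH^2$ is therefore a $\PSL(2;\RR)$-torsor. Fix a $\PSL(2;\RR)$-invariant metric $g$ on $\UT\HH^2$ (say the Sasaki metric). The metric on $\UT\HH^2_\Roots$ in the statement is $\Phi^{*}g$, and the metric on $\UT\HH^2_\Coefs$ is $\Psi^{*}g$ where $\Psi\colon\UT\HH^2_\Coefs\to\UT\HH^2$ is the trivialization of Observation~\ref{obs:Foliation_Coefs}. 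Now $\Phi\circ\RootMap$ and $\Psi$ are both $\PSL(2;\RR)$-equivariant maps out of the torsor $\UT\HH^2_\Coefs$, so they differ by an equivariant automorphism of the target torsor $\UT\HH^2$, i.e.\ a fixed right translation. Evaluating at one basepoint removes it: the cubic $x^3+x$ has root set $\{0,i,-i\}$, which $\Phi$ sends to the unit tangent vector at $i\in\HH^2$ pointing toward $0\in\partial_\infty\HH^2$, and by the normalization built into Observation~\ref{obs:Foliation_Coefs} (designed so that the coefficient trivialization matches the root trivialization of Observation~\ref{obs:Foliation_Roots} under $\RootMap$) the map $\Psi$ sends $x^3+x$ to the same vector. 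Hence $\Phi\circ\RootMap=\Psi$, so $\RootMap=\Phi^{-1}\circ\Psi$ and $\RootMap^{*}(\Phi^{*}g)=\Psi^{*}g$; that is, $\RootMap$ is an isometry.

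The step I expect to be the real obstacle is precisely that last normalization. For $\HH^2$ the $\PSL(2;\RR)$-invariant metrics form a one-parameter scaling family (the point stabilizer $\SO(2)$ acts irreducibly on the tangent space), so in the quadratic case equivariance of $\RootMap$ all but forces the metric and one only has to compare \eqref{eqn:RootsMetric} with \eqref{eqn:CoefsMetric} at a single pair of points. For $\UT\HH^2$ the stabilizer is trivial, so there is a full six-parameter family of invariant metrics, and equivariance alone does \emph{not} tell us that the metric $\UT\HH^2_\Coefs$ inherits from the coefficient trivialization coincides with the one $\UT\HH^2_\Roots$ inherits from the root trivialization. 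One genuinely has to verify that $\RootMap$ carries the circle and hyperbolic-plane foliations of Observation~\ref{obs:Foliation_Coefs} onto those of Observation~\ref{obs:Foliation_Roots} with matching parameter --- the basepoint computation above. That computation is short because Observation~\ref{obs:Foliation_Coefs} was obtained by transporting Observation~\ref{obs:Foliation_Roots} through $\RootMap$ to begin with; a reader preferring brute force can instead substitute the parametrizations of Observation~\ref{obs:Foliation_Coefs} into the roots map and read off \eqref{eqn:Triv} directly, at the cost of exactly the sort of unenlightening calculation the narrative here is meant to sidestep.
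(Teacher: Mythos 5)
Your argument is correct, and at the top level it is the same as the paper's: the paper disposes of this proposition in a single sentence ("the roots map restricts to an equivariant homeomorphism \dots thus $\RootMap$ is an isomorphism of geometries"), exactly the equivariance-forces-isometry template of Theorem~\ref{thm:hypIso}. Where you go further is in flagging, correctly, that this template does not close the argument on its own here: because $\PSL(2;\RR)$ acts freely on $\UT\HH^2$, the invariant metrics form a six-parameter family (inner products on the Lie algebra) rather than a ray, so an equivariant homeomorphism between two torsor models is only guaranteed to intertwine the metrics up to a right translation, and right translations do not preserve a merely left-invariant metric. The paper silently absorbs this by \emph{defining} the coefficient-side structure via the roots map (Observation~\ref{obs:Foliation_Coefs} is obtained by pushing Observation~\ref{obs:Foliation_Roots} through $\RootMap$), which makes the statement close to tautological; your torsor argument makes that logic explicit and isolates the one genuine check, namely that $\Phi\circ\RootMap$ and $\Psi$ agree at a single basepoint such as $x^3+x\mapsto$ the unit vector at $i$ pointing to $0$. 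So: same route as the paper, but you have filled in the step the paper elides, and your diagnosis of why the quadratic case is easier (irreducibility of the $\SO(2)$ stabilizer action on $T_i\HH^2$ pins the invariant metric up to scale) is exactly right.
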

\begin{proof}
By the above proposition and the equivariance of the $\PSL(2;\RR)$ actions by isometries, it is enough to show that $\mathcal{R}$ defines a Riemannian isometry at any single point.
We choose to compute at the basepoint $f=x(x^2+1)$ in $\UT\HH^2_\Coefs$ and its image $\RootMap(f)=\{0,\pm i\}$ in $\UT\HH^2_\Roots$.
Because writing an actual formula for the roots map in degree 3, we opt instead to work with its inverse (if $\RootMap^{-1}$ is a Riemannian isometry then so is $\mathcal{R}$).
This inverse takes the point $\{r,z,\overline{z}\}$ to the polynomial that has $r$ and $z,\overline{z}$ as roots: written in the affine patch of monic cubics, this has the explicit formula
$$\mathcal{R}^{-1}(\{r,z,\overline{z}\})=(x-r)(x-z)(x-\overline{z})=(x-r)(x^2+2\mathrm{Re}(z)+|z|^2).$$

Let $\{u,v+iw\}$ denote a tangent vector to $\{0,\pm i\}$ as in Definition \ref{def:CubicRootMetric}.
We may realize this tangent vector as the derivative at $t=0$ of the path $\{tu, i+t(v+iw)\}$.
Thus we may compute the result of applying $\mathcal{R}^{-1}$ to this tangent vector by looking at the path $\RootMap^{-1}(\{tu, i+t(v+iw)\})$ and taking its derivative in the space of coefficients.
Performing this computation, we arrive at the path of polynomials
$$x^3+( 
    2 t v-t u ) x^2 +(1 - 2 t^2 u v + t^2 v^2 + 2 t w + t^2 w^2) x-(t u + t^3 u v^2 + 2 t^2 u w + t^3 u w^2)$$
differentiating at $t=0$ gives the tangent vector, realized a quadratic polynomial as in Definition \ref{def:CubicCoefMetric}.
$$(2 v-u) x^2 + 2 w x - u$$
Having these two vectors on hand, the rest of the proof is a direct computation: we compute the norm of each with respect to the Riemannian metric on each respective space, and then see the results are equal.
First, in the space of roots, using Definition \ref{def:CubicRootMetric} we see the norm square of $\{u,v\pm iw\}$ is simply its Euclidean value:
$$\|\{u,v\pm iw\}\|^2_{\Roots}=u^2+v^2+w^2.$$
Performing the analogous computation in coefficient space, we take the coefficient vector $(-(u+2v),2w,-u)$ of the quadratic above, and apply the norm squared of Definition \ref{def:CubicCoefMetric}:
\begin{align*}\|((2v-u),2w,-u)\|^2_\Coefs&=\left(\frac{(2v-u)+u}{2}\right)^2+\left(\frac{2w}{2}\right)^2+\left(-u\right)^2\\
&=v^2+w^2+u^2
\end{align*}
Thus $\RootMap^{-1}$ induces an isometry between the tangent spaces $T_{\{0,\pm i\}}\UT\HH^2_\Roots$ to the roots and $T_{x(x^2+1)}\UT\HH^2_\Coefs$ to the coefficients at our chosen basepoint, as required.
\end{proof}

After the right preliminary work, the proof of our main theorem, (the analog of Theorem~\ref{thm:hypIso} but for cubics) reduced to checking a simple computation at the basepoint.  But, we can say even more than this, and relate exactly how this isometry acts with respect to the foilations of the space of cubics by circles and hyperbolic planes (Observation \ref{obs:Foliation_Roots}).

\begin{theorem}
\label{thm:CubicFactor}
The roots map $\RootMap\colon\PP\Coefs\to\Roots$, restricted to the space $\UT\HH^2_\Coefs$ of real cubics with negative discriminant, factors as $\RootMap_3=(\RootMap_1,\RootMap_2)\circ(\pi_1,\pi_2)$ where $\pi=(\pi_1,\pi_2)$ is the trivialization of the unit tangent bundle given by the foliations by circles, and hyperbolic planes, respectively, in Observation \ref{obs:Foliation_Coefs}, and $\RootMap_1,\RootMap_2$ are the root maps for linear and quadratic polynomials.
This is best seen diagrammatically: compare the diagram below with  Figure \ref{fig:cubicrootmap}.
\begin{center}
\begin{tikzcd}
\mathbb{P}\mathrm{Coefs} \arrow[d, "\cong" description]                           &             & \mathbb{RP}^1_\mathrm{Coefs} \arrow[r, "\mathcal{R}_1"] & \mathbb{RP}^1_\mathrm{Roots} \arrow[rrd]        &&                                                                       \\
\mathrm{UT}\mathbb{H}^2_\mathrm{Coefs} \arrow[rru, "\pi_1"] \arrow[rrd, "\pi_2"'] & f \arrow[r] & \begin{pmatrix}f_1\\f_2\end{pmatrix} \arrow[r]          & \begin{pmatrix}r\\z\end{pmatrix} \arrow[r] & {\{r,z,\overline{z}\}} & \mathrm{UT}\mathbb{H}^2_\mathrm{Roots} \arrow[d, "\cong" description] \\
 &             & \mathbb{H}^2_\mathrm{Coefs} \arrow[r, "\mathcal{R}_2"'] & \mathbb{H}^2_\mathrm{Roots} \arrow[rru]         &                             & \mathrm{Roots}                                                       
\end{tikzcd}
\end{center}
\end{theorem}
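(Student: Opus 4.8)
The plan is to recognize the trivialization $(\pi_1,\pi_2)$ as nothing more than the (projectivized) unique factorization of a real cubic of negative discriminant into its real linear factor and its irreducible real quadratic factor, after which the theorem collapses to the elementary observation that the roots of a product of polynomials are the union of the roots of the factors. The first move: a real cubic $f$ with $\Delta_3(f)<0$ has one simple real root $r\in\RP^1$ and a complex conjugate pair $z,\overline z\in\CC\setminus\RR$, hence factors uniquely over $\RR$, up to a scalar on each factor, as $f=f_1 f_2$ with $f_1$ the linear form having root $r$ and $f_2$ the real quadratic with root-set $\{z,\overline z\}$ and negative discriminant. Projectivizing, $[f]\mapsto([f_1],[f_2])$ is a well-defined map $\UT\HH^2_\Coefs\to\RP^1_\Coefs\times\HH^2_\Coefs$, with two-sided inverse given by polynomial multiplication $([f_1],[f_2])\mapsto[f_1 f_2]$ (such a product always has negative discriminant, having exactly one real and two complex roots); continuity in both directions is immediate, since roots depend continuously on coefficients and on $\UT\HH^2_\Coefs$ the real and complex roots never collide, so this map is a homeomorphism.

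Next I would verify that this algebraic factorization map is exactly the trivialization $(\pi_1,\pi_2)$. This is the content of Observation \ref{obs:Foliation_Coefs}: its circle fibers are precisely the loci on which the irreducible quadratic factor is held fixed while the linear factor varies, and its hyperbolic-plane leaves are precisely the loci on which the linear factor (equivalently, the real root $r$) is held fixed while the quadratic factor varies. One confirms this by expanding the products $f_1 f_2$ and matching the result against the two parameterized families listed in Observation \ref{obs:Foliation_Coefs} --- a direct computation which also pins down the scaling and sign conventions. Consequently $\pi_2(f)=f_2$, the quadratic factor (i.e.\ the basepoint, the complex root) lying in $\HH^2_\Coefs$, and $\pi_1(f)=f_1$, the linear factor (i.e.\ the direction, the real root) lying in $\RP^1_\Coefs$.

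With the identification $(\pi_1,\pi_2)(f)=(f_1,f_2)$ in hand the diagram commutes by a direct chase. For $[f]\in\UT\HH^2_\Coefs$ with $f=f_1 f_2$, $\RootMap_1(\pi_1(f))=\RootMap_1(f_1)=r$ is the real root (recall $\RootMap_1$ is the linear isomorphism $[a:b]\mapsto[-b:a]$ of Section~\ref{sssec:RootsMap}), and $\RootMap_2(\pi_2(f))=\RootMap_2(f_2)=z$ is the complex root with positive imaginary part, this last step being precisely the quadratic formula $\RootMap_2([1:-2u:u^2+v^2])=u+iv$ of Theorem \ref{thm:hypIso}. Since $\RootMap_3(f)$ is the multiset of roots of $f=f_1 f_2$, namely the union of the root-sets of $f_1$ and $f_2$, we obtain $\RootMap_3(f)=\{r,z,\overline z\}$; and under the identification $\RootMap(\CCubic)=\UT\HH^2_\Roots$ of Proposition \ref{prop:UTH2}, the assignment $(r,z)\mapsto\{r,z,\overline z\}$ is exactly the map sending an ideal point and a basepoint to the unit tangent vector at $z$ pointing at $r$, i.e.\ the inverse of the trivialization of Observation \ref{obs:Foliation_Roots} (up to the basepoint-dependent reparameterization $r\leftrightarrow\theta$ of \eqref{eqn:Triv}). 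This is the commutativity asserted by the displayed diagram, giving $\RootMap_3=(\RootMap_1,\RootMap_2)\circ(\pi_1,\pi_2)$.

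The substantive ingredients --- the $\PSL(2;\RR)$-equivariance and resulting isometry (Proposition \ref{thm:cubicIsom}), the explicit quadratic formula (Theorem \ref{thm:hypIso}), and the explicit description of the two coefficient-space foliations (Observation \ref{obs:Foliation_Coefs}) --- are all already in hand, so the only real work is the bookkeeping of the second step: matching the \emph{abstractly defined} trivialization (basepoint plus the angle $\theta$ of \eqref{eqn:Triv}) with the \emph{algebraically defined} factorization map, and keeping the projective scalings straight when one passes among $f$, the pair $(f_1,f_2)$, and the parameterized families of Observation \ref{obs:Foliation_Coefs}. That reduces to the two polynomial expansions mentioned above, so I expect it to be routine rather than a genuine obstacle.
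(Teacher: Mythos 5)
Your proposal is correct and follows essentially the same route as the paper: both arguments hinge on identifying the trivialization $(\pi_1,\pi_2)$ with the unique factorization of a negative-discriminant real cubic into its real linear and irreducible quadratic factors (via Observations \ref{obs:Foliation_Roots} and \ref{obs:Foliation_Coefs}), after which the component maps are recognized as $\RootMap_1$ and $\RootMap_2$. The only difference is one of emphasis --- the paper deduces that $\RootMap$ preserves the two foliations from the isometry property of Proposition \ref{thm:cubicIsom}, whereas you get the same conclusion more elementarily from unique factorization over $\RR$ and the fact that the roots of $f_1f_2$ are the union of the roots of the factors; your version is essentially a fleshed-out form of the paper's terse argument.
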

\begin{proof}

This is not deep, and follows directly from our geometric interpretations of the space of quadratic and linear polynomials.
As isometries of $\UT\HH^2$ preserve the fiber bundle structure, $\RootMap$ preserves the $\RP^1$ foliation, and similarly the $\HH^2$ foliation described in Observations \ref{obs:Foliation_Roots} and \ref{obs:Foliation_Coefs}.
By Observation \ref{obs:Foliation_Roots}, we may identify these foliations with the preimages of projections $\pi_1,\pi_2$ onto the linear and irreducible quadratic factors respectively.
As these are preserved by the roots map, $\RootMap_3$ factors through the trivialization $\pi$ to a pair of maps sending these factors to their roots.
But these are already familiar: they are just the lower dimensional roots maps $\RootMap_1$ and $\RootMap_2$.
	
\end{proof}

The bottom row of this diagram gives the formula for the complex root as in Theorem \ref{thm:cubicMain}; the top row gives the analog returning the real root.
Choosing to identify the base $\HH^2_\Coefs$ with one of the sections of Observation \ref{obs:Foliation_Coefs}, one may see the solution to the cubic as a \emph{process} as follows.
(For simplicity of exposition, we identify it with the fiber of cubics with real root zero here.)
Starting with a point $f\in\UT\HH^2_\Coefs$ (recall Figure \ref{fig:UTCoefs} as a visual aid here), we slide along the fiber through $f$ until reaching the hyperbolic sheet specified by the section.
The real root is given by the distance traveled along this fiber, and the complex root is given by taking the resulting point, which is now a cubic of the form $(ax^2+bx+c)x$ and applying the isometry from the parabola model of $\HH^2$ in $[a:b:c]$ to the upper half plane.

\begin{figure}[h!tbp]
\centering
\includegraphics[width=0.9\textwidth]{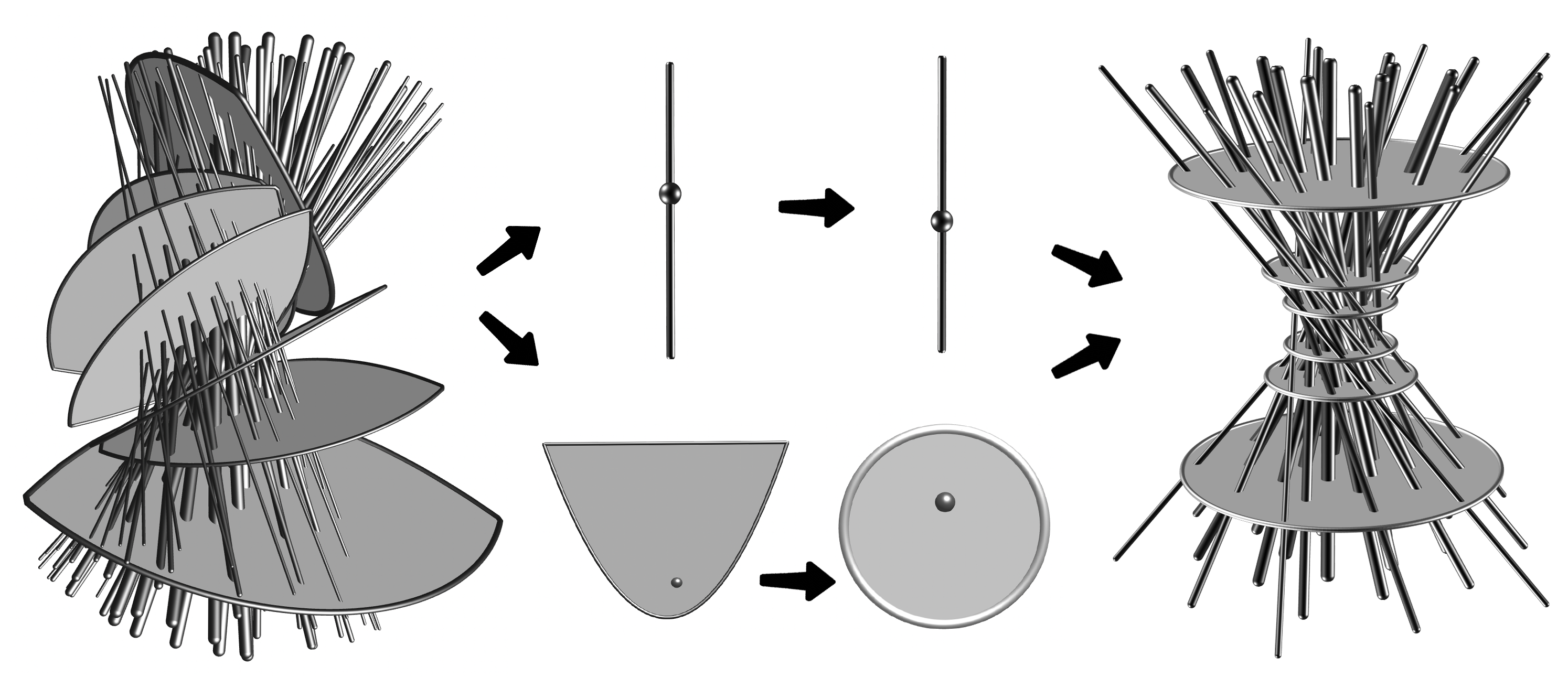}
\caption{Geometric factoring of the roots map $\RootMap_3\colon\UT\HH^2_\Coefs\to\UT\HH^2_\Roots$ of Theorem \ref{thm:CubicFactor}, using the root maps $\RootMap_1,\RootMap_2$ of lower degrees.
On the left, the space $\UT\HH^2_\Coefs\subset\RP^3$ is depicted together with the trivialization determined by fixing a real or complex root.  On the right is the space $\UT\HH^2_\Roots\subset\UT\CP^1$ of unordered roots with the same trivialization. 
Compare with the diagram in Theorem \ref{thm:CubicFactor}.
}
\label{fig:cubicrootmap}
\end{figure}

\begin{remark}
One may derive an expression for the cubic formula from this procedure, similarly to what was done in Remark \ref{rem:QuadFormula}, using the parameterization of the $\RP^1$ and $\HH^2$ factors in Observation \ref{obs:Foliation_Coefs} and attempting to invert their dependence of the coefficients $[a:b:c:d]$ on the parameters $u,v$.
Of course, the inherent messiness of the cubic formula must make this challenging at some point, which we can now specify: it lies in giving explicit formulas for the projections of $\UT\HH^2_\Coefs$ onto its foliations\footnote{In light of Theorem \ref{thm:CubicFactor} it cannot be anywhere else: as the only remaining portion to the cubic formula from this perspective is to solve the associated quadratic and linear equations: both of which have simple roots maps as we have seen before.}.
\end{remark}

\subsubsection{Applications to Cubic Numbers}
\label{sec:applications-cubic}

When reasoning about cubic numbers, we often want to deal just with the points in $\CC$ themselves, and not the abstract space of roots.
Thus, from the perspective of polynomials, we are interested in the image of the \emph{projection} $\UT\HH^2_\Roots\to\HH^2_\Roots\subset\CC$, not the space of roots itself.
This is yet another place where the circle and hyperbolic fibrations of the space of cubics are important, as they are the kernel and $1$-eigenspaces of the differential of this projection, respectively.

One place this may arise is in trying to bound distances between cubic numbers.
Thus, to control distances between cubic numbers in terms of their minimal polynomials, we do not need to use a full expression\footnote{While the Riemannian metric here is easy to describe by translating the standard euclidean metric on some tangent space to a point $p\in\UT\HH^2_\Coefs\subset\RP^3$ around by action given by the representation $\rho$, its expression is complicated, making the computation of a distance function unwieldy.}
 for the unit tangent bundle metric on $\UT\HH^2_\Coefs$ but rather just a means of measuring the hyperbolic distance between their projections to the base $\HH^2$.
 While abstractly this is given exactly by the pull back of the metric on $\HH^2_\Coefs$ by the bundle projection $\UT\HH^2\to\HH^2$, any sufficiently simple expression for this would hopefully make some of the analysis of Section \ref{sec:Dio-quad} in the quadratic case extendable to cubics.

Another place this arises is in the study of one and two parameter families of cubics.
Let $F\colon\R^n\to\PP\Coefs$ be a smooth map for $n\in\{1,2\}$ tracing out a submanifold of the space of cubics.
A natural question for the production of good images, is when does the result of drawing the complex root of each polynomial in the family produce a coherent image in $\CC$: that is, when is the composition $\RootMap_\CC\circ F$ a homeomorphism onto its image?
Given that on the space of roots, the projection onto the complex root is directly collapsing the $\RP^1_\Roots$ factor, we recall $\RootMap_\CC$ is the projection along the $\RP^1$ fibers of Observation \ref{obs:Foliation_Coefs}, followed by an isometry.
This gives an explicit condition on $F$.

\begin{observation}
	\label{obs:transverse}
Let $F\colon \RR^n\to\UT\HH^2_\Coefs$ be a family of real cubics in the space of projectivized coefficients, for $n\in\{1,2\}$.
Then the map $\RootMap_\CC\circ F$ sending a cubic to its complex root is an embedding if $F$ is everywhere transverse to the $\RP^1$ fibration of $\UT\HH^2_\Coefs$.\end{observation}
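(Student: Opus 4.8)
The plan is to feed $F$ through the geometric factorization of the roots map established in Theorem~\ref{thm:CubicFactor} (equivalently Theorem~\ref{thm:cubicMain}), so that the problem reduces to a single submersion. That factorization writes $\RootMap_\CC$, the map returning the complex root, as $\iota\circ\RootMap_2\circ\pi_2$, where $\pi_2\colon\UT\HH^2_\Coefs\to\HH^2_\Coefs$ is the bundle projection collapsing the $\RP^1$ circle fibers of Observation~\ref{obs:Foliation_Coefs}, $\RootMap_2\colon\HH^2_\Coefs\to\HH^2_\Roots$ is the isometry of Theorem~\ref{thm:hypIso}, and $\iota\colon\HH^2_\Roots\hookrightarrow\CC$ is the inclusion of the upper half plane. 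Since $\RootMap_2$ and $\iota$ are themselves embeddings, the first step is simply to observe that $\RootMap_\CC\circ F$ is a (local) embedding exactly when $\pi_2\circ F$ is, so the whole question is now about the single map $\pi_2$.

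Next I would identify the vertical distribution of $\pi_2$. By construction its fibers are the $\RP^1$ leaves, so at each $q\in\UT\HH^2_\Coefs$ the kernel of $d(\pi_2)_q$ is the tangent line to the circle fiber through $q$; this is precisely the statement that the circle and hyperbolic fibrations are the kernel and $1$-eigenspace of the differential of the projection onto the complex root. Then for $p\in\RR^n$ the chain rule gives $d(\pi_2\circ F)_p = d(\pi_2)_{F(p)}\circ dF_p$, and this composite is injective if and only if $\operatorname{im}(dF_p)$ meets the fiber tangent line only at $0$ — i.e.\ if and only if $F$ is transverse to the $\RP^1$ fibration at $p$. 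Running this at every $p$ shows that $\pi_2\circ F$, hence $\RootMap_\CC\circ F$, is an immersion precisely under the transversality hypothesis; for $n=2$ one moreover gets that $d(\pi_2\circ F)_p$ is an isomorphism, so $\pi_2\circ F$ is a local diffeomorphism onto an open subset of $\HH^2_\Coefs$. This is the core of the argument and is essentially a one-line linear-algebra computation once the factorization is in hand.

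The step I expect to be the genuine obstacle is upgrading this local statement to a \emph{global} embedding, since transversality only controls first derivatives and does not by itself forbid two distant points of $\RR^n$ from being sent by $\pi_2\circ F$ to the same point of $\HH^2_\Coefs$. For the affine families that actually occur in the starscapes — which is the case treated in Corollary~\ref{cor:StarscapeEmbed1} — I would exploit the linear structure together with the explicit fiber parametrization $[a:b-ra:c-rb:rc]$ of Observation~\ref{obs:Foliation_Coefs}: two coefficient vectors on an affine plane $\mathcal S$ with the same image under $\pi_2$ must lie on a common circle fiber, and since $\mathcal S$ is affine this forces $\mathcal S$ to contain that fiber's direction, contradicting transversality; hence $\pi_2|_\mathcal S$ is injective, and being a proper injective immersion it is an embedding. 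For a completely general smooth $F$ the honest conclusion is only ``local embedding (immersion),'' so I would either restrict to the affine case or add a properness/injectivity hypothesis, noting that transversality is exactly the condition that rules out the visible collapsing of a curve to a point contrasted in Figures~\ref{fig:AffineStarscapes_c} and~\ref{fig:Fam7C}, with the refinement to cubic and quadratic numbers handled separately in Corollary~\ref{cor:transverse}.
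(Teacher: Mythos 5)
Your argument is correct and follows the same route the paper intends: the paper offers no formal proof of this Observation beyond the preceding remark that $\RootMap_\CC$ is the projection along the $\RP^1$ fibers of Observation~\ref{obs:Foliation_Coefs} followed by an isometry, and your chain-rule computation identifying the fiber tangent line with $\ker d\pi_2$ is exactly the linear algebra that remark is gesturing at. Your caveat about global injectivity is well taken — transversality alone only yields an immersion for a general smooth $F$ — and your repair via the fiber parametrization (two points with the same projection lie on a common fiber line, which an affine subspace must then contain) is precisely the argument the paper itself deploys just after the Observation, where it notes the fibers are pairwise skew lines so no affine plane contains two of them; this is why the paper's precise statements (Corollaries~\ref{cor:StarscapeEmbed1} and~\ref{cor:transverse}) are restricted to affine subspaces.
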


Restricting attention to planar starscapes, the relevant families $F\colon\R^2\to\RP^3$ 
are those whose image is some affine plane $\mathcal{S}\subset\RP^3$ (that is, the projectivization of some $\RR^3$ through the origin) intersected with the space of cubics of negative discriminant, as in Corollary \ref{cor:StarscapeEmbed1} highlighted in the introduction to this section.
Examples include the accompanying figure (\ref{fig:Lines}) as well as the gallery Figures \ref{fig:Starscapes_a}, \ref{fig:Starscapes_b} and \ref{fig:AffineStarscapes_a}--\ref{fig:AffineStarscapes_d}.
The fiber above any point $r\in \CC$ is a line $(\RootMap_2\pi_2)^{-1}(r)=L_r$ (parameterized as in Observation \ref{obs:Foliation_Coefs}), thus any affine plane $\mathcal{S}\subset\UT\HH^2_\Coefs$ containing $L_r$ fails to embed under projection onto the complex root.
As we may find an affine plane containing any projective line we like, this can happen at any point in the upper half plane.
The fact that this can only happen at a single point in a single starscape is implied by the fact that the fibers of $\UT\HH^2_\Coefs$ are pairwise skew lines in $\RP^3$, and thus no two are ever contained in a plane.

\begin{corollary}
	\label{cor:transverse}
Let $\mathcal{S}\subset\UT\HH^2_\Coefs$ be an affine 2-dimensional projective subspace, and $\RootMap|_\mathcal{S}\colon\mathcal{S}\to\CC$ be the projection onto the complex root.
Then $\RootMap|_\mathcal{S}$ is singular along at most one projective line $L\subset\mathcal{S}$.
	Furthermore, for a planar starscape (i.e. $\mathcal{S}$ has rational normal vector), this line, if it exists, corresponds to a rational polynomial with a linear factor over $\QQ$ (i.e. a quadratic point in the complex plane).  Finally, every such polynomial gives a singularity for $\RootMap_\mathcal{S}$ for some planar starscape $\mathcal{S}$.
\end{corollary}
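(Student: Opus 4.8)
The plan is to route everything through the circle fibration of $\UT\HH^2_\Coefs$ from Observation~\ref{obs:Foliation_Coefs}. By Theorem~\ref{thm:CubicFactor} (see also Observation~\ref{obs:transverse}), $\RootMap_\CC=\RootMap_2\circ\pi_2$, where $\pi_2$ is the bundle projection collapsing the $\RP^1$ fibers and $\RootMap_2$ is a diffeomorphism onto $\HH^2_\Roots$. Hence for an affine plane $\mathcal{S}=\PP W\cap\CCubic$ (with $W\subset\RR^4$ a $3$-dimensional linear subspace), $\RootMap|_\mathcal{S}$ is singular at $p$ exactly when $T_p\mathcal{S}$ contains the fiber direction $\ker d\pi_2|_p$. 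Since $\mathcal{S}$ is the trace of a projective plane and the fiber through $p$ is an entire projective line inside $\UT\HH^2_\Coefs$, a line whose direction lies in $T_p\mathcal{S}$ must be contained in $\mathcal{S}$; so the singular locus of $\RootMap|_\mathcal{S}$ is precisely the union of the circle fibers contained in $\mathcal{S}$. The first assertion then follows immediately from the fact (recorded just above the statement) that distinct circle fibers are pairwise skew lines in $\RP^3$: two distinct projective lines lying in a single projective plane always meet, so at most one fiber lies in $\mathcal{S}$, and the singular locus is at most one line $L$.

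The crux is the second assertion, and the key observation is a description of the fiber: the fiber over a quadratic $q=ax^2+bxy+cy^2\in\HH^2_\Coefs$ consists of the cubics $(\mu x+\nu y)\,q$, so in coordinates $[a:b:c:d]$ it is the projective line spanned by the coefficient vectors $u_0$ of $x\,q$ and $u_1$ of $y\,q$, namely $u_0=(a,b,c,0)$ and $u_1=(0,a,b,c)$ (this is the line of Observation~\ref{obs:Foliation_Coefs}). Writing a rational plane as $\mathcal{S}=\PP(n^{\perp})\cap\CCubic$ for a rational normal $n=(n_0,n_1,n_2,n_3)$, the containment $L_q\subset\mathcal{S}$ is equivalent to $\langle n,u_0\rangle=\langle n,u_1\rangle=0$, i.e.\ to $(a,b,c)$ lying in the kernel of the rational Hankel-type matrix $\left(\begin{smallmatrix}n_0&n_1&n_2\\ n_1&n_2&n_3\end{smallmatrix}\right)$. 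That kernel cannot be $2$-dimensional: its projectivization would then be a line through the interior point $[q]$ of the disk $\HH^2_\Coefs$ (Observation~\ref{obs:Quadratic_HomogeneousSpace}), hence would meet $\HH^2_\Coefs$ in an arc, producing infinitely many fibers inside $\mathcal{S}$ and contradicting skewness. So the matrix has rank $2$, its kernel is a rational line, and $[a:b:c]$ is a rational point; thus $q$ is a scalar multiple of a rational quadratic, the rational cubic $[a:b:c:0]=x\,q$ lies on $L_q$ and splits off the rational linear factor $x$, and the common complex root of the cubics on $L_q$ — the root of $q$ — is a quadratic irrational, i.e.\ a quadratic point.

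For the final assertion I would reverse the construction. Given a quadratic point $\alpha\in\CC\smallsetminus\RR$, let $q$ be its (rational, negative-discriminant) minimal polynomial and $f=(x-r_0)\,q(x)$ any rational cubic with $r_0\in\QQ$; then $f$ has roots $r_0,\alpha,\overline{\alpha}$ with $\alpha$ non-real, so $f\in\CCubic$, and $f$ lies on the fiber $L_q$. Since $q$ is rational, $L_q=\PP\,\mathrm{span}\{u_0,u_1\}$ is a rational projective line, and every rational line in $\RP^3$ lies in a pencil of rational planes; choosing any rational $3$-dimensional $W\supset\mathrm{span}\{u_0,u_1\}$ and setting $\mathcal{S}=\PP W\cap\CCubic$ yields a planar starscape containing $f$ (a genuine $2$-dimensional region, since $\CCubic$ is open and meets $\PP W$ at $f$), along which $\RootMap|_\mathcal{S}$ is singular by the first part. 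As $L_q$ does not depend on $r_0$, every such $f$ is realized.

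The one step requiring genuine care is the exclusion of a $2$-dimensional kernel in the middle paragraph, and I expect that to be the main (essentially the only) obstacle; it is dispatched cleanly by combining the convexity of $\HH^2_\Coefs$ (a line through the interior of a disk meets it in an arc) with the pairwise skewness of the fibers. Everything else is bookkeeping: translating the transversality criterion of Observation~\ref{obs:transverse} into linear algebra via the spanning vectors $u_0,u_1$, and using that distinct coplanar projective lines intersect.
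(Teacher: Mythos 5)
Your proof is correct, and its overall architecture is the same as the paper's: uniqueness of the singular line from pairwise skewness of the circle fibers (two coplanar projective lines meet), rationality of the underlying quadratic from the linear conditions $\langle n,u_0\rangle=\langle n,u_1\rangle=0$ on the fiber's spanning vectors, and the converse from the pencil of rational planes through the rational line $L_q$. The one place you diverge is in organizing the middle step: you fix the rational normal $n$ and solve for the quadratic, so the dichotomy becomes the rank of the Hankel matrix $\left(\begin{smallmatrix}n_0&n_1&n_2\\ n_1&n_2&n_3\end{smallmatrix}\right)$, and you kill the degenerate (rank-one) case geometrically — a two-dimensional kernel would projectivize to a secant line of $\HH^2_\Coefs$ through $[q]$ and hence put infinitely many pairwise-skew fibers into a single plane. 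The paper instead fixes the roots $\{r,z,\overline z\}$ and solves for $n$, phrasing the dichotomy as the $\QQ$-rank of $\{1,\,z+\overline z,\,z\overline z\}$ and killing the intermediate case by forcing $n=[1:x:x^2:x^3]$ and hence $z$ rational, contradicting $z\notin\RR$. These are dual views of the same $2\times 4$ system; your exclusion argument reuses the skewness already needed for uniqueness and is arguably the cleaner of the two, while the paper's version connects more directly to the rational-geodesic criterion of Observation \ref{obs:rat-geo} used elsewhere. No gaps.
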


To see the statement that the line lies over a quadratic point, suppose the projective line corresponds to roots $\{r, z, \overline{z}\}$, where $r$ is the varying real root, while $z$ and $\overline{z}$ are fixed.  Then the projective line has an expression as
\begin{equation}
	\label{eqn:L}
	[ 1 : - z - \overline{z} - r : z\overline{z} + (z + \overline{z}) r : - z\overline{z} r ].
\end{equation}
If this lies in a planar starscape, then it lies in a projective plane with rational normal, i.e. there is a rational vector $[r:s:t:u]$ such that
\[
	\begin{pmatrix}
	1 & - z - \overline{z} & z\overline{z} & 0 \\
	0 & 1 & -z - \overline{z} & z\overline{z} \\
	\end{pmatrix}
	\begin{pmatrix} r \\ s \\ t \\ u \end{pmatrix} = 0.
\]
In particular, this implies $1$, $z + \overline{z}$ and $z \overline{z}$ are $\QQ$-linearly dependent.  If they have rank two, then this implies that $[r:s:t] = [s:t:u]$, which implies $[r:s:t:u]$ has the form $[1:x:x^2:x^3]$; this further implies $z$ is rational (as the projective line in coefficient space has a fixed rational root $x$).  If they have rank one, then $z$ lies on the intersection of two rational geodesics, i.e. is quadratic (see Observation \ref{obs:rat-geo}).  

For the final statement, suppose the projective line $L$ is as in \eqref{eqn:L} above.  Then it lies on infinitely many rational geodesics; taking any two will span an appropriate $\mathcal{S}$.

\begin{figure}[h!tbp]
\centering
\centering
\begin{subfigure}[b]{0.45\textwidth} 
    \centering
	\includegraphics[width=\textwidth]{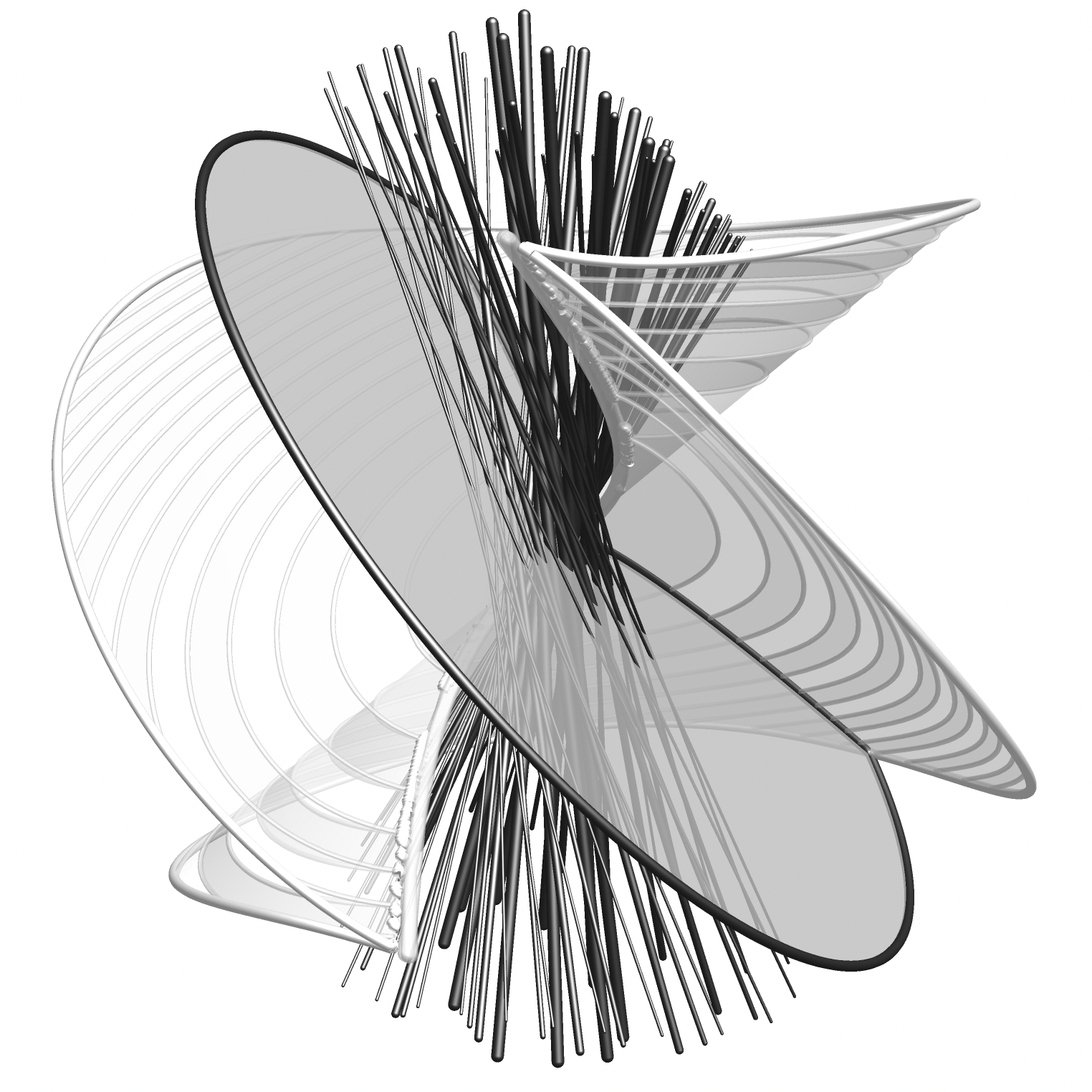}
	\\
	\includegraphics[width=\textwidth]{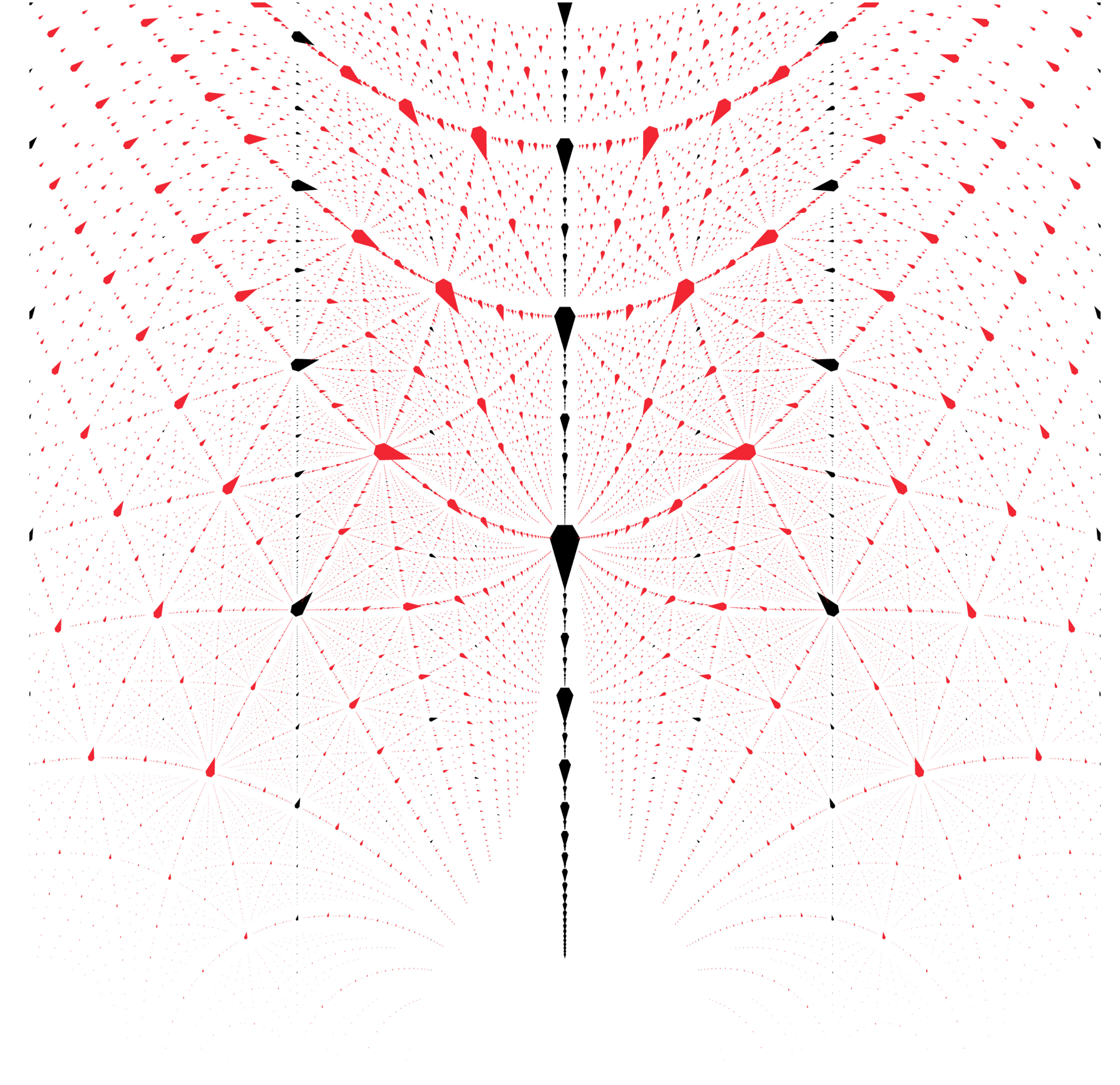}
	\caption{$ax^3+cx+d$}
    \label{fig:Fam2A}
\end{subfigure}
\begin{subfigure}[b]{0.45\textwidth} 
    \centering
	\includegraphics[width=\textwidth]{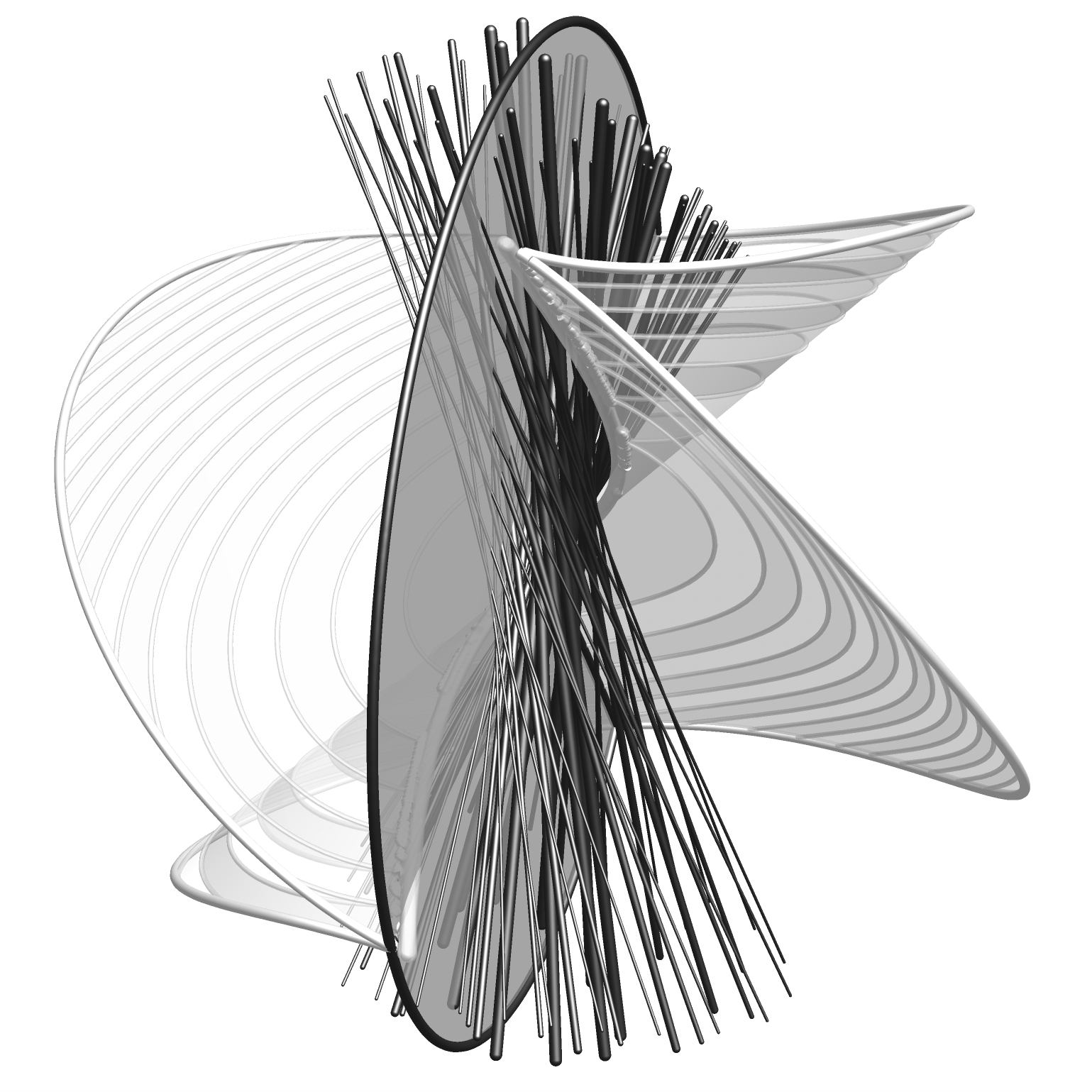}\\
	\includegraphics[width=\textwidth]{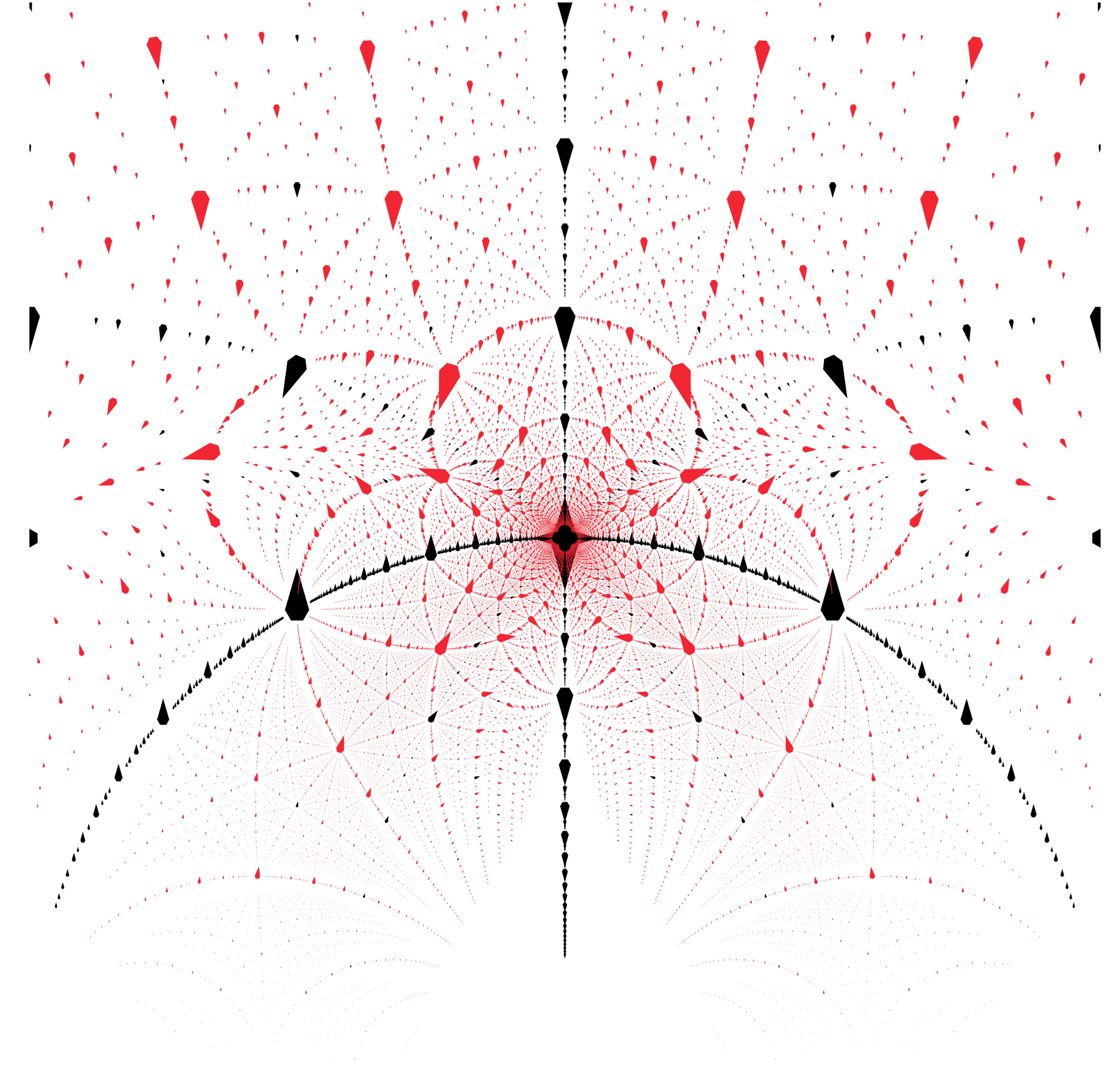}
	\caption{$ax^3+bx^2+cx+b$}
    \label{fig:Fam7C}
\end{subfigure}
	\caption{2-parameter projective families of cubics, plotted in coefficient space together with their projection onto their complex root.  The family on the left is transverse to the $\SS^1$ fibers of $\UT\HH^2_\Coefs$, and so the projection onto roots is an embedding.  The family on the right is not transverse to the $\SS^1$ foliation, and contains the fiber $(x^2+1)(px+q)$.  Thus the projection onto the complex root is not an embedding, and collapses an entire curve above $i$.
}
\label{fig:CoeffCubics}
\end{figure}

\section{Diophantine approximation}
\label{sec:DiophantineApproximation}

\subsection{Classical Diophantine approximation}
\label{ssec:ClassicDiophantineApproximation}

The study of Diophantine approximation is the study of the relative placement of real or complex numbers with regards to their arithmetic complexity.  To illustrate, we consider the unit interval $[0,1]$.  We measure the complexity of a rational number $p/q$ in lowest terms by its denominator, defining its \emph{height} to be $q$.  Then we observe a fundamental phenomenon one might call \textbf{repulsion}:  distinct rationals of low height cannot be too close to one other.  Explicitly,
\begin{equation}
  \label{eqn:basicrep}
      \left|  \frac{p_1}{q_1} - \frac{p_2}{q_2} \right| \ge \frac{1}{q_1q_2}.
    \end{equation}
    This leads one to consider the question of \textbf{good approximations}:  fix $\alpha \in [0,1]$ and ask whether there are rational $p/q$ which are surprisingly close to $\alpha$, in terms of their height.  One asks whether there are infinitely or finitely many $p/q$ such that
    \begin{equation}
      \label{eqn:goodapprox}
      \left| \alpha - \frac{p}{q} \right| < \frac{1}{q^k}.
    \end{equation}
    The behaviour with regards to exponent $k=2$ distinguishes rationals from irrationals.
    
\begin{theorem}[Dirichlet \cite{Dirichlet}]
  \label{thm:dirichlet}
  Let $\alpha \in \RR$.  Then $\alpha$ is irrational if and only if there exist infinitely many distinct $p/q \in \QQ$ such that
  \[
    \left| \alpha - p/q \right| < 1/q^2.
  \]
\end{theorem}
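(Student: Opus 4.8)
The plan is to prove both implications directly: the reverse implication is an immediate consequence of the repulsion inequality \eqref{eqn:basicrep} applied to a rational target, while the forward implication rests on Dirichlet's pigeonhole (box) argument together with a small extraction step to produce infinitely many distinct approximations.

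For the reverse implication I would argue the contrapositive: if $\alpha$ is rational, write $\alpha = a/b$ in lowest terms with $b \ge 1$. Since we are counting distinct rationals, we may assume each competitor $p/q$ is in lowest terms; if $p/q \ne a/b$ then $aq - bp$ is a nonzero integer, so $|\alpha - p/q| = |aq - bp|/(bq) \ge 1/(bq)$. Combined with the hypothesis $|\alpha - p/q| < 1/q^2$ this forces $q < b$, leaving only finitely many possible denominators; for each fixed $q$, the numerators $p$ with $|p/q - \alpha| < 1/q^2$ form a bounded, hence finite, set. The value $p/q = a/b$ itself gives $|\alpha - p/q| = 0 \not< 1/q^2$, so altogether only finitely many $p/q$ satisfy the inequality. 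This direction is routine.

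For the forward implication, fix $\alpha \in \RR \setminus \QQ$ and a parameter $N \in \NN$. I would consider the $N+1$ fractional parts $\{k\alpha\}$ for $k = 0, 1, \ldots, N$ inside $[0,1)$, partition $[0,1)$ into the $N$ half-open intervals $[\,j/N,\,(j+1)/N\,)$, and invoke the pigeonhole principle to find $0 \le i < j \le N$ with $\{i\alpha\}$ and $\{j\alpha\}$ in a common interval. Setting $q := j - i$ (so $1 \le q \le N$) and $p := \lfloor j\alpha\rfloor - \lfloor i\alpha\rfloor$ gives $|q\alpha - p| < 1/N$, hence $|\alpha - p/q| < 1/(qN) \le 1/q^2$ — in fact slightly stronger, which is the asymptotic sharpening mentioned in the footnote.

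It remains to pass from these finitely-many-$N$ solutions to \emph{infinitely many distinct} ones, and this is the only mildly delicate point. Here I would use irrationality: for any approximation $p/q$ produced above, $|\alpha - p/q| > 0$ since $\alpha \notin \QQ$; choosing a new $N$ with $1/N < |\alpha - p/q|$ and rerunning the pigeonhole argument yields $p'/q'$ with $|\alpha - p'/q'| < 1/N < |\alpha - p/q|$, so $p'/q' \ne p/q$ as rational numbers. Iterating produces an infinite strictly-error-decreasing sequence of pairwise distinct good approximations. The main obstacle is precisely this bookkeeping — guaranteeing genuine distinctness rather than a non-reduced rewriting of the same rational — which is cleanly resolved by comparing the positive approximation errors instead of the denominators directly.
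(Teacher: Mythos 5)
Your proposal is correct and follows essentially the same route as the paper: the same pigeonhole argument on fractional parts for the forward direction, the same repulsion inequality \eqref{eqn:basicrep} for the converse, and the same iteration (shrinking the pigeonhole parameter below the current approximation error) to extract infinitely many distinct approximations. No gaps.
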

In other words, rationals are ``poorly approximable'' and irrationals are ``well approximable.''  This can be proven by a simple pigeonhole principle argument, which we include here for the sake of exposition, as later proofs will imitate the method.

\begin{proof}
	Let $\alpha$ be irrational.  Choose an integer $Q > 1$.  Divide the unit interval $[0,1]$ into $Q$ even subintervals.  Then, among the real numbers $0, \alpha, 2\alpha, \ldots, Q\alpha$, there must be two, say $i\alpha$ and $j\alpha$, where $0 \le i < j \le Q$, whose fractional parts fall into the same subintervals.  Then we have $| (j-i)\alpha - p | < 1/Q$ for some integer $p$.  Letting $q = j-i$, observe that $q \le Q$, and we obtain $|\alpha - p/q| < 1/qQ \le 1/q^2$.  As $\alpha$ is irrational, we may choose $Q'$ to be such that $|q\alpha - p| > 1/Q'$, and run the argument again; by construction, we discover a new, distinct rational approximation.  In this way, if $\alpha$ is irrational we discover infinitely many such approximations.  By contrast, if $\alpha$ is rational, then \eqref{eqn:basicrep} limits the ability to find good approximations.
\end{proof}

Dirichlet's Theorem is illustrated in Figure \ref{fig:Dirichlet}; if one places disks over each rational $p/q$ with radius $1/q^2$, then the irrationals are covered by infinitely many disks, while rationals by only finitely many.  We can create a more starscape-esque version by a constant scaling, in Figure \ref{fig:RationalStarscape}.  This latter version has a different feel, and more vividly illustrates the mutual repulsion of rational numbers.

The natural accompaniment to Dirichlet's elementary result is a deep one of Roth:  if the exponent $2$ is strengthened to $2+\epsilon$ for any positive $\epsilon$, then all \emph{algebraic} $\alpha$ fail to have infinitely many good approximations \cite{Roth}.  
\begin{theorem}[Roth \cite{Roth}]
	\label{thm:roth}
	Let $\epsilon > 0$.
	Let $\alpha \in \RR$ be algebraic of degree $\ge 2$.  Then there are only finitely many distinct $p/q \in \QQ$ such that
  \[
	  \left| \alpha - p/q \right| < 1/q^{2+\epsilon}.
  \]
\end{theorem}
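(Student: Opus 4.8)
The statement is Roth's theorem, which is genuinely deep; I will only sketch the shape of the standard proof. The plan is to argue by contradiction: assume $\alpha$ is algebraic of degree $\ge 2$ and that there are infinitely many distinct $p/q$ with $|\alpha - p/q| < q^{-2-\epsilon}$. The engine of the argument is an \emph{auxiliary polynomial} in many variables $P(x_1,\dots,x_m)\in\ZZ[x_1,\dots,x_m]$, together with a careful accounting of the order of vanishing — the \emph{index} — of $P$ at two competing points: the algebraic point $(\alpha,\dots,\alpha)$, where I force $P$ to vanish to high order, and a tuple of rational approximations $(p_1/q_1,\dots,p_m/q_m)$, where I show that $P$, or some low-order partial derivative of it, cannot vanish — producing a nonzero integer of absolute value less than one once denominators are cleared.

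First, fix the number of variables $m$ large (depending on $\epsilon$ and on the degree of $\alpha$), and choose degree bounds $r_1,\dots,r_m$ with the $r_j$ decreasing rapidly in $j$, arranged so that the $q_j^{r_j}$ are all roughly comparable once the $q_j$ are chosen. Using Siegel's lemma — pigeonhole applied to the underdetermined linear system expressing the vanishing of all sufficiently low-order weighted partial derivatives at $(\alpha,\dots,\alpha)$ — one constructs a nonzero $P$, of degree $\le r_j$ in each $x_j$, with integer coefficients bounded polynomially in $\prod_j (r_j+1)$ and in the height of $\alpha$, whose weighted index at $(\alpha,\dots,\alpha)$ is close to $m/2$. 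Since $\alpha$ has bounded degree, controlling the coefficient growth here reduces to a norm estimate on the conjugates of $\alpha$.

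The crux — and the genuinely hard step — is the \emph{upper} bound on the index of $P$ at a rational point, namely Roth's lemma (the multivariable generalization of Dyson's lemma in two variables). It asserts that if the $q_j$ grow fast enough and $\prod_j q_j^{r_j}$ dominates the coefficient size of $P$, then the weighted index of $P$ at $(p_1/q_1,\dots,p_m/q_m)$ is smaller than any preassigned positive bound, in particular strictly less than the index at $(\alpha,\dots,\alpha)$. Granting this, from the infinitely many approximations I select $p_1/q_1,\dots,p_m/q_m$ — distinct, with $q_1$ large and each $q_{j+1}$ enormously larger than $q_j$ — satisfying the hypotheses, and conclude that some partial derivative $P_I$ of low weighted order nonetheless has $P_I(p_1/q_1,\dots,p_m/q_m)\neq 0$. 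On the other hand, Taylor-expanding $P_I$ about $(\alpha,\dots,\alpha)$, using that $P$ vanishes there to weighted order exceeding that of $I$ plus roughly $m/2$, together with each $|p_j/q_j - \alpha| < q_j^{-2-\epsilon}$, bounds $|P_I(p_1/q_1,\dots,p_m/q_m)|$ by a quantity which, after multiplying by $q_1^{r_1}\cdots q_m^{r_m}$ to clear denominators, is still strictly less than $1$. But $q_1^{r_1}\cdots q_m^{r_m}\,P_I(p_1/q_1,\dots,p_m/q_m)$ is a nonzero integer — contradiction. Hence there are only finitely many such $p/q$.

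I expect essentially all of the difficulty to be concentrated in Roth's lemma: the Siegel's-lemma construction and the final Taylor-expansion estimate are essentially bookkeeping once the weights $r_j$ and the growth rates of the $q_j$ are coordinated correctly, whereas the index bound at rational points needs a delicate induction on the number of variables (via Wronskians and Dyson's lemma in the base case, with considerable combinatorial care to propagate it). One should also flag that this argument is \emph{ineffective}: it shows the exceptional set of approximations is finite but gives no bound on its members, in sharp contrast to the constructive pigeonhole proof of Dirichlet's theorem recorded above.
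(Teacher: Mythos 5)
The paper does not prove this statement; it is quoted as Roth's theorem with a citation to \cite{Roth}, so there is no in-paper argument to compare against. Your outline is a faithful sketch of the standard proof that the citation points to --- auxiliary polynomial via Siegel's lemma, index near $m/2$ at $(\alpha,\dots,\alpha)$, Roth's lemma bounding the index at the rational tuple, and the nonzero-integer-less-than-one contradiction --- and you correctly flag both where the difficulty sits (Roth's lemma) and the ineffectivity of the conclusion.
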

This finiteness is in fact true for almost all real numbers $\alpha$ (in the sense of Lebesgue measure), a result due to Khintchine \cite[Theorem 29]{Khintchine}.

However, one can construct real numbers which are well-approximable to all higher exponents, called \emph{Liouville numbers} after Liouville's famous construction \cite{Liouville}.  The key to constructing a Liouville number is to artificially build something incredibly close to a series of rational numbers.  The simplest example is $\sum_{k=0}^\infty \frac{1}{10^{k!}}$: the partial sums form rational approximations that are too good to allow for a finiteness property like Theorem \ref{thm:roth}, even for any fixed positive exponent of $q$.  These were the first explicit transcendental numbers.

Having studied the exponent $k$ in approximation within $1/q^k$, we can turn to a finer question of constants.  For example, does Dirichlet's Theorem hold if $1/q^2$ is replaced with $1/Cq^2$ for various increasing values of $C$?  The theorem holds until $C = \sqrt{5}$, above which the golden ratio and certain of its relatives are no longer approximable by infinitely many rationals.  This state of affairs continues until another tipping point, $C = 2\sqrt{2}$, above which $\sqrt{2}$ is poorly approximable, and so on.  These tipping points form the beginning of the theory of the \emph{Lagrange spectrum}.

For the rich theory of Diophantine approximation, including further historical context for these results, the reader may begin with \cite{BugeaudBook, SilvermanHindryBook, SchmidtBook}.

The Diophantine approximation of the complex plane away from the real line is less well-studied.  Here we may ask the approximants to come from number fields:  in our context, it is natural to stratify these by degree, and ask:  how well-approximable is a complex number by algebraic numbers of degree $\le d$?

In order to do so, we need to generalize the notion of height, to measure the arithmetic complexity of algebraic numbers in general.

\subsubsection{Measuring arithmetic complexity with the na\"{i}ve height of a polynomial}
\label{ssec:NaiveHeightComplexity}

The simplest notion of arithmetic complexity may be with reference to the coefficients of its minimal polynomial.  Recall that an algebraic number has a unique \emph{minimal polynomial}, most often taken to be the unique monic irreducible $f(x) \in \QQ[x]$ for which it is a root.  We may take this polynomial to be in $\ZZ[x]$ by scaling up the denominator, which causes us to lose the monic condition.  We will write $f_\alpha$ for the unique scaling whose coefficients are integral but with no common factor, and positive leading coefficient, and refer to this as the \emph{minimal polynomial}, following much of the literature of Diophantine approximation (for example \cite{BugeaudEvertse}).

Consider any polynomial $f = a_d x^d + \cdots a_1 x + a_0 \in \ZZ[x]$.  Then the \emph{na\"{i}ve height} of $f$ is defined as
\[
  H(f) := \max_{0\le i\le d} |a_i|.
\]
This measure has the advantage of simplicity, and a close connection to the linear algebra of the lattice of polynomials of degree $d$ in the space of coefficients.  It is also clear that there are only finitely many polynomials of bounded degree and height.  

We will write $H(f_\alpha)$ for the height of the minimal polynomial $f_\alpha$ of $\alpha$, which measures the arithmetic complexity of $\alpha$.  

\subsubsection{Good approximations drawn from fixed degree}
\label{ssec:fixdeg}

To generalize Dirichlet's Theorem \ref{thm:dirichlet}, one might ask to approximate by algebraic numbers of bounded degree $d$ (so that Dirichlet's Theorem becomes the case of $d=1$, i.e. approximation by rationals).  One can define, following Koksma \cite{Koksma} the quantity $k_d(\alpha)$ to be the surpremum of all $k$ such that there are infinitely many algebraic numbers $\beta$ of degree $\le d$ satisfying
\[
  |\alpha - \beta| < \frac{1}{H(f_\beta)^{k}}.
\]
Dirichlet's Theorem \ref{thm:dirichlet}, in this language, states that $k_1(\alpha) \le 2$ for $\alpha$ rational and $k_1(\alpha) \ge 2$ for $\alpha$ real and irrational.  Roth's Theorem \ref{thm:roth} is that $k_1(\alpha) = 2$ for $\alpha$ real and algebraic.  

As regards the real case, Wirsing conjectured that for transcendental $\alpha \in \RR$, $k_d(\alpha) \ge d + 1$ \cite{Wirsing}.  This is known for $d=1$ (from Dirichlet's Theorem) and for $d=2$ \cite{DavenportSchmidt}.

For general degree, Sprind\u{z}uk gave an answer for almost all $\alpha$ (generalizing Khintchine's statement).  We see a qualitative difference between real and non-real $\alpha$.

\begin{theorem}[Sprind\u{z}uk, \cite{Sprindzuk}]
  \label{thm:sprindzuk}
For almost all $\alpha \in \RR$, $k_d(\alpha) = d+1$.  For almost all $\alpha \in \CC \backslash \RR$, $k_d(\alpha) = (d+1)/2$.
\end{theorem}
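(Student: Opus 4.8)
The plan is to pin down $k_d(\alpha)$ for almost every $\alpha$ by proving matching upper and lower bounds, separately over $\RR$ and over $\CC\setminus\RR$. The one input used throughout is a count: there are $\asymp H^{d+1}$ integer polynomials of degree $\le d$ with na\"ive height $\le H$, hence $\asymp H^{d+1}$ algebraic numbers $\beta$ of degree $\le d$ with $H(f_\beta)\le H$ (each polynomial contributing at most $d$ roots, each such $\beta$ coming from an irreducible factor of one). Heuristically these behave like an equidistributed set of density $\asymp H^{d+1}$ on $\RR$, or on a bounded region of $\CC$, so the expected number of them within distance $\delta$ of a fixed point is $\asymp\delta\,H^{d+1}$ over $\RR$ and $\asymp\delta^2 H^{d+1}$ over $\CC$ (a disk of radius $\delta$ having area $\asymp\delta^2$). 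With $\delta=H^{-k}$ this stays bounded as $H\to\infty$ exactly when $k\le d+1$ (real) or $k\le(d+1)/2$ (complex), and the theorem asserts this heuristic is correct almost everywhere. This is the ``shadow of a lattice'' picture of Section~\ref{sec:ProjGeo}, now for the lattice of degree-$\le d$ polynomials, with the non-clustering of algebraic numbers playing the role Dirichlet's theorem plays for $\QQ$.

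\textbf{Upper bounds (soft).} Fix $\epsilon>0$ and, for $H\ge1$, let $E_H$ be the set of $\alpha$ lying within $H^{-k}$ of some algebraic $\beta$ with $\deg\beta\le d$ and $H(f_\beta)=H$, taking $k=d+1+\epsilon$ over $\RR$ and $k=(d+1)/2+\epsilon$ over $\CC$. There are $\asymp H^{d}$ such $\beta$, so $|E_H|\ll H^{d}H^{-k}\ll H^{-1-\epsilon}$ (length, real case) and $|E_H|\ll H^{d}H^{-2k}\ll H^{-1-2\epsilon}$ (area, complex case); either way $\sum_H|E_H|<\infty$, and Borel--Cantelli gives $k_d(\alpha)\le d+1+\epsilon$ resp.\ $(d+1)/2+\epsilon$ for almost every $\alpha$. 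Intersecting over $\epsilon=1/m$ yields the upper bounds $k_d(\alpha)\le d+1$ resp.\ $(d+1)/2$ almost everywhere.

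\textbf{Lower bounds (the real work).} Here I must produce, for almost every $\alpha$ and every $\epsilon>0$, infinitely many algebraic $\beta$ of degree $\le d$ with $|\alpha-\beta|<H(f_\beta)^{-(d+1)+\epsilon}$ over $\RR$, or $<H(f_\beta)^{-(d+1)/2+\epsilon}$ over $\CC$. The entry point, valid for every $\alpha$, is a Dirichlet/Minkowski pigeonhole on the $\asymp H^{d+1}$ values $f(\alpha)$ with $f\in\ZZ[x]$ of degree $\le d$ and coefficients bounded by $H$: two agree to within a single cell, and their difference is an $f$ with $H(f)\ll H$ and $|f(\alpha)|\ll H^{-d}$ over $\RR$, or $|f(\alpha)|\ll H^{-(d-1)/2}$ over $\CC$ (the exponent halved because the values fill a planar region of area $\asymp H^2$ rather than an interval). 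The crux is to convert smallness of $f$ at $\alpha$ into a root of $f$ near $\alpha$: if $\beta$ is the nearest root then $|\alpha-\beta|\asymp|f(\alpha)|/|f'(\alpha)|$, so one needs $|f'(\alpha)|$ to have its generic size $\asymp H$, and $f$ not to factor so as to drop $H(f_\beta)$ well below $H(f)$; granting this, division yields a root at distance $\ll H^{-d-1}$ resp.\ $\ll H^{-(d+1)/2}$ with $H(f_\beta)\ll H$, and varying $H$ produces infinitely many distinct such $\beta$ for any $\alpha$ not already algebraic of degree $\le d$ (a null set).

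\textbf{The main obstacle.} Everything above except the italicized crux is soft counting plus Borel--Cantelli; the crux is Sprind\u{z}uk's theorem itself, and it is the only place genuinely new technique enters. The difficulty is measure-theoretic: the set of $\alpha$ whose pigeonhole polynomial has an anomalously small derivative, or an anomalous factorization, must be shown negligible. Sprind\u{z}uk's route partitions the polynomial parameter space into \emph{essential} domains, where $|f'(\alpha)|$ has its generic size so close roots genuinely appear, and \emph{inessential} domains (small leading coefficients, near-vanishing resultants, and the like), and then bounds the total measure of all inessential contributions across all $H$ by a convergent series, so they do not affect $k_d(\alpha)$ almost everywhere; reducibility of $f$ is absorbed by an induction on $d$, and the complex (and $p$-adic) cases run in parallel with only the pigeonhole exponent halved. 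Equivalently, in modern language, the real (resp.\ complex) algebraic numbers of degree $\le d$ form a \emph{regular system} with counting function $H(f_\beta)^{d+1}$, after which the divergence half of the Borel--Cantelli lemma for regular systems gives the lower bounds at once, the relevant series being $\sum_t t^{-1+\epsilon/(d+1)}$, which diverges. Note that the standard inequality $w_d^*\ge w_d-d+1$ between Koksma's exponent $w_d^*$ (governing $|\alpha-\beta|$, essentially $k_d$) and Mahler's exponent $w_d$ (governing $|f(\alpha)|$) does \emph{not} suffice: from Mahler's conjecture $w_d=d$ it extracts only $k_d(\alpha)\ge 2$, precisely because it discards the derivative information that is the whole point.
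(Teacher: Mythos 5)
This is a quoted background result: the paper does not prove Theorem~\ref{thm:sprindzuk} but cites it from Sprind\u{z}uk, so there is no in-paper argument to compare against. Judged on its own terms, your outline is an accurate reconstruction of the standard proof architecture, and the exponents all check out: the count $\asymp H^{d+1}$ of polynomials of height $\le H$, the convergence Borel--Cantelli argument giving the upper bounds $k_d\le d+1$ (real) and $k_d\le (d+1)/2$ (complex) almost everywhere (after the routine restriction to a bounded region so the measures are finite), the Dirichlet pigeonhole giving $|f(\alpha)|\ll H^{-d}$ resp.\ $H^{-(d-1)/2}$, and the conversion $|\alpha-\beta|\asymp |f(\alpha)|/|f'(\alpha)|$ yielding the target exponents when $|f'(\alpha)|\asymp H$. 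You are also right that the soft inequality between Koksma's and Mahler's exponents cannot substitute for the derivative analysis.

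That said, as a proof the proposal is complete only for the upper bounds. The entire lower bound rests on the claim that for almost every $\alpha$ the pigeonhole polynomial generically has $|f'(\alpha)|\asymp H$ and no degenerate factorization, and you establish none of this: the decomposition into essential and inessential domains, the convergent bound on the total measure of the inessential contributions uniformly in $H$, and the induction on $d$ handling reducibility are named but not carried out. Equivalently, the regular-systems reformulation you invoke requires proving that real (resp.\ complex) algebraic numbers of degree $\le d$ form a regular system with the stated counting function --- which \emph{is} Sprind\u{z}uk's theorem, as you acknowledge. So the proposal should be read as a correct road map with the destination marked but the hardest leg of the journey outsourced to the cited reference; since the paper itself makes exactly that outsourcing, this is a reasonable state of affairs, but it should not be mistaken for a self-contained proof.
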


This gives us a better idea of the natural sizing for algebraic points in the complex plane:  a sizing of $1/H(f_\alpha)^{(d+1)/2}$ would be the natural analogue of Figure \ref{fig:RationalStarscape}.  See Figure \ref{fig:nuanced-sizings}.

Next, we may consider the generalisation of Roth's Theorem \ref{thm:roth} governing the approximability of algebraic numbers.  Schmidt showed that for algebraic $\alpha \in \RR$ of degree at least $2$, $k_d(\alpha) = \min\{ \deg(\alpha), d+1\}$ \cite{SchmidtAlgebraic}.  For complex numbers away from the real line (our concern here), it is slightly more complicated.

\begin{theorem}[Bugeaud, Evertse \cite{BugeaudEvertse}, Theorem 2.1, Corollary 2.4]
  \label{thm:bugeaud-evertse}
	For algebraic $\alpha \in \CC \backslash \RR$,
	\[
		k_d(\alpha) = \min\{ \deg(\alpha)/2, (d+1)/2 \}
	\]
	except in the case that $\deg(\alpha) \ge d+2$ and $d$ is even.  In this case, $k_d(\alpha) \in \{ (d+1)/2, (d+2)/2 \}$.  

	In particular, for $d=2$, we have $k_2(\alpha) = 2$ if and only if the quantities $1$, $\alpha \overline{\alpha}$ and $\alpha + \overline{\alpha}$ are $\QQ$-linearly dependent (and $k_2(\alpha)= 3/2$ otherwise).
\end{theorem}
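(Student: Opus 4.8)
The plan is to set the problem up in coefficient space, apply Schmidt's Subspace Theorem for the upper bound, read off the dichotomy from the geometry of rational geodesics, and match it against Dirichlet-type constructions; throughout I treat the generic case (so $\alpha$ algebraic of degree $\ge 3$), the few low-degree degeneracies being dispatched separately. Since $|\alpha-\beta|<H(f_\beta)^{-k}$ forces an infinite family of approximants to accumulate at $\alpha$, I would first restrict to $\beta$ in a fixed bounded neighbourhood $U\ni\alpha$ of the open upper half plane. For a quadratic irrationality $\beta\in U$ with minimal polynomial $f_\beta=ax^2+bx+c$ (coprime integer coefficients), the conjugate root $\beta'=\overline\beta$ lies near $\overline\alpha$, so on $U$ one has $H(f_\beta)\asymp|a|$, $|\Delta_\beta|=|b^2-4ac|\asymp H(f_\beta)^2$, and, from $f_\beta(\alpha)=a(\alpha-\beta)(\alpha-\beta')$ together with $|\alpha-\beta'|\asymp 1$, also $|f_\beta(\alpha)|\asymp H(f_\beta)\,|\alpha-\beta|$. (Combined with $\acosh(1+t^{-1})\asymp t^{-1/2}$ and $d_{\mathrm{hyp}}(\alpha,\cdot)\asymp|\alpha-\cdot|$ on $U$, these comparisons also exhibit the statement as equivalent to the hyperbolic/discriminant formulation handled by Theorems \ref{thm:dirichlet-geo}, \ref{thm:dirichlet-quad} and \ref{thm:quad-roth}.)

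\emph{Upper bound.} Suppose $|\alpha-\beta|<H(f_\beta)^{-k}$ for infinitely many $\beta\in U$, and write $\mathbf x=(a,b,c)\in\ZZ^3$. Consider the linear forms $L_1(\mathbf x)=\alpha^2 a+\alpha b+c=f_\beta(\alpha)$, $L_2=\overline{L_1}$, and $L_3(\mathbf x)=a$; they have coefficients in $\QQ(\alpha,\overline\alpha)$ and are linearly independent (the determinant is $\alpha-\overline\alpha\ne 0$). On $U$,
\[
|L_1(\mathbf x)L_2(\mathbf x)L_3(\mathbf x)|=|f_\beta(\alpha)|^2\,|a|\;\asymp\;H(f_\beta)^{3-2k}\;\asymp\;\|\mathbf x\|^{3-2k},
\]
so if $k>3/2$ this product is $\ll\|\mathbf x\|^{-\delta}$ for some fixed $\delta>0$, and Schmidt's Subspace Theorem (in the form allowing complex linear forms) puts all such $\mathbf x$ into a finite union of proper rational subspaces $S_1,\dots,S_m\subset\QQ^3$.

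\emph{The dichotomy.} Each $S_i$ is a rational $2$-plane, projectivising to a rational line in $\RP^2_\Coefs$; if it meets $\HH^2_\Coefs$, its image under $\RootMap$ is a \emph{rational geodesic} $g_i$ (Observation \ref{obs:rat-geo}), and every relevant $\beta$ with $(a,b,c)\in S_i$ lies on $g_i$. If $\alpha\notin g_i$ then $\operatorname{dist}(\alpha,g_i)>0$, so only finitely many $\beta\in S_i$ can satisfy $|\alpha-\beta|<H^{-k}$. If instead $\alpha\in g_i$, then $\alpha$ lies on a rational geodesic — equivalently, by Observation \ref{obs:rat-geo}(1), $1$, $\alpha+\overline\alpha$, $\alpha\overline\alpha$ are $\QQ$-linearly dependent — and here I would restrict to $S_i$: choosing a rational basis, $\mathbf x$ acquires integer coordinates $\mathbf y\in\ZZ^2$ with $\|\mathbf y\|\asymp H$, $L_1|_{S_i}$ is a nonzero form with algebraic coefficients, and $|L_1|_{S_i}(\mathbf y)|\cdot|y_1|\asymp H^{1-k}\cdot H=H^{2-k}\asymp\|\mathbf y\|^{2-k}$; for $k>2$ the two-variable case of the Subspace Theorem (a Roth-type theorem) confines $\mathbf y$ to finitely many lines, hence finitely many $\beta$. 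Therefore $k_2(\alpha)\le 3/2$ when $\alpha$ lies on no rational geodesic, and $k_2(\alpha)\le 2$ always.

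\emph{Lower bounds, conclusion, and the main obstacle.} In the other direction, a geometry-of-numbers count in the $\ZZ^3$ lattice of coefficient vectors produces, for every $\alpha$, infinitely many quadratic $\beta$ with $|\alpha-\beta|\ll H(f_\beta)^{-3/2}$, giving $k_2(\alpha)\ge 3/2$ always; and when $\alpha$ lies on a rational geodesic $g=\RootMap(S)$, a one-dimensional continued-fraction–type construction inside the rank-$2$ lattice $S\cap\ZZ^3$ yields infinitely many $\beta\in g$ with $|\alpha-\beta|\ll H(f_\beta)^{-2}$, giving $k_2(\alpha)\ge 2$ there (these are Theorems \ref{thm:dirichlet-quad} and \ref{thm:dirichlet-geo}). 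Hence $k_2(\alpha)=2$ exactly when $\alpha$ lies on a rational geodesic, i.e.\ when $1$, $\alpha\overline\alpha$, $\alpha+\overline\alpha$ are $\QQ$-linearly dependent, and $k_2(\alpha)=3/2$ otherwise. The hard part is the third step: one must verify that Schmidt's exceptional subspaces are precisely the rational $2$-planes, that these correspond bijectively to rational geodesics, and — the crux — that the only way to beat the exponent $3/2$ is for $\alpha$ itself to sit on such a geodesic, in which case the in-plane analysis must be carried out with exactly the exponent $2$ and matched against a Dirichlet-type construction on that geodesic; pinning down the $\asymp$-constants near $\alpha$ and discarding the real-rooted subspaces are the remaining routine points.
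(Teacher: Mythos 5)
You should first be clear that the paper does not prove Theorem \ref{thm:bugeaud-evertse} at all: it is quoted from Bugeaud--Evertse, and the paper only later \emph{recovers} the $d=2$ dichotomy, up to constants, via Theorems \ref{thm:dirichlet-geo}, \ref{thm:dirichlet-quad} and \ref{thm:quad-roth} together with the dictionary of Theorem \ref{thm:translate}. Measured against the displayed statement, your proposal is incomplete: you treat only $d=2$, and the general formula $k_d(\alpha)=\min\{\deg(\alpha)/2,(d+1)/2\}$ with its even-$d$ exception is not addressed. Within $d=2$, your route is essentially Bugeaud--Evertse's own rather than the paper's: you feed the complex forms $L_1=f_\beta(\alpha)$, $L_2=\overline{L_1}$, $L_3=a$ into the Subspace Theorem, whereas the paper uses the \emph{real} forms $n\alpha_1-p_1$, $-n\alpha_2+p_2$, $\alpha_1p_2-\alpha_2p_1$ built from $f_\alpha=[\alpha_1:1:\alpha_2]$ (Lemma \ref{lemma:mainlemma}), measures closeness hyperbolically with the discriminant as height, and translates back only at the end. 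Your setup gives a more direct link to $|\alpha-\beta|$ and $H(f_\beta)$; the paper's gives $\PSL(2;\ZZ)$-invariance and keeps all coefficients real, so that Theorem \ref{thm:schmidt} applies exactly as stated.

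The genuine gap in your $d=2$ argument is the two-variable step, and it is not merely the bookkeeping you defer at the end. Theorem \ref{thm:schmidt} as stated requires real algebraic coefficients; in three variables the conjugate pair $L_1,L_2$ is covered by the standard complex-conjugate version, but in the restricted two-variable application you use the single complex form $L_1|_{S_i}$ together with $y_1$, and here the complexness is the whole point. Writing $S_i$ as $c=\lambda a+\mu b$, one computes
\[
\det\bigl(L_1|_{S_i},\overline{L_1}|_{S_i}\bigr)=(\alpha-\overline{\alpha})\bigl(\alpha\overline{\alpha}+\mu(\alpha+\overline{\alpha})-\lambda\bigr),
\]
which vanishes \emph{exactly} when $\alpha$ lies on the geodesic $g_i$. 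So precisely in the case you need, $L_1|_{S_i}=(\alpha+\mu)(a\gamma+b)$ with $\gamma=(\alpha^2+\lambda)/(\alpha+\mu)$ a \emph{real} algebraic number, irrational because $\alpha$ is not quadratic; the in-plane step is then literally Roth's theorem for $\gamma$, and the in-plane Dirichlet lower bound of exponent $2$ is Dirichlet's theorem for the same $\gamma$. This computation is the mechanism of the dichotomy (it is the analogue of the paper's substitution $|n\alpha_2-p_2|=|a/c||n\alpha_1-p_1|$ in the proof of Theorem \ref{thm:quad-roth}); without it your two-variable invocation is unjustified, and indeed if $\gamma$ were non-real the same product would spuriously give finiteness for every $k>1$, contradicting Theorem \ref{thm:dirichlet-geo}. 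Lesser points: several of your $\asymp$'s should be $\ll$ (you only have upper bounds on $|L_1|$), $|y_1|\asymp H$ is false in general though $|y_1|\le\|\mathbf y\|$ suffices, and the quadratic-$\alpha$ and $\deg\alpha=3$ cases (where the ``in particular'' clause needs the observation that cubics lie on no rational geodesic) should be dispatched explicitly. The reduction ``exceptional subspaces are rational geodesics, and off-geodesic planes contribute finitely many $\beta$'' is correct, as is delegating the lower bounds to Theorems \ref{thm:dirichlet-geo} and \ref{thm:dirichlet-quad}.
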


In degree $d > 2$, Bugeaud and Evertse give precise conditions for determining which of the two possibilities for $k_d(\alpha)$ is correct in most cases, but were not able to compute it in all cases.

Let us consider the dichotomy given for $d=2$, where Theorem \ref{thm:bugeaud-evertse} says that some $\alpha$ are much more approximable than others, based on whether $1$, $\alpha \overline{\alpha}$ and $\alpha + \overline{\alpha}$ are $\QQ$-linearly dependent. 
To see why this is the case, we recast this characterization more geometrically:  such $\alpha$ have the property that they lie on rational hyperbolic geodesics, i.e.\ those geodesics corresponding to rational planes in coefficient space.  These are exactly the \emph{rational geodesics} discussed in Observation \ref{obs:rat-geo}.   

This dichotomy is illustrated in Figure \ref{fig:quad-quar}.  In fact, we will show in Section \ref{sec:Dio-quad} that this dichotomy holds even for non-algebraic $\alpha$.  

Note that cubics cannot lie on rational geodesics\footnote{One way to see this is to reduce to the unit circle; cubics cannot lie on the unit circle $\alpha \overline{\alpha} = 1$ unless their real root $r$ is rational, since the constant coefficient of the minimal polynomial $\alpha\overline{\alpha}r$ is rational.}, an effect which is quite prominent in Figure \ref{fig:Initial_cubics}.  This is why $k_2(\alpha) \in \{ 3/2, 2 \}$ (i.e., we do not need to allow for $k_2(\alpha) = \deg(\alpha)/2$ separately).


Based on this geometric interpretation of the $d=2$ case of Bugeaud and Evertse, one wonders if the exceptional cases all have similar geometric interpretations.  We muse on this briefly in Section \ref{sec:futurework}.

\begin{figure}[h!tbp]
\centering
\begin{subfigure}[b]{0.95\textwidth} 
    \centering
	\includegraphics[width=\textwidth]{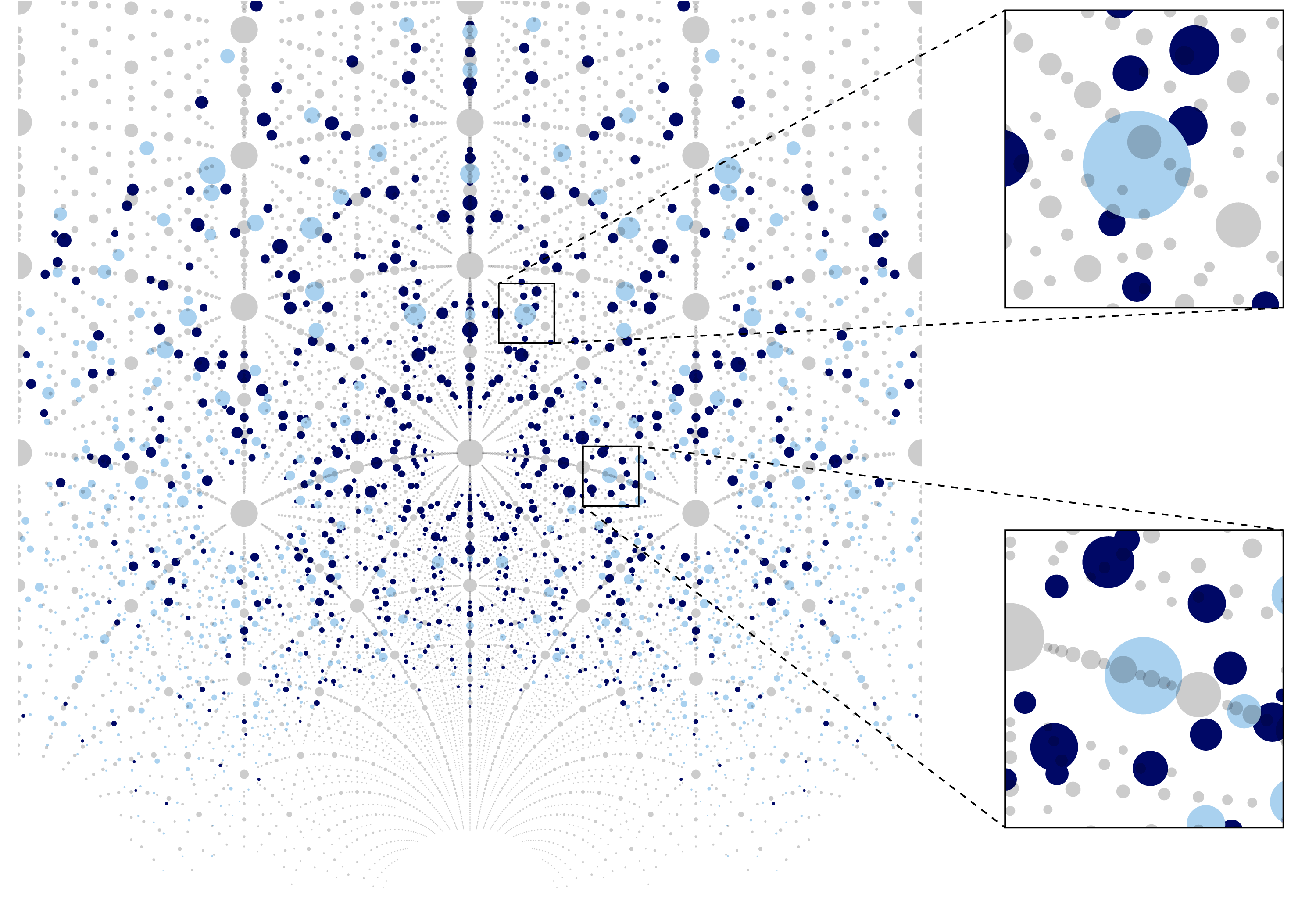}
	\caption{}
\end{subfigure}
	\caption{Quadratics shown in gray, quartics shown in  light blue (if they have no real conjugates) and dark blue (if they do). The quartics which lie off such geodesics (such as the one highlighted in the top right) are approximated only by quadratics at some distance on these geodesics.  In contrast quartics on a geodesic such as the unit circle have more quadratics near them as shown in the lower highlight.}
\label{fig:quad-quar}
\end{figure}

\subsection{Measuring approximation: heights and distances}

\subsubsection{The importance of $\PSL(2;\ZZ)$ and the hyperbolic metric}
\label{ssec:DophantineSl2R}

There is a natural symmetry of the algebraic numbers in the upper half plane:  the action of $\PSL(2;\ZZ)$.  That is, the equivariant action of $\PSL(2;\CC)$ on coefficient and root space, restricted to those elements which preserve the lattice $\ZZ^3$ in the space $\RR^3$ of coefficients (see Section \ref{sec:applquad}).  In this work, we extol the philosophy that, for Diophantine approximation away from the real line, this action should be built into our definitions.  Hyperbolic distance and complex distance are conformally equivalent\footnote{That is, locally the hyperbolic metric and the euclidean metric are very nearly multiples of each other.  This becomes exact at the level of tangent spaces for the Riemannian metric, where $ds^2_{\mathrm{Hyp}}=\frac{1}{\mathrm{Im}(z)}ds^2_{\mathrm{Euc}}$.}, and this symmetry respects the former.  Therefore the relative hyperbolic positions of the algebraic numbers are preserved under $\PSL(2;\ZZ)$.  Several of the notions of arithmetic complexity in the literature partially respect this symmetry.  If they do not, then, given any complex Diophantine approximation statement (such as those of Bugeaud and Evertse), it seems natural to translate by $\PSL(2;\ZZ)$ until the statement is strongest (by which we mean, translate by $\PSL(2;\ZZ)$, find the best approximations according to the theorem, and then transport the constellation back to the original region of interest, where perhaps a priori only worse approximations were guaranteed). 

Perhaps our position is most simply stated in terms of our visualizations:  the positions of the dots are periodic under $\PSL(2;\ZZ)$ and therefore we argue the sizings of them should be too.  
We illustrate this in Figure \ref{fig:sl2z}, showing the transport of a constellation of algebraic numbers under $\PSL(2;\ZZ)$, but shown in the euclidean metric and the height sizing.  The varying sizes and distances of the dots illustrate the changing levels of approximation from the classical perspective. 

\begin{figure}[h!tbp]
\centering
\begin{subfigure}[b]{0.9\textwidth} 
    \centering
	\includegraphics[width=\textwidth]{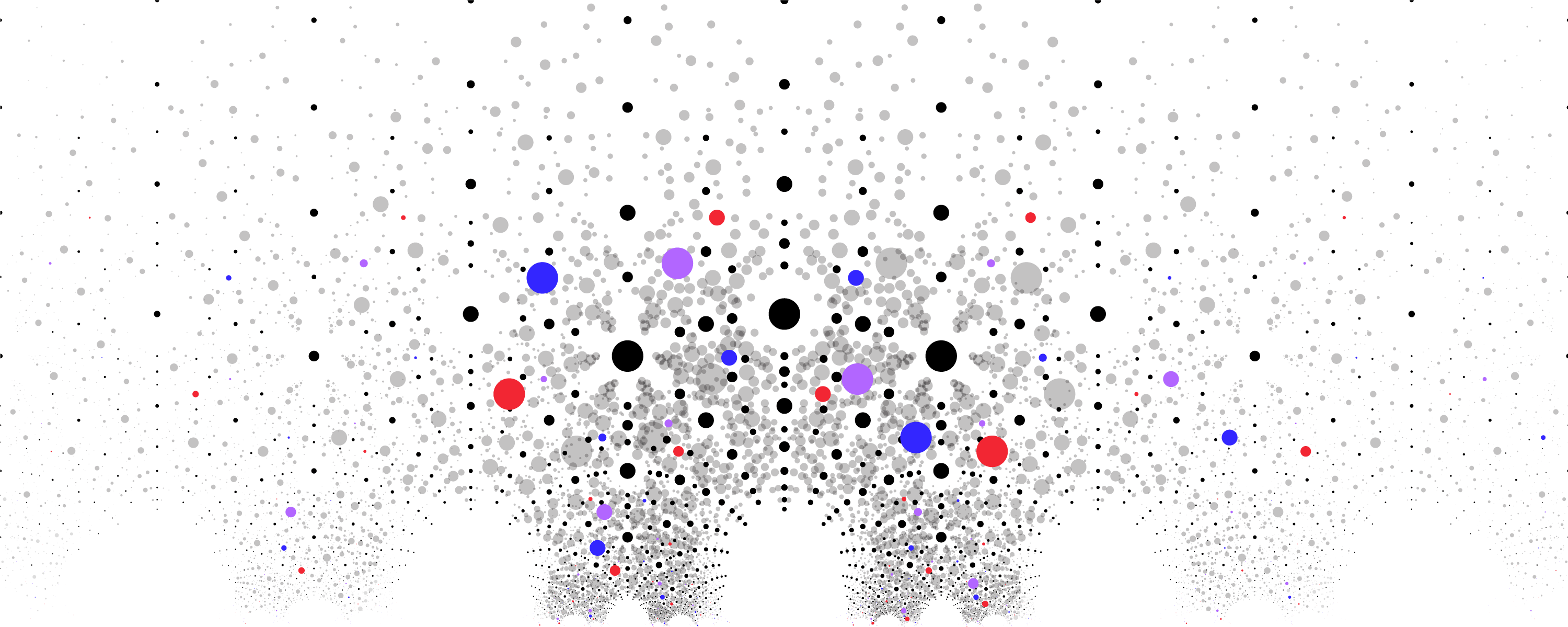}
	\caption{}
\end{subfigure}

	\caption{Cubic algebraic numbers in the na\"ive height, using the maximum coefficient, plotted in the euclidean metric. $\PSL(2;\ZZ)$ orbits are shown for the complex roots of $x^3+x-1$ (red), $x^3+x^2-1$ (purple) and $x^3+x^2+x-1$ (blue). }
\label{fig:sl2z}
\end{figure}

While $H(f_\alpha) = H(f_{1/\alpha})$, unfortunately $H(f_\alpha) \neq H(f_{\alpha+1})$.  Consequently, the na\"ive height is not invariant under the action of $\PSL(2;\ZZ)$ on $\alpha$.  For non-real numbers, the height does, however, attain a minimum on its full $\PSL(2;\ZZ)$ orbit.  Therefore we define\footnote{Although the definition, and much of the discussion, works for $\RR$ as well as $\CC$, $\PSL(2;\ZZ)$ doesn't preserve any meaningful metric on the real line, and all of $\QQ$ falls into a single orbit:  this definition would lose too much information.}
\[
  H_{\PSL}(f) = \min\{ H(f \circ A) : A \in \PSL(2;\ZZ) \}.
\]
To compute this minimum, there is a recent algorithm due to Stoll-Cremona and Hutz-Stoll which may\footnote{It generalizes the reduction theory of binary quadratic forms with respect to $\PSL(2;\ZZ)$ to general binary forms.  It is not known how small a height one is guaranteed under their algorithm, nor where the minimum is attained.} be helpful \cite{HutzStoll, StollCremona}.

\subsubsection{Weil height}
\label{ssec:WeilHeight}

A more nuanced generalisation of the notion of height is the Weil height, defined in terms of the absolute values of an ambient number field.  For an algebraic number $\alpha$ contained in a number field $K$, the Weil height is defined\footnote{We beg the reader's forgiveness for the use of $H$ for both na\"ive height of a polynomial and Weil height of a number; they do \emph{not} satisfy $H(f_\alpha) = H(\alpha)$, but the notation is standard in the literature.} as
\begin{equation}
  \label{eqn:weilht}
  H(\alpha) = \prod_{v \in M_K} \max\{ 1, ||\alpha||_v \},
\end{equation}
where the product is over the set $M_K$ of all \emph{normalized absolute values} $||\alpha||_v = |\alpha|_v^{[K_v:\QQ_v]/[K:\QQ]}$ of $K$.  Here, $K_v$ and $\QQ_v$ are the completions of $K$ and $\QQ$ at $v$.  For further details, a nice introduction to this is available in \cite[Section B.1--B.2]{SilvermanHindryBook}.  This is actually independent of the choice of $K$ containing $\alpha$.  This definition is a generalisation of the case $K=\QQ$, namely
\begin{equation}
    \label{eqn:weil-height-q}
    H(p/q) = \prod_{v \in M_\QQ} \max\{ 1, |p/q|_v \},
\end{equation}
where $| \cdot |_v$ ranges over all $p$-adic absolute values, as well as the archimedean one.  This case can be more simply and intuitively rewritten as
\[ 
 H(p/q) = \max\{ |p|, |q| \},
\]
if $p$ and $q$ are taken to be coprime and integral.  Fortunately, the Weil height and na\"{i}ve height of its minimal polynomial are closely related by a well known relationship in terms of the degree $d := [\QQ(\alpha):\QQ]$ of $\alpha$ \cite[Lemma A.2]{BugeaudBook} (note that the Mahler measure satisfies $M(f_\alpha) = H(\alpha)^d$ \cite[Proposition 1.6.6]{BombieriGubler}):
\begin{equation}
	\label{eqn:wellknown}
  \left( \begin{matrix} d \\ \lfloor d/2 \rfloor \end{matrix} \right)^{-1} H(f_\alpha)
  \le 
  H(\alpha)^d
  \le \sqrt{d+1} H(f_\alpha).
\end{equation}
Here it is important that $f_\alpha$ is minimal in the sense of coprime integer coefficients.

Note that $H(\alpha) = H(1/\alpha)$.  For the same reasons discussed in the previous section, it is natural to define
\[
  H_{\PSL}(\alpha) = \min\{ H(A . \alpha) : A \in \PSL(2;\ZZ) \}.
\]

\subsubsection{Repulsion in the complex plane in terms of Weil height}
\label{ssec:WeilHeightRepulsion}

We can now state a generalization of the repulsion statement \eqref{eqn:basicrep}.

\begin{proposition}[{\cite[Theorem 1.5.21]{BombieriGubler}}]
Suppose $\alpha \neq \beta$ are distinct algebraic numbers, and let $d \ge 1$ be the degree of a field containing both.  Then
\begin{equation}
  \label{eqn:repulsion}
  |\alpha - \beta|^2 \ge \frac{1}{2^d H(\alpha)^d H(\beta)^d}.
\end{equation}
The exponent $2$ on the left can be removed if $\alpha, \beta \in \RR$.
\end{proposition}
See \cite[Section A.2]{BugeaudBook} for a version in terms of the na\"ive height.

\subsubsection{Discriminant as a measure of arithmetic complexity}
\label{ssec:DiscriminantComplexity}

From the perspective of the geometry discussed in the previous section, and the images we've drawn, one might consider the measure of arithmetic complexity given by the discriminant.

For a polynomial $f = a_d x^d + \cdots + a_1 x + a_0 = a_d \prod_{i=1}^d (x - \alpha_i) \in \ZZ[x]$, let $\Delta_f$ denote the discriminant of $f$.  Recall that the discriminant is a measure of the differences between the roots:
\[
	\Delta_f = a_d^{2d-2} \prod_{i < j} (\alpha_i - \alpha_j)^2.
\]
We will refer to an algebraic number as having a discriminant, namely the discriminant of its minimal polynomial, and write $\Delta_\alpha := \Delta_{f_\alpha}$.  
This has the particular advantage of being invariant under $\PSL(2;\ZZ)$.

How do the previously defined heights and the discriminant relate?  Mahler proved a relationship in one direction \cite{Mahler}, namely:
\begin{equation}
  \label{eqn:delta-le-height-gen}
	|\Delta_\alpha| \le d^d H(\alpha)^{d(2d-2)}.
\end{equation}
By using \eqref{eqn:wellknown}, we obtain the related inequality:
\begin{equation}
	\label{eqn:delta-le-naive-gen}
	|\Delta_\alpha| \le d^d(d+1)^{d-1} H(f_\alpha)^{2d-2}.
\end{equation}

In general one doesn't expect a converse inequality, since the discriminant is invariant under $f(x) \mapsto f(x+1)$, while the Weil height would be expected to grow.  Even within one fundamental region of the upper half plane, one doesn't expect a tight relationship.  For example, it is possible to define a family of quadratic irrationalities $(\alpha_n)_{n \ge 1}$ such that $H(\alpha_n)^{4}/|\Delta_{\alpha_n}| \rightarrow \infty$ as $n \rightarrow \infty$.  Namely, the polynomials $x^2 + n$ have upper-half-plane roots $\alpha_n$ approaching $\infty$ along the imaginary axis, and $4H(\alpha_n)^4/|\Delta_{\alpha_n}| = 4n^2/4n = n \rightarrow \infty$.

\subsubsection{Sizing in starscape images}
\label{ssec:SizingStarscapes}

In light of the comparisons \eqref{eqn:wellknown}, \eqref{eqn:delta-le-height-gen}, and \eqref{eqn:delta-le-naive-gen}, as well as Theorems \ref{thm:sprindzuk} and \ref{thm:bugeaud-evertse}, one might compare the na\"ive sizings in Figures \ref{fig:DotSizesCoeff} and \ref{fig:DotSizesRoots} with slightly more nuanced versions given in Figure \ref{fig:nuanced-sizings}.  These latter sizings are all chosen to match Theorem \ref{thm:sprindzuk} just as Figure \ref{fig:RationalStarscape} matches Dirichlet's Theorem \ref{thm:dirichlet}.

\begin{figure}[h!tbp]
\centering
\begin{subfigure}[b]{0.6\textwidth} 
    \centering
	\includegraphics[width=\textwidth]{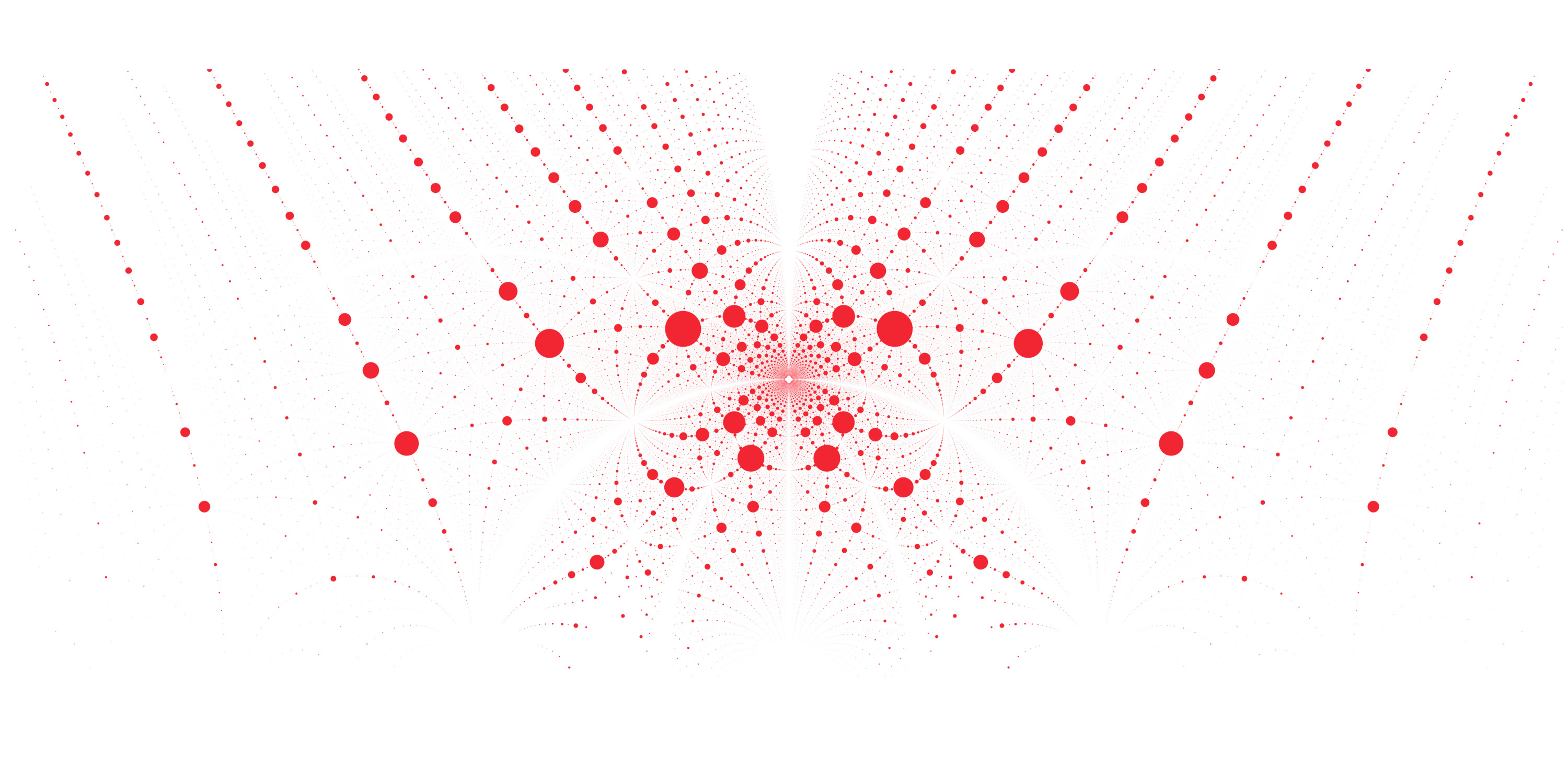}
	\caption{Discriminant: $|\Delta_\alpha|^{(d+1)/(4d-4)}$}
	\label{fig:nuancedDisc}
\end{subfigure}
\begin{subfigure}[b]{0.6\textwidth} 
    \centering
	\includegraphics[width=\textwidth]{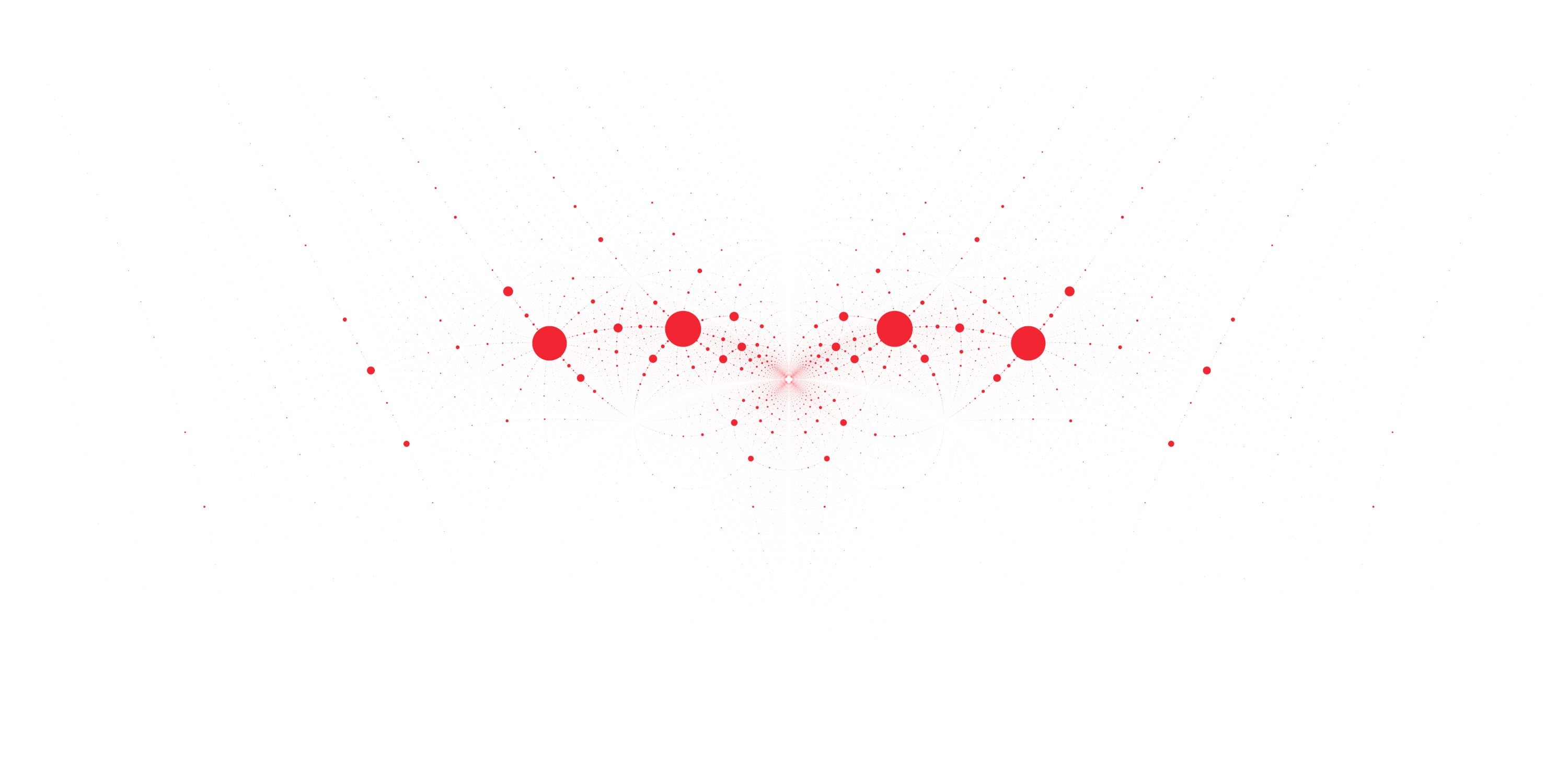}
	\caption{Na\"ive height:  $H(f_\alpha)^{(d+1)/2}$}
	\label{fig:nuancedNaive}
\end{subfigure}
\begin{subfigure}[b]{0.6\textwidth} 
    \centering
	\includegraphics[width=\textwidth]{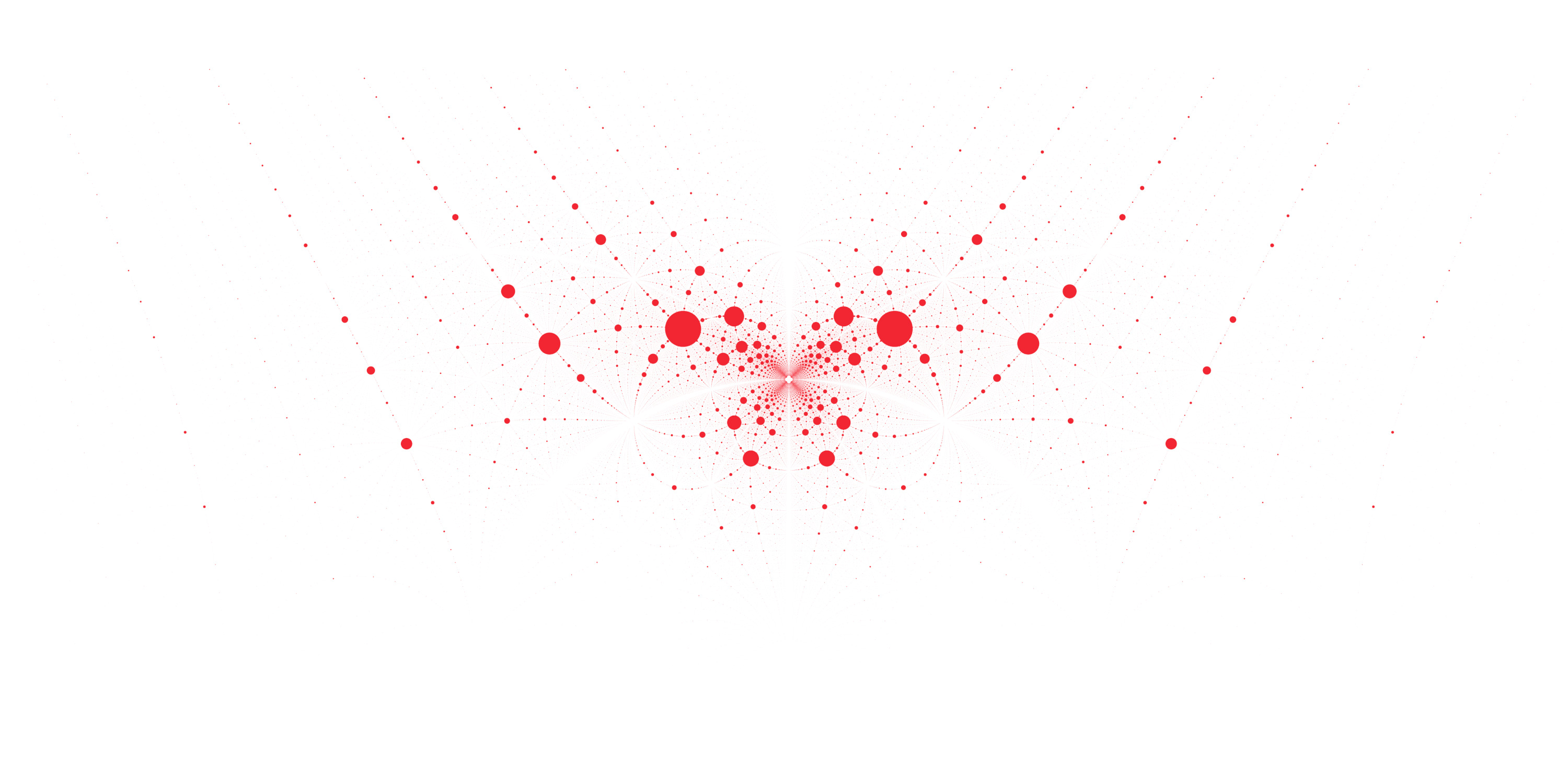}
	\caption{Weil height or Mahler measure:
	$H(\alpha)^{d(d+1)/2} = M(f)^{(d+1)/2}$}
	\label{fig:nuancedMahler}
\end{subfigure}
	\caption{
		Several natural choices of sizing by arithmetic complexity of the roots of cubics to match Theorem \ref{thm:sprindzuk}, scaled so that the complex root of $x^3+x-1$ is the same size. The roots plotted are the family $a x^3 +c x^2+b x + c$. Compare with Figures \ref{fig:DotSizesCoeff} and \ref{fig:DotSizesRoots}.}
\label{fig:nuanced-sizings}
\end{figure}

\section{Diophantine approximation in the quadratics}
\label{sec:Dio-quad}

We now revisit the basic theory of Diophantine approximation by complex quadratic irrationalities, from the starscapes perspective:  we use the \emph{hyperbolic distance} to measure distance and the \emph{discriminant} to measure arithmetic complexity.  We give analogs of repulsion (as in \eqref{eqn:basicrep} and \eqref{eqn:repulsion}), Dirichlet's Theorem \ref{thm:dirichlet} guaranteeing infinitely many approximations, and Roth's Theorem \ref{thm:roth} on the approximation of algebraic numbers, in this new situation.  In this way, we recover the $d=2$ case of Theorem \ref{thm:bugeaud-evertse}, with some additional geometric nuance (we distinguish between approximations coming from different rational geodesics).  The proofs are elementary (with the exception that they depend on Schmidt's Subspace Theorem), and are based on the geometry of Section \ref{sec:Geometry}.  We work in the coefficient space and transport the results to the complex plane afterward.   

This perspective offers a few benefits.  First, it respects the natural $\PSL(2;\ZZ)$ symmetry.  Second, the geometry nicely explains the special cases that arise in Theorem \ref{thm:bugeaud-evertse}, and we can now observe that the same dichotomy holds for non-algebraic complex numbers:  those on rational geodesics are better approximable.  In particular, Theorem \ref{thm:dirichlet-geo} implies that $k_2(\alpha) \ge 2$ for any complex number on a rational geodesic, whereas Theorem \ref{thm:sprindzuk} says we have $k_2(\alpha)=3/2$ for almost all complex non-real numbers.  Also, our method discriminates between approximations taken from fixed rational geodesics.  The relationship between our hyperbolic/discriminant statements and the classical exponents $k_2(\alpha)$ is explained in Section \ref{sec:connections}.

Throughout this section, it will be convenient to use Vinogradov notation: that is, $f \ll_\alpha g$ denotes that $f$ is bounded above by a constant multiple of $g$, where the constant may depend on $\alpha$.

\subsubsection{Repulsion amongst imaginary quadratics}

Recall that with the Weil height function we have a simple repulsion principle, \eqref{eqn:repulsion}:
\begin{equation*}
	|\alpha - \beta|^2 > \frac{1}{2^dH(\alpha)^dH(\beta)^d}.
\end{equation*}

Using the hyperbolic metric (denoted $d_{hyp}$) and the discriminant, we obtain the following version of repulsion.

\begin{theorem}
  \label{thm:delta-repulsion}
  Let $\alpha \neq \beta$ be two non-real quadratic irrationalities, of discriminants $\Delta_\alpha$ and $\Delta_\beta$ respectively.  Then the hyperbolic distance between $\alpha$ and $\beta$, considered in the upper half plane, is at least
  \[
	  d_{hyp}(\alpha, \beta) \ge \acosh\left( \sqrt{ 1 + \frac{1}{{\Delta_\alpha\Delta_\beta}} } \right).
  \]
	If $\Delta_\alpha = \Delta_\beta$, then
	\[
		d_{hyp}(\alpha,\beta) \ge \acosh\left( 1 + \frac{1}{|\Delta_\alpha|} \right).
	\]
\end{theorem}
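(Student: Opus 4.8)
I read ``the final statement'' as the last displayed inequality, the case $\Delta_\alpha=\Delta_\beta$; the first inequality is handled in the same spirit, with one extra ingredient flagged at the end. The plan is to transport the problem into coefficient space, where Theorem~\ref{thm:hypIso} turns the hyperbolic distance between two quadratic irrationalities into $\acosh$ of an explicitly rational quantity, and then to finish by a bare integrality argument.

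First I would fix notation: write the (primitive, integer) minimal polynomials as $f_\alpha=a_1x^2+b_1x+c_1$ and $f_\beta=a_2x^2+b_2x+c_2$ with $\gcd(a_i,b_i,c_i)=1$ and, after a sign change if needed, $a_i>0$. Since $\alpha,\beta\notin\RR$ the discriminants are negative; in the present case set $D:=|\Delta_\alpha|=|\Delta_\beta|\in\ZZ_{>0}$, so $\Delta(f_\alpha)=\Delta(f_\beta)=-D$ and $\Delta(f_\alpha)\Delta(f_\beta)=D^2$. Both $f_\alpha,f_\beta$ lie in $\HH^2_\Coefs$, with $\alpha,\beta$ (the roots of positive imaginary part) their images under $\RootMap$; so by Theorem~\ref{thm:hypIso} and the explicit metric~\eqref{eqn:CoefsMetric},
\[
\cosh d_{hyp}(\alpha,\beta)=\frac{-\langle f_\alpha,f_\beta\rangle}{\sqrt{\Delta(f_\alpha)\Delta(f_\beta)}}=\frac{N}{D},\qquad N:=2a_1c_2+2a_2c_1-b_1b_2.
\]
The crux is then that $N\in\ZZ$, while $\alpha\neq\beta$ forces $d_{hyp}(\alpha,\beta)>0$ and hence $\cosh d_{hyp}(\alpha,\beta)>1$, i.e.\ $N>D$; since $N,D$ are integers this upgrades to $N\ge D+1$. (Equivalently, $N\ge D$ is reverse Cauchy--Schwarz for the Lorentzian form $\Delta$ on the two timelike vectors $f_\alpha,f_\beta$ in one sheet of the cone, with equality exactly when they are proportional, i.e.\ when $\alpha=\beta$; but the isometry already hands us $\cosh\ge 1$.) Then
\[
\cosh d_{hyp}(\alpha,\beta)=\frac{N}{D}\ge\frac{D+1}{D}=1+\frac1D=1+\frac{1}{|\Delta_\alpha|},
\]
and applying the increasing function $\acosh$ gives the claim.

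I do not anticipate a genuine obstacle: essentially all of the force is in Theorem~\ref{thm:hypIso}, which converts a transcendental-looking distance into $\acosh$ of a rational number whose denominator is exactly $\sqrt{\Delta_\alpha\Delta_\beta}$; after that, integers do the rest. The steps needing a little care are the sign bookkeeping ($\sqrt{\Delta_\alpha\Delta_\beta}=|\Delta_\alpha|$ precisely because both discriminants are negative and equal) and the observation that the argument really uses that $f_\alpha,f_\beta$ are primitive integer polynomials, so that $N$ is an integer. For the general inequality the same computation gives $\cosh d_{hyp}(\alpha,\beta)=N/\sqrt{\Delta_\alpha\Delta_\beta}$ with $N\in\ZZ$; there the additional input I would use is the identity $N^2-\Delta_\alpha\Delta_\beta=4\operatorname{Res}(f_\alpha,f_\beta)$ together with the fact that the resultant of the two distinct irreducible polynomials $f_\alpha,f_\beta$ is a positive integer (nonnegative via the root-difference formula $a_1^2a_2^2|\alpha-\beta|^2|\alpha-\overline{\beta}|^2$, and nonzero by coprimality), which bounds $N-\sqrt{\Delta_\alpha\Delta_\beta}$ from below.
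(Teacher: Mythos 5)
Your proposal is correct and takes essentially the same route as the paper's proof: both transport the problem to coefficient space via Theorem~\ref{thm:hypIso} and equation~\eqref{eqn:CoefsMetric}, write $\cosh d_{hyp}(\alpha,\beta)=-\langle f_\alpha,f_\beta\rangle/\sqrt{\Delta_\alpha\Delta_\beta}$ with integer numerator $N$, and finish by integrality ($N\ge|\Delta_\alpha|+1$ in the equal-discriminant case, exactly as in the paper). Your only deviation is in the general case, where you lower-bound $N^2-\Delta_\alpha\Delta_\beta$ by $4\operatorname{Res}(f_\alpha,f_\beta)\ge 4$ instead of the paper's bare observation that this integer is positive, hence $\ge 1$; this is a mild sharpening of the same integrality step rather than a different method.
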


\begin{proof}
	Suppose $\alpha$ and $\beta$ are associated to some vectors $f_\alpha, f_\beta$ in coefficient space.
	By \eqref{eqn:CoefsMetric}, the distance between them is
  \[
	  d_\Coefs(f_\alpha,f_\beta) = \acosh\left( \frac{ |\langle f_\alpha, f_\beta \rangle| }{\sqrt{|\Delta_\alpha \Delta_\beta|}} \right).
  \]
  Since $\alpha \neq \beta$, this distance is greater than $0$.  However, it lies in
  \[
    \acosh\left( \frac{\ZZ}{\sqrt{\Delta_\alpha\Delta_\beta}} \right),
  \]
  which is a discrete set of values whose smallest positive value is
  \[
    \acosh\left( \frac{n}{\sqrt{\Delta_\alpha\Delta_\beta}} \right),
  \]
  where $n$ is the smallest integer greater than $\sqrt{|\Delta_\alpha\Delta_\beta|}$.  In particular, $n^2 - \Delta_\alpha \Delta_\beta \ge 1$.  This implies that 
 \[
	 \frac{n}{\sqrt{\Delta_\alpha\Delta_\beta}} \ge \sqrt{ 1 + \frac{1}{{\Delta_\alpha\Delta_\beta}} }.
 \]
	The special case is the case that $n = |\Delta_\alpha| + 1$.
\end{proof}

\subsubsection{Fundamental geometric lemma}

Fix $\alpha \in \CC \backslash \RR$, not quadratic.  We will consider approximations by quadratic irrational $\beta$.  We can give corresponding vectors in coefficient space: 
\[
	f_\alpha = [\alpha_1: 1: \alpha_2], \quad f_\beta = [p_1: n: p_2]
\]
where $p_1, n, p_2 \in \ZZ$ and $\alpha_1, \alpha_2$ are not both rational.  More precisely, we have $\alpha_1 = 1/(\alpha + \overline{\alpha})$ and $\alpha_2 = (\alpha\overline{\alpha})/(\alpha + \overline{\alpha})$.

We wish to compute the hyperbolic distance between $f_\alpha$ and $f_\beta$.  Our proofs will rely on a fundamental lemma which relates this distance to linear forms, with coefficients depending on $\alpha$, in $f_\beta$'s coordinates.\footnote{This generalizes the rational approximation case, since there the angle $\theta$ between the projective lines $[\alpha:1]$ and $[p:q]$ satisfies
\[
	\cos \theta = 
	\frac{ \langle (\alpha,1), (p,q) \rangle }{ || (p,q) || || (\alpha,1) || }
	= 
	\frac{ |q\alpha - p| }{ || (p,q) || || (\alpha,1) || }.
\]}

\begin{lemma}
	\label{lemma:mainlemma}
	Suppose $\alpha \in \CC \backslash \RR$ is a fixed non-quadratic, with $f_\alpha = [\alpha_1:1:\alpha_2]$.

	Let $\beta$ be quadratic, with $f_\beta = [p_1: n: p_2]$, where $n, p_1, p_2 \in \ZZ$.  

	Define the linear forms:
	\begin{align*}
		L_1 := L_1(p_1,n,p_2) &= n \alpha_1 - p_1,  \\
		L_2 := L_2(p_1,n,p_2) &= -n \alpha_2 + p_2,  \\
		L_3 := L_3(p_1,n,p_2) &= \alpha_1p_2 - \alpha_2 p_1.
	\end{align*}

	\begin{enumerate}
\item We have
	\[
		d_{hyp}(\alpha,\beta) \le \acosh \left( 1 + \frac{\max\{ |L_1L_2|, L_3^2 \}}{ |\Delta_\beta| } \right).
	\]
		\item 
	Suppose $\beta$ is sufficiently close to $\alpha$, namely $d_{hyp}(\alpha,\beta) < \acosh 2$.  Then
	\[
		d_{hyp}(\alpha,\beta) \ge \acosh \left( 1 + \frac{m_\alpha |L_1L_2| }{ |\Delta_\beta| } \right),
	\]
			for some constant $m_\alpha > 0$ depending only on $\alpha$.
	\end{enumerate}
\end{lemma}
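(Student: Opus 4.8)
The plan is to transport the whole question into coefficient space, where it collapses to a single algebraic identity plus elementary estimates. By Theorem~\ref{thm:hypIso} the roots map restricts to an isometry $\HH^2_\Coefs\to\HH^2_\Roots$, so $d_{hyp}(\alpha,\beta)=d_\Coefs(f_\alpha,f_\beta)$ and \eqref{eqn:CoefsMetric} gives
\[
\cosh d_{hyp}(\alpha,\beta)=\frac{-\langle f_\alpha,f_\beta\rangle}{\sqrt{\Delta(f_\alpha)\Delta(f_\beta)}},
\]
with $\Delta(f_\alpha),\Delta(f_\beta)<0$ so the radicand is positive (the sign chosen in $f_\alpha=[\alpha_1:1:\alpha_2]$ only affects which member of a conjugate pair one reads off, and is harmless since the reflection $z\mapsto-\overline z$ is an isometry of $\HH^2$ applied to both $\alpha$ and $\beta$). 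The key observation is that $L_1,L_3,L_2$ are, in this order, the three $2\times2$ minors of $\left(\begin{smallmatrix}\alpha_1&1&\alpha_2\\p_1&n&p_2\end{smallmatrix}\right)$; expanding the square — this is exactly the resultant of the two binary quadratics $f_\alpha,f_\beta$ — yields
\[
\langle f_\alpha,f_\beta\rangle^2-\Delta(f_\alpha)\Delta(f_\beta)=4\bigl(L_3^2-L_1L_2\bigr).
\]
Being a resultant, the right side is $\ge 0$ (it equals $\alpha_1^2p_1^2|\alpha-\beta|^2|\alpha-\overline\beta|^2$), so $L_3^2\ge L_1L_2$, and dividing by $\Delta(f_\alpha)\Delta(f_\beta)$ turns the identity into the exact formula
\[
\sinh^2 d_{hyp}(\alpha,\beta)=\frac{4\bigl(L_3^2-L_1L_2\bigr)}{|\Delta(f_\alpha)|\,|\Delta_\beta|}.
\]
I will also record two by-products of the same computation: the collinearity $L_3=\alpha_2L_1+\alpha_1L_2$, and — using $\alpha_1=(2\Re\alpha)^{-1}$, $\alpha_2=|\alpha|^2(2\Re\alpha)^{-1}$, $p_2/p_1=|\beta|^2$ — the identities $L_3=\tfrac{p_1}{2\Re\alpha}(|\beta|^2-|\alpha|^2)$ and $|\Delta_\beta|=4p_1^2(\Im\beta)^2$. (Here $\Re\alpha\ne0$ is implicit in the normalization $f_\alpha=[\alpha_1:1:\alpha_2]$ and may always be arranged by an integer translation.)

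Part~(1) then follows by monotonicity of $\acosh$: from $\cosh d_{hyp}+1\ge2$ we get $\cosh d_{hyp}(\alpha,\beta)-1=\sinh^2 d_{hyp}/(\cosh d_{hyp}+1)\le\tfrac{2(L_3^2-L_1L_2)}{|\Delta(f_\alpha)|\,|\Delta_\beta|}$, and then $L_3^2-L_1L_2\le L_3^2+|L_1L_2|\le2\max\{L_3^2,|L_1L_2|\}$ produces a bound of exactly the stated shape $\acosh\!\bigl(1+\tfrac{c_\alpha\max\{|L_1L_2|,L_3^2\}}{|\Delta_\beta|}\bigr)$, the constant $c_\alpha=4/|\Delta(f_\alpha)|$ being absorbed into the normalization of $\alpha$.

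For part~(2) I would split on the sign of $L_1L_2$. If $L_1L_2\le0$ then $L_3^2-L_1L_2\ge|L_1L_2|$, and the hypothesis $d_{hyp}(\alpha,\beta)<\acosh 2$ gives $\cosh d_{hyp}+1<3$, so
\[
\cosh d_{hyp}(\alpha,\beta)-1=\frac{\sinh^2 d_{hyp}(\alpha,\beta)}{\cosh d_{hyp}(\alpha,\beta)+1}\ge\frac{4\,|L_1L_2|}{3\,|\Delta(f_\alpha)|\,|\Delta_\beta|},
\]
which has the desired form. If $L_1L_2>0$ then $L_3^2\ge L_1L_2=|L_1L_2|$, so it is enough to bound $L_3^2$: by the formula above $\tfrac{L_3^2}{|\Delta_\beta|}=\tfrac{(|\beta|^2-|\alpha|^2)^2}{16(\Re\alpha)^2(\Im\beta)^2}\le\tfrac{|\alpha-\beta|^2(|\alpha|+|\beta|)^2}{16(\Re\alpha)^2(\Im\beta)^2}$, while the standard upper half-plane formula $\cosh d_{hyp}(z,w)=1+\tfrac{|z-w|^2}{2\Im z\,\Im w}$ gives $|\alpha-\beta|^2=2\,\Im\alpha\,\Im\beta\,(\cosh d_{hyp}(\alpha,\beta)-1)$. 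The closeness hypothesis $\cosh d_{hyp}<2$ forces $\Im\beta/\Im\alpha\in(2-\sqrt3,\,2+\sqrt3)$ and $|\beta|\le|\alpha|+\sqrt{2(2+\sqrt3)}\,\Im\alpha$, so these combine to $\tfrac{L_3^2}{|\Delta_\beta|}\le\tfrac1{m_\alpha}(\cosh d_{hyp}(\alpha,\beta)-1)$ for an explicit $m_\alpha>0$ depending only on $\alpha$; taking the smaller of this $m_\alpha$ and $4/(3|\Delta(f_\alpha)|)$ completes the argument.

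The step I expect to be the real obstacle is precisely the branch $L_1L_2>0$ of part~(2): there the resultant identity controls only the combination $L_3^2-L_1L_2$, which can be far smaller than $L_1L_2$ itself, so one is forced to estimate $L_3^2$ directly, and this is the one place where the hypothesis $d_{hyp}(\alpha,\beta)<\acosh 2$ is genuinely used — to keep $\Im\beta$ comparable to $\Im\alpha$ and $|\beta|$ bounded in terms of $\alpha$, so that $m_\alpha$ depends on $\alpha$ alone. Everything else is linear algebra together with the monotonicity of $\acosh$.
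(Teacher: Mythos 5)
Your proof is correct, and for part (1) and for the $L_1L_2\le 0$ branch of part (2) it is essentially the paper's argument: both work in coefficient space, start from $\cosh d_{hyp}(\alpha,\beta)=-\langle f_\alpha,f_\beta\rangle/\sqrt{\Delta(f_\alpha)\Delta(f_\beta)}$, and hinge on the identity $\langle f_\alpha,f_\beta\rangle^2-\Delta(f_\alpha)\Delta(f_\beta)=4(L_3^2-L_1L_2)$; both also share the cosmetic issue that a factor $|\Delta(f_\alpha)|$ must be absorbed into an $\alpha$-dependent normalization (the paper silently drops $||f_\alpha||^2$ from a denominator, you flag it explicitly). The genuine divergence is the branch $L_1L_2>0$, which you single out as the obstacle and attack metrically: you bound $L_3^2/|\Delta_\beta|$ above by $C_\alpha(\cosh d_{hyp}(\alpha,\beta)-1)$ via the explicit formulas $L_3=\tfrac{p_1}{2\Re\alpha}(|\beta|^2-|\alpha|^2)$ and $|\Delta_\beta|=4p_1^2(\Im\beta)^2$, using the closeness hypothesis to keep $\Im\beta$ and $|\beta|$ comparable to $\alpha$. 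The paper dissolves this case purely algebraically with the collinearity relation you record as a by-product but never use: from $L_3=\alpha_2L_1+\alpha_1L_2$ and AM--GM one gets $L_3^2\ge 4\alpha_1\alpha_2\,L_1L_2$, hence $L_3^2-L_1L_2\ge(4\alpha_1\alpha_2-1)L_1L_2=K_\alpha|L_1L_2|$ with $K_\alpha=(\Im\alpha/\Re\alpha)^2>0$ --- so the quantity you feared could be ``far smaller than $L_1L_2$'' in fact never is, and the closeness hypothesis is needed only once at the end, to linearize the square root when passing from $\cosh^2$ to $\cosh$. The paper's route is shorter and uniform across both signs; yours costs a few extra estimates but makes the geometric content of $L_3$ (essentially $|\beta|^2-|\alpha|^2$) explicit. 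Both arguments are valid.
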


\begin{proof}
	Recall that the distance is invariant under scaling, so we will temporarily replace $f_\alpha$ with $f'_\alpha = nf_\alpha$, so the middle coordinates of the two vectors agree.
We have
\begin{align*}
	&\frac{ \langle f'_\alpha, f_\beta \rangle^2 }
	{ ||f'_\alpha||^2 ||f_\beta||^2 } - 1 \\ 
	&= \frac{ (n^2 - 2n\alpha_1 p_2 - 2n\alpha_2 p_1)^2 - (n^2 - 4n^2\alpha_1\alpha_2)(n^2-4p_1p_2) }
	{ ||f'_\alpha||^2 ||f_\beta||^2 } \\
	&= 4n^2 \frac{ (\alpha_1p_2 - \alpha_2p_1)^2 + (n\alpha_1 - p_1)(n\alpha_2 - p_2) }
	{ ||f'_\alpha||^2 ||f_\beta||^2 }  \\
	&= 4 \frac{ L_3^2 - L_1L_2 }
	{ ||f_\alpha||^2 ||f_\beta||^2 } 
\end{align*}
Evidently,
\[
	L_3^2 - L_1L_2 \le 2 \max\{ |L_1L_2|, L_3^2 \}.
\]
Therefore
\[
	\frac{ \langle f_\alpha, f_\beta \rangle^2 }
	{ ||f_\alpha||^2 ||f_\beta||^2 } 
	\le 1 + \frac{2 \max\{ |L_1L_2|, L_3^2 \} }{ ||f_\beta||^2 }.
\]
Hence,
\[
	\frac{ - \langle f_\alpha, f_\beta \rangle }
	{ ||f_\alpha|| ||f_\beta|| } 
	\le 1 + \frac{ \max\{ |L_1L_2|, L_3^2 \} }{ ||f_\beta||^2 }.
\]

Next we show
\[
	L_3^2 - L_1L_2 \gg_\alpha \min\{ |L_1L_2|, L_3^2 \}.
\]
By the definitions of the $L_i$, we have
\[
	L_1\alpha_2 + L_2\alpha_1 = L_3.
\]
In particular,
\[
	L_3^2 - L_1L_2 = (L_1\alpha_2 + L_2\alpha_1)^2 - L_1L_2 \ge (4\alpha_1\alpha_2-1)L_1L_2
\]
by the arithmetic-geometric mean inequality.  Note that $K_\alpha := 4\alpha_1\alpha_2-1 > 0$ since $\alpha$ is not real.  Thus if $L_1L_2$ is positive, we are done.  On the other hand, if it is negative, then 
\[
	L_3^2 - L_1L_2 = L_3^2 + |L_1L_2| \ge |L_1L_2|.
\]

We have shown that, for some constant $m'_\alpha > 0$ depending only on $\alpha$,
\[
	\frac{ \langle f_\alpha, f_\beta \rangle^2 }
	{ ||f_\alpha||^2 ||f_\beta||^2 } 
	\ge 1 + \frac{m'_\alpha |L_1L_2| }{ ||f_\beta||^2 }.
\]
Therefore, for $\beta$ sufficiently close to $\alpha$ (i.e., so that $\frac{ \langle f_\alpha, f_\beta \rangle^2 }{ ||f_\alpha||^2 ||f_\beta||^2 } < 2$), this implies (taking $m_\alpha = (\sqrt{2}-1)m'_\alpha$) that
\[
	\frac{ - \langle f_\alpha, f_\beta \rangle }
	{ ||f_\alpha|| ||f_\beta|| } 
	\ge 1 + \frac{ m_\alpha |L_1L_2| }{ ||f_\beta||^2 }.
\]
\end{proof}

\subsection{Quadratic Dirichlet's Theorem}

\subsubsection{Quadratic Dirichlet's Theorem on a rational geodesic}
\label{ssec:QuadraticDirichletonRationalGeodesics}
We first consider the question of Diophantine approximation on a single \emph{rational geodesic}, i.e. the image of a rational plane in coefficient space (see Section \ref{sec:applquad}).  This is motivated by the observation that each such geodesic looks, in Figure \ref{fig:Initial_quadratics}, like a copy of Figure \ref{fig:RationalStarscape}, so we expect it to have Diophantine approximation properties similar to the rationals.  It will turn out that points lying on such geodesics are better approximable than points elsewhere in $\CC$:  see Figure \ref{fig:quad-quar}.
Recall that, given $\alpha \in \CC \backslash \RR$, $\alpha$ lies on a rational geodesic if and only if $1$, $\alpha + \overline{\alpha}$ and $\alpha\overline{\alpha}$ are $\QQ$-linearly dependent (Observation \ref{obs:rat-geo}).

We begin with an analogue to Dirichlet's Theorem \ref{thm:dirichlet}, asserting the existence of infinitely many good approximations on a rational geodesic.

\begin{theorem}
	\label{thm:dirichlet-geo}
  Let $\alpha \in \CC \backslash \RR$ not be quadratic irrational, but lying on a rational geodesic.
	Then there exists a constant $K_\alpha > 0$, depending only on the $\PSL(2;\ZZ)$ orbit of $\alpha$, such that there are infinitely many quadratic irrational $\beta$ lying on that rational geodesic, with
  \[
	  d_{hyp}(\alpha, \beta) \le \operatorname{arcosh}\left( 1 + \frac{K_\alpha}{|\Delta_\beta|^{2}} \right).
  \]
\end{theorem}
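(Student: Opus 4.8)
The plan is to reduce the whole statement to classical one--dimensional Dirichlet approximation (Theorem~\ref{thm:dirichlet}), using the coefficient--space isometry of Theorem~\ref{thm:hypIso} and a lattice basis adapted to the rational geodesic through $\alpha$. Since $d_{hyp}$ and $\Delta_\beta$ are $\PSL(2;\ZZ)$--invariant and $\PSL(2;\ZZ)$ permutes rational geodesics, both the hypothesis and the conclusion are $\PSL(2;\ZZ)$--equivariant: any constant we manufacture for one $\alpha$ automatically works for its whole orbit, so we are free to normalize and the ``depends only on the orbit'' clause will come for free once we express the constant as an invariant.

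First I would set up the geometry. By Observation~\ref{obs:rat-geo}, a non--quadratic $\alpha$ lies on at most one rational geodesic $\gamma$, realized as the roots--map image of $\PP\mathcal{S}$ for a rational plane $\mathcal{S}\subset\RR^3$; fix a $\ZZ$--basis $e_1,e_2$ of the saturated rank--two lattice $\Lambda=\mathcal{S}\cap\ZZ^3$. The non--real quadratic irrationalities on $\gamma$ are exactly the $\beta$ whose minimal polynomial is a primitive vector $f_\beta=p\,e_1+q\,e_2$ with $\gcd(p,q)=1$ and $\Delta(f_\beta)<0$; write $f_\alpha=\xi_1 e_1+\xi_2 e_2$ with real $\xi_i$, where (since $\alpha$ is not quadratic, so $f_\alpha$ is not proportional to an integer vector) after possibly swapping $e_1,e_2$ we have $\xi_1\neq 0$ and $\lambda:=\xi_2/\xi_1\notin\QQ$. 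Plugging $f_\alpha,f_\beta$ into the coefficient--space distance formula~\eqref{eqn:CoefsMetric} and expanding the Gram determinant of $\{f_\alpha,f_\beta\}$ in the basis $e_1,e_2$ gives the clean identity
\[
\cosh^{2} d_{hyp}(\alpha,\beta)-1
=\frac{\delta\,(\xi_1 q-\xi_2 p)^{2}}{\lvert\Delta(f_\alpha)\rvert\,\lvert\Delta_\beta\rvert},
\qquad
\delta:=\langle e_1,e_2\rangle^{2}-\Delta(e_1)\Delta(e_2)\in\ZZ_{>0},
\]
where $d_{hyp}(\alpha,\beta)=d_\Coefs(f_\alpha,f_\beta)$ by Theorem~\ref{thm:hypIso}, and $\delta$ is independent of the chosen basis and preserved by $\rho(\PSL(2;\ZZ))$, hence an invariant of the $\PSL(2;\ZZ)$--orbit of $\gamma$. (This is Lemma~\ref{lemma:mainlemma}(1) specialized to perturbations tangent to $\gamma$, where only the $L_3^2-L_1L_2$ combination survives.)

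Next I would feed in Dirichlet. Since $\lambda$ is irrational, Theorem~\ref{thm:dirichlet} supplies infinitely many coprime pairs $(p,q)$ with $\lvert q-\lambda p\rvert<1/\lvert p\rvert$. For such a pair with $\lvert p\rvert$ large, $\Delta(f_\beta)=p^{2}\Delta\!\bigl(e_1+\tfrac{q}{p}e_2\bigr)\to p^{2}\Delta(e_1+\lambda e_2)=\tfrac{p^{2}}{\xi_1^{2}}\Delta(f_\alpha)<0$, so $\Delta(f_\beta)<0$ (the approximant really is a non--real quadratic irrational on $\gamma$, not a degenerate point of the form) and $\lvert\Delta_\beta\rvert\le\tfrac{p^{2}}{\xi_1^{2}}\bigl(\lvert\Delta(f_\alpha)\rvert+\varepsilon\bigr)$. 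Substituting $(\xi_1 q-\xi_2 p)^{2}=\xi_1^{2}(q-\lambda p)^{2}<\xi_1^{2}/p^{2}$ into the displayed identity gives, for all but finitely many of these pairs, $\cosh^{2} d_{hyp}(\alpha,\beta)-1\le(\delta+\varepsilon)/\lvert\Delta_\beta\rvert^{2}$, and therefore $\cosh d_{hyp}(\alpha,\beta)-1\le\tfrac12(\delta+\varepsilon)/\lvert\Delta_\beta\rvert^{2}\le\delta/\lvert\Delta_\beta\rvert^{2}$, i.e.\ $d_{hyp}(\alpha,\beta)\le\operatorname{arcosh}\!\bigl(1+\delta/\lvert\Delta_\beta\rvert^{2}\bigr)$. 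Distinct pairs give distinct $\beta$, so infinitely many such $\beta$ occur, and we may take $K_\alpha:=\delta$, which indeed depends only on $\gamma$ and hence only on the $\PSL(2;\ZZ)$--orbit of $\alpha$.

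The only genuine analytic input is classical Dirichlet; no Subspace Theorem is needed here (that enters for the off--geodesic statement Theorem~\ref{thm:dirichlet-quad} and the Roth--type upper bound Theorem~\ref{thm:quad-roth}). The remaining points are bookkeeping rather than obstacles: checking that $\Lambda$ is a direct summand of $\ZZ^3$ so that primitivity of $f_\beta$ is equivalent to $\gcd(p,q)=1$; verifying that the produced approximants keep negative discriminant; and recognizing $\delta$ as an orbit invariant. I expect the only step needing real care to be the mild normalization at the outset --- choosing the lattice basis of $\mathcal{S}\cap\ZZ^3$ and ruling out $\xi_1=0$ --- and, if one prefers to route through Lemma~\ref{lemma:mainlemma}(1) verbatim, the minor nuisance that its normalization $f_\alpha=[\alpha_1:1:\alpha_2]$ requires $\alpha+\overline\alpha\neq 0$, which one clears by a preliminary $\PSL(2;\ZZ)$ move.
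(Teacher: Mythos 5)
Your proof is correct. The core idea coincides with the paper's: since $\alpha$ sits on a rational geodesic, the problem collapses to one‑dimensional Dirichlet approximation of a single irrational number attached to $\alpha$'s position on that geodesic, and the Subspace Theorem is indeed not needed. Where you differ is in the bookkeeping, and your version is arguably cleaner. The paper normalizes $f_\alpha=[\alpha_1:1:\alpha_2]$, applies the pigeonhole argument to $\alpha_1$, uses the defining relation $a\alpha_1+b+c\alpha_2=0$ to manufacture $f_\beta=[cq_0:cn_0:-bn_0-aq_0]$, and then routes the distance estimate through Lemma~\ref{lemma:mainlemma} together with the lower bound $n\gg_\alpha|\Delta_\beta|^{1/2}$; the constants remain implicit and depend on a chosen canonical representative of the geodesic's orbit. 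You instead work intrinsically in the saturated rank‑two lattice $\Lambda=\mathcal{S}\cap\ZZ^3$, where the Gram‑determinant computation turns \eqref{eqn:CoefsMetric} into the exact identity $\cosh^2 d-1=\delta(\xi_1q-\xi_2p)^2/(|\Delta(f_\alpha)||\Delta_\beta|)$ (which I checked: the numerator is $(\det)^2$ times the negated Gram determinant of $e_1,e_2$, and $\delta>0$ because the form restricted to a plane meeting the open negative cone has signature $(1,1)$). This bypasses Lemma~\ref{lemma:mainlemma} entirely, makes the primitivity and negativity-of-discriminant checks transparent, and — a genuine bonus over the paper's statement — identifies $K_\alpha$ explicitly as the integer invariant $\delta$ of the lattice $\Lambda$, essentially the discriminant of the real quadratic form attached to the geodesic, which makes the ``depends only on the orbit'' clause automatic and connects directly to the paper's remark that $K_\alpha$ reflects an ideal class of a real quadratic field. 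The only points requiring the care you already flag are the saturation of $\Lambda$ (so $\gcd(p,q)=1$ gives a primitive vector and $\Delta_\beta=\Delta(f_\beta)$) and the scale‑invariance of the quantities $\lambda$, $\delta(\xi_1q-\xi_2p)^2/|\Delta(f_\alpha)|$ under rescaling of $f_\alpha$, both of which hold.
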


\begin{proof}
	Suppose $\alpha \in \CC \backslash \RR$ is a fixed non-quadratic, with $f_\alpha = [\alpha_1:1:\alpha_2]$.
	The theorem statement is invariant under $\PSL(2;\ZZ)$ (i.e.\ replacing $\alpha$ and all candidate $\beta$ with their images under the action of some element of $\PSL(2;\ZZ)$, we preserve $\Delta_\beta$ and the hyperbolic distances).
	We assume $\alpha$ is on a rational geodesic, so $a\alpha_1 + b + c\alpha_2 = 0$.  We may translate by $\PSL(2;\ZZ)$ until $\max\{|a|,|b|,|c|\}$ is minimal; this is a constant depending only on the $\PSL(2;\ZZ)$ orbit of $\alpha$.  We can actually choose a canonical geodesic amongst these finitely many in any way we wish; say the one of smallest radius, and amongst those, center nearest the origin but to its right.  We set some such convention.

	Let $Q > 0$ be an integer.

    We use the classical method of proof of Dirichlet's Theorem \ref{thm:dirichlet} to find a solution $(n_0,q_0) \in \ZZ^2$ to
    \[
      |n_0 \alpha_1 - q_0| \le 1/Q, \quad n_0 \le Q.
    \]
	Namely, we divide the unit interval into $Q$ even subintervals, and the box principle guarantees some $i\alpha_1$ and $j\alpha_1$, for some $0 \le i < j \le Q$, lie in the same interval modulo $\ZZ$; we let $n_0=j-i$.  
    We have $an_0\alpha_1 = -bn_0 - cn_0\alpha_2$, so that
	\[
		|cn_0 \alpha_1 - cq_0| \le |c|/Q, \quad
		|cn_0 \alpha_2 + bn_0 + aq_0| = |-an_0 \alpha_1 + aq_0| \le |a|/Q.
	\]
	We let $f_\beta = [p:n:q] = [cq_0: cn_0: -bn_0 - aq_0]$.  
	The corresponding $\beta \in \CC$ is a candidate good quadratic irrational approximation to $\alpha$.

	Increasing $Q$ and finding a new approximation, we can in fact produce infinitely many such $f_\beta$ which are linearly independent, and such that the resulting infinite sequence of distinct $\beta$ approach $\alpha$.

	It remains to show that these $\beta$ are indeed good approximations.  
	We have the following observations:
	\begin{gather*}
		n \ll_\alpha Q, \quad |L_1L_2| \ll_\alpha 1/Q^2,
		\\
		L_3^2 = | L_1\alpha_2 + L_2\alpha_1 |^2 \ll_\alpha \max\{ |L_1|^2, |L_2|^2 \} \ll_\alpha 1/Q^2.
	\end{gather*}
	Here and for the entire proof, the constants in the Vinogradov notation depend on $\alpha$, but this in the canonical choice of $\alpha$ within its $\PSL(2;\ZZ)$ orbit, so the constants actually only depend on the orbit.

	Combining these with Lemma \ref{lemma:mainlemma},
	\[
		\cosh d_\Coefs(f_n,f_\beta) - 1 \ll_\alpha \frac{1}{n^{2}|\Delta_\beta|}.
	\]
	Let $C_\alpha = |\Re(\alpha)|/2|\Im(\alpha)|$.  This is a positive constant depending only on $\alpha$.
	Then for all $\beta$ sufficiently close to $\alpha$, $|\Re(\beta)| > C_\alpha |\Im(\beta)|$.  We consider only those solutions $\beta$ which are at least that close.
	Then $n/2p > C_\alpha |\Delta_\beta|^{1/2}/2p$, hence $n > C_\alpha |\Delta_\beta|^{1/2}$.  Using this fact, we obtain
	\[
		\cosh d_{\Coefs}( f_n, f_\beta ) - 1 \ll_\alpha \frac{1}{ |\Delta_\beta|^{2}}.
	\]
	This proves the theorem.
\end{proof}

It is interesting to note that the constant $K_\alpha$ depends on a particular ideal class of a real quadratic field, since by Observation \ref{obs:rat-geo}, it can only lie on one rational geodesic and that geodesic is associated to such an ideal class.  In the proof, the constant $K_\alpha$ directly depends on the class.  It is natural to wonder about a Lagrange spectrum for $\alpha$ on rational geodesics.

Also interesting is that all $\alpha$ lying on rational geodesics are exceptions to Sprind\u{z}uk's Theorem \ref{thm:sprindzuk}.  It is likely possible to prove using Khintchine's classic methods, that, within a single geodesic, almost all $\alpha$ have $k_2(\alpha) = 2$.

\subsubsection{Quadratic Dirichlet's Theorem in general}

In the general case, where $\alpha$ may not lie on a geodesic, we have weaker approximation guarantee, with a similar proof.

\begin{theorem}
	\label{thm:dirichlet-quad}
  Let $\alpha \in \CC \backslash \RR$ not be a quadratic irrational.  Let $K > 0$ be any constant.  Then, there are infinitely many quadratic irrationalities $\beta$ with
  \[
	  d_{hyp}(\alpha,\beta) \le \acosh\left( 1 + \frac{K}{|\Delta_\beta|^{3/2}} \right).
  \]
\end{theorem}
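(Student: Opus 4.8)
The plan is to mirror the proof of Theorem~\ref{thm:dirichlet-geo}, replacing the one–dimensional Dirichlet input (which on a rational geodesic produced \emph{two} small linear forms from a \emph{single} rational approximation, and hence exponent $2$) by two–dimensional simultaneous Dirichlet approximation, which produces two small linear forms at the cost of one power, hence exponent $3/2$. As in that proof, the conclusion is invariant under $\PSL(2;\ZZ)$ and under rescaling of $f_\alpha$, so fix the representative $f_\alpha=[\alpha_1:1:\alpha_2]$ with $\alpha_1=1/(\alpha+\overline{\alpha})$, $\alpha_2=(\alpha\overline{\alpha})/(\alpha+\overline{\alpha})$, first applying an element of $\PSL(2;\ZZ)$, if necessary, so that $\Re\alpha\neq 0$ and this chart is available. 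Since $\alpha$ is not a quadratic irrational, $\alpha_1$ and $\alpha_2$ are not both rational; in particular at least one is irrational, and we set $K_\alpha:=4\alpha_1\alpha_2-1>0$ (positive because $\alpha\notin\RR$).

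For each integer $Q>0$, apply the box principle in the unit square $[0,1)^2$ to the $Q+1$ points $(\{k\alpha_1\},\{k\alpha_2\})$, $0\le k\le Q$, subdivided into $Q$ congruent boxes: this yields $n\in\{1,\dots,Q\}$ and integers $p_1,p_2$ with $|n\alpha_1-p_1|\le Q^{-1/2}$ and $|n\alpha_2-p_2|\le Q^{-1/2}$. Put $f_\beta=[p_1:n:p_2]$ and let $L_1,L_2,L_3$ be the linear forms of Lemma~\ref{lemma:mainlemma} evaluated at $(p_1,n,p_2)$; then $|L_1|,|L_2|\le Q^{-1/2}$, and since $L_3=\alpha_1L_2+\alpha_2L_1$ we get $\max\{|L_1L_2|,L_3^2\}\ll_\alpha Q^{-1}$. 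Because $p_i/n\to\alpha_i$ as $Q\to\infty$ one checks $\Delta(f_\beta)=n^2-4p_1p_2=-K_\alpha n^2+O_\alpha(nQ^{-1/2})<0$ for $Q$ large, so $\beta$ really is a non-real quadratic irrational and moreover $|\Delta_\beta|\asymp_\alpha n^2$; since at least one $\alpha_i$ is irrational, $n\to\infty$ with $Q$, and the resulting $\beta$'s tend to $\alpha$ (so, $\alpha$ not being quadratic, infinitely many of them are distinct). Feeding this into Lemma~\ref{lemma:mainlemma}(1) and using $|\Delta_\beta|^{1/2}\asymp_\alpha n$,
\[
\cosh d_{hyp}(\alpha,\beta)-1\ \le\ \frac{\max\{|L_1L_2|,L_3^2\}}{|\Delta_\beta|}\ \ll_\alpha\ \frac{Q^{-1}}{|\Delta_\beta|}\ \ll_\alpha\ \frac{1}{n\,|\Delta_\beta|}\ \ll_\alpha\ \frac{1}{|\Delta_\beta|^{3/2}},
\]
which already proves the statement with \emph{some} constant depending only on the $\PSL(2;\ZZ)$ orbit of $\alpha$.

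\noindent\textbf{Main obstacle.} The remaining point — that for \emph{every} prescribed $K>0$ there are infinitely many such $\beta$ — is the delicate one, and is where I expect the real work to lie. The displayed chain shows it suffices to arrange $\max\{|L_1L_2|,L_3^2\}\ll_\alpha K\,|\Delta_\beta|^{-1/2}$, i.e.\ (since $|\Delta_\beta|^{1/2}\asymp_\alpha n$) to find infinitely many admissible triples whose denominator $n$ is small compared to the box-principle parameter $Q$, so that the gain $Q^{-1}$ beats $n^{-1}$ by the factor $K$. I would obtain this by running the box principle with $Q$ taken far larger than $n^{2}$ along a carefully chosen infinite sequence of scales — keeping only the triple produced at the first scale at which its denominator appears — and controlling the growth of denominators between consecutive such scales; the slack that makes arbitrary $K$ possible is exactly the slack freed up by settling for the exponent $3/2$ rather than the (generically unattainable) exponent $2$ of Theorem~\ref{thm:dirichlet-geo}. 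Along the way one must check that the retained triples stay $\ZZ$-linearly independent (so the $\beta$'s are genuinely distinct), that the error estimates above are uniform in the scale, and — already handled by the sign of $\Delta(f_\beta)$ above — that no reducible form is ever produced; these are routine but should be written out.
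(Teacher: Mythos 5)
The first half of your argument --- simultaneous Dirichlet approximation of $(\alpha_1,\alpha_2)$, the forms $L_1,L_2,L_3$, Lemma~\ref{lemma:mainlemma}(1), and the comparison $|\Delta_\beta|\asymp_\alpha n^2$ via $K_\alpha=4\alpha_1\alpha_2-1$ --- is exactly the paper's argument, and it correctly delivers the statement for \emph{some} constant depending on $\alpha$. You have also correctly isolated the remaining issue (arbitrary $K>0$) as the real content. But the mechanism you propose for it would fail. You want infinitely many $n$ with $n\le\epsilon Q$ for the producing scale $Q$, which is the same as asking for infinitely many $n$ with $\max_i\min_{p\in\ZZ}|n\alpha_i-p|\le\epsilon^{1/2}n^{-1/2}$ for every $\epsilon>0$; that is precisely the assertion that $(\alpha_1,\alpha_2)$ is \emph{not} a badly approximable pair for simultaneous rational approximation. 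Badly approximable pairs exist (a dense set of full Hausdorff dimension), and since $\alpha\mapsto(\alpha_1,\alpha_2)$ is an open map, they do arise from non-quadratic $\alpha\in\CC\setminus\RR$. For such $\alpha$ no choice of scales can beat Dirichlet's constant, and there is no ``slack in the exponent'' to spend: for $\beta$ near a fixed $\alpha$ one always has $n\asymp_\alpha|\Delta_\beta|^{1/2}$, so the exponent $3/2$ is pinned and only a constant is in play --- exactly the quantity the box principle cannot improve.

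The paper obtains the arbitrary constant from a source you set up but did not use: the statement is invariant under $\PSL(2;\ZZ)$, so one may replace $\alpha$ by $\alpha+m$ for a large integer $m$ \emph{before} running the argument. This does nothing to the Dirichlet step; what it changes is the constant you already computed, $K_\alpha=4\alpha_1\alpha_2-1=\Im(\alpha)^2/\Re(\alpha)^2$, which tends to $0$ as $m\to\infty$. Equivalently, after translation one has $n>C|\Delta_\beta|^{1/2}$ with $C\approx|\Re(\alpha)|/|\Im(\alpha)|$ as large as desired, and substituting this into $\cosh d_{hyp}(\alpha,\beta)-1\ll 1/(n|\Delta_\beta|)$ produces the factor $1/C$ in front of $|\Delta_\beta|^{-3/2}$. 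If you rewrite your proof along these lines, the one genuinely delicate point is to track how the \emph{other} $\alpha$-dependent constants behave under the translation --- in particular the bound $L_3^2\le(|\alpha_1|+|\alpha_2|)^2\max\{L_1^2,L_2^2\}$ and the normalization $|\Delta(f_\alpha)|=K_\alpha$ implicit in Lemma~\ref{lemma:mainlemma} --- since these degrade as $m$ grows and must be balanced against the gain of $1/C$.
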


\begin{proof}
	Let $Q > 1$ be an integer.

	Let $f_i = [i\alpha_1: i: i\alpha_2] = i f_\alpha$ for positive integers $i$.
	Divide the unit interval into $Q$ equal subintervals, and consider the vectors $f_i$ modulo $[\ZZ:\ZZ:\ZZ]$.  Then, by the pigeonhole principle, there are some $0 \le i < j \le Q^2$ such that $f_i$ and $f_j$ have first and third coordinates lying in the same pair of subintervals modulo $\ZZ$.  Let $n := j-i$.  Note that $0 < n \le Q^2$.

	By construction, we have $|n\alpha_1-p_1| < 1/Q$ and $|n\alpha_2-p_2| < 1/Q$ for some integers $p_1$ and $p_2$.  Defining $f_\beta = [p_1: n: p_2]$, we have a corresponding $\beta \in \CC$:  this is the candidate good approximation we seek.

	Choose $Q'$ large enough such that $|n\alpha_1- p_1|, |n\alpha_2 - p_2| > 1/Q'$ for all $n<Q^2$, $p_1,p_2 \in \ZZ$.   Then, running this argument again with $Q = Q'$, we obtain a new solution $f_\beta'$ that is linearly independent of $f_\beta$.  By this method, there are infinitely many such solutions, with $\beta$ approaching $\alpha$.

        It remains to show that these $\beta$ are indeed good approximations.  We have the following:
	\begin{equation*}
		n \le Q^2, \quad |L_1L_2| \le 1/Q^2,
		\quad
		L_3^2 = | L_1\alpha_2 + L_2\alpha_1 |^2 \ll_\alpha \max\{ |L_1|^2, |L_2|^2 \} \le 1/Q^2.
	\end{equation*}
	Combining these with Lemma \ref{lemma:mainlemma},
	\[
		\cosh d_\Coefs(f_n,f_\beta) - 1 \ll_\alpha \frac{1}{n|\Delta_\beta|}.
	\]

	Note that the theorem statement is invariant under the action of $\PSL(2;\ZZ)$.
	Therefore, we may assume without loss of generality that $\Re(\alpha) > C \Im(\alpha)$ for any positive constant $C$, by translation by $\ZZ$.
	This implies the same fact about all $\beta$ sufficiently close to $\alpha$:  that for any fixed positive $C$, we can guarantee
	$n/2p > C |\Delta_\beta|^{1/2}/2p$, hence $n > C |\Delta_\beta|^{1/2}$.  Using this fact, we obtain
	\[
		\cosh d_\Coefs(f_n,f_\beta) - 1 \ll_\alpha \frac{1}{C|\Delta_\beta|^{3/2}}.
	\]
	This proves the theorem.
\end{proof}

\subsection{Poor approximation of algebraic numbers by quadratics}

In this section, we will give a complementary result to Theorems \ref{thm:dirichlet-geo} and \ref{thm:dirichlet-quad}, showing that algebraic numbers are not any better approximable than those theorems guarantee.

Most of the deep results in the Diophantine approximation of algebraic numbers, including Roth's Theorem \ref{thm:roth} and Theorem \ref{thm:bugeaud-evertse} of Bugeaud and Evertse, are consequences of the following far-reaching result.

\begin{theorem}[Schmidt's Subspace Theorem \cite{SchmidtSubspace, SchmidtBook}]
	\label{thm:schmidt}
	Let $n \ge 2$, and let $L_1, \ldots, L_n$ be linearly independent linear forms in $n$ variables, with real algebraic coefficients.  Let $\epsilon > 0$ be real.  Then the solutions $\mathbf{x} \in \ZZ^n$ to
	\[
		|L_1(\mathbf{x})L_2(\mathbf{x})\cdots L_n(\mathbf{x})| < \frac{1}{(\max\{1,|x_1|, \ldots, |x_n|\})^\epsilon}
		\]
		lie in finitely many proper subspaces of $\QQ^n$.
\end{theorem}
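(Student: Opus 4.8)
\emph{Remark on strategy.} As the statement is quoted verbatim from \cite{SchmidtSubspace, SchmidtBook}, for our purposes it is enough to invoke it; a self-contained proof fills a monograph chapter and we do not reproduce it. Nevertheless, the shape of the argument is worth recording, as it guides the applications. The plan is to follow the classical Thue--Siegel--Roth method in the far-reaching form devised by Schmidt, which replaces a single approximating quantity by a tuple of linearly independent linear forms. First I would normalize the data: after a $\QQ$-change of basis of $\ZZ^n$ and a rescaling of the $L_i$, one may assume the product $|L_1\cdots L_n|$ is bounded above and below by positive constants on the unit sphere, and it is convenient to arrange that several of the $L_i$ are coordinate forms, so that smallness of the product forces the remaining ``genuine'' forms to be very small on lattice points whose coordinates are spread over many different scales. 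Partitioning $\ZZ^n$ into finitely many cones and dyadically bucketing the sizes $|L_i(\mathbf x)|$, one reduces to counting solutions inside a single bucket, and to showing that once the parameters are large they all lie on a single proper subspace of $\QQ^n$.

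The geometric engine is the geometry of numbers. To a box $\{|L_i(\mathbf x)| \le c_i\}$ with $\prod_i c_i$ essentially fixed one attaches its successive minima $\lambda_1 \le \cdots \le \lambda_n$; Minkowski's second theorem, together with a lemma of Davenport, controls how the lattice points of bounded size distribute among the faces of the box. The dichotomy to extract is: either a candidate solution forces $\lambda_1$ to be abnormally small, in which case it is pinned to one of finitely many subspaces spanned by the (few, short) minimal vectors of the relevant lattices; or it is not, and one must pass to the arithmetic core.

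That core --- and the step I expect to be by far the main obstacle --- is Schmidt's generalization of Roth's lemma to many blocks of variables. One constructs, via Siegel's lemma, an auxiliary polynomial $P(\mathbf{X}^{(1)}, \dots, \mathbf{X}^{(m)})$ that is multihomogeneous in $m$ blocks of $n$ variables, of carefully chosen degrees and very small height, vanishing to high ``index'' along the subspace one wishes to exclude. The small-forms hypothesis, applied to an $m$-tuple of hypothetical solutions chosen with extremely rapidly increasing heights, forces $P$ to vanish to high index at that tuple; but the generalized Roth's lemma asserts this is impossible unless the tuple is degenerate --- a contradiction. Calibrating all of this simultaneously --- the value of $m$, the ratios of the block degrees, the required gaps between consecutive heights, the index thresholds, and the $\epsilon$-loss accumulated at each stage --- is the delicate combinatorial bookkeeping that constitutes the technical heart of the theorem. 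For the details we refer to \cite{SchmidtBook}, and for streamlined treatments via successive minima and the parametric geometry of numbers to the subsequent literature.
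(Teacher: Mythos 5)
The paper does not prove this statement either: it is quoted as an external result with citations to the literature, exactly as you do. Your accompanying sketch of Schmidt's actual argument (reduction via geometry of numbers and successive minima, then the auxiliary-polynomial/index machinery culminating in the generalized Roth lemma) is a faithful high-level description of the proof in \cite{SchmidtBook}, so your treatment matches the paper's.
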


As an example, if one takes $\alpha \in \RR$ to be algebraic, Roth's Theorem \ref{thm:roth} can be reformulated as the statement that there are only finitely many solutions to
\begin{equation}
	\label{eqn:roth}
	|q||q\alpha - p| < \frac{1}{q^\epsilon}.
\end{equation}
To recover this assertion from Schmidt's Subspace Theorem, one can choose $L_1(p,q) = q$ and $L_2(p,q) = q\alpha - p$.

Theorem \ref{thm:bugeaud-evertse} of Bugeaud and Evertse uses Schmidt's Subspace Theorem on the space of coefficients.  To approximate an algebraic number $\alpha$, the coefficient vector $f$ is subject to a linear form given by $|f(\alpha)|$ (so the coefficients of the linear form are the powers of $\alpha$).  In our case, the application is different:  we work again on the space of coefficient vectors, but our linear form is given in terms of $1$, $\alpha + \overline{\alpha}$ and $\alpha\overline{\alpha}$.  That is, in terms of the coefficient vector associated to $\alpha$, instead of a vector of its powers.  We are, in effect, searching for approximations to the coefficient vector $f_\alpha$ of $\alpha$ within the coefficient space.  It is interesting to ask whether this method extends to higher degree.

We will now use Schmidt's Subspace Theorem \ref{thm:schmidt} to deduce the main result of this section.

\begin{theorem}
	\label{thm:quad-roth}
	Suppose that $\alpha \in \CC \backslash \RR$ is algebraic and non-quadratic.   Let $\eta > 0$. 
	If $\alpha$ lies on a rational geodesic, then there are only finitely many quadratic irrationals $\beta$ on that geodesic such that
	\begin{equation}
		\label{eqn:quad-roth-1}
		d_{hyp}(\alpha, \beta) \le \acosh\left( 1 + \frac{1}{|\Delta_\beta|^{2+\eta}} \right).
	\end{equation}
	Amongst quadratic irrationals $\beta$ not sharing a rational geodesic with $\alpha$, there are only finitely many such that
	\begin{equation}
		\label{eqn:quad-roth-2}
		d_{hyp}(\alpha, \beta) \le \acosh\left( 1 + \frac{1}{|\Delta_\beta|^{3/2 + \eta}} \right).
	\end{equation}
	In particular, if $\alpha$ is not on any rational geodesic, then there are only finitely many quadratic irrational $\beta$ satisfying \eqref{eqn:quad-roth-2} at all.
\end{theorem}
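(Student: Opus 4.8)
\medskip
\noindent\textbf{Proof proposal.} The plan is to use Lemma \ref{lemma:mainlemma} to turn the hyperbolic-distance hypothesis into an inequality on a product of linear forms in the coordinates of $f_\beta$, and then quote Schmidt's Subspace Theorem \ref{thm:schmidt}. All three assertions are invariant under the $\PSL(2;\ZZ)$-action, so we first translate $\alpha$ so that $\Re(\alpha)\neq 0$, giving $f_\alpha=[\alpha_1:1:\alpha_2]$ with $\alpha_1=1/(\alpha+\overline\alpha)$, $\alpha_2=(\alpha\overline\alpha)/(\alpha+\overline\alpha)$ real and algebraic. Every $\beta$ under consideration has $|\Delta_\beta|\ge 3$, so its defining inequality forces $d_{hyp}(\alpha,\beta)<\acosh 2$; thus Lemma \ref{lemma:mainlemma}(2) applies to all of them, and each such $\beta$ lies in a single fixed hyperbolic ball about $\alpha$. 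Writing $f_\beta=[p_1:n:p_2]$ with $(p_1,n,p_2)$ primitive and normalized so the projective point is near $[\alpha_1:1:\alpha_2]$, boundedness of this ball away from $\RR$ gives $|\Delta_\beta|\asymp_\alpha H(f_\beta)^2$, where $H(f_\beta)=\max\{|p_1|,|n|,|p_2|\}$; discarding finitely many $\beta$ is harmless, so we may work under all these hypotheses. The computation in the proof of Lemma \ref{lemma:mainlemma} gives, in its notation, $\cosh d_{hyp}(\alpha,\beta)-1\asymp_\alpha (L_3^2-L_1L_2)/|\Delta_\beta|$, together with the unconditional bounds $L_3^2-L_1L_2\gg_\alpha|L_1L_2|$ and $L_3^2-L_1L_2\le 2\max\{|L_1L_2|,L_3^2\}$.

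Suppose first that $\alpha$ and the $\beta$ in question lie on a common rational geodesic, i.e.\ a rational plane $\Pi=\{ap_1+bn+cp_2=0\}$ with $(a,b,c)\in\QQ^3\smallsetminus 0$ and $a\alpha_1+b+c\alpha_2=0$. Then $f_\beta\in\Pi\cap\ZZ^3$, a rank-two lattice, and on $\Pi$ the forms $L_1,L_2$ become $\QQ$-proportional: $\lambda L_1+\mu L_2$ vanishes on $\Pi$ exactly when $(-\lambda,\lambda\alpha_1-\mu\alpha_2,\mu)$ is a multiple of $(a,b,c)$, which the relation above makes solvable. Using $L_3=\alpha_2L_1+\alpha_1L_2$ and $\alpha_1,\alpha_2\neq 0$, a short case analysis shows that on $\Pi$ one has $L_3^2-L_1L_2\asymp_\alpha L^2$ for a single nonzero linear form $L$ with real algebraic coefficients (whichever of $L_1,L_2$ does not vanish identically on $\Pi$). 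The hypothesis \eqref{eqn:quad-roth-1} then gives $|L|\ll_\alpha|\Delta_\beta|^{-(1+\eta)/2}\asymp_\alpha H(f_\beta)^{-1-\eta}$. If $L$ is $\QQ$-proportional to a rational form $M$, then $|L|\to 0$ forces $M(f_\beta)=0$ for all but finitely many $\beta$, confining $f_\beta$ to one scaling class; otherwise, in integral coordinates on $\Pi$ the forms $L$ and a coordinate form are linearly independent with real algebraic coefficients and satisfy $|L\cdot x_1|\ll_\alpha H(f_\beta)^{-\eta}$, so Schmidt's Subspace Theorem \ref{thm:schmidt} (with $\epsilon=\eta/2$, after discarding finitely many $\beta$) confines the solutions to finitely many one-dimensional rational subspaces of $\QQ^2$, hence to finitely many $\beta$. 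This proves the first finiteness statement.

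In the general case we retain $f_\beta\in\ZZ^3$ and use the three linear forms $L_1=\alpha_1x_2-x_1$, $L_2=-\alpha_2x_2+x_3$, $M=x_2$, which are linearly independent (determinant $\pm1$) with real algebraic coefficients. From \eqref{eqn:quad-roth-2} we get $L_3^2-L_1L_2\ll_\alpha|\Delta_\beta|^{-1/2-\eta}$, hence $|L_1L_2|\ll_\alpha|\Delta_\beta|^{-1/2-\eta}\asymp_\alpha H(f_\beta)^{-1-2\eta}$, hence $|L_1L_2M|\ll_\alpha H(f_\beta)^{-2\eta}$. After discarding finitely many $\beta$, Schmidt's Subspace Theorem \ref{thm:schmidt} (with $\epsilon=\eta$) confines the solutions to finitely many proper rational subspaces of $\QQ^3$. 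A one-dimensional such subspace is a single scaling class, giving one $\beta$. A two-dimensional such subspace is a rational plane, hence projects to a rational geodesic $\gamma_i$; here the geometry enters. A non-quadratic $\alpha$ lies on at most one rational geodesic (two would meet at a quadratic point, Observation \ref{obs:rat-geo}), so for the $\beta$ being counted — which either share no rational geodesic with $\alpha$, or arise when $\alpha$ lies on none — we have $\alpha\notin\gamma_i$, whence $\delta_i:=d_{hyp}(\alpha,\gamma_i)>0$. Any $\beta\in\gamma_i$ satisfying \eqref{eqn:quad-roth-2} then obeys $|\Delta_\beta|^{-3/2-\eta}\ge\cosh\delta_i-1$, so $|\Delta_\beta|$ is bounded; and the quadratic irrationals on $\gamma_i$ of bounded discriminant lying at bounded hyperbolic distance from $\alpha$ are finite in number, since on such a compact arc of $\gamma_i$ one has $|\Delta_\beta|\asymp H(f_\beta)^2$, so only boundedly many primitive lattice vectors occur. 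Thus the $\beta$ in every subspace are finite in number, proving the second statement; the concluding ``in particular'' is immediate, as when $\alpha$ lies on no rational geodesic every quadratic $\beta$ trivially shares none with it.

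The main obstacle is the handling of the two-dimensional subspaces returned by Schmidt's Subspace Theorem in the general case: one must recognize them as rational geodesics, invoke the fact (Observation \ref{obs:rat-geo}) that a non-quadratic $\alpha$ determines at most one rational geodesic in order to conclude $\alpha\notin\gamma_i$, and then run the geometric bound on $|\Delta_\beta|$ together with a lattice-point count on a compact arc. A secondary, purely computational nuisance is the case analysis of degenerate configurations of $L_1,L_2,L_3$ on the rational plane $\Pi$ (some $L_i$ vanishing identically, or $L$ being $\QQ$-proportional to a rational form); each such possibility is elementary and contributes at most finitely many $\beta$. Verifying the hypotheses of Lemma \ref{lemma:mainlemma}, the comparison $|\Delta_\beta|\asymp_\alpha H(f_\beta)^2$ for $\beta$ near $\alpha$, and the absorption of $\alpha$-dependent constants into the exponents are routine.
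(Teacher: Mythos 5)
Your proposal is correct, and its skeleton — translate the hypothesis into a bound on $|L_1L_2|$ via Lemma \ref{lemma:mainlemma}, then apply Schmidt's Subspace Theorem in the coefficient space, first in three variables and then restricted to the exceptional rational planes — is the same as the paper's. The one genuine divergence is how you dispose of the two-dimensional exceptional subspaces in the general case. The paper runs Schmidt a \emph{second} time on each such plane $\{ap_1+bn+cp_2=0\}$ not containing $\alpha$, using that $a\alpha_1+b+c\alpha_2\neq 0$ makes the two restricted forms independent, and concludes finiteness from the two-variable subspace theorem. You instead observe that such a plane projects to a rational geodesic $\gamma_i$ with $d_{hyp}(\alpha,\gamma_i)=\delta_i>0$, so \eqref{eqn:quad-roth-2} forces $|\Delta_\beta|\le(\cosh\delta_i-1)^{-1/(3/2+\eta)}$, and a compact arc of $\gamma_i$ contains only finitely many quadratic points of bounded discriminant. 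This is more elementary (no second appeal to Schmidt, and no dependence on $\eta$ in that step — it even rules out all but finitely many $\beta$ on $\gamma_i$ satisfying the weaker Dirichlet-type bound), at the cost of being specific to the exponent being strictly positive in the distance bound; the paper's double-Schmidt argument is the one that parallels the on-geodesic case structurally. Your explicit treatment of the degenerate configurations on $\Pi$ (one of $L_1,L_2$ vanishing identically when $a=0$ or $c=0$, or the restricted form being rational) via the exact identity $\cosh d-1\asymp_\alpha(L_3^2-L_1L_2)/|\Delta_\beta|$ is slightly more careful than the paper, which passes through $|L_1L_2|$ and implicitly assumes $a,c\neq 0$. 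Both arguments rely, as they must, on the fact that a non-quadratic $\alpha$ lies on at most one rational geodesic, so that every exceptional plane relevant to the count misses $\alpha$.
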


\begin{proof}
	Note that, by Theorem \ref{thm:delta-repulsion}, at most one $\beta$ having $\Delta_\beta = \Delta$ can satisfy \eqref{eqn:quad-roth-1} or \eqref{eqn:quad-roth-2}, for each $\Delta$.  Therefore, by throwing away at most finitely many approximations $\beta$, we can reduce to considering $\beta$ having discriminant $|\Delta_\beta|$ exceeding any fixed bound, or, consequently, to considering $\beta$ sufficiently close to $\alpha$.

	First, we will show that there are only finitely many $\beta$ which are not on the same rational geodesic as $\alpha$ (where $\alpha$ may or may not be on any rational geodesic), and which satisfy
	\[
		d_\Coefs(f_\alpha, f_\beta) \le \acosh\left( 1 + \frac{1}{|\Delta_\beta|^{3/2+\eta}} \right).
	\]
	For $\beta$ sufficiently close to $\alpha$, Lemma \ref{lemma:mainlemma} implies that any such solution $\beta$ satisfies
	\[
		|L_1L_2| \ll_\alpha 1/|\Delta_\beta|^{1/2+\eta}.
	\]
	For $\beta$ sufficiently close to $\alpha$, we also have
	\begin{equation}
		\label{eqn:im-re}
		|\Re(\beta)| \gg\ll_\alpha |\Im(\beta)|.
	\end{equation}
        In particular, 	
	\[
		n \gg\ll_\alpha |\Delta_\beta|^{1/2}.
	\]
	Therefore (altering $\eta$),	
	\[
		|n| |L_1(p_1,n,p_2)| |L_2(p_1,n,p_2)| \ll_\alpha \frac{1}{|n|^\eta}.
	\]
	These three linear forms are independent.  So by Schmidt's Subspace Theorem \ref{thm:schmidt}, these solutions lie on finitely many proper subspaces, i.e. rational geodesics.  Note that this finite collection of orbits depends only on $\alpha$.  
	Choose any one geodesic.  
	In particular, assume that $ap_1 + bn + cp_2 = 0$.  In particular,  $|a|$, $|b|$, $|c|$ are bounded above by a constant depending only on $\alpha$.

	Then, what we have is actually
	\[
		|n\alpha_1 - p_1||cn\alpha_2+ap_1+bn|
		=
		|n\alpha_1 - p_1||n\alpha_2-p_2||c|
		\ll_\alpha \frac{1}{|n|^{1+\eta}}
		< \frac{1}{|n|^{\eta}}.
	\]
	Since $\alpha$ is not on the geodesic, $a\alpha_1 + b + c\alpha_2 \neq 0$, which implies these are independent linear forms in two variables $n$ and $p_1$.
	Again by Schmidt's Subspace Theorem \ref{thm:schmidt}, the solutions lie on finitely many lines in (non-projectivized) coefficient space.  Hence there are finitely many solutions.

	Next, we show that there are only finitely many solutions $\beta$ which are on the same rational geodesic as $\alpha$ (thus we are in the case that $\alpha$ is on a rational geodesic), and which satisfy
	\[
		d_{hyp}(\alpha, \beta) \le \acosh\left( 1 + \frac{1}{|\Delta_\beta|^{2+\eta}} \right).
	\]
	For such $\beta$ sufficiently close to $\alpha$, by Lemma \ref{lemma:mainlemma},
	\[
		|L_1L_2||\Delta_\beta| \ll_\alpha 1/|\Delta_\beta|^{\eta}.
	\]
	By the same argument surrounding \eqref{eqn:im-re} (altering $\eta$),
	\[
		|n|^2 |n\alpha_1 - p_1| |n\alpha_2 - p_2| \ll_\alpha \frac{1}{|n|^\eta}.
	\]
	Assume that this geodesic is characterised by $ap_1 + b + cp_2 = 0$, and therefore $a\alpha_1 + b + c\alpha_2 = 0$.  Therefore $|n\alpha_2 - p_2| = |a/c||n\alpha_1 - p_1|$ and so this becomes (altering $\eta$):
	\[
		|n| |n\alpha_1 - p_1| \ll_\alpha \frac{1}{|n|^\eta}.
	\]
	These are independent linear forms in two variables, so by Schmidt's Subspace Theorem \ref{thm:schmidt} on the two-dimensional space in $n$ and $p_1$, we have solutions on only finitely many lines in coefficient space.   This means there are only finitely many $\beta$.
\end{proof}

\subsubsection{Complex Liouville numbers}

We demonstrate that there are non-quadratic complex numbers which are extremely well-approximable by quadratic irrationals (in particular, so as to be necessarily non-algebraic, as a result of Theorem \ref{thm:quad-roth}).

In analogy with the classical case, we will call $\alpha \in \CC$ a \emph{complex quadratic Liouville number} if, for every positive integer $m$, there are infinitely many quadratic irrational $\beta$ such that
	\begin{equation}
		\label{eqn:liouville-goal}
		d_{hyp}(\alpha, \beta) \le \acosh\left( 1 + \frac{1}{|\Delta_\beta|^{m}} \right).
	\end{equation}

To accomplish this, we will construct a Cauchy sequence of quadratic irrationals $\beta_k$.  Calling the limit $\alpha$, we will show that the hyperbolic distance between $\alpha$ and $\beta_k$ satisfies \eqref{eqn:liouville-goal}.  The construction is simple:  we only require at each stage that
\begin{equation}
	\label{eqn:liouville-require}
	d_{hyp}(\beta_k,\beta_{k+1}) < \acosh \left( 1 + \frac{1}{|\Delta_{\beta_k}|^k} \right), \quad |\Delta_{\beta_k}| \ge 2.
\end{equation}
As the quadratic irrationals of absolute discriminant $\ge 2$ are dense, this is possible.  To see that the sequence is Cauchy and the limit satisfies \eqref{eqn:liouville-goal}, we can compute, for $M > m$,
\begin{align*}
	d_{hyp}(\beta_M,\beta_m) &\le \sum_{k=m}^{M-1} d_{hyp}(\beta_k,\beta_{k+1}) \\
	&\le d_{hyp}(\beta_m,\beta_{m+1}) \sum_{k=m}^{M-1} \frac{d_{hyp}(\beta_k,\beta_{k+1})}{d_{hyp}(\beta_m,\beta_{m+1})} \\
	&\ll d_{hyp}(\beta_m,\beta_{m+1}) \\
	&\le \acosh \left( 1 + \frac{1}{|\Delta_{\beta_m}|^m} \right).
\end{align*}
Note that the step in which the sum symbol disappears follows from \eqref{eqn:liouville-require} and the series expansion as $x \rightarrow \infty$,
\begin{equation}
	\label{eqn:arcosh}
	\acosh(1 + 1/x^2) = \frac{ \sqrt{2} }{x} - \frac{1}{6\sqrt{2}x^3} + \frac{3}{80\sqrt{2}x^5} + \cdots.
\end{equation}
This implies \eqref{eqn:liouville-goal}.

As this construction has a great deal of freedom, we can construct quadratic Liouville numbers on any fixed rational geodesic, for example.  The countability of the rational geodesics also implies we can construct quadratic Liouville numbers not lying on any rational geodesic.\footnote{To see this, count the geodesics and assign a tubular neighbourhood of width $1/n$ to the $n$-th such geodesic; require all terms $\beta_k$ past the $n$-th to avoid the first $n$ tubular neighbourhoods; each term has finitely many restrictions placed upon it.  (To make this work, the geodesics must be ordered as the terms are created, i.e.\ the $n$-th geodesic is always chosen so that the closure of its neighbourhood does not include $\beta_n$.)}

\subsection{Comparison with classical results}
\label{sec:connections}

We wish to compare Theorems \ref{thm:dirichlet-quad} and \ref{thm:quad-roth} with Theorem \ref{thm:bugeaud-evertse}.  In other words, to compare the classical approach to Diophantine approximation in the complex plane with the approach we consider here, using hyperbolic metric and discriminant sizing.

We begin with the following Lemma \ref{lemma:delta-ge-height-quad}, relating the na\"ive height and the discriminant as measures of arithmetic complexity.  In the case of quadratics, and taking into account $\PSL(2;\ZZ)$ invariance, \eqref{eqn:delta-le-naive-gen} becomes
\begin{equation}
  \label{eqn:delta-le-naive-quad}
	|\Delta_\alpha| \le 12 H_{\PSL}(f_\alpha)^2.
\end{equation}
As discussed above, we don't expect an inequality in the other direction in general.  However, if we take into account the $\PSL(2;\ZZ)$ action, and loosen the exponents, then we can prove such a thing in the quadratic case.

\begin{lemma}
	\label{lemma:delta-ge-height-quad}
  Suppose $\beta$ is of degree $2$.  Then
  \begin{equation}
    \label{eqn:delta-ge-height-quad}
	  |\Delta_\beta| \ge 3 H_{\PSL}(f_\beta),
  \end{equation}
	and also,
  \begin{equation}
    \label{eqn:delta-ge-height-quad-2}
	  |\Delta_\beta| \ge \frac{3 H_{\PSL}(f_\beta)^2}{N_{\PSL}(\beta)},
  \end{equation}
	where $N_{\PSL}$ denotes the norm of the element of its $\PSL(2;\ZZ)$ orbit which lies in the standard fundamental region.
\end{lemma}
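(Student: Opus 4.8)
The plan is to reduce everything to an explicit computation with a single binary quadratic form representing the $\PSL(2;\ZZ)$-orbit, chosen so that it is reduced in the classical sense. First I would fix a degree-$2$ number $\beta$, and let $f_\beta = [a:b:c]$ be (a scaling of) its minimal polynomial. Since $\beta \in \CC\setminus\RR$, the discriminant $\Delta_\beta = b^2 - 4ac$ is negative, and the form $ax^2+bxy+cy^2$ is (up to sign) a positive definite integral binary quadratic form of discriminant $\Delta_\beta$. The key point is that $H_\PSL(f_\beta)$ is by definition the minimum of $\max\{|a'|,|b'|,|c'|\}$ over the $\PSL(2;\ZZ)$-orbit, and this minimum is attained (perhaps not uniquely) on the \emph{reduced representative} of the form, i.e.\ the one with $|b'| \le a' \le c'$. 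So I would replace $f_\beta$ by this reduced representative without loss of generality, noting that $\Delta_\beta$ is unchanged (the action is by change of variables on the quadratic form, as in Observation \ref{obs:rat-geo}(4)), and that $H_\PSL(f_\beta) = c'$ since $c'$ is the largest of the three.

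With a reduced form $a'x^2+b'xy+c'y^2$, $|b'|\le a' \le c'$, we have
\[
	|\Delta_\beta| = 4a'c' - b'^2 \ge 4a'c' - a'^2 = a'(4c' - a') \ge 3a'c' \ge 3c' = 3H_\PSL(f_\beta),
\]
using $a'\le c'$ in the second-to-last step and $a'\ge 1$ (the form is primitive, so $a'\neq 0$, and integral, so $|a'|\ge 1$) in the last. This gives \eqref{eqn:delta-ge-height-quad}. For \eqref{eqn:delta-ge-height-quad-2}, I would identify $N_\PSL(\beta)$ with the norm of $\beta$ computed from the reduced representative: if $\beta = \frac{-b' + i\sqrt{|\Delta_\beta|}}{2a'}$ then $N(\beta) = \beta\overline{\beta} = c'/a'$ (reading off the constant-to-leading coefficient ratio of the minimal polynomial). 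Then from $|\Delta_\beta| \ge 3a'c'$ we get
\[
	|\Delta_\beta| \ge 3 a' c' = 3 \frac{c'^2}{c'/a'} = \frac{3 c'^2}{N_\PSL(\beta)} = \frac{3 H_\PSL(f_\beta)^2}{N_\PSL(\beta)},
\]
which is \eqref{eqn:delta-ge-height-quad-2}, provided the reduced quadratic representative indeed lies in (or corresponds to a point in) the standard fundamental region so that $N_\PSL$ as defined in the statement matches $c'/a'$.

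The main obstacle I anticipate is bookkeeping about what exactly $H_\PSL$ and $N_\PSL$ mean and making sure the \emph{same} representative realizes both minima simultaneously. The reduction theory of positive definite binary quadratic forms tells us the reduced form (with $|b'|\le a'\le c'$) is the unique one in the orbit up to the boundary identifications, and this is precisely the orbit representative whose associated point $\beta$ lies in the standard fundamental domain of $\PSL(2;\ZZ)$; so $N_\PSL(\beta) = c'/a'$ is forced. The only subtlety is confirming $H_\PSL(f_\beta) = \max\{|a'|,|b'|,|c'|\} = c'$ is genuinely attained at this reduced form and not at some other orbit element with smaller sup-norm but non-reduced shape — but since any orbit element $[a'':b'':c'']$ has $|\Delta_\beta| = 4a''c'' - b''^2 \le 4 H(f)^2$, hence $H(f) \ge \tfrac12\sqrt{|\Delta_\beta|}$, while the reduced form already achieves $c' \le |\Delta_\beta|/3$ — wait, that bound goes the wrong way, so instead I would argue directly: among all integral forms of fixed discriminant in one orbit, the sup-norm of the coefficient vector is minimized at the reduced form, a standard fact from reduction theory (the reduced form minimizes $a$, and among forms with that $a$, essentially minimizes the rest). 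This is the one place where I would cite the classical reduction theory rather than compute, and everything else is the elementary chain of inequalities above.
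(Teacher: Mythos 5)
Your proposal is correct and is essentially the paper's own argument: the paper passes to the representative of $\beta$ in the standard fundamental domain of $\PSL(2;\ZZ)$, which is exactly your reduced form $|b'|\le a'\le c'$, and then runs the same chain $4a'c'-b'^2\ge 3a'c'\ge 3c'$ together with $N=c'/a'$ for the second inequality. Your closing worry about whether $H_{\PSL}$ is \emph{attained} at the reduced representative is unnecessary: since $H_{\PSL}(f_\beta)$ is a minimum over the orbit, $H_{\PSL}(f_\beta)\le c'$ holds automatically, which is the only direction the inequalities require.
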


\begin{proof}
The quantities involved are all invariant under the action of $\PSL(2;\ZZ)$, so it suffices to assume $\beta$ lies in the usual $\PSL(2;\ZZ)$ fundamental region.
For $\beta$ an imaginary quadratic irrationality satisfying $ax^2 + bx + c=0$, lying in the usual fundamental region,
	\[
  H(f_\beta) = \max\{|a|,|b|,|c|\}, \quad
  	N(\beta) = |c/a| \ge 1, \quad
	|\Re(\beta)| = |b/2a| \le 1/2.
	\]
	From this we conclude that $|b| \le |a| \le |c|$ hence $H(f_\beta) = |c|$ and $b^2 < |ac|$.
Then we have
\[
  |\Delta_\beta| = 4|ac| - b^2 \ge 3|ac| \ge 3|c| = 3H(f_\beta).
\]
	Alternately, we use $N(\beta) = |c/a|$ for the second inequality.
\end{proof}

Next, we recall that the hyperbolic metric and euclidean metric are locally conformally equivalent.  More precisely, for $\beta$ sufficiently close to $\alpha$ in either metric,
\begin{equation}
	\label{eqn:hyp-euc}
	(1-\epsilon)\Im(\alpha)d_{hyp}(\alpha,\beta)
	<
  |\alpha - \beta|
  <
	(1+\epsilon)\Im(\alpha)d_{hyp}(\alpha,\beta).
\end{equation}
Finally, we need the series expansion as $x \rightarrow \infty$ for the inverse hyperbolic cosine given in \eqref{eqn:arcosh}.

Collecting the above relationships, it is a simple computation to derive the following:

\begin{lemma}
	\begin{enumerate}
		\item Suppose 
	\[
		d_{hyp}(\alpha,\beta) < 
		\acosh\left( 1 + \frac{C}{|\Delta_\beta|^k} \right).
	\]
	Then 
	\[
		|\alpha - \beta| < \frac{(1+\epsilon)\Im(\alpha)N_{\PSL}(\beta)^{k/2}\sqrt{2C}}{3^{k/2}H_{\PSL}(f_\beta)^k}.
	\]
		\item Suppose 
	\[
		d_{hyp}(\alpha,\beta) >
		\acosh\left( 1 + \frac{C}{|\Delta_\beta|^k} \right).
	\]
	Then for $|\Delta_\beta|$ sufficiently large,
	\[
		|\alpha - \beta| > \frac{(1-\epsilon)\Im(\alpha)\sqrt{2C}}{12^{k/2} H_{\PSL}(f_\beta)^k}.
	\]
	\end{enumerate}
\end{lemma}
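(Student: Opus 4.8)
The plan is to chain the three quantitative inputs that the preceding discussion has assembled: the series expansion \eqref{eqn:arcosh} for $\acosh(1+1/x^2)$ as $x\to\infty$, the local conformal comparison \eqref{eqn:hyp-euc} between $d_{hyp}$ and euclidean distance, and the two-sided comparison between $|\Delta_\beta|$ and $H_{\PSL}(f_\beta)$ coming from \eqref{eqn:delta-le-naive-quad} and Lemma \ref{lemma:delta-ge-height-quad}. In both parts it suffices to treat $\beta$ with $|\Delta_\beta|$ large. In part (1) the hypothesis $d_{hyp}(\alpha,\beta)<\acosh\!\left(1+C/|\Delta_\beta|^k\right)$ forces $\beta$ to be close to $\alpha$ once $|\Delta_\beta|$ is large, so \eqref{eqn:hyp-euc} is available. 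In part (2), if $\beta$ is close to $\alpha$ we again use \eqref{eqn:hyp-euc}; if not, the claimed lower bound is automatic, since by \eqref{eqn:delta-le-naive-quad} we have $H_{\PSL}(f_\beta)\to\infty$ as $|\Delta_\beta|\to\infty$, so the right-hand side of the desired inequality tends to $0$ while $|\alpha-\beta|$ stays bounded below.

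For part (1): monotonicity of $\acosh$ together with \eqref{eqn:arcosh} (the first correction term is negative and of smaller order) gives $\acosh\!\left(1+C/|\Delta_\beta|^k\right)\le (1+\epsilon_1)\sqrt{2C}\,|\Delta_\beta|^{-k/2}$ for $|\Delta_\beta|$ large. The upper inequality of \eqref{eqn:hyp-euc} then yields $|\alpha-\beta|<(1+\epsilon_2)\,\Im(\alpha)\,d_{hyp}(\alpha,\beta)<(1+\epsilon)\,\Im(\alpha)\sqrt{2C}\,|\Delta_\beta|^{-k/2}$. Finally, the second estimate of Lemma \ref{lemma:delta-ge-height-quad}, $|\Delta_\beta|\ge 3H_{\PSL}(f_\beta)^2/N_{\PSL}(\beta)$, gives $|\Delta_\beta|^{-k/2}\le N_{\PSL}(\beta)^{k/2}/(3^{k/2}H_{\PSL}(f_\beta)^k)$, and substituting produces exactly the stated bound.

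For part (2) one runs the same string of inequalities in reverse: from $d_{hyp}(\alpha,\beta)>\acosh\!\left(1+C/|\Delta_\beta|^k\right)$ and the lower inequality of \eqref{eqn:hyp-euc} we get $|\alpha-\beta|>(1-\epsilon_1)\,\Im(\alpha)\,\acosh\!\left(1+C/|\Delta_\beta|^k\right)$; then \eqref{eqn:arcosh} gives $\acosh\!\left(1+C/|\Delta_\beta|^k\right)\ge (1-\epsilon_2)\sqrt{2C}\,|\Delta_\beta|^{-k/2}$ for $|\Delta_\beta|$ large, so $|\alpha-\beta|>(1-\epsilon)\,\Im(\alpha)\sqrt{2C}\,|\Delta_\beta|^{-k/2}$. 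Now \eqref{eqn:delta-le-naive-quad}, i.e.\ $|\Delta_\beta|\le 12H_{\PSL}(f_\beta)^2$, gives $|\Delta_\beta|^{-k/2}\ge 12^{-k/2}H_{\PSL}(f_\beta)^{-k}$, which is the claim.

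No genuine obstacle is expected: the computation is elementary, and the only real care is in the bookkeeping of the error factors $(1\pm\epsilon)$ so that the single $\epsilon$ in the statement absorbs both the conformal distortion in \eqref{eqn:hyp-euc} and the tail of \eqref{eqn:arcosh}, together with the trivial-case argument in part (2) noted above and the observation that \eqref{eqn:delta-ge-height-quad-2} (not \eqref{eqn:delta-ge-height-quad}) is the inequality matching the factor $N_{\PSL}(\beta)^{k/2}$ appearing in part (1).
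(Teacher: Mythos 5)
Your proposal is correct and follows exactly the route the paper intends: the paper states this lemma as "a simple computation" from the collected relationships \eqref{eqn:hyp-euc}, \eqref{eqn:arcosh}, \eqref{eqn:delta-le-naive-quad}, and Lemma \ref{lemma:delta-ge-height-quad}, and you chain precisely those inputs with the right one of the two discriminant--height inequalities in each direction. The extra care you take (restricting to large $|\Delta_\beta|$ so that \eqref{eqn:hyp-euc} and the $\acosh$ asymptotic apply, and disposing of the case where $\beta$ is far from $\alpha$ in part (2)) is exactly the bookkeeping the paper leaves implicit.
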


In particular, Theorems \ref{thm:delta-repulsion}, \ref{thm:dirichlet-geo}, \ref{thm:dirichlet-quad}, and \ref{thm:quad-roth} imply statements in the euclidean metric with the choice of the na\"ive height, which we collect here for completeness.

\begin{theorem}
	\label{thm:translate}
	\begin{enumerate}
		\item 
  Let $\alpha \neq \beta$ be two non-real quadratic irrationalities.  Then for and positive $\epsilon$, and for $|\Delta_\beta|$ sufficiently large,
  \[
			|\alpha - \beta| \ge 
			\frac{(1-\epsilon)\Im(\alpha)\sqrt{2}}{2\sqrt{3} H_{\PSL}(f_\beta)^{1/2+\epsilon}H_{\PSL}(f_\alpha)^{1/2+\epsilon}}.
  \]
	If $\Delta_\alpha = \Delta_\beta$ is sufficiently large in absolute value, then
	\[
				|\alpha - \beta| \ge 
			\frac{(1-\epsilon)\Im(\alpha)\sqrt{2}}{2\sqrt{3} \min\{H_{\PSL}(f_\beta),H_{\PSL}(f_\alpha)\}}.
	\]
		\item
  Let $\alpha \in \CC \backslash \RR$ not be quadratic irrational, but lying on a rational geodesic.
	Then there exists a constant $K_\alpha > 0$, depending only on the $\PSL(2;\ZZ)$ orbit of $\alpha$, such that there are infinitely many quadratic irrational $\beta$ lying on that rational geodesic, with
  \[
		|\alpha - \beta| < \frac{K_\alpha}{H_{\PSL}(f_\beta)^2}.
  \]
		\item
  Let $\alpha \in \CC \backslash \RR$ not be a quadratic irrational.  Let $K > 0$ be any constant.  Then, there are infinitely many quadratic irrationalities $\beta$ with
  \[
	  |\alpha - \beta| < \frac{K}{H_{\PSL}(f_\beta)^{3/2}}.
  \]
		\item
	Suppose that $\alpha \in \CC \backslash \RR$ is algebraic and non-quadratic.   Let $\eta > 0$. 
	If $\alpha$ lies on a rational geodesic, then there are only finitely many quadratic irrationals $\beta$ on that geodesic such that
	\begin{equation}
		|\alpha - \beta| < \frac{1}{H_{\PSL}(f_\beta)^{2+\eta}}.
	\end{equation}
	Amongst quadratic irrationals $\beta$ not sharing a rational geodesic with $\alpha$, there are only finitely many such that
	\begin{equation}
		|\alpha - \beta| < \frac{1}{H_{\PSL}(f_\beta)^{3/2+\eta}}.
	\end{equation}
			In particular, if $\alpha$ is not on any rational geodesic, then there are only finitely many quadratic irrational $\beta$ satisfying \eqref{eqn:quad-roth-2} at all.  Note that in \eqref{eqn:quad-roth-1} and \eqref{eqn:quad-roth-2}, $H_{\PSL}$ can be weakened to $H$.
	\end{enumerate}
\end{theorem}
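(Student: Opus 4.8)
The four assertions are exactly the euclidean/na\"ive-height shadows of the four hyperbolic/discriminant theorems proved above: part (1) comes from Theorem \ref{thm:delta-repulsion}, parts (2) and (3) from Theorems \ref{thm:dirichlet-geo} and \ref{thm:dirichlet-quad}, and part (4) from Theorem \ref{thm:quad-roth}. The plan is to run each source theorem through the translation dictionary assembled just above: the local conformal comparison \eqref{eqn:hyp-euc}; the asymptotics \eqref{eqn:arcosh}, which give $\acosh(1+t)\sim\sqrt{2t}$ as $t\to 0$, so that a bound of shape $\acosh(1+C/|\Delta_\beta|^{k})$ is comparable to $\sqrt{2C}\,|\Delta_\beta|^{-k/2}$; and the two-sided comparison between $|\Delta_\beta|$ and $H_{\PSL}(f_\beta)$ — inequality \eqref{eqn:delta-le-naive-quad} in the form $|\Delta_\beta|\le 12\,H_{\PSL}(f_\beta)^2$, and Lemma \ref{lemma:delta-ge-height-quad} in the form $|\Delta_\beta|\ge 3\,H_{\PSL}(f_\beta)$ (or, when sharper is needed, $|\Delta_\beta|\ge 3H_{\PSL}(f_\beta)^2/N_{\PSL}(\beta)$). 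These facts are already packaged into the (unlabelled) Lemma immediately preceding the statement, so each part reduces to one short deduction.

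For (1) and (4), which are lower bounds on $|\alpha-\beta|$, I would feed the lower bound on $d_{hyp}$ into part (2) of that preceding Lemma; this is the direction in which \eqref{eqn:delta-le-naive-quad} supplies the needed \emph{upper} bound on $|\Delta_\beta|$. The two sub-cases of (1) correspond verbatim to the two sub-cases of Theorem \ref{thm:delta-repulsion}, and the ``$|\Delta_\beta|$ sufficiently large'' caveats are forced by the local nature of \eqref{eqn:hyp-euc}. For (4) one runs the dictionary backwards once: the euclidean hypothesis $|\alpha-\beta|<H_{\PSL}(f_\beta)^{-(2+\eta)}$ (resp.\ $H_{\PSL}(f_\beta)^{-(3/2+\eta)}$) implies, via \eqref{eqn:hyp-euc}, the bound $H_{\PSL}(f_\beta)\ge\sqrt{|\Delta_\beta|/12}$, and a harmless shrinking of $\eta$ (exactly the ``altering $\eta$'' move used earlier), the corresponding hypothesis $d_{hyp}(\alpha,\beta)\le\acosh\!\left(1+|\Delta_\beta|^{-(2+\eta)}\right)$ of Theorem \ref{thm:quad-roth}, whose finiteness conclusion then transfers. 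The closing remark that $H_{\PSL}$ may be replaced by $H$ in (4) is immediate: since $H_{\PSL}(f_\beta)\le H(f_\beta)$, the set of $\beta$ with $|\alpha-\beta|<H(f_\beta)^{-k}$ is contained in the already-finite set with $|\alpha-\beta|<H_{\PSL}(f_\beta)^{-k}$.

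For the Dirichlet-type statements (2) and (3), which bound $|\alpha-\beta|$ from above for the explicit families of $\beta$ produced in Theorems \ref{thm:dirichlet-geo} and \ref{thm:dirichlet-quad}, I would apply part (1) of the preceding Lemma. This conclusion carries a factor $N_{\PSL}(\beta)^{k/2}$, and the one thing to verify is that it is harmless. Since the relevant $\beta$ approach $\alpha$, it suffices to note that for \emph{any} quadratic irrational $\beta$ the reduced representative $\beta^*$ of its $\PSL(2;\ZZ)$ orbit has $|\Re\beta^*|\le\tfrac12$ and $\Im\beta^*=\sqrt{|\Delta_\beta|}/(2a^*)\le\sqrt{|\Delta_\beta|}/2$, whence $N_{\PSL}(\beta)=|\beta^*|^2\le(1+|\Delta_\beta|)/4\ll H_{\PSL}(f_\beta)^2$ by \eqref{eqn:delta-le-naive-quad}; so $N_{\PSL}(\beta)^{k/2}/H_{\PSL}(f_\beta)^{k}$ is bounded and folds into the constant, leaving the clean bounds $K_\alpha/H_{\PSL}(f_\beta)^2$ and $K/H_{\PSL}(f_\beta)^{3/2}$ (with $K_\alpha$ depending only on the $\PSL(2;\ZZ)$ orbit of $\alpha$, inherited from Theorem \ref{thm:dirichlet-geo}, and $K$ still arbitrary, inherited from Theorem \ref{thm:dirichlet-quad}). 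One should also flag that the conformal constant $\Im(\alpha)$ in \eqref{eqn:hyp-euc} is \emph{not} $\PSL(2;\ZZ)$-invariant, so, unlike their hyperbolic/discriminant progenitors, parts (1)--(3) are genuinely statements about the chosen representative of $\alpha$.

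I expect no genuine obstruction: everything is downstream of results already established, and the paper's own remark that this is ``a simple computation'' is accurate. The only places demanding care rather than substitution are (a) the control of the norm factor $N_{\PSL}(\beta)$ in (2)--(3) just described, and (b) the exponent bookkeeping, whose one memorable feature is the coincidence that makes it all work: $\acosh(1+x^{-2})\sim\sqrt2\,x^{-1}$ carries a half-power, $|\Delta_\beta|\asymp H_{\PSL}(f_\beta)^2$ carries a square, and the two square roots cancel, so a discriminant-exponent $k$ maps to the same euclidean/height-exponent $k$ — which is why (1)--(4) come out with exponents $\tfrac12$, $2$, $3/2$, $2+\eta$, $3/2+\eta$ consistent with Theorem \ref{thm:bugeaud-evertse}. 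Keeping every implied constant dependent on $\alpha$ (or its orbit) alone, and keeping track of which bounds need $|\Delta_\beta|$ large, is the entirety of the remaining work.
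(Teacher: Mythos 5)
Your overall route is exactly the paper's: the paper offers no written proof of Theorem \ref{thm:translate} beyond the remark that it is ``a simple computation'' from Theorems \ref{thm:delta-repulsion}, \ref{thm:dirichlet-geo}, \ref{thm:dirichlet-quad} and \ref{thm:quad-roth} together with the conversion lemma immediately preceding it, and your treatment of parts (1) and (4) --- including which direction each of \eqref{eqn:delta-le-naive-quad} and Lemma \ref{lemma:delta-ge-height-quad} must be applied in, the ``altering $\eta$'' step, and the trivial reduction from $H_{\PSL}$ to $H$ --- is sound.

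However, your handling of the norm factor in parts (2) and (3) contains a genuine error. Part (1) of the conversion lemma yields $|\alpha-\beta|\ll_\alpha N_{\PSL}(\beta)^{k/2}/H_{\PSL}(f_\beta)^{k}$, and to reach the stated conclusion $|\alpha-\beta|\ll_\alpha H_{\PSL}(f_\beta)^{-k}$ you must bound $N_{\PSL}(\beta)^{k/2}$ \emph{by itself} by a constant. Instead you bound the entire ratio $N_{\PSL}(\beta)^{k/2}/H_{\PSL}(f_\beta)^{k}$ by a constant (via $N_{\PSL}(\beta)\ll|\Delta_\beta|\ll H_{\PSL}(f_\beta)^2$); but a bounded ratio ``folded into the constant'' leaves no power of $H_{\PSL}(f_\beta)$ in the denominator and yields only the vacuous $|\alpha-\beta|\ll_\alpha 1$. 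The estimate $N_{\PSL}(\beta)\le(1+|\Delta_\beta|)/4$ is true but grows with the discriminant, so it cannot do the job. The correct argument is the one your opening clause gestures at but never uses: the approximants $\beta$ produced by Theorems \ref{thm:dirichlet-geo} and \ref{thm:dirichlet-quad} converge to $\alpha$, so all but finitely many lie in a fixed compact neighbourhood $U$ of $\alpha$ in the upper half plane; since $\Im(A.z)$ is uniformly bounded for $z\in U$ and $A\in\PSL(2;\ZZ)$, proper discontinuity shows only finitely many $A$ carry $U$ into the standard fundamental domain, so the reduced representatives $\beta^*$ lie in a compact set determined by $\alpha$ alone, whence $N_{\PSL}(\beta)=|\beta^*|^2\le C_\alpha$. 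With that replacement, $N_{\PSL}(\beta)^{k/2}\le C_\alpha^{k/2}$ genuinely folds into $K_\alpha$ (and, for part (3), is absorbed by the arbitrariness of $K$), and the rest of your argument goes through. Your observation that the factor $\Im(\alpha)$ from \eqref{eqn:hyp-euc} is not $\PSL(2;\ZZ)$-invariant is a fair caveat about how the constant in (2) should be read, but it is not an obstruction to the proof.
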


	In particular, up to constants depending on $\alpha$, our theorems recover the computation of $k_2(\alpha)$ given by Theorem \ref{thm:bugeaud-evertse} of Bugeaud and Evertse for the quadratic case.\footnote{To see this for items (2) and (3) in Theorem \ref{thm:translate} requires some consideration of the relationship between $H_{\PSL}(f_\alpha)$ and $H(f_\alpha)$; we need to know the quotient is bounded in terms of the height of the relevant element of $\PSL(2;\ZZ)$, which is a constant in terms of $\alpha$.}  Theorem \ref{thm:translate} offers some refinement in terms of drawing a distinction between approximations on a geodesic containing $\alpha$ or not.

	However, the constants are different.  As the imaginary part of $\alpha$ approaches $\infty$, the constant $K_\alpha$ of item (2) of Theorem \ref{thm:translate} weakens.
	By contrast item (1) of Theorem \ref{thm:translate} becomes a stronger statement as $\alpha \rightarrow \infty$.  It is natural to ask whether there is an analogue to the Lagrange spectrum for approximation by quadratic irrationals, at least for those lying on a geodesic.  The preceding discussion demonstrates that knowledge of the spectrum in one of the classical or hyperbolic/discriminant settings would not imply the other, although there would be some relationships.

\section{To boldly go where no one has gone before}
\label{sec:futurework}

The investigation of algebraic starscapes raises a wide variety of possible future research directions, many of which we intend to continue to investigate.  We invite you to join us. 

\subsection{The homogeneous geometry of higher degrees}
For higher degree polynomials, the beginnings of the geometric story remain relatively unchanged, but strong conclusions such as Theorems \ref{thm:QuadRoots} and \ref{thm:CubicFactor} become more complicated to draw, as the dimension of the space of polynomials grows.

In particular, fixing a degree $n\geq 1$, we have that $\PP\Coefs_n\cong\CP^n$ identifies with complex projective space, and $\Roots_n\cong\SP^n(\CP^1)$ is the set of unordered $n$-tuples in the Riemann sphere.
The roots map $\RootMap_n\colon\CP^n\to\SP^n(\CP^1)$ is a homeomorphism, and
is equivariant with respect to the natural $\PSL(2;\CC)$ actions on each side: given on the space of roots by precomposition with a M\"obius transformation, and on the space of coefficients by the action of the unique irreducible representation $\PSL(2;\CC)\to\PSL(n;\CC)$.

Restricting to real coefficients, the fact that $\RootMap$ is a homeomorphism implies $\RootMap_n\colon\RP^n\to\SP^n(\CP^1)$ is an embedding, equivariant with respect to the restricted $\PSL(2;\RR)$ action on each side.
The orbits of this action decompose the space of degree $n$ polynomials, whose nature depends on the degree.
For $n\leq 3$, each component of the complement of the discriminant locus comprises an entire orbit itself, and thus comes equipped with the structure of a homogeneous geometry for $\PSL(2;\RR)$.
For $n>3$, the $\PSL(2;\RR)$ orbits foliate each component.
Consider as an example the space of quartics. The complement of the discriminant locus has two components; those with two pairs of complex conjugate roots, and those with a pair of complex conjugate roots and two real roots.
The $\PSL(2;\RR)$ action decomposes the latter into a family of codimension-1 hypersurfaces, which are generically\footnote{Indeed, by the action of $\PSL(2;\RR)$, the pair of complex roots can be moved to any point in $\HH^2$.  Fixing this point, the remaining degree of freedom of the $\PSL(2;\RR)$ action acts by rotation on the ideal boundary $\partial_\infty\HH^2=\RP^1$, and each $\PSL(2;\RR)$ orbit is determined by the \emph{angle} between these two points, measured between the tangent vectors at the complex root pointing to the real roots, as in Equation \ref{eqn:Triv}.} diffeomorphic to $\PSL(2;\RR)$ itself.
This complicates the overall picture, and suggests generalisations of Theorems \ref{thm:QuadRoots} and \ref{thm:CubicFactor} will involve fiber bundles of homogeneous spaces, rather than just the spaces themselves.

Building a more robust geometric toolkit would not only allow the extension of these ideas to higher degree polynomials, but strong enough tools may provide a window into exploring their solvability.
In particular, while the next case of interest, quartics, admits a solution by radicals, the quintics and beyond do not.
It is an exciting prospect to try and understand this dichotomy geometrically along our journey.

\subsection{Starscape curves}  Revisit Figure \ref{fig:Lines}.  The images of rational planes (projective rational lines) in the coefficient space form delicate beaded necklaces (linear starscapes).  Any two algebraic numbers lie on at least one common curve (more if the point has the non-transversal property discussed in Corollary \ref{cor:transverse}).  It is possible to give algebraic equations for these curves in general dimension, in terms of the two associated minimal polynomials $f_1$ and $f_2$, namely, viewing the complex plane as $\RR^2$, the curve is $\Re(f_1(x+iy))\Im(f_2(x+iy)) = \Re(f_2(x+iy))\Im(f_1(x+iy))$.  We will call these \emph{starscape curves}.  
The intricacies of some of these curves suggest that the projective geometry of coefficient space is quite disguised by projecting onto complex roots.
Determining whether these starscape curves are related to any natural geometric structures on the complex plane may allow us to develop further extensions of the material in Section \ref{sec:Geometry} to higher degree.

Furthermore, starscape curves appear to have a repulsion effect all their own (see the whitespace surrounding geodesics in Figure \ref{fig:Initial_quadratics}, for example).  Is it possible to quantify how well approximable some complex number $\alpha$ is by these curves?  We might measure the distance between $\alpha$ and a curve.  What is the \emph{height} of a starscape curve?

\subsection{Planar starscapes}

The planar starscapes we have studied provide a two dimensional analogue to the curves described above, with the complex quadratics (Figure \ref{fig:Initial_quadratics}) providing a primary example. As the degree of polynomials increases these families can get more complicated, but the projection to the upper half-plane by complex roots is always available (although each polynomial might be represented by increasing numbers of individual roots). Even in the cubic case where there is at most one complex pair we start to see new behaviour with the ``blackhole''-like phenomena seen on the right of Figure \ref{fig:Fam7C}, where a quadratic point is the complex root of many polynomials. The starscapes shown in Figures \ref{fig:Starscapes_d}, \ref{fig:Starscapes_e}, and \ref{fig:Starscapes_f} show further intriguing behaviours that might be studied, such as seemingly denser regions and the isolated quadratic and cubic points at the top middle of Figure \ref{fig:Starscapes_f}. 

\subsection{Algorithms to draw starscapes and Farey structure}

Some of the images here have taken minutes or even hours to compute.  Can we develop faster, more intelligent algorithms? The current images are made using a brute force approach:  generate a large number of polynomials, solve them, and then plot the results which fit into a desired region.  The code is short and many of the hard subproblems, like solving polynomials, are already implemented in computer algebra environments.  However, it is very inefficient:  we don't effectively sieve for points which will end up in the desired region, and work is repeated for every point.

The geometry discussed above provides a path to improvements.  Can we efficiently predict which polynomials need to be solved for a given region?  Along each curve we see a recursive pattern resembling the rational numbers in Figure \ref{fig:RationalStarscape}. An efficient way to generate rational numbers is to use the Farey (or Stern-Brocot) tree stucture, discussed in Section \ref{sec:Stern-Brocot} and shown in Figure \ref{fig:TreeMediant}:  in essence, use the mediant operation to fill in gaps, recursively.  A variation on this can be achieved by the na\"ive addition of polynomials as coefficient vectors. This produces a new polynomial with a root ``between'' the original two along the curve.  In effect, we are asking about higher dimensional Stern-Brocot trees \cite{Lennerstad}. It should also be possible to make use of the $\PSL(2;\ZZ)$ symmetry, at least in large-scale pictures. 

\subsection{Continued fractions}
Do there exist continued fraction algorithms for approximation by algebraic numbers of fixed or bounded degree?  The Farey structure of the rationals is, in some sense, the source of the continued fraction algorithm for real numbers approximated by rationals (the continued fraction algorithm can be viewed both as a traversal of a Cayley graph for $\PSL(2;\ZZ)$ and as a geometric process of repeated mediants).  Figure \ref{fig:Initial_quadratics} wonders aloud, might it be possible to extend the continued fraction method to approximate complex numbers by quadratic irrationals?  It is possible that existing multidimensional continued fraction algorithms, which have a long history, may provide hints; a recent article from which to enter the literature is \cite{Murru}.

\subsection{Higher degree and geometry-sensitive Diophantine approximation} 
We have seen that the geometry of the roots map naturally classifies certain types of approximations.  Our example is quadratic approximations from one rational geodesic, where we saw that approximations from a geodesic containing $\alpha$ can be better than approximations not on the geodesic.  Moving to the cubic case, one might ask how well a quadratic is approximated by cubics from a particular planar starscape.  If the quadratic is a singular point in the sense of Corollary \ref{cor:transverse} and Figure \ref{fig:CoeffCubics}, then we conjecture the cubics from that planar starscape are better approximations of the quadratic than those from starscapes not having the property.

More generally, to what extent can the results of Section \ref{sec:Dio-quad} be extended to higher degree?  Could such an extension settle the oustanding cases in the work of Bugeaud and Evertse?  In higher degree, are the exceptionally well-approximable algebraic numbers all living on starscape curves, or are there other reasons to be well-approximable?  The work of Bugeaud and Evertse also indicates a difference in approximability based on how many real conjugates an algebraic number has.  What pictures would one draw to see this effect?

\begin{figure}[h!tbp]
\centering
\begin{subfigure}[b]{0.42\textwidth} 
    \centering
	\includegraphics[width=\textwidth]{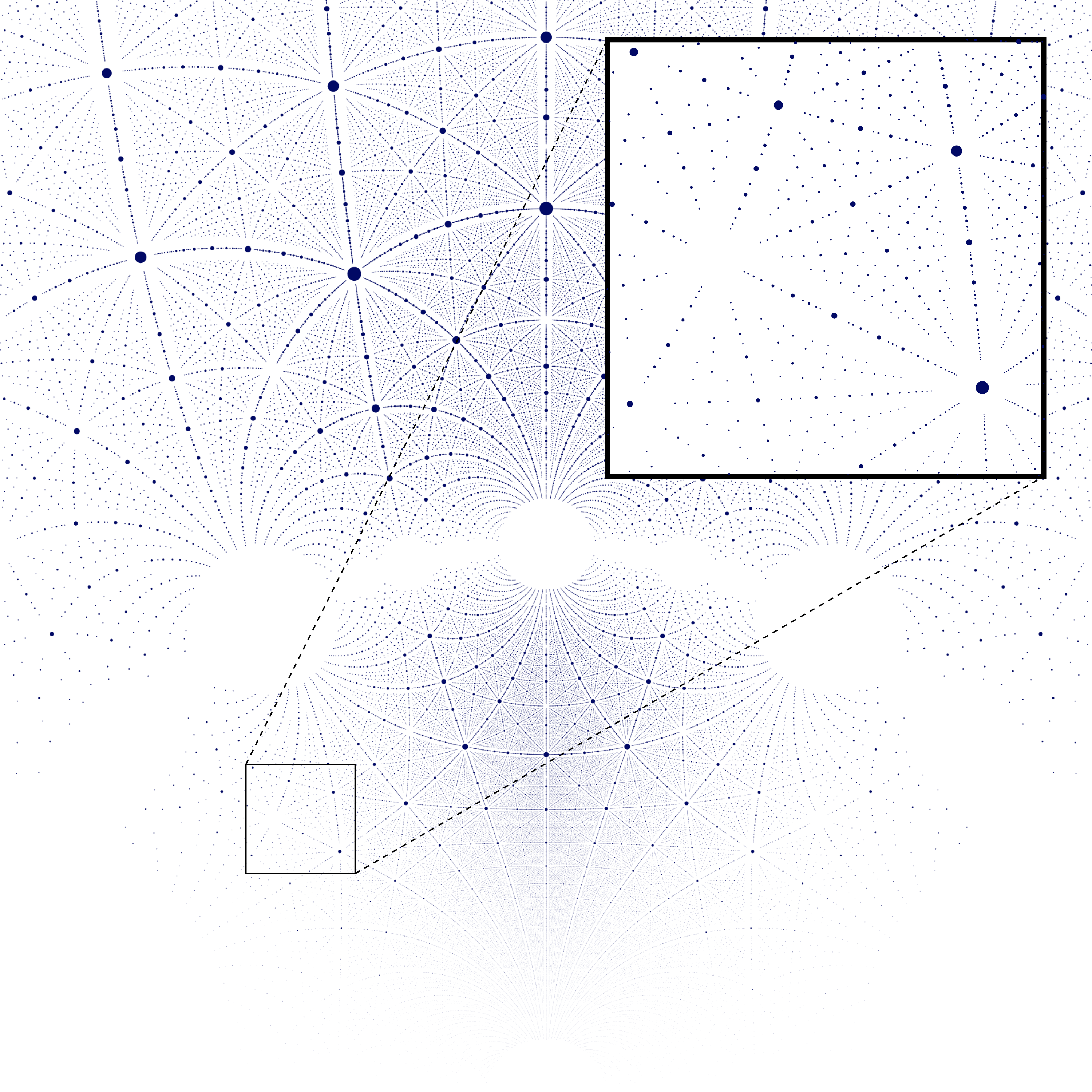}
	\caption{Quartics}
    \label{fig:reciprocal4}
\end{subfigure}
\begin{subfigure}[b]{0.42\textwidth} 
    \centering
	\includegraphics[width=\textwidth]{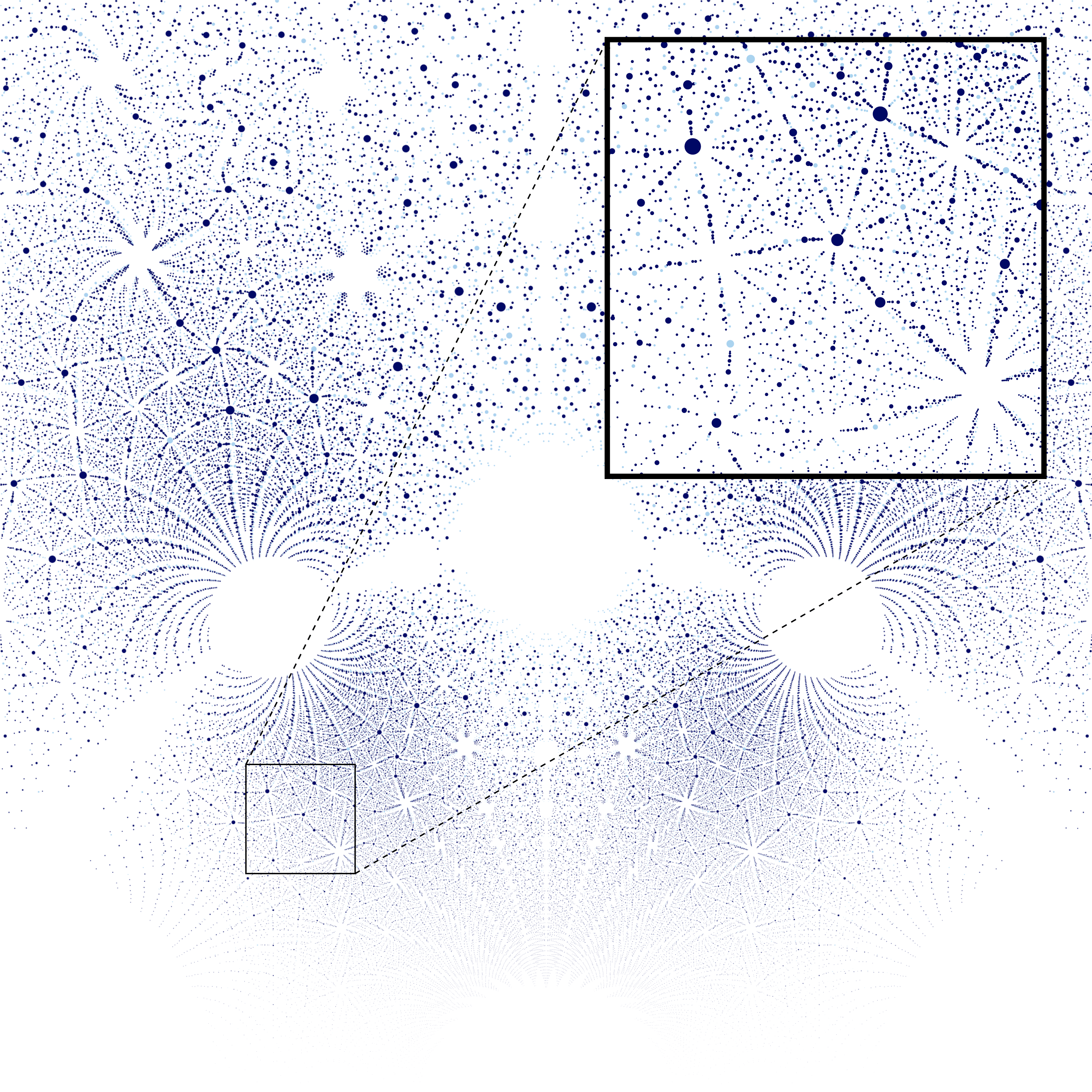}
	\caption{Sextics}
    \label{fig:reciprocal6}
\end{subfigure}
\begin{subfigure}[b]{0.42\textwidth} 
    \centering
	\includegraphics[width=\textwidth]{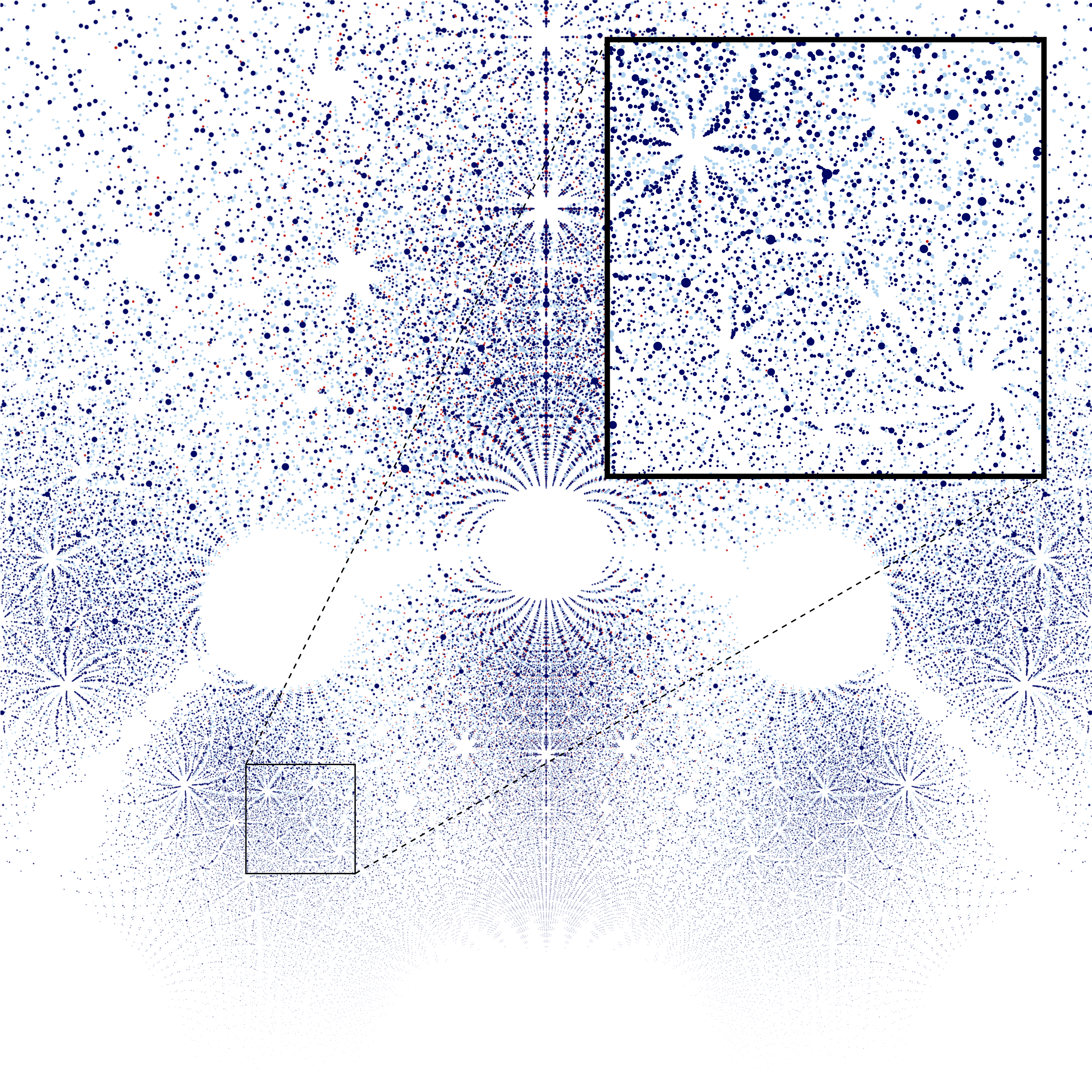}
	\caption{Octics}
    \label{fig:reciprocal8}
\end{subfigure}
\begin{subfigure}[b]{0.42\textwidth} 
    \centering
	\includegraphics[width=\textwidth]{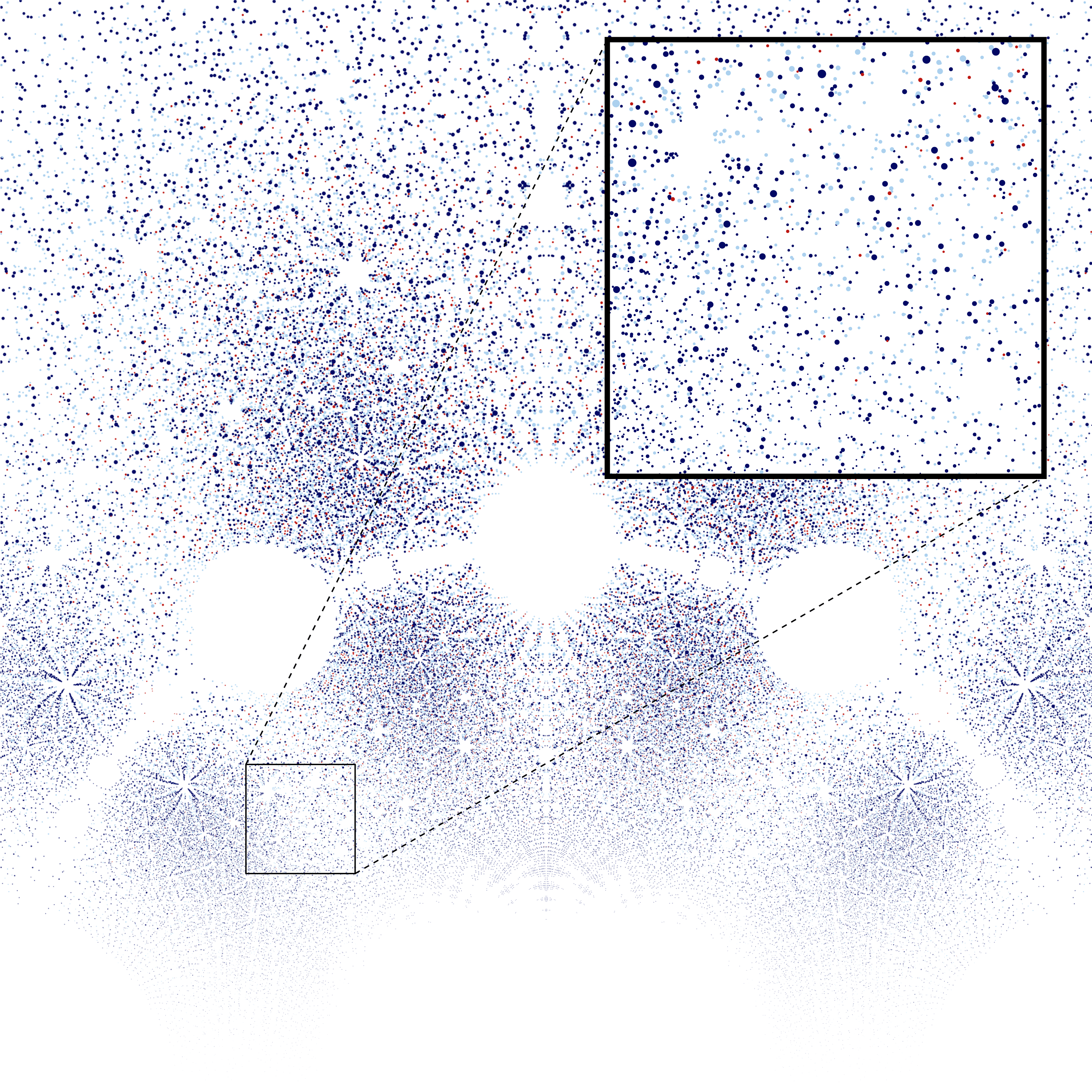}
	\caption{Decics}
    \label{fig:reciprocal10}
\end{subfigure}
	\caption{Pictures of roots of reciprocal polynomials off the unit circle (the many roots on the unit circle are not plotted). For odd degrees the reciprocal polynomials always have the root $-1$. The points are coloured by the number of conjugate real roots with dark blue for 0, light blue for 2, dark red for 4 and light red for 6 (dark and light blue are swapped from Figure \ref{fig:quad-quar} to make the image clearer). Note the strong repulsion of the unit circle in all cases, especially close to $i$ and third/sixth roots of unity. There are also denser regions in the direction of the $d$-th roots of unity for degree $d$.}
\label{fig:reciprocal}
\end{figure}

\subsection{Mahler measure and Lehmer's Conjecture}
Lehmer's famous conjecture is related to another important measure of arithmetic complexity, namely the \emph{Mahler measure} (which is not technically any type of measure).  A polynomial $f = a_d x^d + \cdots a_1 x + a_0 = a_d \prod_{i=1}^d (x - \alpha_i) \in \ZZ[x]$ has Mahler measure
\[
  M(f) = |a_d| \prod_{i=0}^d \max\{1, |\alpha_i|\}.
\]
The Weil height of an algebraic number is exactly related to the Mahler measure of its minimal polynomial: $M(f_\alpha) = H(\alpha)^d$.

Lehmer's conjecture states that there is a lower bound to $M(f)$ away from polynomials whose roots are roots of unity.  The smallest known Mahler measure of a non-root-of-unity is called \emph{Salem's number}, $1.176280818\ldots$ associated to the roots of the \emph{Lehmer polynomial} $x^{10} + x^9 - x^7 - x^6 -x ^5 -x^4 -x^3 +x+1$.\footnote{The complex roots of Lehmer's polynomial all lie on the unit circle, and its Mahler measure is a function of its real roots.}
Lehmer's Conjecture is known to hold for non-reciprocal polynomials (those whose coefficients are not palindromic) \cite{Smyth}, and Voutier \cite{Voutier} gives an explicit lower bound in terms of the degree, implying the conjecture is true if fields are restricted by degree.
Therefore to properly visualize Lehmer's conjecture would require a starscape in increasing degree, and some understanding of the real roots accompanying the complex roots.  

For these various reasons, there is a sense in which Lehmer's conjecture does not properly belong to the complex plane and the algebraic starscapes, although it is natural to wonder if it gives rise to any interesting images.  One related image which is particularly stunning is the reciprocal polynomials, shown in Figure \ref{fig:reciprocal}.  The image itself asks a variety of interesting questions.

\bibliographystyle{plain}
\bibliography{starbib}

\end{document}